\newtheorem{theorem}{Theorem}[section]
\newtheorem{Counter-example}[theorem]{Counter example}
\newtheorem{Claim}[theorem]{Claim}
\newtheorem{Lemma}[theorem]{Lemma}
\newtheorem{Proposition}[theorem]{Proposition}
\newtheorem{Notation}[theorem]{Notation}
\newtheorem{Definition}[theorem]{Definition}
\newtheorem{Corollary}[theorem]{Corollary}
\newtheorem{Conjecture}[theorem]{Conjecture}
\newtheorem*{theorem*}{Theorem}
\DeclareMathOperator*{\cmpct}{cpct}
\newcommand{\Fomega}{\pi_{m_1} (\tilde{F}_\omega)}
\newcommand{\Eeta}{\pi_{m_2} (\tilde{E}_\eta)}
\newcommand{\Rtheta}{R_{\theta}}
\newcommand{\supp}{\text{supp}}
\newcommand\blfootnote[1]{%
  \begingroup
  \renewcommand\thefootnote{}\footnote{#1}%
  \addtocounter{footnote}{-1}%
  \endgroup
}
\title{Slicing Theorems and rigidity phenomena for self affine carpets}
\author{Amir Algom}
\date{}
\begin{document}
\maketitle

\begin{abstract}
Let\blfootnote{Supported by ERC grant 306494.} $F$ be a Bedford-McMullen carpet defined by independent exponents. We prove that $\overline{\dim}_B (\ell \cap F) \leq \max \lbrace \dim^* F -1,0 \rbrace$ for all lines $\ell$ not parallel to the principal axes, where  $\dim^*$ is Furstenberg's star dimension (maximal dimension of a microset). We also prove several rigidity results for incommensurable Bedford-McMullen carpets, that is, carpets $F$ and $E$ such that all defining exponents are independent: Assuming various conditions, we find bounds on the dimension of the intersection of such carpets, show that self affine measures on them are mutually singular, and prove that  they do not embed affinely into each other.  

We obtain these results as an application of a slicing Theorem for products of certain Cantor sets. This Theorem is a generlization of the results of Shmerkin \cite{shmerkin2016furstenberg}, and Wu \cite{wu2016proof}, that proved Furstenberg's slicing Conjecture \cite{furstenberg1970intersections}.
\end{abstract}

\section{Introduction}
Let $F\subset \mathbb{R}^2$ be a set, and let $\ell \subset \mathbb{R}^2$ be an affine line.  One of the classic questions in geometric measure theory involves studying the dimension of $F\cap \ell$, as we go over all the lines in the plane. It is natural to parametrize a line in the plane by its slope (an element in $\mathbb{R}\cup \lbrace \infty \rbrace$, where $\infty$ corresponds to  lines parallel to the $y$-axis) and its intercept (an element in $\mathbb{R})$.  The most general result in this direction, known as Marstrand's slicing Theorem, asserts that for any fixed slope $u$,
\begin{equation} \label{Eq. Marstrand}
\dim_H F\cap \ell_{u,t} \leq \max \lbrace \dim_H F -1, 0 \rbrace \text{ for Lebesgue almost every } t, 
\end{equation}
where $\dim_H$ denotes the Hausdorff dimension, $\ell_{u,t}$ is the line with slope $u$ and intercept $t$.

While \eqref{Eq. Marstrand} predicts the dimension of the intersection of $F$ with a typical line $\ell$, it is  a  challenging problem to understand the intersection of $F$ with a fixed line $\ell$.  However, when the set $F$ has some arithmetic or dynamical origin, it is sometimes possible  to say something beyond \eqref{Eq. Marstrand}. 

 Indeed, for an integer $2\leq m\in \mathbb{N}$, define the $m$-fold map of the unit interval 
 \begin{equation} \label{Eq T_m}
 T_m : [0,1] \rightarrow [0,1), \quad T_m(x) = m\cdot x \mod 1.
 \end{equation}
When we say that a line is not principal, we mean that its slope is in $\mathbb{R}\setminus \lbrace 0 \rbrace$, i.e. it is not parallel to the principal axes of $\mathbb{R}^2$. The following Conjecture, known also as Furstenberg's slicing Conjecture, is an example of the heuristic described in the previous paragraph:
 \begin{Conjecture} (Furstenberg, \cite{furstenberg1970intersections}) \label{Conjecutre Furstenberg}
Let $\emptyset \neq X,Y \subseteq [0,1]$ be closed sets that are invariant under $T_m$ and $T_n$, respectively.  If $\frac{\log n}{\log m} \notin \mathbb{Q}$ then for every non principal line $\ell$,
\begin{equation*}
\dim_H \ell \cap \left( X \times Y \right) \leq \max \lbrace \dim_H X +\dim_H Y -1 ,0 \rbrace.
\end{equation*}
 \end{Conjecture}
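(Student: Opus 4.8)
My plan is to follow the measure-theoretic and dynamical route of Furstenberg, Shmerkin and Wu.

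\medskip
\noindent\textbf{Step 1 (reduction to a uniform projection estimate).} Fix a non-principal line $\ell_{u,t}$, so $u\neq 0$. By the mass distribution principle it suffices to bound $\dim_H(\ell_{u,t}\cap(X\times Y))$ from above, and since a closed $T_m$-invariant set is (conjugate to) a subshift, I would replace $X$ and $Y$ by $T_m$- resp.\ $T_n$-invariant, hence exact dimensional, measures $\mu$ and $\nu$ with $\dim\mu,\dim\nu$ as close as I wish to $\dim_H X,\dim_H Y$; put $\theta=\mu\times\nu$, an $S:=T_m\times T_n$-invariant measure on the plane. With $\Pi_u(x,y)=y-ux$ the shear of $\mathbb{R}^2$ onto the $y$-axis along direction $u$, the line $\ell_{u,t}$ meets $X\times Y$ in the fiber $\Pi_u^{-1}(t)$, and the dimension-conservation identity
\[ \dim\theta=\dim(\Pi_u)_*\theta+\int \dim\theta^{\Pi_u}_s\, d(\Pi_u)_*\theta(s) \]
(together with Marstrand's slicing theorem) reduces everything to the following \emph{uniform projection estimate}: for every $u\neq 0$, the measure $(\Pi_u)_*\theta$ and all of its micromeasures have dimension $\min\{1,\dim\theta\}$. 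The reason for insisting on all micromeasures is that, once it is fed into the local-entropy-average bound for box dimension, it yields the slice estimate for \emph{every} intercept $t$ at once, not merely for Lebesgue-almost every $t$ as Marstrand alone would give.

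\medskip
\noindent\textbf{Step 2 (decoupling the scales).} The carpet $X\times Y$ carries two commuting magnifications, $\times m$ in the first coordinate and $\times n$ in the second. Tracking these through the scenery flow --- equivalently, through Furstenberg's CP-processes --- the orbit of a point of $X\times Y$ under joint magnification by $(m^k,n^j)$ produces microsets of the form $X'\times Y'$ with $X'$ a microset of $X$ and $Y'$ a microset of $Y$; crucially, since $\log m/\log n\notin\mathbb{Q}$, the set of attainable zoom ratios $n^j/m^k$ is dense in $(0,\infty)$, so the two magnifications run at asymptotically independent rates. Concretely, a line of slope $u$ through such a microset acquires slope $u\,n^j/m^k$, and the orbit of slopes is dense: a slice cannot be trapped inside a single direction. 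This is the mechanism that both forces the homogeneity of $(\Pi_u)_*\theta$ required in Step 1 and lets a single exceptional slice be spread to a positive-measure set of intercepts, contradicting Marstrand.

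\medskip
\noindent\textbf{Step 3 (the projection estimate).} The remaining, and genuinely hard, point is that no shear $\Pi_u$ with $u\neq 0$ can drop the dimension of $\theta$ or of its micromeasures. For Liouville ratios $\log m/\log n$ this is already accessible via Furstenberg's transversality method, but in general I would prove it using quantitative additive combinatorics --- an $L^q$-flattening/inverse theorem for convolutions of the dyadic components of $\mu$ and of $u\cdot\nu$, i.e.\ essentially the statement $\dim(\mu*(u\cdot\nu))=\min\{1,\dim\mu+\dim\nu\}$ uniformly in $u\neq 0$ --- where the rigidity of $\times m,\times n$-invariant measures (in the spirit of Host's normality theorem) is used to exclude the structured, low-entropy scenario. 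Feeding this into the local-entropy-average bound for the box dimension of fibers (Hochman--Shmerkin) yields $\overline{\dim}_B(\ell_{u,t}\cap(X\times Y))\le \dim\mu+\dim\nu-\min\{1,\dim\mu+\dim\nu\}$, and letting $\dim\mu\to\dim_H X$, $\dim\nu\to\dim_H Y$ gives the asserted bound $\max\{\dim_H X+\dim_H Y-1,0\}$.

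\medskip
\noindent\textbf{Expected main obstacle.} The crux is Step 3: the uniform, all-directions no-dimension-drop statement for oblique projections of $\theta$ and its micromeasures. This is precisely what Shmerkin's $L^q$-dimension machinery and Wu's local-entropy analysis are built to deliver, and it is where the hypothesis $\log m/\log n\notin\mathbb{Q}$ enters in an essential, non-Liouville way. The secondary difficulty will be making the scale-decoupling of Step 2 rigorous --- organizing the CP-chain/scenery-flow bookkeeping so that all micromeasures are controlled simultaneously and so that ``a single bad slice spreads'' becomes an honest measure-theoretic contradiction.
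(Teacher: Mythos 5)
Your proposal sketches Shmerkin's route to the conjecture (\cite{shmerkin2016furstenberg}): reduce to showing that the oblique projections $\Pi_u(\mu\times\nu)$, $u\neq 0$, have maximal $L^q$ dimension uniformly over micromeasures, and prove this via an $L^q$-flattening / inverse theorem from additive combinatorics; the $L^q$ robustness is what upgrades Marstrand's almost-everywhere slice bound to an everywhere bound. The paper itself does not prove Conjecture~\ref{Conjecutre Furstenberg} by this route. It cites both published proofs, but the machinery it actually develops --- and the proof of Theorem~\ref{Theorem main Theorem}, which implies the conjecture by an approximation argument --- follows Wu's method \cite{wu2016proof}: one builds a CP distribution directly on the space of measures on non-principal slices of the relevant family of product sets, observes that the slope coordinate evolves as the irrational rotation $R_\theta$ (so its marginal is forced to be Lebesgue), applies Sinai's factor theorem to extract an i.i.d.\ factor, and concludes by a Fubini / covering-count argument that many slices of dimension at least $\gamma$ pass through an arbitrarily small region while their slopes equidistribute, forcing $1+\gamma$ to be at most the covering exponent of the ambient product set. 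These are genuinely different strategies: your route trades the dynamical bookkeeping (CP chains, Sinai, scaling sceneries) for hard Fourier-analytic / additive-combinatorial input, and in exchange yields sharper quantitative information ($L^q$ dimensions of all projections, uniform decay rates); Wu's route --- and this paper's --- is softer and more flexible, which is exactly what lets the author generalize to products $\pi_{m_1}(\widetilde F_\omega)\times\pi_{m_2}(\widetilde E_\eta)$ that are not individually $T_{m}$-invariant, and hence to Bedford--McMullen carpets, where the $L^q$ machinery does not directly apply.

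One detail in your Step~1 that would need repair: you replace the closed invariant sets $X,Y$ by invariant measures $\mu,\nu$ with $\dim\mu\approx\dim_H X$, $\dim\nu\approx\dim_H Y$ and $\supp\mu\subseteq X$, $\supp\nu\subseteq Y$. For an \emph{upper} bound on $\dim_H(\ell\cap(X\times Y))$ this inclusion points the wrong way: a slice estimate for $\supp\mu\times\supp\nu$ controls only a subset of the slice you care about. The standard fix is to approximate $X$ (resp.\ $Y$) from \emph{outside} by the subshifts of finite type $X_k\supseteq X$ obtained by allowing all length-$k$ words that occur in $X$, use that $\dim X_k\downarrow\dim X$, and work with the measure of maximal entropy on $X_k$ (which has full support and uniformly controlled cylinder masses); then pass to the limit $k\to\infty$. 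Also, a small slip: the mass distribution principle gives lower bounds on Hausdorff dimension, whereas here you are bounding a dimension from above; the correct tool is a covering / local-entropy count, precisely as in the Hochman--Shmerkin bound you invoke later.
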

 
 Recently, two landmark papers have proven, simultanesouly and independently, this conjecture  to be correct: One of them, by Shmerkin \cite{shmerkin2016furstenberg}, proved it by computing the $L^q$ dimensions of all the projections of products of  invariant  Cantor-Lebesgue measures. The second approach, by Wu \cite{wu2016proof}, followed initially along the original idea of Furstenberg by constructing a stationary distribution on the space of measures on slices of $X\times Y$ (a CP distribution). Wu then applied Sinai's factor Theorem, "forcing" many slices of large dimension to pass through a small region in the unit square, which yielded the Conjecture. In this paper, we shall take after Wu's approach.
 
 The objectives of this paper are threefold. The first is to generalize the phenomenon predicted by Conjecture \ref{Conjecutre Furstenberg} to more general product sets, and in particular, to products of sets that are not necessarily $T_m$ invariant for some $m$ (The results of Shmerkin and Wu do not apply for these sets).  The second objective is to apply these results in order to prove slicing Theorems for Bedford-McMullen carpets with independent exponents. The third objective is to apply the results on slicing Theorems for product sets in order to prove some rigidity results in the class of Bedford-McMullen carpets. Namely, for two carpets that are incommensurable in a sense that will be defined below (and satisfy some other varying conditions), we bound non-trivially the  dimension of their intersection, show that a large class of self affine measures  on them are mutually singular, and show that they do not embed affinely into one another.
 
In the subsequent Section we outline our results in the context of the latter two objectives, which form the main results of this paper. The Section following it outlines our results in the context of the first objective, which forms our main technical tool.

\subsection{Main results} \label{Section appl}
Our main results are about geometric properties of Bedford-McMullen carpets.  These are defined as follows: let $m\neq n$ be integers greater than one, and denote, for every integer, $[n]:=\lbrace 0,...,n-1\rbrace$. We shall always assume $m>n$. Let 
\begin{equation*}
\Gamma \subseteq \lbrace 0,...,m-1 \rbrace \times \lbrace 0,...,n-1 \rbrace \;=\;[m]\times[n],
\end{equation*}
and define
\begin{equation*}
F = \lbrace (\sum_{k=1} ^\infty \frac{x_k}{m^k}, \sum_{k=1} ^\infty \frac{y_k}{n^k}) :\quad  (x_k,y_k) \in \Gamma \rbrace.
\end{equation*}
$F$ is then called a Bedford-McMullen carpet with defining exponents $m,n$, and allowed digit set $\Gamma$. For every $j\in [n]$ let
\begin{equation} \label{Eq motivation}
\Gamma_j := \lbrace i \in [m] :\quad  (i,j)\in \Gamma \rbrace \subseteq [m].
\end{equation}
We shall always assume that our carpets do not lie on a single vertical or horizontal line. When we have two carpets $F$ and $E$ we shall denote the set of allowed digits of $E$ by $\Lambda$.
\subsubsection{Dimension of slices through Bedford-McMullen carpets}
We denote by $P_2 :\mathbb{R}^2 \rightarrow \mathbb{R}$ the principal projection $P_2(x,y)=y$. We shall use the same notation for the coordinate projection in $([m]\times [n])^\mathbb{N}$.
\begin{theorem} \label{Theorem star dimension}
Let $F$ be a Bedford-McMullen carpet with exponents $(m,n)$ such that $\frac{\log m}{\log n}\notin \mathbb{Q}$. Let $\ell$ be any non-principal line in the plane. Then 
\begin{equation*}
\overline{\dim}_B (\ell \cap F) \leq  \max \lbrace \dim_H P_2 (F) + \max_{i\in [n]} \frac{\log |\Gamma_i|}{\log m} -1, 0 \rbrace
\end{equation*}
\end{theorem}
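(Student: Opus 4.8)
The claimed estimate is exactly $\overline{\dim}_B(\ell\cap F)\le\max\{\dim^* F-1,0\}$, because
\[
\dim^* F \;=\; \dim_H P_2(F)+\max_{i\in[n]}\frac{\log|\Gamma_i|}{\log m},
\]
the most generous microset being an affine copy of $C_{i^{*}}\times P_2(F)$, where $i^{*}$ realizes $\max_i|\Gamma_i|$ and $C_{i^{*}}$ is the $m$-adic self-similar set with digit set $\Gamma_{i^{*}}$ (see Step~2). \textbf{Reduction to slices of microsets.} The first step is to show
\[
\overline{\dim}_B(\ell\cap F)\;\le\;\sup\bigl\{\dim_H(\ell'\cap\tilde F):\ \tilde F\text{ a microset of }F,\ \ell'\text{ a non-principal line}\bigr\}.
\]
The idea is to cover $\ell\cap F$ at scale $n^{-q}$ by the approximate squares of the carpet, i.e.\ to renormalize $F$ by the maps $\operatorname{diag}(m^{p},n^{q})$ with $n^{-q}\le m^{-p}<n^{-q+1}$; this is the correct family of zooms for a Bedford--McMullen carpet, and crucially it distorts the slope $u$ of $\ell$ only by the bounded factor $n^{q}/m^{p}\in[1,n)$, so the rescaled lines $\ell'$ stay non-principal with slopes in a fixed compact subset of $\mathbb R\setminus\{0\}$. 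Telescoping these covers, the box-counting growth rate of $\ell\cap F$ is dominated by that of a slice of some rescaled copy of $F$; since these rescaled copies range over a compact family of sets, a compactness argument extracts a limiting microset $\tilde F$ and a line $\ell'$ realizing the rate in the $\limsup$, giving the displayed bound. One cannot replace $F$ by a single product here: $\bigcup_{y}\{x:(x,y)\in F\}$ is the $m$-adic set with digit set $\bigcup_i\Gamma_i$, whose dimension is in general strictly larger than $\max_i\frac{\log|\Gamma_i|}{\log m}$, and it is only after passing to microsets that the $x$-direction thins out appropriately.

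\textbf{Structure of microsets.} For a zoom $\operatorname{diag}(m^{p},n^{q})$ based at a point whose $y$-coordinate has base-$n$ digits $j_1j_2\dots$, a direct computation shows that the corresponding renormalization of $F$ is, up to a translation, contained in $C_{(p,q)}\times D_{(p,q)}$, where $D_{(p,q)}$ is a cylinder of $P_2(F)$ (so $\dim_H D_{(p,q)}\le\dim_H P_2(F)$) and $C_{(p,q)}$ is an $m$-adic set built from an $m$-adic Moran set whose successive digit sets form the window $\Gamma_{j_{p+1}},\dots,\Gamma_{j_{q}}$, followed by a carpet-type coupling to the $y$-coordinate that only affects scales below $m^{-(q-p)}$. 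Passing to the limit $p,q\to\infty$ (here $q-p\to\infty$ since $m>n$, so the coupling retreats to infinity), every microset $\tilde F$ is contained in a product $C\times D$ with $D\subseteq P_2(F)$ and $C$ an $m$-adic Cantor set with
\[
\overline{\dim}_B C\;\le\;\limsup_{N\to\infty}\frac{1}{N\log m}\sum_{i=1}^{N}\log|\Gamma_{k_i}|\;\le\;\max_{i\in[n]}\frac{\log|\Gamma_i|}{\log m}
\]
for a suitable sequence $(k_i)$ of $y$-digits.

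\textbf{Slicing the product microsets.} To each such product $C\times D$ I would then apply our slicing theorem for products of Cantor sets — the generalization of the theorems of Shmerkin \cite{shmerkin2016furstenberg} and Wu \cite{wu2016proof} — whose hypotheses hold here since $\frac{\log m}{\log n}\notin\mathbb Q$, $D$ is (a piece of) the $T_n$-invariant set $P_2(F)$, and $C$ is an $m$-adic set of the type produced above. This gives, for every non-principal $\ell'$,
\[
\dim_H\bigl(\ell'\cap(C\times D)\bigr)\;\le\;\max\{\dim_H C+\dim_H D-1,\,0\}\;\le\;\max\Bigl\{\dim_H P_2(F)+\max_{i\in[n]}\tfrac{\log|\Gamma_i|}{\log m}-1,\,0\Bigr\}.
\]
Since $\ell'\cap\tilde F\subseteq\ell'\cap(C\times D)$, combining this with the reduction of the first step proves the theorem.

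\textbf{Main obstacle.} I expect the reduction step to be the crux: converting a bound on the \emph{upper box} dimension of one fixed slice into \emph{Hausdorff} dimension bounds for \emph{all} microsets is not formal, since upper box dimension behaves badly under countable decomposition, so the covering-and-compactness argument must be carried out with care — and it is exactly the feature $m\neq n$ that forces the $\operatorname{diag}(m^{p},n^{q})$ renormalizations, because iterating the naive self-affine maps $(x,y)\mapsto(\tfrac{x+i}{m},\tfrac{y+j}{n})$ would drive the slope of $\ell$ to $0$. A secondary subtlety is that, unlike in the $T_m$-invariant setting of Shmerkin and Wu, the $x$-fibres of $F$ yield only non-autonomous Moran-type Cantor sets, so the product slicing theorem used above must genuinely be established at that level of generality.
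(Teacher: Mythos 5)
Your proposal takes essentially the same route as the paper, and the end result is correct, but you flag as "obstacles" two steps that the paper handles via citation. For the reduction step, the paper does not run a bespoke covering-and-compactness argument; it simply uses the general inequality $\overline{\dim}_B A \leq \dim^* A$ for bounded $A$ (Lemma~2.4.4 in Bishop--Peres), applied to $A = F\cap\ell$, together with the elementary observation that any microset of $F\cap\ell$ is contained in $M'\cap M''$ for some microset $M'$ of $\ell$ (a non-principal line of the same slope) and some microset $M''$ of $F$. This collapses your "crux" into one cited lemma, and it is cleaner than tracking $\operatorname{diag}(m^p,n^q)$ renormalizations by hand. For the structure of microsets, the paper again cites a result: Theorem~\ref{Theorem - structure of tangent sets general} (a mild extension of Theorem~4.2 in the earlier Algom--Hochman work) says every microset of $F$ is contained in a finite union of affine images of product sets $\pi_m(\widetilde{F}_\omega)\times P_2(F)$ with diagonal linear part; this is precisely the $C\times D$ decomposition you are rederiving.

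One small imprecision in your final slicing step: you invoke a bound of the form $\dim_H(\ell'\cap(C\times D))\leq\max\{\dim_H C+\dim_H D-1,0\}$. The slicing theorem (Theorem~\ref{Theorem main Theorem}) gives this Hausdorff-dimension-type bound only in part~(2), which holds for \emph{almost every} $(\omega,\eta)$, not for the specific $\omega$ produced by your microset. What the paper actually uses is part~(1), valid for \emph{all} $(\omega,\eta)$, whose bound is the worst case $\max_i\frac{\log|\Gamma_i|}{\log m}+\max_j\frac{\log|\Lambda_j|}{\log n}-1$. Since the $D$-factor here is (a cylinder of) the homogeneous set $P_2(F)$, that worst-case bound is exactly $\max_i\frac{\log|\Gamma_i|}{\log m}+\dim_H P_2(F)-1$, so your final inequality is unharmed — but the correct citation is part~(1), not part~(2).
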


The bound obtained in Theorem \ref{Theorem star dimension} comes from the star dimension of the carpet $F$, a notion introduced by Furstenberg in \cite{furstenberg2008ergodic}: For any set $A$ we define
\begin{equation} \label{Eq star dim}
\dim^* A:= \sup \lbrace \dim_H M: \quad M  \text{ is a microset of } A \rbrace
\end{equation}
where microsets of $A$ are limits in the Hausdorff metric of "blow-up" of increasingly small balls about points in $A$ (for a formal definition of a microset, and some discussion of them, see Section \ref{Section microsets}). Now,  in \cite{mackay2011assouad}, Mackay proved that for a Bedford-McMullen carpet $F$
\begin{equation} \label{Eq star dim of carpet}
\dim^* F = \dim_H P_2 (F) + \max_{i\in [n]} \frac{\log |\Gamma_i|}{\log m}.
\end{equation}
Thus, Theorem \ref{Theorem star dimension} implies that $\overline{\dim}_B (\ell \cap F) \leq  \max \lbrace \dim^* F-1,0 \rbrace$ for any non-principal line $\ell$. 

Also, notice that if for every $i\neq j \in P_2 (\Gamma)$ we have $|\Gamma_i|=|\Gamma_j|$ then it is known that $\dim_H F = \dim^* F$ (this follows from the original works of McMullen \cite{mcmullen1984hausdorff} and Bedford \cite{bedford1984crinkly}, see also a proof in \cite{bishop2013fractal}). Therefore, in this situation, we recover the "optimal" bound, in the sense of \eqref{Eq. Marstrand} and Conjecture \ref{Conjecutre Furstenberg}. However, in general $\dim_H F \lneq \dim^* F$, and we do not know weather Theorem \ref{Theorem star dimension} can be optimized to give that $\dim_H F -1$ bounds the dimension of any non principal slice.

\subsubsection{Rigidity phenomena in the class of Bedford-McMullen carpets}
Let $F$ and $E$ be two Bedford McMullen carpets with defining exponents $(m_1,n_1)$ and $(m_2, n_2)$ respectively, and allowed digits sets $\Gamma$ and $\Lambda$.
\begin{Definition}
We shall say that  $F$ and $E$ are incommensurable if
\begin{equation*}
\frac{\log m_1}{\log m_2}, \quad \frac{\log m_1}{\log n_2}, \quad \frac{\log n_1}{\log m_2}, \quad \frac{\log n_1}{\log n_2},
\end{equation*}
are all not in $\mathbb{Q}$.
\end{Definition}

In this section we shall describe several results about geometric rigidity of incommensurable Bedford-McMullen carpets. The following result gives a bound on the dimension of intersections of such carpets. When we write $\dim$ we always mean Hausdorff dimension.
\begin{theorem} \label{Theorem intersections of carpets}
Let $F$ and $E$ be two incommensurable Bedford-McMullen carpets. Let $g:\mathbb{R}^2 \rightarrow \mathbb{R}^2$ be an affine map.
\begin{enumerate}
\item If the linear part of $g$ is given by a diagonal matrix then
\begin{equation*}
\dim^* (g(F)\cap E)\leq  \max_{(i,j)\in [n_1]\times [n_2]} \lbrace  \frac{\log |\Gamma_i|}{\log m_1} + \frac{\log |\Lambda_j|}{\log m_2} -1 ,0 \rbrace +\max \lbrace \dim P_2 (F) + \dim P_2 (E) -1,0 \rbrace.
\end{equation*}

\item If the linear part of $g$ is given by an anti-diagonal matrix then
\begin{equation*}
\dim^* (g(F)\cap E)\leq  \max_{i\in [n_1]} \lbrace  \frac{\log |\Gamma_i|}{\log m_1} + \dim P_2 (E) -1 ,0 \rbrace +\max_{j\in [n_2]}  \lbrace \dim P_2 (F) + \frac{\log |\Lambda_j|}{\log m_1} -1,0 \rbrace.
\end{equation*}
\end{enumerate} 
\end{theorem}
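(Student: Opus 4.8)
The plan is to estimate $\dim^{*}(g(F)\cap E)$ by passing to microsets and reducing, in each of the two coordinate directions, to the slicing Theorem for products of Cantor sets stated in the next section. Recall that $\dim^{*}A=\sup\{\dim_{H}M:M\text{ a microset of }A\}$ and that $\dim^{*}$ is an affine invariant; we may assume $g$ is invertible (the degenerate cases reduce to a single vertical or horizontal slice, and are easier). Since $g(F)\cap E$ lies in both $E$ and $g(F)$, any microset $M$ of $g(F)\cap E$ is a limit of isotropic blow-ups of balls centred on $g(F)\cap E$; after passing to a subsequence the corresponding blow-ups of $E$ and of $F$ converge to microsets $M_{E}$ of $E$ and $M_{F}$ of $F$, and
\begin{equation*}
M\ \subseteq\ M_{E}\cap h(M_{F}),
\end{equation*}
where $h$ is an affine map with the same linear part as $g$ — diagonal in case (1), anti-diagonal in case (2). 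It therefore suffices to bound $\dim_{H}\bigl(M_{E}\cap h(M_{F})\bigr)$, uniformly in $M_{E},M_{F},h$.

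Next I use the structure of microsets of a Bedford--McMullen carpet, essentially Mackay's analysis behind \eqref{Eq star dim of carpet} \cite{mackay2011assouad}: a microset $M_{F}$ of $F$ is contained in a product $A_{F}\times B_{F}$, with $B_{F}\subseteq P_{2}(F)$ (so $\dim^{*}B_{F}\le\dim_{H}P_{2}(F)$) and $A_{F}$ a Cantor set in base $m_{1}$ arising as a microset of a horizontal slice $F_{y}=\{x:(x,y)\in F\}$, so that $\dim^{*}A_{F}\le\max_{i\in[n_{1}]}\frac{\log|\Gamma_{i}|}{\log m_{1}}$. (This is where $m_{1}>n_{1}$ enters: a horizontal slice of $F$ depends on $y$ in a H\"older fashion with exponent $\frac{\log m_{1}}{\log n_{1}}>1$, so the skew-product structure of $F$ untwists under magnification and the limiting microset is a genuine product.) Symmetrically $M_{E}\subseteq A_{E}\times B_{E}$ with $B_{E}\subseteq P_{2}(E)$, $\dim^{*}B_{E}\le\dim_{H}P_{2}(E)$, and $A_{E}$ a Cantor set in base $m_{2}$ with $\dim^{*}A_{E}\le\max_{j\in[n_{2}]}\frac{\log|\Lambda_{j}|}{\log m_{2}}$.

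Now I split on the linear part of $h$. In case (1) it is diagonal, so $h(A_{F}\times B_{F})=h_{1}(A_{F})\times h_{2}(B_{F})$ for affine bijections $h_{1},h_{2}$ of $\mathbb{R}$, whence
\begin{equation*}
M_{E}\cap h(M_{F})\ \subseteq\ \bigl(A_{E}\cap h_{1}(A_{F})\bigr)\times\bigl(B_{E}\cap h_{2}(B_{F})\bigr).
\end{equation*}
The first factor is affinely equivalent to $\ell\cap(A_{E}\times A_{F})$ for a non-principal line $\ell$, and the factors $A_{E},A_{F}$ are Cantor sets in the bases $m_{2}$ and $m_{1}$ with $\frac{\log m_{1}}{\log m_{2}}\notin\mathbb{Q}$ (incommensurability); so the slicing Theorem of the next section — in the form giving box-counting estimates uniform over the relevant affine maps — bounds $\overline{\dim}_{B}\bigl(A_{E}\cap h_{1}(A_{F})\bigr)$ by $\max\{\max_{j}\frac{\log|\Lambda_{j}|}{\log m_{2}}+\max_{i}\frac{\log|\Gamma_{i}|}{\log m_{1}}-1,\,0\}$. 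The second factor lies in $\ell'\cap(P_{2}(E)\times P_{2}(F))$ for a non-principal line $\ell'$, the product of the $T_{n_{2}}$- and $T_{n_{1}}$-invariant sets $P_{2}(E),P_{2}(F)$; here already the original Furstenberg Conjecture (Shmerkin \cite{shmerkin2016furstenberg}, Wu \cite{wu2016proof}) applies, since $\frac{\log n_{1}}{\log n_{2}}\notin\mathbb{Q}$ and (these sets being Ahlfors regular) in box-counting form, bounding $\overline{\dim}_{B}\bigl(B_{E}\cap h_{2}(B_{F})\bigr)$ by $\max\{\dim_{H}P_{2}(F)+\dim_{H}P_{2}(E)-1,0\}$. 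Since $\dim_{H}\le\overline{\dim}_{B}$ and $\overline{\dim}_{B}$ of a product is at most the sum of the factors', adding the two estimates and using $\max_{i}a_{i}+\max_{j}b_{j}=\max_{i,j}(a_{i}+b_{j})$ together with the fact that $\max\{\cdot,0\}$ commutes with suprema yields exactly the bound of part (1). Case (2) is handled the same way, except that when the linear part of $h$ is anti-diagonal one has $h(A_{F}\times B_{F})=h_{1}(B_{F})\times h_{2}(A_{F})$, so the two slicing problems now pair the bases $(m_{2},n_{1})$ and $(n_{2},m_{1})$ — independent exactly because $F$ and $E$ are incommensurable; this is why all four ratios are imposed, two being used in each case, and the resulting two summands are precisely those of part (2).

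The heart of the argument, and the place where a result beyond Shmerkin's and Wu's is genuinely needed, is the estimate for $A_{E}\cap h_{1}(A_{F})$ (and its analogue in case (2)): the sets $A_{E},A_{F}$ are microsets of horizontal slices of carpets, and a horizontal slice is not $T_{m}$-invariant — its allowed digits vary along the coding sequence — so one must invoke the generalized slicing Theorem of this paper, in a version covering such Cantor sets and their microsets, delivering upper box-counting (not merely Hausdorff) bounds, and uniform over the affine maps $h_{1}$ that occur. The other point demanding care is the product description of microsets of $F$ in the second paragraph — i.e.\ the assertion that magnification untwists the carpet's skew structure; this rests on $m_{1}>n_{1}$, and it is why the conclusion is phrased with $\dim^{*}$ of $g(F)\cap E$ rather than with its Hausdorff dimension.
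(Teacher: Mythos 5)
Your argument takes essentially the same route as the paper: use covariance of microsets together with the product structure of microsets of Bedford--McMullen carpets to contain a microset of $g(F)\cap E$ in (a finite union of affine images of) a product of two one-dimensional intersections, and bound each factor by the slicing Theorem~\ref{Theorem main Theorem} (and Shmerkin--Wu for the $P_2$-factor). One small imprecision: the covering of an arbitrary microset of $F$ by finitely many affine images of products $\pi_{m_1}(\tilde{F}_{\omega})\times P_2(F)$ is the content of Theorem~\ref{Theorem - structure of tangent sets general} (from \cite{algom2016self}), not of Mackay's result, which gives only the value of $\dim^* F$ rather than this structural description that the argument actually requires.
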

Theorem \ref{Theorem intersections of carpets} is related to a long line of research about intersections of Cantor sets. Notable realted works include, for example, those of Shmerkin \cite{shmerkin2016furstenberg} and Wu \cite{wu2016proof} that proved Conjecture \ref{Conjecutre Furstenberg}, the work of Feng, Huang and Rao \cite{feng2014affine}, and the work of Elekes, Keleti and M\'{a}th\'{e} \cite{elekes2010self}. Also, it is quite easy to see that the assumption that the carpets are incommensurable cannot be lifted from Theorem \ref{Theorem intersections of carpets}.

Next, we discuss self affine measures on Bedford-McMullen carpets. First, we define maps $\pi_{m_1} : [m_1]^\mathbb{N} \rightarrow [0,1]$ and $\pi_{m_2} : [m_2]^\mathbb{N} \rightarrow [0,1]$ by
\begin{equation} \label{Eq. projection from symbolic}
\pi_{m_1} (\xi) = \sum_{i=1} ^\infty \frac{\xi_i}{m_1 ^i}, \quad \pi_{m_2} (\zeta) = \sum_{i=1} ^\infty \frac{\zeta_i}{m_2 ^i}, \quad (\xi,\zeta) \in [m_1]^\mathbb{N} \times [m_2]^\mathbb{N}.
\end{equation}
A self affine measure $\mu$ on a Bedford-McMullen carpet $F$ is the push-forward $\pi_{m_1} \times \pi_{n_1} (\nu)$  of a Bernoulli measure $\nu \in P( \Gamma^\mathbb{N})$ (i.e. a stationary product measure), where $P(X)$ denotes the probability measures on a Borel space $X$. 

\begin{theorem} \label{Theorem mutual singularity}
Let $F$ and $E$ be two incommensurable Bedford-McMullen carpets, and let $\mu \in P(F)$ and $\nu\in P(E)$ be two self affine measures. Let $\kappa := \max \lbrace \dim_H \mu, \dim_H \nu  \rbrace$.  

\begin{enumerate}
\item If 
\begin{equation*} 
\kappa  \gneqq \max_{(i,j)\in [n_1] \times [n_2]} \lbrace  \frac{\log |\Gamma_i|}{\log m_1} +\frac{\log |\Lambda_j|}{\log m_2} - 1, 0 \rbrace + \max \lbrace \dim P_2 (F) + \dim P_2 (E)-1,0\rbrace
\end{equation*}
Then for any affine map $g:\mathbb{R}^2 \rightarrow \mathbb{R}^2$ such that the linear part of $g$ is a diagonal matrix, the measures $g\mu$ and $\nu$ are mutually singular . 

\item If
\begin{equation*}
\kappa  \gneqq  \max_{i\in [n_1]} \lbrace  \frac{\log |\Gamma_i|}{\log m_1} + \dim P_2 (E) -1 ,0 \rbrace +\max_{j\in [n_2]}  \lbrace  \frac{\log |\Lambda_j|}{\log m_2}+\dim P_2 (F) -1,0 \rbrace
\end{equation*}
Then for any affine map $g:\mathbb{R}^2 \rightarrow \mathbb{R}^2$ such that the linear part of $g$ is an  anti-diagonal matrix, the measures $g\mu$ and $\nu$ are mutually singular 
\end{enumerate}
\end{theorem}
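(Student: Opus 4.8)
The plan is to obtain both statements as a soft consequence of Theorem \ref{Theorem intersections of carpets}. The point is that if $g\mu$ and $\nu$ are not mutually singular, then the absolutely continuous part of $\nu$ relative to $g\mu$ is a nonzero measure $\eta$ which is carried by the compact set $g(F)\cap E$ and charges only sets of Hausdorff dimension at least $\kappa$; hence $\dim^*(g(F)\cap E)\ge\dim_H(g(F)\cap E)\ge\kappa$, which contradicts the bound of Theorem \ref{Theorem intersections of carpets}. I would write out part (1), where the linear part of $g$ is diagonal; part (2) is word for word the same, with item (2) of Theorem \ref{Theorem intersections of carpets} in place of item (1).

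First I would dispose of the degenerate case where $g$ is not invertible. If the diagonal part of $g$ is singular then $g(F)$ lies on a line parallel to a principal axis, or is a single point. Writing $P_1(x,y)=x$: since $E$ does not lie on a single principal line, both push-forwards $P_1\nu$ and $P_2\nu$ are non-atomic self similar measures, so $\nu$ gives zero mass to every such line and to every point; hence $\nu(g(F))=0$ and $g\mu\perp\nu$ trivially. So from now on $g$ is an invertible, hence bi-Lipschitz, affine map.

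Next, I would assume for contradiction that $g\mu\not\perp\nu$, and let $\nu=\nu_{ac}+\nu_s$ be the Lebesgue decomposition of $\nu$ with respect to $g\mu$. If $\nu_{ac}=0$ then $\nu=\nu_s\perp g\mu$, a contradiction, so $\eta:=\nu_{ac}$ is nonzero, with $\eta\ll g\mu$ and (since $\eta\le\nu$) also $\eta\ll\nu$. Because $g$ is invertible, $g\mu(g(F))=\mu(F)=1$ and $\nu(E)=1$, so $\eta$ is carried by the compact set $K:=g(F)\cap E$. Now I would invoke the fact that self affine (Bernoulli) measures on a Bedford-McMullen carpet are exact dimensional --- this is classical (such a measure is the push-forward of an ergodic shift-invariant measure under the natural coding, and its dimension is the one computed by McMullen and Kenyon--Peres) --- together with the fact that exact dimensionality, and its value, are preserved under bi-Lipschitz maps; thus $g\mu$ and $\nu$ are exact dimensional of dimensions $\dim_H\mu$ and $\dim_H\nu$ respectively. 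An exact dimensional measure of dimension $\alpha$ charges only Borel sets of Hausdorff dimension at least $\alpha$ (Billingsley's lemma applied to the restriction of the measure to such a set). Applying this to $g\mu$ and $\nu$: every Borel $A$ with $\eta(A)>0$ has $g\mu(A)>0$ and $\nu(A)>0$, hence $\dim_H A\ge\max\{\dim_H\mu,\dim_H\nu\}=\kappa$; taking $A=K$ and using the elementary bound $\dim_H K\le\dim^* K$ gives $\dim^*(g(F)\cap E)\ge\kappa$.

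Finally, Theorem \ref{Theorem intersections of carpets}(1), applied to the incommensurable carpets $F,E$ and the diagonal affine map $g$, bounds $\dim^*(g(F)\cap E)$ by exactly the right-hand side of the inequality hypothesized in item (1) of the present theorem, which is strictly less than $\kappa$. This contradiction proves $g\mu\perp\nu$, and part (2) follows identically from Theorem \ref{Theorem intersections of carpets}(2). The one genuine external ingredient is the exact dimensionality of self affine measures on Bedford-McMullen carpets; the rest --- the Lebesgue-decomposition bookkeeping, bi-Lipschitz invariance of exact dimensionality, and the inequality $\dim_H\le\dim^*$ --- is routine, so the real difficulty sits in Theorem \ref{Theorem intersections of carpets}, of which this is essentially a corollary.
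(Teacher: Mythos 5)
Your proof is correct and takes a genuinely different, more elementary route than the paper's. The paper does not derive Theorem \ref{Theorem mutual singularity} from Theorem \ref{Theorem intersections of carpets}; instead it runs the scaling-scenery / CP-distribution machinery (Theorem \ref{Theorem Fraser Ferguson}, Theorem \ref{Theorem Gavish}, Lemma \ref{Lemma generating w.r.t. image and abs. conti.}): from the hypothesis $g\mu\not\perp\nu$ it extracts a common scaling-scenery distribution $P$ at a typical point, uses Theorem \ref{Theorem Gavish} to produce a measure $\rho$ and a small ball $B(0,r)$ on which $\rho$ is dominated by affine images of typical CP-chain measures for both $\mu$ and $\nu$, and then bounds $\dim_H\supp(\rho|_{B(0,r)})$ from below by $\kappa$ and from above by a slice dimension via Theorem \ref{Theorem main Theorem} applied directly. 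Your version replaces all of this by a Lebesgue decomposition: the nonzero absolutely continuous part $\eta$ of $\nu$ with respect to $g\mu$ is carried by $g(F)\cap E$, and exact dimensionality of self-affine measures (plus bi-Lipschitz invariance and the elementary inequality $\dim_H\leq\dim^*$) forces $\dim^*\left(g(F)\cap E\right)\geq\kappa$, contradicting Theorem \ref{Theorem intersections of carpets}. What your approach buys is a much shorter deduction that makes the logical dependence on Theorem \ref{Theorem intersections of carpets} explicit and dispenses with the Gavish/Hochman centering apparatus; it also handles the non-invertible $g$ explicitly, which the paper's proof silently assumes away (Lemma \ref{Lemma generating w.r.t. image and abs. conti.} part (2) needs $g$ to be a diffeomorphism). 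What the paper's route buys is that it works with the CP/scenery objects directly, which is the mode of argument used throughout the rest of Section \ref{Section app}, and it does not need to invoke exact dimensionality of self-affine measures as a separate external input (it is effectively absorbed into Theorem \ref{Theorem Fraser Ferguson}). Both proofs ultimately rest on the same slicing bound from Theorem \ref{Theorem main Theorem}.
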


For the definition of the dimension of a measure, we refer the reader to Section \ref{Section dimension}. Theorem \ref{Theorem mutual singularity} is an analogue in higher dimension of a Theorem of Hochman (\cite{hochman2010geometric}, Theorem 1.4). By this Theorem, if $\frac{\log m}{\log n}\notin \mathbb{Q}$ then  any diffeomorhic image of an ergodic $T_m$ invariant measure on $\mathbb{R}/\mathbb{Z}$, and any ergodic $T_n$ invariant measure on $\mathbb{R}/\mathbb{Z}$ are mutually singular, assuming both have intermediate dimension (recall the definition of the $m$-fold map of  the interval  $T_m$ from \eqref{Eq T_m}).

Finally, we discuss affine embeddings of incommensurable Bedford-McMullen carpets. Let $F$ and $E$ be two Bedford-McMullen carpets. We say that $F$ may be affinely embedded into $E$ if there exists an invertible affine map $g:\mathbb{R}^2 \rightarrow \mathbb{R}^2$ such that $g(F)\subseteq E$. 
\begin{theorem} \label{Theorem embeddings}
Let $F$ and $E$ be two incommensurable Bedford-McMullen carpets. Assume that $\min_{i\in [n_1]} |\Gamma_i|>1$, and that $\dim^* E <2$. Then $F$ does not admit an affine embedding into $E$.
\end{theorem}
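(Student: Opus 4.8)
The plan is to argue by contradiction: suppose $g\colon\mathbb{R}^2\to\mathbb{R}^2$ is an invertible affine map with $g(F)\subseteq E$, and write $A$ for its linear part. Two consequences of the hypothesis $\min_{i\in[n_1]}|\Gamma_i|>1$ are used throughout. First, every $\Gamma_i$ is nonempty, so $P_2(\Gamma)=[n_1]$ and hence $P_2(F)=[0,1]$; thus $\dim_H P_2(F)=1$, and Mackay's formula \eqref{Eq star dim of carpet} gives $\dim^* F=1+\max_{i\in[n_1]}\frac{\log|\Gamma_i|}{\log m_1}$, which is $>1$ since each $|\Gamma_i|\ge 2$. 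Second, $\dim^*$ is invariant under invertible affine maps (a microset of $g(F)$ is, up to intersecting with a fixed ball, the $A$-image of a microset of $F$, which does not change the Hausdorff dimension), so $\dim^* g(F)=\dim^* F>1$; and $g(F)\subseteq E$ forces every microset of $g(F)$ to sit inside a microset of $E$, so $\dim^* g(F)\le\dim^* E<2$. These bounds are not yet contradictory, so a finer argument is needed.

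The next step is to reduce $A$ to a diagonal or anti-diagonal matrix. Since $P_2(F)=[0,1]$ and $\max_i|\Gamma_i|\ge 2$, the carpet $F$ has a microset equal to a vertical ``band'' $C\times[0,1]$, where $C\subseteq[0,1]$ is the self-similar set on the digits $\Gamma_{i_0}$ in base $m_1$ with $i_0=\argmax_i|\Gamma_i|$; this is precisely Mackay's microset witnessing $\dim^* F\ge 1+\frac{\log|\Gamma_{i_0}|}{\log m_1}$, and it contains genuine vertical line segments. Applying $A$, the set $g(F)$ has a microset containing a segment in the direction $Ae_2$, and this microset lies inside a microset of $E$. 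A Bedford--McMullen carpet with star dimension strictly less than $2$ has no microset containing a line segment in a non-principal direction (this follows from the product structure of microsets of self-affine carpets together with $\dim^* E<2$). Hence $Ae_2$ is parallel to a coordinate axis, and $A$ is diagonal, anti-diagonal, or a nontrivial shear; the first two cases are finished next and the shear case is the main obstacle, treated at the end.

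Suppose $A$ is diagonal. Then $g(F)\cap E=g(F)$, so by the first step $\dim^*(g(F)\cap E)=1+\max_i\frac{\log|\Gamma_i|}{\log m_1}$, while Theorem \ref{Theorem intersections of carpets}(1), using $\dim P_2(F)=1$, bounds this by
\[
\max\Bigl\{\max_i\tfrac{\log|\Gamma_i|}{\log m_1}+\max_j\tfrac{\log|\Lambda_j|}{\log m_2}-1,\ 0\Bigr\}+\dim P_2(E).
\]
If the first maximum is attained at the positive term, then $2\le\max_j\frac{\log|\Lambda_j|}{\log m_2}+\dim P_2(E)=\dim^* E$, contradicting $\dim^* E<2$; if it is attained at $0$, then $1+\max_i\frac{\log|\Gamma_i|}{\log m_1}\le\dim P_2(E)\le 1$, i.e.\ $\max_i\frac{\log|\Gamma_i|}{\log m_1}\le 0$, contradicting $|\Gamma_i|\ge 2$. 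The anti-diagonal case is identical, using Theorem \ref{Theorem intersections of carpets}(2), again $\dim^*(g(F)\cap E)=\dim^* F=1+\max_i\frac{\log|\Gamma_i|}{\log m_1}$, $\dim P_2(F)=1$, and the trivial bound $\frac{\log|\Lambda_j|}{\log m_2}\le 1$.

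It remains to rule out a shear, and this is where I expect the real difficulty to lie and where the incommensurability hypothesis and the paper's main technical slicing theorem become essential. Factoring out the diagonal part of $A$ (a diagonal affine image of a Bedford--McMullen carpet being, up to a harmless affine coordinate change, the same carpet), one reduces to $S(F)\subseteq E$ with $S$ a nontrivial shear. Because $P_2(F)=[0,1]$, a short computation shows the $y$-projection of $S(F)$ is a full interval, which forces $P_2(E)=[0,1]$ and hence $\dim^* E=1+\max_j\frac{\log|\Lambda_j|}{\log m_2}$. The shear maps the horizontal fibres of $F$ to subsets of $E$ lying on parallel lines of a fixed non-principal slope, through a full interval of intercepts; choosing the fibres so that they realise dimension $\max_i\frac{\log|\Gamma_i|}{\log m_1}$ and feeding this family of inclusions into the main technical slicing theorem — which applies because $\log m_1$ and $\log m_2$ (and the remaining cross-pairs of exponents) are rationally independent, even though $\frac{\log m_1}{\log n_1}$ and $\frac{\log m_2}{\log n_2}$ may be rational so that Theorem \ref{Theorem star dimension} is not available for $F$ or $E$ individually — should contradict the equality $\dim^* g(F)=1+\max_i\frac{\log|\Gamma_i|}{\log m_1}$ from the first step. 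Making this last step precise, in particular matching the sheared picture to the hypotheses of the slicing theorem, is the crux of the argument.
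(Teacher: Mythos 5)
Your reduction to diagonal and anti-diagonal $A$ is clean, and both sub-cases are correctly closed via Theorem~\ref{Theorem intersections of carpets}, affine invariance of $\dim^*$, and $P_2(F)=[0,1]$. But the remaining case is a genuine gap, and you acknowledge it: ``feeding the sheared fibres into the slicing theorem'' is not yet a proof. Two specific problems. First, the slicing theorem is about non-principal slices of a fixed \emph{product} set, while a shear applied to $F$ destroys the product structure, and it is not clear what one-parameter family of product-set slices the sheared picture would give you. Second, the intermediate claim that $P_2(S(F))$ is a full interval is itself unjustified: the image of $F$ under $(x,y)\mapsto sx+y$ need not contain an interval even when $P_2(F)=[0,1]$, because the horizontal fibres of $F$ are Cantor sets. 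Also, the class of matrices not excluded by your microset argument is all triangular and anti-triangular matrices, not merely unit shears.

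The paper's proof never encounters this case split, because it does a one-dimensional projection that makes the shape of $A$ irrelevant. Choose $\omega=(i,i,\dots)$ so that $K=\Fomega$ is a fixed deleted-digit self-similar set with $\dim K=\frac{\log|\Gamma_i|}{\log m_1}>0$. Covariance of $m_1$-adic microsets (Proposition~\ref{Proposition Covariance}) applied along a common magnifying subsequence at $(x,y)$ and at $g(x,y)$, together with the structure theorem (Theorem~\ref{Theorem - structure of tangnet sets}), gives, for \emph{any} invertible linear part $A$,
\begin{equation*}
A\Bigl(\bigl(K\times P_2(F)-t\bigr)\cap(-1,1)^2\Bigr)\subseteq\bigcup_{i=1}^p\Bigl(\begin{pmatrix}a_i&0\\0&b_i\end{pmatrix}\pi_{m_2}(\tilde E_{\eta_i})\times P_2(E)+t_i\Bigr).
\end{equation*}
Since $\dim^*E<2$, either $\max_j|\Lambda_j|<m_2$ or $\dim P_2(E)<1$. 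In the first case project by $P_1$: invertibility of $A$ makes $P_1\circ A$ a nonzero linear functional, so the left side becomes a Minkowski sum $a_1\bigl((K-t_1)\cap(-1,1)\bigr)+a_2\bigl((P_2(F)-t_2)\cap(-1,1)\bigr)$ with $(a_1,a_2)\ne(0,0)$, which contains a similar copy of $K$ or of $P_2(F)$. That copy sits inside a finite union of similar copies of the $\pi_{m_2}(\tilde E_{\eta_i})$, and Theorem~\ref{Theorem main Theorem}(1), applied to this one-dimensional intersection realized as a non-principal slice of a product of deleted-digit Cantor sets, forces a strict dimension drop, contradicting $\dim K>0$ (resp.\ $\dim P_2(F)>0$). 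The other sub-case is symmetric under $P_2$. In short: the missing idea is to project the microset inclusion to one coordinate, which collapses any $A$ to a scalar and eliminates the triangular case before it ever arises.
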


Theorem \ref{Theorem embeddings} is related to the recently developed theory of affine embeddings of Cantor sets. The first to study such problems (for self similar sets) were Feng, Huang and Rao in \cite{feng2014affine}. In the same paper they formulated a Conjecture, stating that if one self similar set embeds into the other, then every one of its contraction ratios should be algebraically dependent on the contractions of the other set. This Conjecture was resolved for homogeneous self similar sets in dimension one by Shmerkin and Wu, in the papers proving Conjecture \ref{Conjecutre Furstenberg}, but remains open in general (for some partial results see also \cite{feng2016affine} and \cite{algom2017affine}). There is a clear relation between this Conjecture and Theorem \ref{Theorem embeddings}: our Theorem says that if $F$ embeds into $E$ then the eigenvalues of the matrices in a generating IFS for $F$ are dependent on those of $E$, which is an analogue (in an appropriate sense) of the latter Conjecture.

Finally, we do not know weather the assumptions on the dimensions of $F$ and $E$ are a by-product of our proof, or form genuine obstructions. The assumption that the carpets are incommensurable cannot be even slightly weakened in the general case, as the following example shows. Consider the carpet $F$ defined by the exponents $(3,2)$ and the digit set 
\begin{equation*}
\Gamma = \lbrace (0,0), (0,1), (2,0) \rbrace
\end{equation*}
and let $E$ be the carpet defined by exponents $(5,3)$ and the digit set
\begin{equation*}
\Lambda = \lbrace (i,0),(j,2),(1,1):0\leq i,j\leq 4 \rbrace.
\end{equation*}
Notice that $\dim^* E =2$. Then, although $\frac{\log 2}{\log 5}\notin \mathbb{Q}$, it is not hard to see that we have 
\begin{equation*}
\begin{pmatrix}
0 & 1 \\
1 & 0 \\
\end{pmatrix} \cdot F \subset E.
\end{equation*}

\subsection{A slicing Theorem for products of Cantor sets}
We obtain the results of Section \ref{Section appl}  as applications of the following slicing Theorem. Let us first describe its setup. Let $m_1 > m_2\geq 2$ and $n_1, n_2\geq 2$ be integers. Unless stated otherwise, we always assume $\theta :=\frac{\log m_2}{\log m_1} \notin \mathbb{Q}$. For every $i\in [n_1]$ we associate a subset $\emptyset\neq  \Gamma_i \subseteq [m_1]$, and for every $j\in [n_2]$ we associate a subset $\emptyset \neq \Lambda_j \subseteq [m_2]$. We always assume that there exists some $i\in [n_1]$ such that $\Gamma_i \neq [m_1]$, and similarly a $j\in [n_2]$ such that $\Lambda_j \neq  [m_2]$. Our setup (and notation) are motivated by Bedford-McMullen carpets, and the notation we have used for them in Section \ref{Section appl}, in particular \eqref{Eq motivation}.

Thus, given $\omega \in [n_1]^\mathbb{N}$ and $\eta \in [n_2]^\mathbb{N}$ we define product sets
\begin{equation} \label{Eq symbolic slice}
\tilde{F}_\omega = \prod_{i=1} ^\infty \Gamma_{\omega_i} \subseteq [m_1]^\mathbb{N}, \quad \tilde{E}_\eta = \prod_{i=1} ^\infty \Lambda_{\eta_i} \subseteq [m_2]^\mathbb{N}.
\end{equation}
In particular, for $\omega \in [n_1]^\mathbb{N}$ and $\eta \in [n_2]^\mathbb{N}$ we have
\begin{equation*}
\Fomega = \lbrace \sum_{i=1} ^\infty \frac{x_i}{m_1 ^i} :\quad  x_i \in \Gamma_{\omega_i} \rbrace ,\quad  \Eeta = \lbrace \sum_{i=1} ^\infty \frac{y_i}{m_2 ^i} : \quad  y_i \in \Lambda_{\eta_i} \rbrace,
\end{equation*}
where the maps $\pi_{m_i}$ were defined in \eqref{Eq. projection from symbolic}.

\begin{theorem} \label{Theorem main Theorem}
\begin{enumerate}
\item Let $\ell \subset \mathbb{R}^2$ be a non-principal line, and let $(\omega,\eta)\in [n_1]^\mathbb{N} \times [n_2]^\mathbb{N}$. Then
\begin{equation*}
\overline{\dim}_B \left (\Fomega \times \Eeta\right) \cap \ell \leq  \max_{i\in [n_1],j\in [n_2]} \lbrace \frac{\log |\Gamma_i|}{\log m_1}+ \frac{\log |\Lambda_j|}{\log m_2} -1, 0 \rbrace.
\end{equation*}

\item Let $u\in \mathbb{R}\setminus \lbrace 0 \rbrace$, and let $\alpha_1$ and $\alpha_2$ be Bernoulli measures on  $[n_1]^\mathbb{N}$ and  $[n_2]^\mathbb{N}$, respectively. Then there exists a measurable set $A(u,\alpha_1,\alpha_2)\subseteq [n_1]^\mathbb{N} \times [n_2]^\mathbb{N}$ of full $\alpha_1\times \alpha_2$ measure such that for all $(\omega,\eta)\in A(u,\alpha_1,\alpha_2)$, and for any line $\ell$ with slope $u$,
\begin{eqnarray*}
  \overline{\dim}_B \left (\Fomega \times \Eeta\right) \cap \ell &\leq & \max \lbrace \dim_H \Fomega + \dim_H \Eeta -1, 0 \rbrace 
   \end{eqnarray*}
\end{enumerate}
\end{theorem}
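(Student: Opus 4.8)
The plan is to follow Wu's approach to Furstenberg's slicing conjecture, adapting the CP-chain / Sinai factor machinery to the more general product sets $\tilde F_\omega\times\tilde E_\eta$. The key point is that although $\pi_{m_1}(\tilde F_\omega)$ and $\pi_{m_2}(\tilde E_\eta)$ are no longer $T_{m_1}$- and $T_{m_2}$-invariant, the randomness in the choice of $(\omega,\eta)$ with respect to a Bernoulli measure gives back exactly the stationarity that Wu's argument exploits. More precisely, on the symbolic level the left shift on $[n_1]^\mathbb{N}\times[n_2]^\mathbb{N}$ (endowed with $\alpha_1\times\alpha_2$) plays the role of the base dynamics, and fibering over it are the one-sided subshifts of finite type (really just product spaces) $\tilde F_\omega$, $\tilde E_\eta$ whose ``digit alphabets'' vary along the base orbit. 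This is a random-dynamical-systems version of the product system in Furstenberg's conjecture; the irrationality of $\theta=\log m_2/\log m_1$ is what prevents the two multiplications from being commensurable, just as in the classical case.

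First I would set up the geometric measure-theoretic framework: for a line $\ell$ of slope $u\ne 0$, the slice $(\pi_{m_1}(\tilde F_\omega)\times\pi_{m_2}(\tilde E_\eta))\cap\ell$ is (after an affine change of coordinates) the intersection with a graph, and one reduces bounding its upper box dimension to controlling, at dyadic-type scales $m_1^{-k}$, the number of $(m_1^{-k}\times m_1^{-k})$-cells of the product that meet $\ell$. This is the standard reduction: it suffices to show that for $\alpha_1\times\alpha_2$-a.e.\ $(\omega,\eta)$ and every $u$, the slice has box dimension at most $s:=\max\{\dim_H\pi_{m_1}(\tilde F_\omega)+\dim_H\pi_{m_2}(\tilde E_\eta)-1,0\}$. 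Part (1) (the worst-case bound with $\max_{i,j}(\log|\Gamma_i|/\log m_1+\log|\Lambda_j|/\log m_2-1)$) should follow from the same analysis applied to the ``fattest'' fiber, i.e.\ by dominating every $\tilde F_\omega$ by the product of the largest $\Gamma_i$'s — more honestly, by running the CP-chain argument but taking the supremum of the relevant entropies over the alphabets, since no genericity of $(\omega,\eta)$ is available when $\ell$ is allowed to depend on $(\omega,\eta)$.

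The heart of the argument, following Wu, is: (i) build a CP-distribution (a shift-invariant measure on the space of ``measure-valued sequences'' recording the conditional measures on slices at a sequence of nested scales) supported on configurations coming from a hypothetical slice of large dimension; (ii) observe that the projection to the $y$-axis pushes this to a $T_{m_2}$-type system while the slice direction relates it to a $T_{m_1}$-type system, and use the $\log m_2/\log m_1\notin\mathbb Q$ hypothesis together with Sinai's factor theorem to force an abundance of slices of dimension $>s$ passing through a set of small measure, contradicting Marstrand-type / Fubini bounds — here this is where $\eqref{Eq. Marstrand}$ and a Frostman-type estimate on $\pi_{m_1}(\tilde F_\omega)\times\pi_{m_2}(\tilde E_\eta)$ enter. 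Throughout, the varying alphabets along the base orbit must be carried along as extra coordinates in the CP-chain; because $\alpha_1,\alpha_2$ are Bernoulli, the resulting skew-product is still a nice ergodic (indeed Bernoulli-over-Bernoulli) system, so Sinai's theorem still applies and the ergodic averages giving the local dimensions converge.

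I expect the main obstacle to be precisely this adaptation of the CP-chain / Sinai-factor step to the random setting: one must check that the ``slice measures'' still generate a genuine CP-distribution when the generating maps change from scale to scale according to $(\omega,\eta)$, that the entropy of its $y$-projection is at least $\dim_H\pi_{m_2}(\tilde E_\eta)$ on a.e.\ fiber, and that the incompatibility of the two base-$m_1$ and base-$m_2$ scalings survives the fibered construction — i.e.\ that no ``hidden resonance'' is introduced by the varying alphabets. Proving part (2) in the stated a.e.-$(\omega,\eta)$, all-$u$ form (rather than a.e.\ $u$) requires, as in Wu, an extra uniformity/continuity argument in the parameter $u$, typically via a compactness argument over the projective line combined with semicontinuity of box dimension of slices; I would carry this out last, after the a.e.-single-$u$ statement is in hand, exactly as in \cite{wu2016proof}. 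Part (1) should then be a comparatively soft corollary obtained by bounding along the fattest fibers.
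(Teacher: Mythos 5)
Your plan matches the paper's approach: enlarge Wu's CP-chain state space to carry the digit-alphabet coordinates $(\omega,\eta)$ as a skew-product over the irrational rotation and the Bernoulli base, transfer to a $U$-invariant measure, apply Sinai's factor theorem, and conclude with a Fubini-type argument in which the worst-case versus Bernoulli-generic control of the $(\omega,\eta)$-orbit produces the bounds in parts (1) and (2) respectively. Two small corrections to your side remarks: the closing step is a direct box-counting argument on the Minkowski difference $K-D_1$ (many slices of $\gamma$-large covering number with $\epsilon$-dense slopes through a small set force $N_{2^{-k}}(K-D_1)\gtrsim 2^{k(1+\gamma)}$, versus the upper bound $N_{2^{-k}}(K-D_1)\lesssim N_{2^{-k}}(K)N_{2^{-k}}(D_1)$), not an appeal to Marstrand's theorem or a Frostman estimate; and part (2) needs no uniformity or compactness argument over the slope $u$, since the full-measure set $A(u,\alpha_1,\alpha_2)$ in the statement is permitted to depend on $u$ and is produced by pulling back the set of $Z$-generic $(\omega,\eta)$ under suitable shifts (Lemma \ref{Lemma assumptions}).
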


If for all $i \neq j\in [n_1]$ we have $|\Gamma_i| = |\Gamma_j|$ then $\dim_H \Fomega  = \frac{\log |\Gamma_i|}{\log m_1}$, and similarly for $\Eeta$ if $|\Lambda_j|$ is constant for all $j\in [n_2]$. Thus, by Theorem \ref{Theorem main Theorem} part (1), we recover many new explicit  examples of product sets satisfying the Furstenberg slicing bound, in the sense of Conjecture \ref{Conjecutre Furstenberg}. Moreover, by this observation and an approximation argument, it is possible to show that Theorem \ref{Theorem main Theorem} implies Conjecture \ref{Conjecutre Furstenberg}. However, as our method is based on Wu's method from \cite{wu2016proof}, this does not yield a new proof.

\subsection{On the proof of Theorem \ref{Theorem main Theorem}}
Let $m_1>m_2 \geq 2$ be integers such that $\theta:= \frac{\log m_2}{\log m_1} \notin \mathbb{Q}$. First, let $\emptyset \neq X,Y \subseteq [0,1]$ be two closed sets that are $T_{m_1}$ and $T_{m_2}$ invariant sets, respectively.  Let $\ell\cap (X\times Y)$ be any non princpal slice thorugh the corresponding product set. In \cite{wu2016proof}, Wu proved $\dim \ell\cap (X\times Y) \leq \max \lbrace \dim X +\dim Y-1,0\rbrace$ (and thus Conjecture \ref{Conjecutre Furstenberg}) by first  constructing a well structured measure (a CP distribution) on the space of measures on slices of $X\times Y$. Two key features of this measure are that its marginal on the slopes of these slices is the Lebesgue measure, and that almost all of these slices have at least the same dimension as the original slice $\ell\cap (X\times Y)$ . The construction of such a measure, originally due to Furstenberg in \cite{furstenberg1970intersections}, relies on the following observation: For every $t\in \mathbb{T}:= \mathbb{R}/\mathbb{Z}$, define a map $\Phi_t : [0,1]^2 \rightarrow [0,1]^2$, by
$$\Phi_t (z) = 
     \begin{cases}
      (T_{m_1} (z_1), T_{m_2}  (z_2))   &\quad\text{if } t\in [1 - \theta,1) \\
       ( z_1, T_{m_2} ( z_2))  &\quad\text{if } t\in [0, 1-\theta) \\
     
     \end{cases}$$
Notice that, if $m_1 ^t$ is the slope of $\ell$, then the map $\Phi_t$ transforms our slice into a finite family of slices through $X \times Y$, such that their slope corresponds to the translation by $\theta$ in $\mathbb{T}$ of $t$, and at least one has the same dimension as the original slice. 

 Wu then proceeded to apply Sinai's factor Theorem, allowing him to show that many slices that are both of dimension at least $\dim \ell\cap (X\times Y)$, and such that their slopes correspond to  sets of arbitrarily large density in an equidistributed sequence in $\mathbb{T}$,  pass through a small region in the unit square. This yielded the desired bound on $\dim \ell\cap (X\times Y)$  by a Fubini type argument. 

We take a similar approach, but we construct our CP distribution on a larger parameter space:  The space of non-principal slices of all product sets in the family $$\lbrace \Fomega\times \Eeta: \quad (\omega,\eta)\in [n_1]^\mathbb{N} \times [n_2]^\mathbb{N} \rbrace.$$ We also define, for $t\in \mathbb{T}$,  a map $\sigma_t : [n_1]^\mathbb{N} \rightarrow [n_1]^\mathbb{N}$ by 
$$\sigma_t (\omega) = 
     \begin{cases}
      \sigma (\omega)  &\quad\text{if } t \in [1- \theta,1) \\
       \omega &\quad\text{if } t\in [0, 1-\theta) \\
     
 \end{cases}$$
   The basic observation behind our approach is that now, for any non principal slice $\ell \cap(\Fomega\times \Eeta)$ through any product set in our family, the map $\Phi_t$ transforms this  slice  into a finite family of slices through $\pi_{m_1} (\tilde{F}_{\sigma_t (\omega)}) \times \pi_{m_2} (\tilde{E}_{\sigma (\eta)})$, where $\sigma:[n_i]^\mathbb{N}\rightarrow [n_i]^\mathbb{N}$ is the left shift. It is still true that their slopes correspond to the original slope translated by $\theta$ in $\mathbb{T}$, and at least one has the same dimension as the original slice. Notice that this is a slice through (possibly a different) product set in our family.
   
An application of Sinai's factor Theorem yields a similar conclusion to that of Wu's,  that many slices in this family that are both of dimension at least $\dim \ell \cap(\Fomega\times \Eeta)$,  and such that their slopes correspond to sets of arbitrarily large density in an equidistributed sequence in $\mathbb{T}$,  pass through a small region in the unit square. Moreover, using this idea we can also show that the amount of product sets in our family being sliced in this procedure is not too large (in some sense), allowing for a Fubini argument (similar, but more complicated, than that of Wu's), to be preformed.

However, unless we have some additional information about the $(\omega,\eta)$ from Theorem \ref{Theorem main Theorem} part (1) (as we do in Theorem \ref{Theorem main Theorem} part (2)), we cannot control which product sets will play a part in the end game of this procedure. This explains the bound appearing in part (1) of the Theorem (which is the "worst case scenario" - the largest possible box dimension of a product set in our family).

\textbf{Notation} This paper is particularly related to the work of Wu \cite{wu2016proof}, and to our previous work with Hochman \cite{algom2016self} (via Theorems \ref{Theorem - structure of tangnet sets} and \ref{Theorem - structure of tangent sets general} in Section \ref{Section micro of BMC} below). Thus, we make an effort to use similar notation as both of these works. Otherwise, we use standard notation: For example, Greek letters shall usually denote measures (the maps defined in \eqref{Eq. projection from symbolic}, which are defined as in \cite{algom2016self}, are one exception to this rule), lower case Latin letters denote maps, and upper case Latin letters shall denote sets. 
 
\textbf{Organization} In section \ref{Section prel.} we survey some relevant definitions and results  about dimension theory of sets and measures, and about CP distributions. We then proceed to prove, in section \ref{Section app}, Theorems \ref{Theorem star dimension}, \ref{Theorem intersections of carpets}, \ref{Theorem mutual singularity}, \ref{Theorem embeddings}, assuming Theorem \ref{Theorem main Theorem} is correct. The subsequent sections are then devoted to the proof of Theorem \ref{Theorem main Theorem}, and related constructions.

\textbf{Acknowledgements} This work was carried out as part of the author's research towards a PhD dissertation, conducted at the Hebrew University of Jerusalem. I would like to thank my advisor, Michael Hochman, for many helpful discussions and useful suggestions. I would also like  to thank Pablo Shmerkin and Meng Wu for some interesting discussions related to the topic of this paper.

\section{Preliminaries} \label{Section prel.}
Let $X$ be a metric space. The set of Borel probability measures on $X$ will be denoted by $P(X)$. In this paper, all measures are  Borel probability measures.
     
 \subsection{Some notions of dimension of sets and measures} \label{Section dimension}
For a set $A$ in some metric space, we use the standard notation $\dim_H A$ for the Hausdorff dimension of $A$, and $\overline{\dim}_B (A)$ for the upper box dimension of $A$. See e.g. Falconer's book \cite{falconer1986geometry} for some exposition on these concepts. 

Next, let $\mu$ be a Borel probability measure on some metric space. For every $x\in \supp(\mu)$  we define the  pointwise (exact) dimension of $\mu$ at $x$  as
\begin{equation*}
\dim(\mu,x)=\lim_{r\rightarrow 0} \frac{\log \mu (B(x,r))}{\log r}
\end{equation*}
where $B(x,r)$ denotes the closed ball or radius $r$ about $x$. If the limit does not exist, we define the upper and lower pointwise dimensions of $\mu$ at $x$ as the corresponding $\limsup$ and $\liminf$.

We also define the (lower) Hausdorff dimension of the measure $\mu$ as
\begin{equation*}
\dim (\mu) = \inf \lbrace \dim_H A : \quad \mu (A)>0 \rbrace.
\end{equation*}
If the pointwise dimension of $\mu$ exists at almost every $x\in \supp(\mu)$ and is constant almost surely, then this constant value is known to equal $\dim (\mu)$. For proofs and some more discussion, see e.g. \cite{falconer1997techniques} or \cite{Fan2002measures}.

Next, we discuss entropy of measures and entropy dimension. First, let $\mu$ be a Borel probability measure on some metric space. Let $\mathcal{A}$ denote a countable (or finite) partition of the underlying space. Then the entropy of $\mu$ with respect to $\mathcal{A}$ is defined as
\begin{equation*}
H(\mu,\mathcal{A}) = -\sum_{A\in \mathcal{A}} \mu(A)\cdot \log \mu (A)
\end{equation*}
with the convention  $0\log 0=0$.

Let us now define the entropy dimension of a measure $\mu \in P(\mathbb{R}^d)$. For every integer $p\geq 0$ let $\mathcal{D}_p$ denote the $p$-adic partition of $\mathbb{R}^d$, that is,
\begin{equation*}
\mathcal{D}_p = \lbrace \prod_{i=1} ^d [\frac{z_i}{p}, \frac{z_i+1}{p}) :\quad  (z_1,...,z_d)\in \mathbb{Z}^d \rbrace.
\end{equation*}
The entropy dimension of $\mu$ is defined as
\begin{equation*}
\dim_e (\mu) = \lim_{k\rightarrow \infty} \frac{1}{k\log 2}H(\mu,\mathcal{D}_{2^k}),
\end{equation*}
provided that the limit exists. If the limits does not exist, the upper and lower entropy dimension of $\mu$ are defined as the corresponding $\limsup$ and $\liminf$.

Next, let $n\geq 2$ and consider the symbolic space $[n]^\mathbb{N}$, with the usual product topology. For every finite word $u\in [n]^k$ for some $k\in \mathbb{N}$ we associate its length, defined by $|u|=k$, and a cylinder set defined by
\begin{equation*}
[u]:=\lbrace \omega \in [n]^\mathbb{N}: \quad (\omega_1,...,\omega_k)=u \rbrace.
\end{equation*} 
Though this coincides with the notation $[n]$, which notion is meant will be clear from context. Let $\mathcal{I}_k$ denote the partition of $[n]^\mathbb{N}$ into cylinders of length $k$. For a measure $\mu \in P([n]^\mathbb{N})$ we define the entropy dimension of $\mu$ as
\begin{equation*}
\dim_e (\mu) = \lim_{k\rightarrow \infty} \frac{1}{k\log 2}H(\mu,\mathcal{I}_k),
\end{equation*}
provided that the limit exists. If the limits does not exist, the upper and lower entropy dimension of $\mu$ are defined as the corresponding $\limsup$ and $\liminf$.

Finally, let $\mu\in P(\mathbb{R}^d)$ or $\mu \in P( [n]^\mathbb{N} )$ for some $n\geq 2$. Then, if $\mu$ is supported on a set $A$
\begin{equation} \label{Eq entr. dim vs Hausd. dim}
\dim(\mu)\leq \underline{\dim}_e (\mu) \leq  \overline{\dim}_e (\mu) \leq  \overline{\dim}_B A.
\end{equation}
If $\mu$ is exact dimensional then
\begin{equation*}
\dim (\mu) = \dim_e (\mu).
\end{equation*}
For proofs, and more discussion of these concepts see \cite{Fan2002measures} and \cite{mattila1999geometry}.

 \subsection{Star dimension, microsets and covariance of microsets}  \label{Section microsets}  
Let $X$ be a compact metric space, which in practice will be either $[-1,1]^2$ or a symbolic spaces of the form $[n]^\mathbb{N}$. If $X=[-1,1]^2$ we shall use the Euclidean norm $|| \cdot ||$, and in the space $[n]^\mathbb{N}$ we  consider, for some $\rho\in (0,1)$, the metric $d_\rho$ on $[n]^\mathbb{N}$, defined  by
\begin{equation} \label{The metric drho}
d_\rho (x,y) = \rho^{\min\lbrace k : \quad  x_k \neq y_k \rbrace}.
\end{equation} 
Let $\cmpct(X)$ denote the set of non-empty closed subsets of $X$. For $A,B\in\cmpct(X)$ and $\epsilon >0$ define
\begin{equation*}
A_\epsilon = \lbrace x \in X : \quad \exists a\in A, d(x,a) < \epsilon \rbrace.
\end{equation*}
The Hausdorff distance between $A$ and $B$ is defined by
\begin{equation*}
d_H (A,B) = \inf \lbrace \epsilon >0 : \quad A \subseteq B_\epsilon, \quad B \subseteq A_\epsilon \rbrace. 
\end{equation*}
This is a compact metric on  $\cmpct(X)$ (see e.g. the appendix in \cite{bishop2013fractal}). 

Now,  let us restrict to $X=[-1,1]^2$. Let $F\subseteq [0,1]^2$  be a compact set. A set $A$ such that $A \subseteq [-1,1]^2$ is called a  miniset of $F$ if $A \subseteq (a \cdot F + t)\cap [-1,1]^2$ for some $a \geq 1, t\in \mathbb{R}$. A set $M$ is called a  microset of $F$ if $M$ is a limit in the  Hausdorff metric on subsets of $[-1,1]^2$ of minisets of $F$. Let $\mathcal{G}_F$ denote the family of all microsetes of $F$. Recall, from \eqref{Eq star dim}, that the star dimension of $F$ is the defined as
\begin{equation*}
\dim^* F = \sup \lbrace \dim_H A : \quad A\in \mathcal{G}_F \rbrace.
\end{equation*}
It is known that this supremum is in fact a maximum, obtained by the dimension of a limit of non-degenerate minisets, i.e. minisets of the form $(a_k \cdot F + t_k)\cap [-1,1]^2$ such that $a_k \rightarrow \infty$. For a proof, see Lemma 2.4.4 in \cite{bishop2013fractal}.

We shall also consider a special type of minisets and micorsets. Let $m>1$ and fix $x\in F$. An $m$-adic mini-set of $F$ about $x$ is a set of the form 
\begin{equation} \label{Def M-adic microset}
[m^k (F-x)]\cap [-1,1]^2 \in \cmpct([-1,1]^2), \text{ where } k\in \mathbb{N}.
\end{equation}
An $m$-adic microset of $F$ about $x$ is a limit of such sets as $k\rightarrow \infty$ (there is always a converging subsequence by the compactness of $\cmpct([-1,1]^2)$).

One of the many reasons it is interesting to study microsets is their nice behaviour with respect to affine (and more generally, smooth) embeddings. Namely, an affine embedding of one set into another set induces a corresponding affine embedding of their microsets:
\begin{Proposition} \label{Proposition Covariance}
Let $m_1,m_2 >1$ be integers, and let $g(x)=Ax+t$ be an invertible affine map of $\mathbb{R}^2$ such that $c \cdot A([-1,1]^2) \subseteq [-1,1]^2$ for all $c\in [1,m_1]$. Let $F,E\subseteq [-1,1]^2$ be compact, and suppose that $g(F)\subseteq E$. Let $x\in F$ and set $y=g(x)\in E$. Suppose that for some sequence $\lbrace n_k \rbrace \subseteq \mathbb{N}$
\begin{equation*}
\lim_{k\rightarrow \infty} [m_1 ^{[n_k \cdot \frac{\log m_2}{\log m_1}]} (F-x)]\cap [-1,1]^2 = T ,\quad \text{ and } \lim_{k\rightarrow \infty} [m_1 ^{n_k} (E-y)]\cap [-1,1]^2 = T'
\end{equation*}
Then there exists some constant $c\in [1,m_1]$ such that $c\cdot A(T) \subseteq T'$.
\end{Proposition}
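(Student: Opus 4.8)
The plan is to unwind the two rescaling limits into a single comparison, using the affine embedding $g$ to transport the $F$-side miniset to the $E$-side. First I would set $k$ large and write $p_k := [n_k \cdot \theta]$ where $\theta = \tfrac{\log m_2}{\log m_1}$, so that $m_1^{p_k} \approx m_1^{n_k \theta} = m_2^{n_k}$; more precisely, $m_1^{p_k} = m_2^{n_k} \cdot \delta_k$ with $\delta_k = m_1^{p_k - n_k\theta} \in (m_1^{-1}, m_1]$, a bounded multiplicative error. Applying $g$ to the miniset $[m_1^{p_k}(F-x)] \cap [-1,1]^2$: since $g(z) = Az + t$ and $g(x) = y$, we have $A\big(m_1^{p_k}(F-x)\big) = m_1^{p_k} A(F) - m_1^{p_k} A(x) $, and because $g(F) \subseteq E$ we get $m_1^{p_k}(g(F) - y) \subseteq m_1^{p_k}(E - y)$. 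Combining, $m_1^{p_k} A(F - x) \subseteq m_1^{p_k}(E - y)$. Now rewrite the right-hand side in terms of the scale $m_1^{n_k}$ that appears in the hypothesis: $m_1^{p_k}(E-y) = \big(m_1^{p_k - n_k}\big)\big(m_1^{n_k}(E-y)\big)$, so that after intersecting with $[-1,1]^2$ we can relate $A$ applied to the $F$-side $m_1^{p_k}$-adic miniset to a rescaled copy of the $E$-side $m_1^{n_k}$-adic miniset.

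The key point, and where I would be careful, is the bookkeeping of the bounded rescaling factors against the truncations $\cap[-1,1]^2$. The hypothesis $c \cdot A([-1,1]^2) \subseteq [-1,1]^2$ for all $c\in[1,m_1]$ is exactly what is needed so that, after multiplying $A(T)$ by a factor in $[1,m_1]$, we stay inside the ambient square and the Hausdorff-limit of the intersected minisets behaves well. Concretely, I would show: for $z \in F$ with $m_1^{p_k}(z-x) \in [-1,1]^2$, the image $A(m_1^{p_k}(z-x))$ lies in $[-1,1]^2$ (using the containment hypothesis on $A$ together with the scale factor being $\leq m_1$), and it also equals $m_1^{p_k}(g(z) - y)$, an element of $m_1^{p_k}(E - y)$. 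Writing $m_1^{p_k - n_k} \in (0, m_1^{p_k - n_k}]$ — here $p_k - n_k$ is negative for large $k$ since $\theta < 1$ — hmm, that would shrink things, so instead I should compare at the matched scale $m_2^{n_k} = m_1^{n_k\theta}$: since $m_1^{p_k} = \delta_k m_2^{n_k}$ with $\delta_k \in (m_1^{-1}, m_1]$, we get $A\big(m_1^{p_k}(F-x)\big) = \delta_k \cdot A\big(m_2^{n_k}(F-x)\big)$, and I must reconcile the $m_1^{p_k}$-adic miniset $T$ of $F$ with the $m_2^{n_k}$-scaled picture; these differ only by the bounded factor $\delta_k$, and passing to a further subsequence I may assume $\delta_k \to \delta \in [m_1^{-1}, m_1]$.

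Assembling: pass to a subsequence along which $\delta_k \to \delta$ and along which both $[m_1^{p_k}(F-x)]\cap[-1,1]^2 \to T$ (given) and the auxiliary scaled-and-embedded sets converge. Taking Hausdorff limits in the inclusion $\delta_k^{-1} A\big(m_1^{p_k}(F-x)\big) \cap [-1,1]^2 \subseteq$ (a truncation of $m_2^{n_k}(E-y) \approx m_1^{n_k}(E-y)$, up to the bounded factor $m_2^{n_k}/m_1^{n_k} = 1$, which is actually exact) and using continuity of $A$ and of Hausdorff limits under the standing containment hypothesis on $A([-1,1]^2)$, I obtain $c \cdot A(T) \subseteq T'$ for $c = \delta^{-1}$ or a closely related bounded constant; since $\delta \in [m_1^{-1}, m_1]$, possibly after absorbing one more factor of $m_1$ coming from the difference between $p_k$ and $n_k\theta$ being rounded down rather than to the nearest integer, the resulting $c$ lies in $[1, m_1]$, as claimed.

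\textbf{Main obstacle.} The genuine difficulty is not the algebra of rescalings but controlling the interaction between the truncation operation $\cap[-1,1]^2$ and the bounded-factor rescaling: Hausdorff limits do not commute freely with intersections, so I must verify that the part of $m_1^{n_k}(E-y)$ that "escapes" $[-1,1]^2$ does not interfere — this is precisely why the hypothesis $c\cdot A([-1,1]^2)\subseteq[-1,1]^2$ for $c\in[1,m_1]$ is imposed, and the delicate step is to check that after applying $A$ and the factor $\delta_k^{-1} \in [m_1^{-1}, m_1]$ (or its reciprocal), every accumulation point of the left side genuinely sits inside the limit $T'$ of the right side rather than on its "boundary at scale $1$". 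I expect this to require a short argument that minisets at scale $m_1^{p_k}$ of $F$, pushed through $g$, are contained in minisets at a comparable scale of $E$ with room to spare, so that the truncation on the $E$-side is not active on the relevant portion.
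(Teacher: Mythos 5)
Your overall strategy is the right one, and it is in fact the argument the paper has in mind (the paper omits the proof, deferring to Proposition 4.3 of \cite{algom2016self}): push the $F$-side miniset through $A$ using $A(F-x)=g(F)-y\subseteq E-y$, keep track of the bounded rescaling factor created by the rounding $[n_k\theta]$, pass to a further subsequence along which that factor converges, and use the hypothesis $c\cdot A([-1,1]^2)\subseteq[-1,1]^2$ for $c\in[1,m_1]$ so that the truncated inclusions survive the Hausdorff limit (if $X_k\to T$, $Y_k\to T'$, $f_k\to f$ uniformly and $f_k(X_k)\subseteq Y_k$, then $f(T)\subseteq T'$). However, your write-up breaks down at the one step that carries the content: the reconciliation of the two scales. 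The $F$-side limit is taken at scale $m_1^{p_k}$, $p_k=[n_k\theta]$, so $m_1^{p_k}=\delta_k m_2^{n_k}$ with $\delta_k=m_1^{p_k-n_k\theta}\in(m_1^{-1},1]$ (not $(m_1^{-1},m_1]$); but the $E$-side limit in the statement is taken at scale $m_1^{n_k}$, and you bridge the two by asserting that $m_2^{n_k}(E-y)$ agrees with $m_1^{n_k}(E-y)$ ``up to the bounded factor $m_2^{n_k}/m_1^{n_k}=1$, which is actually exact.'' That factor is $(m_2/m_1)^{n_k}$, which is not $1$ and is not even bounded away from $0$ and $\infty$ when $m_1\neq m_2$; equivalently, the ratio of the two magnifications in the hypotheses is $m_1^{n_k-[n_k\theta]}$, which is unbounded. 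You noticed the problem once (``that would shrink things'') and then papered over it rather than resolving it, so as written no bounded constant $c$ is produced and the final inclusion is not established.

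The resolution is that the two blow-ups are meant to be at comparable scales: the $E$-side miniset should be read at scale $m_2^{n_k}$ rather than $m_1^{n_k}$ (this appears to be a slip in the statement; it is the only reading under which a constant in $[1,m_1]$ can arise, it matches the conclusion, it is consistent with the degenerate case $m_2=m_1$ actually used in the proof of Theorem \ref{Theorem embeddings}, and it matches the analogous proposition in \cite{algom2016self} cited in lieu of a proof). With that reading the constant is exactly your $c=\lim\delta_k^{-1}=\lim m_1^{n_k\theta-[n_k\theta]}\in[1,m_1]$, with no extra factor of $m_1$ to ``absorb,'' and your computation closes up correctly: for $q_k=m_1^{p_k}(z_k-x)\in[-1,1]^2$ with $z_k\in F$ one has $\delta_k^{-1}A(q_k)=m_2^{n_k}(g(z_k)-y)\in m_2^{n_k}(E-y)$, the hypothesis on $A$ keeps this point inside $[-1,1]^2$ so it lies in the truncated $E$-side set, and the elementary Hausdorff-limit fact above gives $cA(T)\subseteq T'$. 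Finally, your ``main obstacle'' paragraph points at the wrong difficulty: once $\delta_k^{-1}A$ maps the square into itself the truncation causes no trouble; the genuine issue in your text is the scale matching you left unresolved.
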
  
We refer to this phenomenon as "covariance of microsets". We omit the proof, since it is rather similar to the proof of Proposition 4.3 in \cite{algom2016self}. The assumption $c \cdot A([-1,1]^2) \subseteq [-1,1]^2$ for all $c\in [1,m_1]$ is needed for certain algebraic manipulations to work out. Without it, we can obtain a similar result of the form $A(T)\subseteq T'\cdot c'$, and $c'$ can be bounded in terms of the operator norm of the matrix $A$.

 \subsection{CP distributions}
 \subsubsection{Dynamical systems} \label{Section Ber mea.}
 In this paper, a measure preserving system is a quadruple $(X, \mathcal{B},T,\mu)$, where $X$ is a compact metric space, $\mathcal{B}$ is the Borel sigma algebra, $T:X\rightarrow X$ is a  measure preserving map, i.e. $T$ is Borel measurable and $T\mu = \mu$. Since we always work with the Borel sigma-algebra, we shall usually just write $(X,T,\mu)$.
 
A class of examples of a dynamical systems are symbolic dynamical systems: We take $X=[n]^\mathbb{N}$ for some $n$, we take $T=\sigma$ to be the shift map $\sigma:[n]^\mathbb{N}\rightarrow [n]^\mathbb{N}$ defined by $\sigma(\omega) = \xi$ where $\xi (k) =\omega(k+1)$ for every $k$. A special case is when  $\mu$ is a Bernoulli measure: that is, $\mu = p^\mathbb{N}$ where $p$ is probability vector $p\in P([n])$. These systems are also called Bernoulli shifts.
 
A dynamical system is ergodic if and only if the only invariant sets are trivial. That is, if $B\in \mathcal{B}$ satisfies $T^{-1} (B) = B$ then $\mu(B)=0$ or $\mu(B)=1$. A dynamical system is called weakly mixing if for any ergodic dynamical system $(Y,S,\nu)$, the product system $(X\times Y, T\times S, \mu\times \nu)$ is also ergodic. In particular, weakly mixing systems are ergodic. Moreover, If both $(X,T,\mu)$ and $(Y,S,\nu)$ are weakly mixing, then their product system is also weakly mixing. A class of examples of  weakly mixing systems is given by Bernoulli shifts.

A useful tool that will appear frequently in this paper is the ergodic decomposition Theorem:
\begin{theorem}
Let $(X,T,\mu)$ be a dynamical system. Then there is a  map $X\rightarrow P(X)$, denoted by $\mu \mapsto \mu^{x}$, such that:
\begin{enumerate}
\item The map $x\mapsto \mu^x$ is measurable with respect to the sub-sigma algebra $\mathcal{E}$ of $T$ invariant sets.

\item $\mu = \int \mu^x d\mu(x)$

\item For $\mu$ almost every $x$, $\mu^x$ is $T$ invariant, ergodic, and supported on the atom of $\mathcal{E}$ that contains $x$. The measure $\mu^x$ is called the ergodic component of $x$.
\end{enumerate}
\end{theorem}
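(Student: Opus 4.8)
The plan is to deduce the ergodic decomposition from Birkhoff's pointwise ergodic theorem together with the Riesz representation theorem, realizing the ergodic components $\mu^x$ as the conditional measures of $\mu$ over the $T$-invariant $\sigma$-algebra $\mathcal{E}$. First I would fix a countable $\|\cdot\|_\infty$-dense subset $\{f_j\}_{j\ge 1}$ of $C(X)$ (possible since $X$ is a compact metric space), with $f_1\equiv 1$. By Birkhoff's theorem, for each $j$ the averages $A_N f_j(x):=\frac{1}{N}\sum_{k=0}^{N-1} f_j(T^k x)$ converge $\mu$-a.e. to a function $\bar f_j$ which is $T$-invariant — since $A_N f_j\circ T - A_N f_j\to 0$ pointwise — hence $\mathcal{E}$-measurable, and which equals $\mathbb{E}_\mu(f_j\mid\mathcal{E})$ in $L^1(\mu)$. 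Let $X_0\in\mathcal{E}$ be the $\mu$-conull set on which all these limits exist. For $x\in X_0$ the assignment $f_j\mapsto \bar f_j(x)$ is linear and positive on $\operatorname{span}\{f_j\}$, sends the constant $1$ to $1$, and is bounded since $|\bar f_j(x)|\le\|f_j\|_\infty$; hence it extends uniquely to a positive normalized linear functional on $C(X)$, and by Riesz representation there is a unique Borel probability measure $\mu^x$ with $\int f\,d\mu^x=\bar f(x)$ for every $f\in C(X)$. Put $\mu^x=\mu$ off $X_0$.

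Granting this, the measurability of $x\mapsto\mu^x$ with respect to $\mathcal{E}$ reduces to $\mathcal{E}$-measurability of each $x\mapsto\int f_j\,d\mu^x=\bar f_j(x)$, which holds by construction. The disintegration identity follows by testing against continuous $f$: $\int\big(\int f\,d\mu^x\big)\,d\mu(x)=\int\bar f\,d\mu=\int\mathbb{E}_\mu(f\mid\mathcal{E})\,d\mu=\int f\,d\mu$, and since continuous functions determine measures this gives $\mu=\int\mu^x\,d\mu(x)$; a monotone class argument then upgrades the identity $\int g\,d\mu=\int\big(\int g\,d\mu^x\big)\,d\mu(x)$ to all bounded Borel $g$. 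Invariance of $\mu^x$ for $\mu$-a.e. $x$ is immediate: $\overline{f_j\circ T}=\bar f_j$ on $X_0$ because $A_N(f_j\circ T)-A_N f_j\to 0$, so $\int f_j\circ T\,d\mu^x=\int f_j\,d\mu^x$ for all $j$, hence $T\mu^x=\mu^x$ on $X_0$.

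The substantive point is ergodicity of $\mu^x$. From the disintegration identity applied to the ($\mu$-null) set where some limit $\lim_N A_N f_j$ fails, we get that for $\mu$-a.e. $x$ the measure $\mu^x$ is carried by the set where all $\bar f_j$ are defined; applying Birkhoff to the invariant measure $\mu^x$ then shows $\bar f_j=\mathbb{E}_{\mu^x}(f_j\mid\mathcal{E})$ $\mu^x$-a.e., so that $\int\bar f_j\,d\mu^x=\int f_j\,d\mu^x=\bar f_j(x)$. Now set $G_j(x):=\int\big(\bar f_j(y)-\bar f_j(x)\big)^2\,d\mu^x(y)=\int\bar f_j^2\,d\mu^x-\bar f_j(x)^2$; integrating in $\mu$ over $x$ and using the disintegration identity on the bounded Borel function $\bar f_j^2$ gives $\int G_j\,d\mu=\int\bar f_j^2\,d\mu-\int\bar f_j^2\,d\mu=0$. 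Since $G_j\ge 0$, for $\mu$-a.e. $x$ we have $G_j(x)=0$, i.e. $\bar f_j$ is $\mu^x$-a.e. equal to the constant $\bar f_j(x)$; intersecting over the countably many $j$, for $\mu$-a.e. $x$ every ergodic average is $\mu^x$-a.e. constant, which forces $\mu^x$ to be ergodic and carried by $\bigcap_j\{\bar f_j=\bar f_j(x)\}$, the atom of $\mathcal{E}$ through $x$.

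I expect the main obstacle to be the measure-theoretic bookkeeping in the ergodicity step rather than any deep idea: one must be careful that the a priori only $\mu$-a.e. defined function $\bar f_j$ is $\mu^x$-a.e. defined for $\mu$-a.e. $x$ (this is exactly what the disintegration identity supplies), and that the conditional expectation computed against $\mu^x$ coincides $\mu^x$-a.e. with $\bar f_j$. An alternative would be Choquet's theorem — the $T$-invariant measures form a metrizable simplex whose extreme points are the ergodic measures, so every invariant measure is the barycenter of a measure supported on the ergodic ones — but identifying that barycentric decomposition with the conditional measures $\mu^x$ needs essentially the same ergodic-theoretic input, so I would carry out the direct argument above.
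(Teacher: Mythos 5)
The paper does not prove this theorem; it is quoted in Section~2.3.1 as a classical ``useful tool'' (the ergodic decomposition theorem), so there is no internal argument to compare against. Your proof --- build $\mu^x$ by applying Riesz representation to the positive normalized functional $f_j\mapsto\bar f_j(x)$ determined by the Birkhoff limits of a countable $\|\cdot\|_\infty$-dense family, verify measurability and the disintegration identity by testing on $C(X)$ and a monotone class argument, then deduce ergodicity from the vanishing of the variance $G_j(x)=\int\bigl(\bar f_j(y)-\bar f_j(x)\bigr)^2\,d\mu^x(y)$ --- is correct and is a standard modern route to the result. The one step worth spelling out slightly more is the final inference from ``$\bar f_j$ is $\mu^x$-a.e.\ constant for every $j$'' to ergodicity of $\mu^x$: one should note that a uniform limit of the $\bar f_{j}$ gives the same conclusion for $\bar f$ with $f\in C(X)$, and then $L^1(\mu^x)$-density of $C(X)$ together with continuity of conditional expectation shows $\mathbb{E}_{\mu^x}(g\mid\mathcal{E})$ is constant for every $g\in L^1(\mu^x)$, i.e.\ $\mathcal{E}$ is $\mu^x$-trivial. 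Likewise the ``atom of $\mathcal{E}$'' in the statement should be read modulo $\mu$-null sets (i.e.\ the atom of the countably generated $\sigma$-algebra spanned by $\{\bar f_j\}$), since $\mathcal{E}$ itself need not be countably generated; your identification of it with $\bigcap_j\{\bar f_j=\bar f_j(x)\}$ is the right one.
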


Another useful notion is that of generic points in a dynamical system $(X,T,\mu)$. We say that a point $x\in X$ is generic with respect to $\mu$ if
\begin{equation*}
\frac{1}{N} \sum_{i=0} ^{N-1} \delta_{T^i x} \rightarrow \mu,\quad \text{ where } \delta_y \text{ is the dirac measure on } y\in X,
\end{equation*}
in the weak-* topology. By the ergodic Theorem, if $\mu$ is ergodic then $\mu$ a.e. $x$ is generic for $\mu$.

Finally, we discuss generators.  Let $\mathcal{A}$ be a finite partition of $X$. Let $\mathcal{A}_k = \bigvee_{i=0} ^{k-1} T^{-i} \mathcal{A}$ denote the coarsest common refinement of $\mathcal{A}, T^{-1} \mathcal{A}...,T^{-k+1}\mathcal{A}$. The sequence $\mathcal{A}_k$ is called the filtration generated by $\mathcal{A}$ with respect to $T$. For every $k\geq 1$ and $x\in X$, let $\mathcal{A}_k (x)$ denote the unique element of $\mathcal{A}_k$ that contains $x$. 

Now, if the smallest sigma algebra that contains $\mathcal{A}_k$ for all $k$ is the Borel sigma algebra, we say that $\mathcal{A}$ is a generator for $(X,T,\mu)$. By the Kolmogorov-Sinai Theorem, if $\mathcal{A}$ is a generator, then
\begin{equation*}
\lim_k \frac{1}{k} H(\mu,\mathcal{A}_k) = \sup_{\mathcal{B}: \mathcal{B} \text { is a countable partition of } X}  \lim_k \frac{1}{k} H(\mu,\mathcal{B}_k).
\end{equation*}  
The common value described above is called the entropy of the dynamical system $(X,T,\mu)$ and is denoted by $h(\mu,T)$.

\subsubsection{CP distributions on symbolic spaces} \label{Section CP symb}
The theory of CP distributions, that we discuss in this section, originated implicitly with Furstenberg in \cite{furstenberg1970intersections}. It was then reintroduced by Furstenberg in \cite{furstenberg2008ergodic}, and has since been used by many authors, notably by Hochman and Shmerkin in \cite{hochman2009local}. In particular, CP distributions shall play a crucial role in the proof of Theorem \ref{Theorem main Theorem}, as they do in Wu's work \cite{wu2016proof}. In this Section, we follow closely Section 3 in \cite{wu2016proof}.

As is standard in this context, if $X$ is a metric space then elements of $P(X)$ are called measures, and elements of $P(P(X))$, measures on the space of measures, are called distributions.

Let $m_1,m_2\geq 2$ and let $X =  ([m_1]\times [m_2])^\mathbb{N}$ (the theory extends to any finite alphabet, but this model will suffice for us). Fix $\rho\in (0,1)$ and consider the metric $d_\rho$ on $X$ (recall \eqref{The metric drho}). Let 
\begin{equation*}
\Omega = \lbrace (\mu,x)\in P(X)\times X : \quad x\in \supp(\mu) \rbrace.
\end{equation*}   
We define the magnification operator $M:\Omega \rightarrow \Omega$ by
\begin{equation*}
M(\mu,x) = (\mu^{[x_1]},\sigma(x))
\end{equation*}
where $[x_1] = \lbrace y\in X : y_1 = x_1 \rbrace$, and $\mu^{[x_1]} = \frac{\sigma( \mu|_{[x_1]})}{\mu([x_1])}$. 

It is clear that $M$ is continuous, and that $M(\Omega)\subseteq \Omega$. For any distribution $P\in P(\Omega)$, let $P_1$ denote its marginal on the measure coordinate. We shall say that $P$ is adapted if for every $f\in C(X)$,
\begin{equation*}
\int f(\mu,x)dP(\mu,x) = \int \left( \int f(\mu,x) d\mu(x) \right) dP_1(\mu).
\end{equation*}  
In particular, if $P$ is adapted then if a property holds $P$ almost surely, then it holds for $P_1$ almost every $\mu$, and for $\mu$ almost every $x$.

\begin{Definition} \label{Def CP dist}
A distribution $P\in P(\Omega)$ is called a CP-distribution if it is $M$ invariant and adapted. 
\end{Definition}

A CP-distribution $P$ is called ergodic if the underlying dynamical system $(\Omega,M,P)$ is ergodic. If it is not ergodic, its ergodic decomposition provides us with  ergodic CP distributions:
\begin{Proposition}  \label{Proposition components of CP}
The ergodic components of a CP-distribution are, almost surely, themselves ergodic CP-distributions.
\end{Proposition}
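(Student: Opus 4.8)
\emph{Proof plan.} The plan is to treat the three required properties of an ergodic component separately. Write the ergodic decomposition of $(\Omega,M,P)$ as $P=\int_\Omega P_\omega\,dP(\omega)$. The ergodic decomposition theorem already gives, for $P$-a.e.\ $\omega$, that $P_\omega$ is $M$-invariant and that $(\Omega,M,P_\omega)$ is ergodic; so the entire content of the proposition is that $P_\omega$ is \emph{adapted} for $P$-a.e.\ $\omega$. Recall adaptedness means $\int\Phi_f\,dP=0$ for every bounded continuous $f$ on $\Omega$, where $\Phi_f(\mu,x):=f(\mu,x)-\int f(\mu,y)\,d\mu(y)$; equivalently, under $P$ the conditional law of $x$ given $\mu$ is $\mu$. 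Since $C_b(\Omega)$ is separable, it suffices to fix a countable dense family $\{f_i\}$ (products of a continuous function of $\mu$ and one of $x$ suffice, by Stone--Weierstrass) and to prove, for each fixed $f\in C_b(\Omega)$, that $\int\Phi_f\,dP_\omega=0$ for $P$-a.e.\ $\omega$; equivalently, that $\Phi_f^{*}:=E_P[\Phi_f\mid\mathcal E]$ vanishes $P$-a.s., where $\mathcal E$ is the $\sigma$-algebra of $M$-invariant sets.

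The naive route fails, since adaptedness of $P$ only yields $\int\Phi_f^{*}\,dP=\int\Phi_f\,dP=0$, which is weaker than $\Phi_f^{*}\equiv0$. The fix uses the explicit form $M^k(\mu,x)=(\mu^{[x_1\cdots x_k]},\sigma^k x)$ of the magnification dynamics. Let $\mathcal F_k$ be the $\sigma$-algebra on $\Omega$ generated by $\mu$ together with the first $k$ coordinates of $x$. Since, conditionally on $\mu$, one has $x\sim\mu$ and hence $\sigma^k x\sim\mu^{[x_1\cdots x_k]}$ conditionally on $\mathcal F_k$, a direct computation gives $E_P[\Phi_f\circ M^k\mid\mathcal F_k]=0$ for every $k$. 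This does not by itself give a strong law, because $\Phi_f\circ M^k$ is not $\mathcal F_{k+1}$-measurable. To remedy this I would introduce, for each $m\ge1$, the $m$-step smoothing
\[
f_m(\nu,y):=\frac{1}{\nu([y_1\cdots y_m])}\int_{[y_1\cdots y_m]}f(\nu,z)\,d\nu(z),
\]
a function in $C_b(\Omega)$ depending only on $\nu$ and $y_1,\dots,y_m$. One checks that $\int f_m(\nu,\cdot)\,d\nu=\int f(\nu,\cdot)\,d\nu$, so $\Phi_f-\Phi_{f_m}=f-f_m$, and moreover $\Phi_{f_m}\circ M^k=E_P[\Phi_f\circ M^k\mid\mathcal F_{k+m}]$; in particular $\Phi_{f_m}\circ M^k$ is $\mathcal F_{k+m}$-measurable while still having zero conditional expectation given $\mathcal F_k$.

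To conclude, partition $\{0,1,2,\dots\}$ into the $m$ residue classes modulo $m$: along each class the partial sums of $\Phi_{f_m}\circ M^k$ form a martingale with increments bounded by $2\|f\|_\infty$, so the strong law for bounded martingale differences gives $\frac1N\sum_{k<N}\Phi_{f_m}\circ M^k\to0$ $P$-a.s.; combined with Birkhoff's theorem this forces $E_P[\Phi_{f_m}\mid\mathcal E]=0$ $P$-a.s. Hence $\Phi_f^{*}=E_P[\Phi_f-\Phi_{f_m}\mid\mathcal E]=E_P[f-f_m\mid\mathcal E]$, so $|\Phi_f^{*}|\le E_P[|f-f_m|\mid\mathcal E]$ and $\int|\Phi_f^{*}|\,dP\le\int|f-f_m|\,dP$. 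Finally, for every $(\nu,y)\in\Omega$ the normalized restrictions $\nu|_{[y_1\cdots y_m]}/\nu([y_1\cdots y_m])$ converge weak-$*$ to $\delta_y$ as $m\to\infty$ (because $y\in\supp\nu$), so $f_m\to f$ pointwise on $\Omega$ with $|f-f_m|\le2\|f\|_\infty$, and dominated convergence gives $\int|f-f_m|\,dP\to0$. Therefore $\Phi_f^{*}=0$ $P$-a.s.; intersecting the full-measure sets obtained for all $f_i$ then shows $P_\omega$ is adapted for $P$-a.e.\ $\omega$, which completes the proof.

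The main obstacle is exactly the point just treated: $\{\Phi_f\circ M^k\}_k$ only barely fails to be a martingale difference sequence, and the device of passing to the smoothed functions $\Phi_{f_m}$ (to recover the martingale structure along arithmetic progressions) and then letting $m\to\infty$ --- so that the $L^1$-error introduced by the smoothing disappears even after conditioning on $\mathcal E$ --- is where the real work lies. The rest is the ergodic decomposition theorem together with routine verifications of the stated identities.
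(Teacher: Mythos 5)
Your proof is correct, and it is genuinely different from what the paper does: the paper gives no proof at all for this proposition, instead citing Furstenberg \cite{furstenberg2008ergodic} (after Proposition 5.1) and Hochman \cite{hochman2010dynamics} (Theorem 1.3). Furstenberg's route is conceptually cleaner: he reformulates adaptedness as stationarity with respect to the natural Markov operator on $P(X)$ (pick $a$ with probability $\mu([a])$ and pass to $\mu^{[a]}$), and then the proposition reduces to the classical fact that ergodic components of a stationary measure for a Markov chain are again stationary for the same chain. Your approach instead works directly on $\Omega$ and proves the vanishing of the invariant projection $E_P[\Phi_f\mid\mathcal E]$ by an explicit martingale argument, which is slicker than it may look: the observation that $\Phi_{f_m}\circ M^k=E_P[\Phi_f\circ M^k\mid\mathcal F_{k+m}]$ is both $\mathcal F_{k+m}$-measurable and centered given $\mathcal F_k$ gives you $m$ interleaved bounded martingale difference sequences, and combining the martingale SLLN along residue classes with Birkhoff is exactly the right finish; the $m\to\infty$ limit to remove the smoothing error is handled correctly via dominated convergence since $\nu\vert_{[y_1\cdots y_m]}/\nu([y_1\cdots y_m])\to\delta_y$ weak-$*$ precisely because $y\in\supp\nu$. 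The trade-off is that Furstenberg's argument is essentially one line once the Markov-operator reformulation is in place, whereas yours is longer but fully self-contained and does not require setting up that reformulation.

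One small inaccuracy worth flagging: you assert that $f_m\in C_b(\Omega)$. This is not obviously true for an arbitrary $f\in C_b(\Omega)$ (the issue is that $\Omega$ is not closed in $P(X)\times X$, so $f$ need not be uniformly continuous, and the ratio defining $f_m$ involves $\nu$ in both the numerator and the denominator). The claim does hold for $f$ extending continuously to the compact $P(X)\times X$ --- in particular for the product functions $g(\mu)h(x)$ you take as your countable dense family, so no harm is done --- but more to the point, continuity of $f_m$ is never actually used: boundedness and measurability suffice both for Birkhoff and for the martingale SLLN. You could safely replace ``a function in $C_b(\Omega)$'' with ``a bounded measurable function on $\Omega$'' and nothing else in the argument needs to change.
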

A proof is indicated by Furstenberg in \cite{furstenberg2008ergodic} (after Proposition 5.1), and can be deduced from Theorem 1.3 in \cite{hochman2010dynamics}.

We proceed to collect some useful properties of CP distributions. 

\begin{Proposition}  \cite{furstenberg2008ergodic}
Let $P$ be an ergodic CP-distribution. Then $P_1$ almost every measure $\mu$ is exact dimensional with dimension
\begin{equation*}
\dim \mu = \frac{1}{\log \rho^{-1}} \int -\log \nu([x_1]) dP(\nu,x).
\end{equation*}
\end{Proposition}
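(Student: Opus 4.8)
The plan is to recognize the information function $-\log\mu([x_1\cdots x_k])$, evaluated along a $\mu$-typical point $x$, as a Birkhoff sum for the magnification dynamics $(\Omega,M,P)$, and then invoke the pointwise ergodic theorem.

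Define $f:\Omega\to[0,\infty)$ by $f(\nu,y)=-\log\nu([y_1])$. Since $P$ is adapted,
\[
\int f\,dP \;=\; \int\!\left(\int -\log\nu([y_1])\,d\nu(y)\right) dP_1(\nu) \;=\; \int H(\nu,\mathcal{I}_1)\,dP_1(\nu) \;\leq\; \log(m_1 m_2),
\]
because the partition $\mathcal{I}_1$ of $X=([m_1]\times[m_2])^\mathbb{N}$ into $1$-cylinders has only $m_1 m_2$ atoms; hence $f\in L^1(P)$. Next I establish the key telescoping identity. For $(\mu,x)\in\Omega$ write $M^i(\mu,x)=(\mu_{x,i},\sigma^i x)$, where $\mu_{x,i}$ is the $i$-fold magnification (so $\mu_{x,0}=\mu$). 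Unwinding the definition $M(\mu,x)=(\mu^{[x_1]},\sigma(x))$ one checks by induction that $\mu_{x,i}([x_{i+1}])=\mu([x_1\cdots x_{i+1}])/\mu([x_1\cdots x_i])$, and multiplying these increments gives, for every $k\geq 1$,
\[
\sum_{i=0}^{k-1} f\bigl(M^i(\mu,x)\bigr) \;=\; \sum_{i=0}^{k-1} -\log\mu_{x,i}([x_{i+1}]) \;=\; -\log\mu\bigl([x_1\cdots x_k]\bigr).
\]

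Now apply Birkhoff's ergodic theorem to the ergodic system $(\Omega,M,P)$ and the function $f\in L^1(P)$: for $P$-almost every $(\mu,x)$,
\[
\lim_{k\to\infty}\frac{-\log\mu([x_1\cdots x_k])}{k} \;=\; \int f\,dP \;=:\; E.
\]
In the metric $d_\rho$ a cylinder $[x_1\cdots x_k]$ coincides, up to an index shift, with the closed ball $\bar B(x,\rho^k)$, so the limit above says $\log\mu(\bar B(x,\rho^k))/\log(\rho^k)\to E/\log\rho^{-1}$. A standard squeezing argument over consecutive radii $\rho^{k+1}<r\leq\rho^k$, using monotonicity of $r\mapsto\mu(\bar B(x,r))$, upgrades this to
\[
\dim(\mu,x)\;=\;\lim_{r\to 0}\frac{\log\mu(\bar B(x,r))}{\log r}\;=\;\frac{E}{\log\rho^{-1}}
\]
for $P$-almost every $(\mu,x)$. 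Since $P$ is adapted, this means that for $P_1$-almost every $\mu$ the pointwise dimension of $\mu$ exists and equals the constant $E/\log\rho^{-1}$ at $\mu$-almost every point; by the criterion recalled in Section \ref{Section dimension}, such a $\mu$ is exact dimensional with $\dim\mu = E/\log\rho^{-1} = \frac{1}{\log\rho^{-1}}\int -\log\nu([x_1])\,dP(\nu,x)$, which is the asserted formula.

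The only genuinely delicate point is the telescoping identity: one must track carefully how the shift built into $M$ interacts with the conditioning, so that the information increments $-\log\mu_{x,i}([x_{i+1}])$ really do sum to $-\log\mu([x_1\cdots x_k])$. The integrability of $f$, the identification of $1$-cylinders with metric balls, and the passage from the radii $\rho^k$ to arbitrary $r\to 0$ are all routine. It is also worth emphasizing that ergodicity of $P$ (not merely $M$-invariance) is precisely what forces the Birkhoff limit to be the constant $\int f\,dP$ rather than a non-constant $\mathcal{E}$-measurable conditional expectation; for a general CP-distribution one first passes to its ergodic components via Proposition \ref{Proposition components of CP}.
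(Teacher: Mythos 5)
Your proof is correct, and since the paper only cites this result to \cite{furstenberg2008ergodic} without reproducing a proof, there is nothing in the text to compare against line by line; what you have written is the standard argument (and, to the best of my knowledge, essentially Furstenberg's). The telescoping identity $\mu^{[x_1^i]}([x_{i+1}]) = \mu([x_1^{i+1}])/\mu([x_1^i])$ is verified correctly, Birkhoff's theorem applies because $f(\nu,y)=-\log\nu([y_1])$ is nonnegative with $\int f\,dP \le \log(m_1m_2) < \infty$ by adaptedness, the identification of cylinders with $d_\rho$-balls (with the one-step index shift $[x_1^k]=\bar B(x,\rho^{k+1})$) is harmless for dimension, and adaptedness correctly converts the $P$-a.e. statement into ``for $P_1$-a.e.\ $\mu$, for $\mu$-a.e.\ $x$'', which together with the criterion in Section \ref{Section dimension} yields exact dimensionality with the asserted value. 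Your closing remark that ergodicity of $P$ (rather than mere $M$-invariance) is what makes the Birkhoff limit constant is exactly the point, and is also why the paper later decomposes a non-ergodic $Q$ into ergodic components via Proposition \ref{Proposition components of CP} before using dimension statements.
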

For an ergodic CP distribution $P$, $\dim P$ denotes this (almost surely) constant value.

Next, let $x\in X$, and denote $[x_1 ^k] = \lbrace y\in X:(y_1,..,y_k) = (x_1,...,x_k) \rbrace$. We also denote
\begin{equation*}
\mu^{[x_1 ^k]} = \frac{\sigma(\mu|_{[x_1^k]})}{\mu([x_1 ^k])}
\end{equation*}
It follows from the ergodic Theorem that if $P$ is an ergodic CP distribution, then $P_1$ almost every $\mu$ generates $P_1$ in the sense that for $\mu$ a.e. $x$
\begin{equation} \label{Equation 3.1}
\frac{1}{N}\sum_{i=1} ^{N} \delta_{\mu^{[x_1 ^i]}} \rightarrow P_1
\end{equation}
in the weak-* topology. Measures that satisfy this shall be called generic for $P_1$. 

The following Proposition was proved by Wu in \cite{wu2016proof}. We denote by $\mathcal{J}_k ^i$ the  $k$-th generation cylinder partition of $[m_i]^\mathbb{N}$ for $i=1,2$. Thus, $\mathcal{J}_k ^1 \times \mathcal{J}_k ^2$ is the $k$-th generation cylinder partition of the space $X$.
\begin{Proposition} (\cite{wu2016proof}, Proposition 3.7) \label{Proposition 3.7}
Let $P$ be an ergodic CP distribution with $\dim P =q>0$. For every $\epsilon>0$ there exists $k_0 (\epsilon) \in \mathbb{N}$ such that for each $\mu$ that is generic for $P_1$ and for $\mu$ almost every $x$,
\begin{equation*}
\liminf_{N\rightarrow \infty} \frac{1}{N} | \lbrace 1\leq p \leq N: \max_{u\in ([m_1]\times [m_2])^{k_0(\epsilon)}} \mu^{[x_1 ^p]} ([u]) \leq \epsilon \rbrace| > 1-\epsilon,
\end{equation*}
and
\begin{equation*}
\liminf \frac{1}{N} |\lbrace 1\leq p \leq N: H(\mu^{[x_1 ^p]} , \mathcal{J}_p ^1 \times \mathcal{J}_p ^2) \geq p\cdot (q\cdot \log \rho^{-1} - \epsilon)\rbrace| >1-\epsilon, \text{ for all } p\geq k_0(\epsilon).
\end{equation*}
Moreover, this is true for all pairs $(\mu,x)$ satisfying \eqref{Equation 3.1} (with $k_0$ depending on $(\mu,x)$).
\end{Proposition}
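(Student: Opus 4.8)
The plan is to obtain both estimates from the ergodic theorem applied to the magnification system $(\Omega,M,P)$, using adaptedness to transfer $P$-a.e.\ statements to statements about a measure $\mu$ generic for $P_1$ and $\mu$-a.e.\ $x$. Write $h:=q\log\rho^{-1}$, so that by the dimension formula together with adaptedness $h=\int_\Omega -\log\nu([y_1])\,dP(\nu,y)$. I will use two features of the CP formalism recalled above, both of which also hold for every pair $(\mu,x)$ satisfying \eqref{Equation 3.1}: (i) if $\mu$ is generic for $P_1$ then for $\mu$-a.e.\ $x$ the orbit $(M^i(\mu,x))_{i\ge 0}$ equidistributes for $P$; and (ii) every such $\mu$ is exact dimensional with $\dim\mu=q$, i.e.\ $-\tfrac1k\log\mu([x_1^k])\to h$ for $\mu$-a.e.\ $x$ (the ergodic-theoretic argument behind the dimension formula applies along a generic orbit as well, modulo the usual truncation to handle the unboundedness of $-\log\nu([y_1])$).

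For the first estimate I would start by fixing the depth $k_0$. For a cylinder word $u$ of length $k$ put $g_k(\nu):=\max_{u\in([m_1]\times[m_2])^k}\nu([u])$; this is non-increasing in $k$, and since $q>0$ forces $P_1$-a.e.\ $\nu$ to be non-atomic, $g_k(\nu)\downarrow 0$ for $P_1$-a.e.\ $\nu$. By dominated convergence I may choose $k_0=k_0(\epsilon)$, depending only on $P$ and $\epsilon$, with $\int g_{k_0}\,dP_1<\epsilon^2$. The function $(\nu,y)\mapsto g_{k_0}(\nu)$ is continuous on $\Omega$ --- a finite maximum of the continuous maps $\nu\mapsto\nu([u])$, cylinders being clopen --- and independent of $y$, so adaptedness gives $\int_\Omega g_{k_0}\,dP=\int g_{k_0}\,dP_1<\epsilon^2$. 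Equidistribution of the orbit then yields $\tfrac1N\sum_{p=1}^{N} g_{k_0}(\mu^{[x_1^p]})\to\int_\Omega g_{k_0}\,dP<\epsilon^2$, and Markov's inequality applied to this Ces\`aro sum bounds $\limsup_N\tfrac1N\#\{p\le N:g_{k_0}(\mu^{[x_1^p]})>\epsilon\}$ by $\epsilon$; taking complements gives the first assertion with this $k_0(\epsilon)$.

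For the second estimate I would prove the stronger statement that, for $\mu$-a.e.\ $x$, the inequality holds for \emph{all} sufficiently large $p$, the threshold depending on $x$; this makes the set of good $p$ have density one, so the stated liminf equals $1>1-\epsilon$, and it permits inserting the cutoff $k_0(\epsilon)$ from the first part harmlessly. Fix $\epsilon>0$ and a small $\delta>0$. Egorov's theorem applied to (ii) produces a set $E$ with $\mu(E)>1-\delta$ and a uniform $K$ such that $\big|{-}\tfrac1k\log\mu([z_1^k])-h\big|<\delta$ for all $z\in E$, $k\ge K$. By L\'evy's $0$--$1$ law, $\mu^{[x_1^p]}\big(\{y:x_1^p\frown y\in E\}\big)=\mu(E\cap[x_1^p])/\mu([x_1^p])\to\mathbf 1_E(x)$ for $\mu$-a.e.\ $x$, so for $\mu$-a.e.\ $x\in E$ and all large $p$ this mass exceeds $1-\delta$. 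For such $x,p$ and for every $y$ with $z:=x_1^p\frown y\in E$, the identity
\[
-\log\mu^{[x_1^p]}([y_1^p])=\big({-}\log\mu([x_1^py_1^p])\big)-\big({-}\log\mu([x_1^p])\big),
\]
combined with the Egorov estimates at depths $2p$ and $p$, gives $-\log\mu^{[x_1^p]}([y_1^p])\ge 2p(h-\delta)-p(h+\delta)=p(h-3\delta)$. Hence the union $B_p$ of depth-$p$ cylinders of $\mu^{[x_1^p]}$-mass at most $e^{-p(h-3\delta)}$ has $\mu^{[x_1^p]}(B_p)>1-\delta$, so (discarding the remaining, non-negative, entropy terms)
\[
H(\mu^{[x_1^p]},\mathcal J_p^1\times\mathcal J_p^2)\ \ge\ \sum_{C\subseteq B_p}\mu^{[x_1^p]}(C)\,\big({-}\log\mu^{[x_1^p]}(C)\big)\ \ge\ (1-\delta)\,p\,(h-3\delta).
\]
Choosing $\delta$ small enough (in terms of $\epsilon$ and $h$) that $(1-\delta)(h-3\delta)\ge h-\epsilon$, then letting $\delta$ run through a summable null sequence and invoking Borel--Cantelli to pass from ``$x\in E_\delta$'' to ``$\mu$-a.e.\ $x$'', yields the claim. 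The extension to all $(\mu,x)$ satisfying \eqref{Equation 3.1}, now with a threshold depending on $(\mu,x)$, is immediate because the only property of $\mu$ used is exact dimensionality, which holds there.

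The main obstacle is the second estimate. The quantity $H(\mu^{[x_1^p]},\mathcal J_p^1\times\mathcal J_p^2)$ couples the $p$-th magnification of $\mu$ with the resolution $p$, so it is not a Birkhoff sum of any fixed function on $\Omega$ and cannot be reached directly by the ergodic theorem; and bounding the entropy at a fixed resolution $k_0$ instead only produces a constant lower bound, not one linear in $p$. The way around this is to abandon the magnification dynamics at that step and exploit the exact-dimensional (pointwise) behaviour of $\mu$ itself, which encodes all scales simultaneously, with Egorov's theorem and martingale convergence supplying the uniformity in $p$ that the ergodic theorem alone does not give.
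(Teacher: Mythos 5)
The paper does not prove this proposition but quotes it verbatim from Wu (\cite{wu2016proof}, Proposition 3.7), so there is no internal argument to compare against; I assess your proposal on its own. For the main assertion, where $\mu$ is generic, the argument is correct. The first estimate is clean: $g_{k_0}$ is a $[0,1]$-valued continuous function of the measure coordinate alone, so \eqref{Equation 3.1}, dominated convergence, and Markov's inequality close it. For the second estimate, your route through exact dimensionality, Egorov, and L\'evy's $0$--$1$ law is sound, and you correctly identify why the magnification dynamics alone cannot see an entropy growing linearly in $p$. I would, however, reframe assertion (ii): no truncation is actually needed, because the martingale increments $-\log\mu^{[x_1^i]}([x_{i+1}]) - H\bigl(\mu^{[x_1^i]},\mathcal{J}_1^1\times\mathcal{J}_1^2\bigr)$ have uniformly bounded conditional second moments (the function $t\mapsto t(\log t)^2$ is bounded on $[0,1]$ and the alphabet is finite), so the martingale strong law applies outright; equidistribution \eqref{Equation 3.1} plus the continuity and boundedness of $\nu\mapsto H(\nu,\mathcal{J}_1^1\times\mathcal{J}_1^2)$ then give the Ces\`aro limit $h$ of the compensators, and together these yield $-\tfrac{1}{k}\log\mu([x_1^k])\to h$ $\mu$-a.e. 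The Borel--Cantelli closing step is also fine once written out: fix $\delta_0$ with $(1-\delta_0)(h-3\delta_0)>h-\epsilon$, run a summable $\delta_n\le\delta_0$, note the bound only improves as $\delta$ shrinks, and a.e.\ $x$ lies eventually in $E_{\delta_n}$.

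There is one genuine gap, in your final sentence. You assert the ``Moreover'' clause (a \emph{single} pair $(\mu,x)$ satisfying \eqref{Equation 3.1}) is immediate ``because the only property of $\mu$ used is exact dimensionality, which holds there.'' It does not hold there. A single $x$ satisfying \eqref{Equation 3.1} does not force $\mu$ to be exact dimensional: both the martingale strong law and L\'evy's $0$--$1$ law yield only $\mu$-a.e.\ statements. Indeed, take $\mu=\tfrac12\nu+\tfrac12\nu'$ with $\nu$ a $P_1$-typical measure and $\nu'$ of a different dimension, supported disjointly from $\nu$. Then $\mu$ is not exact dimensional, yet for $\nu$-a.e.\ $x$ one has $\mu^{[x_1^p]}=\nu^{[x_1^p]}$ for all large $p$, so \eqref{Equation 3.1} holds at such $x$. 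In this example the second estimate still holds at such $x$, but that is because one reduces to $\nu$, not because your argument applies to $\mu$; the extension to arbitrary pairs therefore needs its own proof. This gap does not affect the paper's use of the proposition, which only invokes the $P_1$-a.e.\ version.
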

Finally, in practice we shall construct a CP distribution on a space of the form $([m_1]\times [m_2])^{\mathbb{N}_0}$, where $\mathbb{N}_0 = \mathbb{N}\cup \lbrace 0 \rbrace$. It is not hard to see how the discussion in this Section generalizes to this situation.

\subsubsection{CP distributions on Euclidean spaces}
The CP distributions discussed in the previous section have many applications for problems in geometric measure theory. To make the connection, we introduce the Euclidean version of CP distributions, which are closely related to symbolic CP distributions. In this section, we partialy follow Section 2.1 in \cite{Fraser2015Ferguson}. We introduce the theory only in $\mathbb{R}^2$, where we shall use it.

Let $B\subset \mathbb{R}^2$ be a box, that is, a product of intervals (open, closed, or half open). Let $T_B:\mathbb{R}^2 \rightarrow \mathbb{R}^2$ denote the orientation preserving affine map
\begin{equation*}
T_B (x) = \frac{1}{\sqrt{|B|}}(x-\min \overline{B}),
\end{equation*}
where $|B|$ is volume of $B$ and $\min \overline{B}$ is the minimal element of $\overline{B}$ with respect to the lexicographic order (so it's the lower left corner of the box). We define the normalized box $B^* = T_B (B)$, so that $|B^*|=1$. If $\mu \in P(\mathbb{R}^2)$ and $B$ is a box with $\mu(B)>0$ we write
\begin{equation*}
\mu^B = \frac{1}{\mu(B)} T_B (\mu|_B) \in P(B^*).
\end{equation*}

Next, we define partition operators and filtrations. Let $\mathcal{E}$ be a collection of boxes in $\mathbb{R}^2$. A partition operator $\Delta$ on $\mathcal{E}$ associates to every $B\in \mathcal{E}$ a partition $\Delta B \subset \mathcal{E}$ of $B$ such that, for every homothety $S:\mathbb{R}^2 \rightarrow \mathbb{R}^2$, we have $S(\Delta B) = \Delta(SB)$. For every $B\in \mathcal{E}$, the partition operator $\Delta$ defines a filtration of $B$ by
\begin{equation*}
\Delta^0 (B) = \lbrace B \rbrace, \quad \Delta^{n+1} (B) = \lbrace \Delta(A): \quad A\in \Delta^k (B) \rbrace.
\end{equation*}
 A partition operator $\Delta$ is called $\delta$-regular if for any $B\in \mathcal{E}$ there is a constant $c>1$ such that for all $k\in \mathbb{N}$, any element $A\in \Delta^k (B)$ contains a ball of radius $\delta^k/c$ and is contained in a ball of radius $c\delta^k$. 
 
 For example, for every $m\geq 2$ we define the base $m$ partition operator on $\mathcal{E} = \lbrace [u,v]^2 : u<v \rbrace$ by defining 
 \begin{equation*}
 \Delta_m([0,1]^2) = \lbrace [\frac{k_1}{m}, \frac{k_1+1}{m})\times [\frac{k_2}{m}, \frac{k_2+1}{m}): 0\leq k_1,k_2 < m-1, \quad k_1,k_2 \in \mathbb{Z} \rbrace,
 \end{equation*}
 and extending (by invariance) to all cubes. Notice that this operator is $\frac{1}{m}$ regular.
 
\begin{Definition} \label{Defition CP}
 Fix a collection of boxes $\mathcal{E}$ and define a state space
 \begin{equation*}
 \Theta = \lbrace (B,\mu):\quad \mu\in P(B^*),B\in \mathcal{E}\rbrace.
 \end{equation*}
 A $\delta$-regular CP-chain $Q$ with respect to a $\delta$-regular partition operator $\Delta$ is a stationary Markov process on the state space $\Theta$ with the Markov kernel 
 \begin{equation*}
 F(B,\mu) = \sum_{A\in \Delta(B^*)} \mu(A)\delta_{(A,\mu^A)},\quad (B,\mu)\in \Theta.
\end{equation*}  
 \end{Definition}
 
 Thus, by definition, if $Q$ is the unique stationary distribution with respect to the chain, then  $(\Theta^\mathbb{N},\sigma,\hat{Q})$ is a dynamical system, where $\hat{Q}$ is the extension of $Q\in P(\Theta)$ to a measure on $\Theta^\mathbb{N}$, generated by running the Markov chain starting from $Q$. We abuse notation and refer to $\hat{Q}$ as $Q$. Thus, $Q$ is ergodic if this system is ergodic. 
 
 \begin{Definition} \label{Definition CP gen}
 Let $Q$ be a CP chain as above. We abuse notation and write $Q$ for the distribution of its measure component. Given $B\in \mathcal{E}$, the CP chain $Q$ is  generated by $\mu \in P(B^*)$ if at $\mu$ almost every $x\in B^*$ 
 \begin{equation*}
 \frac{1}{N} \sum_{k=0} ^{N-1} \delta_{\mu^{\Delta^k (B) (x)}} \rightarrow Q
 \end{equation*}
 in the weak star topology, and for any $q\in \mathbb{N}$,
 \begin{equation*}
 \frac{1}{N} \sum_{k=0} ^{N-1} \delta_{\mu^{\Delta^{qk} (B) (x)}} \rightarrow Q_q
 \end{equation*}
 converge to some (possibly different) distribution $Q_q$.
  \end{Definition}

\subsubsection{Continuous time scaling scenery}
To prove Theorem \ref{Theorem mutual singularity}, we shall require the notion of the continuous scaling scenery of a measure $\mu\in P([0,1]^2)$ at a point $x\in \supp(\mu)$. First, we define the scaling and translation maps $S_t, T_x :\mathbb{R}^2 \rightarrow \mathbb{R}^2$ by
\begin{equation*}
S_t (y) = e^t \cdot y, \quad T_x (y) =y -x.
\end{equation*}
We also define the restriction and normalization operator
\begin{equation*}
\nu \in P( \mathbb{R}^2)\mapsto \nu^\square :=\left( \frac{\nu}{\nu([-1,1]^2} \right)|_{[-1,1]^2}, \text{ assuming } 0 \in \supp(\nu). 
\end{equation*}

\begin{Definition} (\cite{Gavish2008scaling}, \cite{hochman2010dynamics}) \label{Generating FD}
Let $\mu \in P([0,1]^2)$ and let $x\in \supp (\mu)$. 
\begin{enumerate}
\item  We define the parametrized family of measures $\mu_{x,t}  = \left( S_t \circ T_x (\mu) \right)^\square$. This family is called the scenery of $\mu$ at $x$. 

\item For every $T>0$ we define the scenery distribution $$\left\langle \mu \right\rangle_{x,T} = \frac{1}{T} \int_0 ^T \delta_{\mu_{x,t}} dt \in P( P([-1,1]^2)).$$

\item If $\left\langle \mu \right\rangle_{x,T} \rightarrow P$ as $T\rightarrow \infty$ we say that $\mu$ generates $P$ at $x$.
\end{enumerate}
 \end{Definition}
  
One of the main advantages of zooming into a measure in this way is that it is done in a coordinate free way. An example of how this is useful is the following Lemma:

\begin{Lemma} (\cite{hochman2010dynamics}) \label{Lemma generating w.r.t. image and abs. conti.}
Let $\mu\in P([0,1]^2)$ be a Borel probability measure such that for $\mu$ almost every $x\in [0,1]^2$, $\left\langle \mu \right\rangle_{x,T} \rightarrow P$, for some $P\in P(P([0,1]^2))$. 

\begin{enumerate}
\item If $\nu \ll \mu$ then $\nu$ generates $P$ at almost every $x$.

\item Let $g\in \text{diff} (\mathbb{R}^2)$. Then for $g\mu$ almost every $g(x)$, $\left\langle g\mu \right\rangle_{g(x),T}\rightarrow \left( Dg(x) \right)^\square P$, where $( Dg(x) )^\square$  transforms measures by first pushing them forward via $D_g(x)$ and then applying $^\square$.
\end{enumerate} 
\end{Lemma}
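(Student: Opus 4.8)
I would prove the two assertions separately; both reduce to a ``zooming commutes with the relevant operation'' principle plus one soft fact (a density statement for (1), asymptotic affinity for (2)).

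\textbf{Part (1).} Write $\nu = f\mu$ with $f = d\nu/d\mu \ge 0$. The key observation is that $\mu_{x,t}$ and $\nu_{x,t}$ are the normalized push-forwards of $\mu$ and $\nu$ restricted to the \emph{same} shrinking box $Q_{x,t} := x + e^{-t}[-1,1]^2$ under the \emph{same} affine map $S_t\circ T_x$, so in the zoomed coordinates $\nu_{x,t}$ is just $\mu_{x,t}$ reweighted by $f$. By the Lebesgue density theorem for $\mu$ (equivalently, martingale convergence along shrinking boxes), for $\mu$-a.e.\ $x$ — hence for $\nu$-a.e.\ $x$, after restricting to $\{f>0\}$ —
\begin{equation*}
\frac{1}{\mu(Q_{x,t})}\int_{Q_{x,t}} |f - f(x)|\, d\mu \longrightarrow 0 \qquad (t\to\infty).
\end{equation*}
This forces $d(\mu_{x,t},\nu_{x,t})\to 0$ in any metric $d$ inducing the weak-$*$ topology on $P([-1,1]^2)$, since the difference of the two normalized measures is controlled by the left-hand side. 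Averaging over $t\in[0,T]$, for any $\phi \in C(P([-1,1]^2))$ one gets $\frac1T\int_0^T |\phi(\mu_{x,t}) - \phi(\nu_{x,t})|\,dt \to 0$, so $\langle\nu\rangle_{x,T}$ and $\langle\mu\rangle_{x,T}$ have the same weak-$*$ limit. As $\nu\ll\mu$, the hypothesis gives $\langle\mu\rangle_{x,T}\to P$ for $\nu$-a.e.\ $x$, and therefore $\langle\nu\rangle_{x,T}\to P$ for $\nu$-a.e.\ $x$.

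\textbf{Part (2).} Fix $x$ in the full-$\mu$-measure set on which $\langle\mu\rangle_{x,T}\to P$; since $g\in\mathrm{diff}(\mathbb{R}^2)$ we may also use at such $x$ that $A := Dg(x)$ is invertible and $g(y) = g(x) + A(y-x) + R(y)$ with $|R(y)| = o(|y-x|)$. Conjugating the zoom-at-$g(x)$ of $g$ by the zoom-at-$x$, i.e.\ setting $h_t := (S_t\circ T_{g(x)}\circ g)\circ(S_t\circ T_x)^{-1}$, one computes $h_t(z) = Az + e^{t}R(x + e^{-t}z)$, and the error term is $o(1)$ uniformly for $z$ in any bounded set; hence $h_t\to A$ uniformly on compacta. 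Since $S_t\circ T_{g(x)}(g\mu) = (h_t)_*\,(S_t\circ T_x\mu)$, this asymptotic affinity should give $d\big((g\mu)_{g(x),t},\ (A_*\mu_{x,t})^\square\big)\to 0$ as $t\to\infty$. I would then take Cesàro averages and push the convergence $\langle\mu\rangle_{x,T}\to P$ forward under the self-map $\nu\mapsto (A_*\nu)^\square$ of $P([-1,1]^2)$ (continuous at the measures that matter, the relevant window boundary being $P$-a.s.\ null), concluding $\langle g\mu\rangle_{g(x),T}\to (Dg(x))^\square P$. It is cleanest to first treat affine $g$, where $h_t = A$ exactly, isolating the truncation issue below from the Taylor error, and then perturb.

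\textbf{The main obstacle.} The delicate point, present in both parts but sharpest in (2), is the hard truncation inside $(\cdot)^\square$: in the comparison $d\big((g\mu)_{g(x),t},(A_*\mu_{x,t})^\square\big)\to 0$, the left measure truncates to $[-1,1]^2$ once, after $h_t$, whereas $(A_*\mu_{x,t})^\square$ truncates once \emph{before} applying $A$ and once after, so mass that $A$ moves into or out of the window is treated differently. I would resolve this in the standard way: the generated scenery distribution is unchanged if one enlarges the window from $[-1,1]^2$ to $[-R,R]^2$ (at the cost of an $O(1)$ shift in the time parameter, which is invisible to a Cesàro limit), so one may assume $\|A^{-1}\|$ is small enough that $A^{-1}([-1,1]^2)$ sits well inside the window used to form $\mu_{x,t}$ — equivalently, one works with the germ of the zoomed measure rather than its fixed-window representative, where the truncation is immaterial. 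The remaining routine verification is that a uniformly small perturbation $h_t\approx A$ of the zooming map, combined with normalization, changes the resulting measure by a weak-$*$-small amount. Granting this, parts (1) and (2) rest respectively on shrinking-box density estimates and on the asymptotic affinity of a diffeomorphism, both soft.
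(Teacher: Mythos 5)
The paper does not prove this lemma; it is cited verbatim from Hochman's \emph{Dynamics on fractals and fractal distributions} and used as a black box. So there is no proof in this paper to compare against. What you have written is a fairly faithful reconstruction of the arguments that appear in the cited source, and both halves rest on the right ideas.

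For part (1) your reduction is correct. One should just confirm the version of the Lebesgue differentiation theorem you are invoking: the windows $Q_{x,t}=x+e^{-t}[-1,1]^2$ are centered cubes, and for a general Radon measure $\mu$ on $\mathbb{R}^2$ the Besicovitch covering theorem gives differentiation along centered cubes, so $\frac{1}{\mu(Q_{x,t})}\int_{Q_{x,t}}|f-f(x)|\,d\mu\to 0$ for $\mu$-a.e.\ $x$. You should also record $c_t:=\nu(Q_{x,t})/\mu(Q_{x,t})\to f(x)>0$; the bound on the weak-$*$ distance between $\mu_{x,t}$ and $\nu_{x,t}$ is then $\|\phi\|_\infty\, c_t^{-1}\cdot\frac{1}{\mu(Q_{x,t})}\int_{Q_{x,t}}|f-c_t|\,d\mu$, which is routine but should be written out. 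Uniform continuity of $\phi$ on the compact space $P([-1,1]^2)$ then carries the pointwise convergence $d(\mu_{x,t},\nu_{x,t})\to 0$ through the Ces\`aro average. The passage from a $\mu$-full set to a $\nu$-full set via $\nu\ll\mu$ is of course fine.

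For part (2) the asymptotic-affinity computation $h_t\to Dg(x)$ locally uniformly is the crux, and your identification of the truncation/normalization issue as the real work is exactly right. Two refinements are needed to make your fix honest. First, you cannot ``assume $\|A^{-1}\|$ is small'' since $A=Dg(x)$ is whatever it is; what you actually do is work with enlarged windows $[-R,R]^2$ for $R$ large enough that $A^{-1}([-1,1]^2)\subset(-R,R)^2$, and use the fact (which you gesture at) that the scenery distribution generated in a larger window determines the one in the smaller window, because passing between window sizes is a time-change by a bounded amount, invisible to a Ces\`aro limit. Second, the claim that $\nu\mapsto (A_*\nu)^\square$ is ``continuous at the measures that matter'' needs the observation that for the limiting distribution $P$, almost every measure gives zero mass to the boundary $\partial[-1,1]^2$ and to $\partial A^{-1}([-1,1]^2)$; this is a standard regularity property of scenery/fractal distributions (they are translation-quasi-invariant, so cannot charge a fixed lower-dimensional set almost surely) but it is not free and should be stated. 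With those two points made precise, your sketch is a correct proof along the same lines as the cited reference.
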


The following Theorem, due to Gavish \cite{Gavish2008scaling}, and in greater generality to Hochman \cite{hochman2010dynamics}, shows that a measure that generates an ergodic CP distribution also generates a distribution in the sense of Definition \ref{Generating FD}. Moreover, using the centering operation (see \cite{hochman2010dynamics}) we are able to relate the two distributions:
\begin{theorem} (\cite{Gavish2008scaling}, \cite{hochman2010dynamics})  \label{Theorem Gavish}
Let $\mu \in P([0,1]^2)$ be a measure that generates an ergodic CP distribution $Q$ in the sense of Definition \ref{Definition CP gen}. Then $\mu$ generates a distribution $P\in P(P([0,1]^2)$ at $\mu$ almost every $x$, in the sense of Definition \ref{Generating FD}. Moreover, there exists a distribution $R$ on triplets of the form $(\rho,\nu,x)$ such that:
\begin{enumerate}
\item The first coordinate $\rho$ is distributed according to $P$
.
\item The second coordinate $\nu$ is distributed according to $Q$, and $x$ is distributed according to $\nu$.

\item For $R$-almost every such triplet there exist $r(\rho,\nu,x)>0$ and $t>0$ such that $$\rho|_{B(0,r)} \ll S_t \circ T_x (\nu).$$
\end{enumerate}
\end{theorem}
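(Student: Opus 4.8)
The plan is to realize the continuous scaling scenery as a time--interpolated version of the discrete CP--chain scenery, and then to identify the limit distribution $P$ with the \emph{centering} of $Q$. Let $m$ be the base of the partition operator $\Delta_m$ generating $Q$, write $B_0=[0,1]^2$, and for $\mu$--a.e.\ $x$ let $(\nu_k,x_k)$ record the CP--chain data at scale $k$: $\nu_k:=\mu^{\Delta_m^k(B_0)(x)}$ is the renormalized rescaling of $\mu$ restricted to the $k$--th descendant box containing $x$, and $x_k$ is the position of $x$ inside that rescaled box. By Definition \ref{Definition CP gen} and the ergodic theorem, the empirical measures $\frac1N\sum_{k=0}^{N-1}\delta_{(\nu_k,x_k)}$ converge weakly to the adapted version of $Q$ (the one under which $x\sim\nu$). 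The first step is the elementary, if bookkeeping--heavy, observation that for $t\in[k\log m,(k+1)\log m)$ and $s:=t-k\log m$, the measure $\mu_{x,t}=(S_tT_x\mu)^\square$ coincides, on a fixed small ball $B(0,r)$, with a renormalized restriction of $S_sT_{x_k}\nu_k$. Summing over $t$ gives
\begin{equation*}
\langle\mu\rangle_{x,T}=\frac1T\int_0^T\delta_{\mu_{x,t}}\,dt\;\approx\;\frac1N\sum_{k=0}^{N-1}\mathcal{C}(\nu_k,x_k),\qquad \mathcal{C}(\nu,x):=\frac{1}{\log m}\int_0^{\log m}\delta_{(S_sT_x\nu)^\square}\,ds,\quad T=N\log m.
\end{equation*}

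Since $\mathcal{C}$ is continuous on the co--null set of pairs $(\nu,x)$ for which $S_sT_x\nu$ puts no mass on $\partial[-1,1]^2$ for a.e.\ $s$, pushing the convergence $\frac1N\sum\delta_{(\nu_k,x_k)}\to Q$ forward through $\mathcal{C}$ proves that $\langle\mu\rangle_{x,T}$ converges, to $P:=\int\mathcal{C}(\nu,x)\,dQ(\nu,x)=\mathcal{C}_*Q$; this is part (1) together with an explicit description of $P$. For part (2) I would then only have to unwind this formula as a coupling: let $R$ be the law of the triple $\bigl((S_sT_x\nu)^\square,\,\nu,\,x\bigr)$ produced by drawing $(\nu,x)$ from the adapted $Q$ and $s$ uniformly from $[0,\log m)$. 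By construction the first marginal is $\mathcal{C}_*Q=P$, the pair $(\nu,x)$ has law $Q$ with $x\sim\nu$, and $\rho:=(S_sT_x\nu)^\square$ is, by definition of the $\square$--operation, a constant multiple of $S_sT_x\nu$ restricted to $[-1,1]^2$; hence $\rho|_{B(0,r)}\ll S_sT_x\nu$ for every $r<1$, which is the asserted relation with $t=s>0$ (after discarding the measure--zero event $s=0$), and $0\in\supp\rho$ since $x\in\supp\nu$.

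The main obstacle is the first step. In $\mu_{x,t}$--coordinates the window $[-1,1]^2$ about $0$ corresponds to a ball about $x$ of radius comparable to $e^{-t}$, and when $t$ lies strictly between consecutive multiples of $\log m$ this ball generally straddles several boxes of $\Delta_m^k$, so $\mu_{x,t}$ restricted to it is \emph{not} a function of $\nu_k$ alone. Controlling this requires accounting for the contributions of the finitely many neighbouring boxes---or, more cleanly, replacing $\Delta_m$ by $\Delta_{m^q}$ for large $q$ and invoking the auxiliary convergence to $Q_q$ that is built into Definition \ref{Definition CP gen}---and then checking that the resulting discrepancy in $\langle\mu\rangle_{x,T}$ is $o(1)$ as $T\to\infty$. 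The continuity of the $\square$--operation at the relevant boundary--null configurations is a second, more routine, point. All of this is exactly the correspondence between restricted and unrestricted scaling sceneries worked out by Gavish \cite{Gavish2008scaling} and, in the generality used here, by Hochman \cite{hochman2010dynamics}, whose arguments I would follow.
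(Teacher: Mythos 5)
The paper does not give a proof of this theorem: it is stated as a citation to Gavish and to Hochman, and the present paper uses it as a black box. So there is no ``paper's own proof'' to compare with; what can be judged is whether your sketch is a sound account of the argument in the references.

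Your identification of $P$ with the centering of $Q$, and your construction of the coupling $R$ from the formula $P=\int \mathcal{C}(\nu,x)\,dQ(\nu,x)$, are both the right idea, and the verification of parts (2) and (3) from that coupling is fine as you write it (modulo a small notational slip: $\mathcal{C}_*Q$ would be a distribution on distributions; what you mean is the barycenter $\int\mathcal{C}\,dQ$). The weak point is the first paragraph, where you assert as an ``elementary observation'' that for $t\in[k\log m,(k+1)\log m)$ the measure $\mu_{x,t}$ \emph{coincides} on a fixed ball $B(0,r)$ with a renormalized restriction of $S_sT_{x_k}\nu_k$. That is false in general: the $\mu$-ball of radius $re^{-t}$ about $x$ straddles the boundary of $\Delta_m^k(B_0)(x)$ whenever $x_k$ lies within distance $re^{-s}$ of the boundary of the rescaled box, and then $\mu_{x,t}|_{B(0,r)}$ picks up mass from adjacent boxes that $\nu_k$ does not see. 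You flag this yourself two paragraphs later (calling the same step ``the main obstacle''), so the two halves of your write-up contradict each other, and the actual content of the theorem---showing that the boundary contribution averages out, via the auxiliary limits $Q_q$ and equidistribution of $x_k$---is precisely the part you defer to Gavish and Hochman. As a sketch of what they prove, this is acceptable given that the paper also takes the theorem on faith, but you should not present the interpolation step as elementary: it is exactly where the work lies.
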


 \subsection{Bedford-McMullen carpets}
 \subsubsection{Iterated function systems}
 
Let $\Phi = \lbrace \phi_i \rbrace_{k=1} ^l , l\in \mathbb{N}, l\geq 2$ be a family of contractions $\phi_i : \mathbb{R}^d \rightarrow \mathbb{R}^d, d\geq 1$. The family $\Phi$ is called an iterated function system, abbreviated IFS, the term being coined by Hutchinson \cite{hutchinson1981fractals}, who defined them and studied some of their fundamental properties. In particular, he proved that there exists a unique compact $\emptyset \neq F \subset \mathbb{R}^d$ such that $F = \bigcup_{i=1} ^l \phi_i (F)$. $F$ is called the attractor of $\Phi$, and $\Phi$ is called a generating IFS for $F$.  A set $F \subset \mathbb{R}^d$ will be called self similar if there exists a generating IFS $\Phi$ for $F$ such that $\Phi$ consists only of similarity mappings. Similarly, if $\Phi$ consists only of affine maps, then we say that $F$ is a self affine set.

The self similar sets we shall encounter in this paper are deleted digit sets: for an integer $n\geq 2$, Let $D \subseteq [n]$. Define an IFS $\Phi = \lbrace f_i \rbrace_{i\in D}$, where 
\begin{equation*}
\forall i\in D,  \forall x\in \mathbb{R}, \quad f_i (x) = \frac{x+i}{n}.
\end{equation*}
The attractor of $\Phi$ is called a deleted digit set. These sets are quite nice. For example, if $K$ is a deleted digit set then
\begin{equation*}
\dim_H K = \dim_B K = \dim^* K = \frac{\log |D|}{\log n}.
\end{equation*}

Finally, we discuss self similar measures on deleted digit sets. Let $K$ be a deleted digit set as above. A measure $\mu \in P(K)$ is called a self similar measure if there exists a fully supported Bernoulli measure $\alpha \in P(D^\mathbb{N})$ such that $\pi_n \alpha = \mu$ (recall the map $\pi_n$ from \eqref{Eq. projection from symbolic}). These measures are known to be exact dimensional (in much greater generality, see \cite{feng2009dimension}) of dimension $\dim K$.

 \subsubsection{Bedford-McMullen carpets}
We now recall some basic concepts regarding Bedford-McMullen carpets. We follow the terminoloy of \cite{algom2016self}, which motivates our notation with regard to Theorem \ref{Theorem main Theorem}. Recall the definition of a Bedford-McMullen carpet $F$ with defining exponents $m,n$ and allowed digit set $\Gamma$ from section \ref{Section appl}. Notice that if $F$ is a Bedford-McMullen carpet then both $P_1 (F), P_2 (F)$ are deleted digit sets. Also, note that $F$ is a self affine set generated by an IFS consisting of maps whose linear parts are diagonal matrices. Specifically, $F$ is the attractor of $\Phi = \lbrace \phi_{(i,j)} \rbrace_{(i,j) \in \Gamma}$ where
\begin{equation} \label{Genrating IFS for F}
\phi_{(i,j)} (x,y) = (\frac{x+i}{m}, \frac{y+j}{n}) = \begin{pmatrix}
\frac{1}{m} & 0 \\
0 & \frac{1}{n}
\end{pmatrix}
\cdot (x,y) + (\frac{i}{m},\frac{j}{n}).
\end{equation}
Recall that when we have two carpets $F$ and $E$ we shall denote the set of allowed digits of $E$ by $\Lambda$.

Recall the ``projection'' $\pi_m : [m]^\mathbb{N} \rightarrow [0,1]$ defined in \eqref{Eq. projection from symbolic}. This is a continuous surjection to $[0,1]$, but can fail to be injective on countably many points, specifically, rationals in $(0,1)$ of the form $k/m^n$ have two preimages under $\pi_m$ (but note that $0,1$ have only one pre-image). We also define, by a slight abuse of notation, the projection  $\pi_m \times \pi_n: ([m]\times [n])^\mathbb{N} \rightarrow [0,1]^2$. Then  $\widetilde{F} = \Gamma^\mathbb{N} \subseteq ([m]\times [n])^\mathbb{N}$ is a shift invariant subset satisfying $\pi_m \times \pi_n (\widetilde{F}) = F$. As before, this may not be an injection, even though it is surjective, and $\widetilde{F} \subseteq (\pi_m \times \pi_n) ^{-1} (F)$, but the two sets might not be equal.

Recall that for $y \in P_2 (F)$ we defined $F_y$ as the horizontal slice $F_y = \lbrace x \in \mathbb{R} : (x,y) \in F \rbrace$. Note  that $F_y \times \lbrace y \rbrace = F\cap (\mathbb{R} \times \lbrace y \rbrace)$. In the symbolic context, for an infinite sequence $\omega \in [n]^\mathbb{N}$ we define the symbolic slice corresponding to $\omega$ by
\begin{equation*}
\widetilde{F} _{\omega} = \lbrace  \eta \in [m]^\mathbb{N} : ( \eta , \omega ) \in \widetilde{F} \rbrace \;=\;\prod_{i=1}^\infty \Gamma_{\omega_i},
\end{equation*}
where for $i\in [n]$, $\Gamma_i$ was defined in section \ref{Section appl}. Notice that this coincides with the definition of the infinite product sets from \eqref{Eq symbolic slice}.

Note that
\begin{equation*}
\pi_{m} ( \widetilde{F} _\omega ) \subseteq F_{\pi_n (\omega) },
\end{equation*}
but the two sets might not be equal if $\pi_n (\omega) \in [0,1]$ admits another base-$n$ expansion in $\widetilde{F}$. But we always have  that \[
  F_y = \bigcup_{\omega\in\pi_m^{-1}(y)} \pi_m(\widetilde{F}_\omega )
\]
This is a union of at most two sets (again, if one pre-image of $y$ is not in $\widetilde{F}$, the corresponding term in the union is empty). Given $\omega$, we have
\begin{equation*}
\pi_m(\widetilde{F}_\omega) = \lbrace \sum_{k=1} ^\infty \frac{x_k}{m^k}:\quad x_k \in \Gamma_{\omega_k} \rbrace.
\end{equation*}

We also have an elementary expression for the Hausdorff dimension of projections of symbolic slices: given $\omega \in [n]^\mathbb{N}$, by Billingsley's Lemma,
\begin{equation}
\label{eq:slice-dimension}
   \dim_H \pi_m(\widetilde{F}_\omega) = \liminf_{k\to\infty} \frac{\sum_{i=1}^k \log|\Gamma_{\omega_i}|}{k\log m} \leq \max_{i\in [n]} \frac{\log |\Gamma_i|}{\log m}.
\end{equation}
If, in addition, $\omega$ is generic with respect to some ergodic measure $\alpha \in P([n]^\mathbb{N})$, then by the ergodic Theorem
\begin{equation*}
\dim_H \pi_m(\widetilde{F}_\omega) = \sum_{i=0}^{n-1} \alpha([i]) \frac{ \log|\Gamma_{\omega_i}|}{\log m} = \dim_B \pi_m(\widetilde{F}_\omega).
\end{equation*} 

\subsubsection{Microsets of Bedford-McMullen carpets} \label{Section micro of BMC}
In \cite{bandt2013local}, Bandt and K{\"a}enm{\"a}ki  had studied the structure of microsets of a general class of self affine carpets, where the point of magnification is drawn according to a self affine measure. Now, Let $F$ be a Bedford-McMullen carpet, and suppose that $F$ is not a self similar set. In our recent work with Hochman \cite{algom2016self}, we were able to characterize  $m$-adic microsets of $F$ about any point in $F$. As this characterization is key for our present work, we briefly recall it.

 For $\omega\in [n]^\mathbb{N}$  and $s\in [0,1)$ we define an $(\omega,s)$-set to be a set of the form
\begin{equation} \label{Location}
\left(\begin{array}{cc}
1 & 0\\
0 & n^s
\end{array}\right)\cdot\left(\pi_{m}(\widetilde{F}_{\omega})\times P_{2}(F)+z\right)
\end{equation}
which is contained in $[-2,2]^{2}$. For a fixed $(\omega,s)$, a set $Y \subseteq [-1,1]^2$ is an  $(\omega,s)$-multiset if there are finitely many
$(\omega,s)$-sets $Y_{1},\ldots,Y_{N}$ and $z\in \pi_{m}(\widetilde{F}_{\omega})\times P_2 (F)$,
such that 
\begin{equation}\label{eq:2}
\left( \left( \begin{array}{cc}
1 & 0\\
0 & n^s
\end{array}\right)\cdot\left(\pi_{m}(\widetilde{F}_{\omega})\times P_{2}(F)-z \right)  \right) \bigcap(-1,1)^{2} \quad \subseteq Y  \subseteq \quad \bigcup_{i=1}^{N}Y_{i}\cap[-1,1]^{2}.
\end{equation}
Finally, for $\omega \in [n]^\mathbb{N}$ let $S(\omega)\subseteq [n]^\mathbb{N }\times \mathbb{T}$
denote the set \begin{equation}
  \label{eq:Su}
  S(\omega)=\{(\xi,s)\in [n]^\mathbb{N} \times \mathbb{T}\,:\,(\sigma^{l_k}\omega,l_k \log_nm)\to (\xi,s) \textrm{ for some }l_k\to\infty\}
\end{equation}
i.e. $S(\omega)$ is the set of accumulation points of the orbit of $(\omega,0)$
under the transformation $(\xi,s)\mapsto(\sigma \xi,s+\log_nm)$. 

For $\omega \in [n]^\mathbb{N}$, let $\overline{\omega}=\omega$ if $\pi_n(\omega)$ has a unique base-$n$ expansion, and  otherwise let $\overline{\omega}$ be the other expansion. Recall the definition of $m$-adic microsets from \eqref{Def M-adic microset}.

\begin{theorem} (\cite{algom2016self},Theorem 4.2) \label{Theorem - structure of tangnet sets}
Fix $f=(x,y)\in F$ with $y\neq 0,1$ and let $\omega\in\pi_n^{-1}(y)$. Then for every $m$-adic microset $T$ about $f$, there exists $(\xi,s)\in S(\omega)$ such that $T$ is a non-empty union of a $(\xi,s)$-multiset and a $(\overline{\xi},s)$-multiset. Conversely, if $(\xi,s)\in S(\omega)$, then there is an $m$-adic microset set $T$ about $f$  which is a union of this type.

In the special case when $y=0$ or $y=1$, the same is true but omitting the $(\overline{\xi},s)$-multiset from the union.
\end{theorem}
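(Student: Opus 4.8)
The plan is to describe the $m$-adic minisets $[m^{k}(F-f)]\cap[-1,1]^{2}$ explicitly for large $k$ and then pass to Hausdorff limits, using compactness of $\cmpct([-1,1]^{2})$. I would organise everything around McMullen's approximate squares. Write $f=\pi_m\times\pi_n\big((x_i,y_i)_{i\ge1}\big)$ with $\omega=(y_i)_{i\ge1}\in\pi_n^{-1}(y)$, and for each $k$ put $v_k=\lfloor k\log_n m\rfloor$, so that $m^{k}=n^{v_k}\cdot n^{s_k}$ with $s_k=\{k\log_n m\}\in[0,1)$ and $v_k-k\to\infty$ (recall $\log_n m>1$ since $m>n$). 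Intersecting the depth-$k$ horizontal cylinder of $f$ with its depth-$v_k$ vertical cylinder gives a rectangle $R_k(f)$ of width $m^{-k}$ and height $n^{-v_k}$, approximately square (its aspect ratio is the bounded power $n^{s_k}$). The first point is that $[m^{k}(F-f)]\cap[-1,1]^{2}$ contains the $m^{k}$-rescaling of $F\cap R_k(f)$ and is contained in the union of the $m^{k}$-rescalings of the boundedly many such rectangles meeting the $m^{-k}$-ball about $f$; this is what will produce the sandwich \eqref{eq:2}.

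Next I would analyse a single rescaled approximate square. A point of $F\cap R_k(f)$ has horizontal digits $x_{k+i}\in\Gamma_{y_{k+i}}$ and vertical digits $y_{v_k+j}\in P_2(\Gamma)$: up to level $v_k$ the (free) horizontal digits are slaved to the fixed vertical digits $y_{k+1},\dots,y_{v_k}$ of $R_k(f)$, while beyond level $v_k$ the horizontal and vertical digits are genuinely coupled through $\Gamma$. Consequently, after rescaling by $m^{k}$, the horizontal coordinate of $F\cap R_k(f)$ takes the form $p+m^{-(v_k-k)}u$, where $p$ runs over the finite set $\pi_m(\widetilde{F}_{(y_{k+1},\dots,y_{v_k})})$ and $(u,w)$ runs over $F$, while the vertical coordinate of the rescaled square is $n^{s_k}(w-z_2)$ for a suitable $z_2\in P_2(F)$. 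Since $m^{-(v_k-k)}\to 0$, this copy of $F$ is squeezed horizontally onto its vertical projection $P_2(F)$, so each such piece converges in the Hausdorff metric to $\{p\}\times n^{s}(P_2(F)-z_2)$; and $\pi_m(\widetilde{F}_{(y_{k+1},\dots,y_{v_k})})\to\pi_m(\widetilde{F}_{\xi})$ along any subsequence on which $\sigma^{k}\omega\to\xi$, because $v_k-k\to\infty$. So every Hausdorff limit of the rescalings of $F\cap R_k(f)$ is, up to the translation $z=(z_1,z_2)$ and the vertical stretch $n^{s}$, the product $\pi_m(\widetilde{F}_{\xi})\times P_2(F)$ appearing in \eqref{Location}, with $s=\lim\{k\log_n m\}$ and $\xi=\lim\sigma^{k}\omega$ — that is, an $(\xi,s)$-set with $(\xi,s)\in S(\omega)$ directly by the definition \eqref{eq:Su}. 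Assembling the finitely many off-centre rectangles (translates of the central one, up to their finite-word horizontal data) and extracting limits by a diagonal argument would then give the sandwich \eqref{eq:2}, so $T$ is an $(\xi,s)$-multiset. The extra $(\overline{\xi},s)$-multiset term appears exactly when the $m^{-k}$-ball about $f$ straddles the boundary between two adjacent depth-$v_k$ vertical cylinders: in the limit these are the two base-$n$ expansions of $\pi_n(\xi)$, whose horizontal slices are $\pi_m(\widetilde{F}_{\xi})$ and $\pi_m(\widetilde{F}_{\overline{\xi}})$, so $T$ becomes the union of an $(\xi,s)$-multiset and an $(\overline{\xi},s)$-multiset; when $y\in\{0,1\}$ the ball cannot straddle on the relevant side and only the first term survives. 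Non-emptiness of $T$ is automatic, since $T$ always contains the limit of the central rescaled square.

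For the converse, given $(\xi,s)\in S(\omega)$ I would use \eqref{eq:Su} to pick $k_j\to\infty$ with $\sigma^{k_j}\omega\to\xi$ and $\{k_j\log_n m\}\to s$, pass to a further subsequence so that $[m^{k_j}(F-f)]\cap[-1,1]^{2}$ converges, and invoke the computation above to identify the limit as a union of an $(\xi,s)$-multiset and an $(\overline{\xi},s)$-multiset; centring the miniset at a boundary, respectively interior, point of a depth-$v_{k_j}$ vertical cylinder forces, respectively suppresses, the $(\overline{\xi},s)$-term, while the $(\xi,s)$-multiset is always realised.

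The step I expect to be the main obstacle is the bookkeeping in the direct direction: one must verify that the horizontally squeezed copy of $F$ sitting beyond vertical level $v_k$ genuinely contributes all of $P_2(F)$ and not a proper subset, control uniformly in $k$ the boundedly many off-centre approximate squares together with the affine maps relating them, and — above all — treat the straddling near points of $P_2(F)$ with two $n$-adic expansions carefully enough that the limit is literally of the multiset form \eqref{eq:2} rather than merely comparable to it. All of this is carried out in \cite[Theorem~4.2]{algom2016self}; the outline above only indicates its shape.
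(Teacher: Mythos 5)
Your sketch matches the approximate-square strategy of \cite[Theorem 4.2]{algom2016self} (the present paper cites the result rather than proving it): the decomposition of the rescaled square $m^{k}(F\cap R_k(f)-f)$ into the finite digit set $\pi_m(\widetilde{F}_{(y_{k+1},\dots,y_{v_k})})$ plus the horizontally squeezed copy $m^{-(v_k-k)}(F-z)$ collapsing onto $\{0\}\times n^{s_k}(P_2(F)-z_2)$, the identification of the Hausdorff limit with an $(\xi,s)$-set via $(\xi,s)\in S(\omega)$, and the appearance of the $(\overline{\xi},s)$-multiset from straddling adjacent depth-$v_k$ vertical cylinders are exactly the ingredients of that proof. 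The caveats you flag at the end (that the squeezed copy contributes all of $P_2(F)$, uniform control of the boundedly many off-centre squares, and the two-expansion bookkeeping needed to land on the sandwich \eqref{eq:2} rather than something merely comparable) are precisely the technical points handled there, so the outline is sound.
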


In applications, we shall either not care about the identity of the limit point $(\xi,s$), provided in the theorem, or else we will control it by starting with  $y$  whose expansions are suitably engineered. 

For general microsets, we have the following result:
\begin{theorem} \label{Theorem - structure of tangent sets general}
Let $M_k \subseteq [-1,1]^2$ be a sequence of mini-sets of $F$ of the form
\begin{equation*}
\alpha_k (F-z_k)\cap [-1,1]^2, \quad z_k \in \mathbb{R}^2, \alpha_k \rightarrow \infty.
\end{equation*} 
Then for every limit $M$ of $M_k$ in the Hausdorff metric there is some $p\in \mathbb{N}$ such that
\begin{equation*}
M \subseteq \bigcup_{i=1} ^p \left( \begin{pmatrix}
a_i & 0 \\
0 & b_i \\
\end{pmatrix} \cdot Y_i + t_i \right)
\end{equation*}
where for every $i$, $Y_i$ is an $(\omega_i,s_i)$ set for some $(\omega_i,s_i)\in [n]^\mathbb{N} \times \mathbb{T}$, $a_i,b_i >0$ and $t_i\in \mathbb{R}^2$.
\end{theorem}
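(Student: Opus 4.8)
The plan is to reduce the general minisets $M_k = \alpha_k(F - z_k) \cap [-1,1]^2$ to the $m$-adic picture already handled by Theorem \ref{Theorem - structure of tangnet sets}, up to a bounded amount of bookkeeping. First I would fix, for each $k$, an integer $p_k$ with $m^{p_k} \le \alpha_k < m^{p_k+1}$, so that $\alpha_k = m^{p_k} c_k$ with $c_k \in [1,m)$; passing to a subsequence we may assume $c_k \to c \in [1,m]$. Writing $\alpha_k(F - z_k) = c_k \cdot m^{p_k}(F - z_k)$, the idea is that the scaled sets $m^{p_k}(F - z_k) \cap [-C,C]^2$ converge (along a further subsequence) to something that we can describe via the $m$-adic microset machinery, and then multiplying by the bounded factor $c_k \to c$ only introduces one more diagonal matrix and does not change the structural conclusion. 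To control the region $[-1,1]^2$ in the original coordinates, note it corresponds, before multiplying by $c_k$, to a box of bounded size $[-1/c_k, 1/c_k]^2 \subseteq [-1,1]^2$ which is covered by a bounded number — at most $p$, depending only on $m$ — of translated copies of $[-1,1]^2$ sitting at $m$-adic (or rather, integer) positions in the $m^{p_k}$-rescaled lattice.

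The key steps, in order: (i) choose $p_k$ and the residual scaling $c_k \to c$ as above; (ii) decompose $[-1,1]^2$ in the $z_k$-centered, $\alpha_k$-scaled coordinates into a bounded union of unit boxes whose centers, pulled back by $m^{-p_k}$, land at points $z_k + m^{-p_k} v$ with $v$ in a fixed finite set of integer vectors — and such that the relevant part of $m^{p_k}(F-z_k)$ meeting each such box is, after an integer translation, of the form $m^{p_k}(F - z_k')\cap[-1,1]^2$ where $z_k'$ differs from a point of $F$ by a vector of size $O(m^{-p_k})$; (iii) replace $z_k'$ by a genuine point $f_k \in F$ at distance $O(m^{-p_k})$ (this only perturbs the rescaled set by $O(1)$, hence changes limits only by a bounded translation, which can be absorbed into $t_i$), reducing each piece to an honest $m$-adic mini-set $[m^{p_k}(F - f_k)]\cap[-1,1]^2$; (iv) pass to subsequences so that $f_k$ converges in $F$ and each piece converges in the Hausdorff metric to an $m$-adic microset $T^{(i)}$, and apply Theorem \ref{Theorem - structure of tangnet sets} to conclude each $T^{(i)}$ is a union of an $(\xi_i,s_i)$-multiset and a $(\overline{\xi_i},s_i)$-multiset (itself a bounded union of $(\xi_i,s_i)$-sets and $(\overline{\xi_i},s_i)$-sets); (v) reassemble: $M \subseteq \bigcup_i c \cdot T^{(i)} + (\text{bounded translation})$, and since each $(\omega_i,s_i)$-set already carries a diagonal matrix $\mathrm{diag}(1,n^{s_i})$, multiplying by the scalar $c$ and translating keeps it of the stated form $\mathrm{diag}(a_i,b_i)\cdot Y_i + t_i$ with $Y_i$ an $(\omega_i,s_i)$-set, $a_i,b_i>0$, $t_i \in \mathbb{R}^2$, and the total number of pieces bounded by some $p \in \mathbb{N}$.

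One technical point that needs care throughout: the boundary cases where a chosen pullback point $z_k'$ sits on (or near) a point of $F$ whose base-$n$ expansion is non-unique, or where $y$-coordinate equals $0$ or $1$. In those cases Theorem \ref{Theorem - structure of tangnet sets} gives either the two-multiset union or (at $y \in \{0,1\}$) a single multiset; either way the output is still a bounded union of $(\omega,s)$-sets, so the final form is unaffected — one just has to be slightly generous with the constant $p$. Similarly, one must make sure the bounded decomposition in step (ii) genuinely has a cardinality bound independent of $k$; this follows because a unit box can meet only boundedly many cells of the integer lattice scaled by $m^{-p_k}$ after multiplying back by $m^{p_k}$, i.e. boundedly many level-$p_k$ $m$-adic cells of $[0,1]^2$ in each coordinate, so at most $O(1)$ in total. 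I would also remark that we do not attempt to control which $(\xi_i,s_i)$ arise — only their existence — which is all the statement claims.

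The main obstacle I expect is step (ii)–(iii): cleanly matching the arbitrary affine blow-up $\alpha_k(F - z_k)$, whose center $z_k$ need not lie in $F$ and whose scale need not be an integer power of $m$, to a bounded union of the rigidly $m$-adic minisets $[m^{p_k}(F - f_k)]\cap[-1,1]^2$ for $f_k \in F$, keeping all error terms $O(1)$ after rescaling so that Hausdorff limits are unaffected up to bounded translations. Once that matching is set up carefully, the rest is a routine compactness-and-subsequence argument together with a direct appeal to Theorem \ref{Theorem - structure of tangnet sets}.
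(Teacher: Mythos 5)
Your reduction strategy is genuinely different from the paper's. The paper simply defers to re-reading the proof of Theorem 4.2 in the earlier work \cite{algom2016self} (Section 7.2, a rescaled version of Corollary 7.5) and remarks that the argument there, done for fixed $z_k \equiv z$, goes through for varying $z_k$ with ``minor modifications.'' You instead try to use Theorem \ref{Theorem - structure of tangnet sets} as a black box: write $\alpha_k = m^{p_k} c_k$, cover the rescaled mini-set by boundedly many $m$-adic pieces re-centered at genuine points $f_k^{(i)}\in F$, take limits piecewise, apply Theorem \ref{Theorem - structure of tangnet sets} to each limit, and reassemble (absorbing the residual scalar $c\in[1,m]$ and bounded translations into the diagonal matrices and the $t_i$). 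That reduction architecture is sound, and for what it is worth, steps (i), (ii), (v), the cardinality bound on the cover, and the final reassembly all work (the implementation of (ii)--(iii) is cleanest if you replace integer-centered boxes by a maximal $1/2$-separated subset of $m^{p_k}(F-z_k)\cap[-1,1]^2$ itself, so every cover piece is exactly $m^{p_k}(F - f_k^{(i)})\cap[-1,1]^2 + g_k^{(i)}$ with $f_k^{(i)}\in F$ and $|g_k^{(i)}|$ bounded, rather than arriving at a point ``close to $F$'' that then needs to be perturbed).

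There is, however, a genuine gap in step (iv) that you did not flag (your ``technical points'' address non-unique expansions and cover cardinality, not this). You pass to a subsequence along which $f_k^{(i)}\to f^{(i)}$ and assert that each piece ``converges \ldots to an $m$-adic microset $T^{(i)}$,'' which you then feed into Theorem \ref{Theorem - structure of tangnet sets}. But an $m$-adic microset is by definition a limit of $m^{p_k}(F-x)\cap[-1,1]^2$ about a \emph{fixed} $x\in F$. The convergence $f_k^{(i)}\to f^{(i)}$ only pins down the digits of $f_k^{(i)}$ out to scale $|f_k^{(i)}-f^{(i)}|$, which can be enormously coarser than $m^{-p_k}$; the zoomed-in picture $m^{p_k}(F - f_k^{(i)})\cap[-1,1]^2$ depends on the digits around position $p_k$ (together with the fractional part of $p_k\log_n m$ in the $y$-coordinate), and those digits need bear no relation to the expansion of the limit $f^{(i)}$. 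So the Hausdorff limit of your piece is a limit of $m$-adic \emph{mini}sets with drifting centers, not obviously an $m$-adic microset about any fixed point, and Theorem \ref{Theorem - structure of tangnet sets} does not directly apply. One can close the gap: pass to a subsequence where the symbolic tails $\sigma^{p_k}(\alpha_k^{(i)})\in\Gamma^{\mathbb N}$ and the fractional parts $\{p_k\log_n m\}$ converge, say to $(\beta,s)$, and then build a single $\alpha\in\Gamma^{\mathbb N}$ by concatenation so that $(\beta,s)$ lies in $S(P_2(\alpha))$ as in \eqref{eq:Su}; alternatively, redo the classification directly for drifting centers. Either way this is exactly the kind of symbolic bookkeeping the paper points to in \cite{algom2016self}, so the black-box appeal to Theorem \ref{Theorem - structure of tangnet sets} does not actually let you bypass that step; you should either include the concatenation argument or state the classification you are using in the drifting-center form.
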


The Theorem follows by inspecting the proof of Theorem 4.2 in \cite{algom2016self}, which deals with the case when there is some $z\in \mathbb{R}^2$ such that all the $z_k$'s from Theorem \ref{Theorem - structure of tangent sets general} are equal to $z$. Indeed, one notes that the results of Section 7.2, most notably a rescaled version of Corollary 7.5, generalize to this situation, with some minor modifications.  

\subsubsection{CP distributions generated by self affine measures on Bedford-McMullen carpets}
Let $F$ be a Bedford-McMullen carpet with exponents $(m,n)$, and allowed digits set $\Gamma \subset [m]\times [n]$. Recall that $\mu \in P([0,1]^2)$ is a self affine measure on $F$ if there exists some Bernoulli measure $\nu \in P( \Gamma^\mathbb{N})$ such that
\begin{equation*}
\mu = \pi_m \times \pi_n (\nu).
\end{equation*}
Notice that $P_2 \mu$ is a self similar measure on the deleted digit set $P_2 (F)$.

Given any measure $\mu \in P([0,1]^2)$ we denote by $\lbrace \mu_y \rbrace$ the family of conditional measures obtained by disintegrating $\mu$ according to the coordinate projection  $P_2 : \mathbb{R}^2 \rightarrow \mathbb{R}$, $P_2(x,y)=y$.  We also have a corresponding family of conditional measures $\lbrace \nu_\omega \rbrace$ associated with any measure $\nu\in P(\Gamma^\mathbb{N})$, obtained by disintegrating $\nu$ according to the coordinate projection $(\eta,\omega) \mapsto \omega$. 

The following Theorem, due to Fraser, Ferguson and Sahlsten, shows that self affine measures on Bedford-McMullen carpets generate ergodic CP distributions, in the sense of Definition \ref{Defition CP}.

\begin{theorem} \cite{Fraser2015Ferguson} \label{Theorem Fraser Ferguson}
Let $\mu$ be a self affine measure on a Bedford-McMullen carpet $F$ with exponents $(m,n)$. Then there is a family of boxes $\mathcal{E}$ and a $\delta$-regular partition operator $\Delta$ such that  $\mu$ generates an ergodic CP-distribution in the sense of Definition \ref{Definition CP gen}.

The measure component of the CP-distribution is the distribution of the measures of the form
\begin{equation*}
\begin{pmatrix}
m^{\frac{t}{2}} & 0 \\
0 & m^{\frac{-t}{2}} \\
\end{pmatrix} \left( \mu_y \times P_2 \mu \right),
\end{equation*} 
where $t\in [0,1)$ is distributed according to Lebesgue if $ \frac{\log n}{\log m} \notin \mathbb{Q}$, and otherwise according to some periodic measure with respect to the translation of $\mathbb{T}$ by $\theta$,  and $\mu_y$ is a conditional measure of $\mu$ with respect to the projection $P_2$, where $y$ is drawn according to $P_2\mu$.
\end{theorem}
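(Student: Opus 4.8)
The plan is to pass to the symbolic coding of $F$, compute the magnification dynamics of $\mu$ explicitly, and then quote the standard transference between symbolic magnification dynamics and Euclidean CP chains (as in Definition \ref{Defition CP}), together with the equidistribution coming from the irrationality of $\log m/\log n$.

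First I would set up the symbolic picture. Write $\mu = (\pi_m\times\pi_n)\nu$ for a Bernoulli measure $\nu\in P(\Gamma^{\mathbb N})$, and let $\beta = P_2\nu\in P((P_2\Gamma)^{\mathbb N})$ be the image of $\nu$ under the $y$-itinerary projection $(\eta,\omega)\mapsto\omega$. Disintegrating $\nu = \int \nu_\omega\, d\beta(\omega)$ one has, for $y=\pi_n(\omega)$, the conditional measure $\mu_y = \pi_m(\nu_\omega)$, where $\nu_\omega = \prod_i (p\!\restriction\!\Gamma_{\omega_i})$ is again a product measure, while $P_2\mu = \pi_n\beta$ is a self-similar measure on the deleted-digit set $P_2(F)$. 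One then fixes a family of boxes $\mathcal E$ and a $\delta$-regular partition operator $\Delta$ adapted to the two incommensurable ratios $1/m$ and $1/n$: over suitable blocks of generations $\Delta$ performs one $m$-adic subdivision in the $x$-direction and the matching number of $n$-adic subdivisions in the $y$-direction, keeping cells within a bounded aspect ratio (equivalently one may take the plain $m$-adic isotropic operator and record the residual aspect ratio as an auxiliary continuous coordinate $t\in\mathbb T$).

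Next I would identify the magnification dynamics. Fix a $\mu$-typical point $(x,y)$, $x=\pi_m(\eta)$, $y=\pi_n(\omega)$ with $(\eta,\omega)\in\widetilde F$, and renormalize $\mu$ to the $\Delta$-cell of generation $k$ about $(x,y)$, which amounts to zooming isotropically by roughly $m^k$. In the horizontal direction the digits $\eta_1,\dots,\eta_k$ have been resolved; since $\eta_i\in\Gamma_{\omega_i}$, the remaining horizontal structure is exactly $\pi_m$ of $\prod_{i>k}(p\restriction\Gamma_{\omega_i})$, i.e. a conditional measure $\mu_{y'}$ with $y'=\pi_n(\sigma^k\omega)$ (this is why a \emph{slice} measure appears, and matches the set-level statement of Theorem \ref{Theorem - structure of tangnet sets}). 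In the vertical direction one has zoomed past $\lfloor k/\theta\rfloor$ full $n$-adic levels, where $\theta:=\log n/\log m$, leaving a residual scaling by $n^{s_k}$ with $s_k=\{k/\theta\}$ applied to a rescaled copy of the self-similar measure $P_2\mu$. After normalizing the cell to unit volume this is, up to bounded distortion, $\diag(m^{t_k/2},m^{-t_k/2})(\mu_{y'}\times P_2\mu)$ with $t_k$ a function of $s_k$. Now the ergodic theorem for the Bernoulli shift $((P_2\Gamma)^{\mathbb N},\sigma,\beta)$ shows $\sigma^k\omega$ equidistributes for $\beta$, so $y'$ is asymptotically distributed as $P_2\mu$; and Weyl equidistribution, using $\log m/\log n\notin\mathbb Q$, shows $\{k\log m/\log n\}$ equidistributes for Lebesgue on $[0,1)$ (in the rational case it is periodic, giving a periodic measure under $s\mapsto s+\theta$). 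Hence the Cesàro averages $\frac1N\sum_{k<N}\delta_{\mu^{\Delta^k(B)(x)}}$ converge to precisely the distribution $Q$ claimed in the theorem, the same argument along subsequences $qk$ produces the auxiliary limits $Q_q$ required in Definition \ref{Definition CP gen}, and a direct computation shows $Q$ is stationary for the Markov kernel of Definition \ref{Defition CP}. Ergodicity of $(\Theta^{\mathbb N},\sigma,Q)$ follows because the driving system is a skew product of the Bernoulli shift $(\Gamma^{\mathbb N},\sigma,\nu)$ (governing the slice coordinate) with the rotation $s\mapsto s+\log m/\log n$ on $\mathbb T$ (governing the vertical phase): the Bernoulli shift is weakly mixing, the rotation is ergodic, and a weakly mixing system times an ergodic one is ergodic.

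The step I expect to be the main obstacle is the bookkeeping forced by the incommensurability of $m$ and $n$: a genuine homothety-invariant $\delta$-regular partition operator cannot be simultaneously aligned with the $m$-adic grid in the $x$-direction and the $n$-adic grid in the $y$-direction, so one must quantify the "overhang" between the fixed $\Delta$-cells and the carpet's natural anisotropic cells and check that these boundary effects vanish in the limit, so that the magnifications really do converge to the clean product $\diag(m^{t/2},m^{-t/2})(\mu_y\times P_2\mu)$. This is exactly where the continuous phase parameter $t\in\mathbb T$ is introduced, and it is also where one discards the $\mu$-null set of points with non-unique base-$n$ expansions or lying on cell boundaries. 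Everything else is a routine application of the ergodic theorem and of the general correspondence between symbolic magnification dynamics and Euclidean CP chains.
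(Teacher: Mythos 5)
The paper does not supply its own proof of this theorem: it is quoted verbatim from Ferguson, Fraser and Sahlsten (\cite{Fraser2015Ferguson}), so there is no ``paper's proof'' to compare against. Your sketch does track the strategy one would expect from that source — symbolic coding, identification of the magnification dynamics as a skew product over the rotation by $\log n/\log m$, and an appeal to equidistribution — and the identification of the limiting measures as $\diag(m^{t/2},m^{-t/2})(\mu_y\times P_2\mu)$ is correctly motivated.

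There is, however, a genuine gap in the ergodicity step. You describe the driving system as a skew product of the Bernoulli shift with the rotation and then invoke ``a weakly mixing system times an ergodic one is ergodic'' to conclude. That criterion applies to \emph{direct} products, not skew products, and the system here really is a skew product: with $\sigma_t(\omega)=\sigma(\omega)$ if $t\in[1-\theta,1)$ and $\sigma_t(\omega)=\omega$ otherwise, the map $(t,\omega)\mapsto(R_\theta t,\sigma_t\omega)$ applies $\sigma$ only a $\theta$-fraction of the time. Its Kolmogorov--Sinai entropy is $\theta\cdot h(\beta,\sigma)$, strictly smaller than $h(\beta,\sigma)$ when $n<m$ and $h(\beta,\sigma)>0$, so it is not measurably isomorphic to the direct product $R_\theta\times\sigma$, and the direct-product ergodicity criterion cannot be applied as written. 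The correct route is either to induce on $[1-\theta,1)\times[n]^{\mathbb N}$, where the dynamics becomes an honest direct product of the first-return rotation (still uniquely ergodic) with the full shift, and then use the standard fact that ergodicity of an induced transformation on a positive-measure sweep-out set implies ergodicity of the ambient system; or to cite the ergodicity statement for such cocycle extensions directly — the present paper quotes precisely this, in its two-carpet form, as Theorem~\ref{Theorem X ergodic MPS}, attributed to Proposition~4.3 of \cite{Fraser2015Ferguson}. Without one of these, the ergodicity of the generated CP-distribution, which is the substantive content of the theorem, is not established. The remaining steps (symbolic disintegration, normalization of conditional measures, Weyl/Birkhoff equidistribution producing Lebesgue on the phase coordinate) are soundly sketched modulo the aspect-ratio bookkeeping you yourself flag as the principal technical obstacle.
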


Finally, let $\nu \in P(\Gamma^\mathbb{N})$ be a Bernoulli measure and let $\mu\in P(F)$ be the corresponding self affine measure on $F$. We have
\begin{equation} \label{Eq conditional meausres}
\mu_y = \pi_m \nu_{\pi_n ^{-1} (y)} \text { for } P_2\mu \text{ almost every }y.
\end{equation}
Thus, letting $y=\pi_n (\omega)$ that satisfy \eqref{Eq conditional meausres}, as long as $y$ does not belong to the countably many points of the form $y=\frac{j}{n^k}$ for some $j\in \mathbb{N}$ (which is of measure zero), we have
\begin{equation*}
\supp (\mu_y) = F_y = \Fomega = \lbrace \sum_{k=1} ^\infty \frac{x_k}{m^k}:\quad  x_k\in \Gamma_{\omega_k} \rbrace,
\end{equation*}

\section{Proof of the main results} \label{Section app}
\subsection{Proof of Theorem \ref{Theorem star dimension}}
Let $F$ be a Bedford-McMullen carpet with exponents $(m,n)$ such that $\frac{\log n}{\log m} \notin \mathbb{Q}$. Let $\ell$ be a non-principal line such that $F\cap \ell \neq \emptyset$. We aim to prove that 
\begin{equation*}
\dim^* F\cap \ell := \sup \lbrace \dim_H M : \text{ M is a microset of } F\cap \ell \rbrace \leq \max_{i\in [n]} \lbrace \frac{\log |\Gamma_i|}{\log m}+ \dim P_2 (F) -1, 0 \rbrace.
\end{equation*} 
This will suffice for the proof of Theorem \ref{Theorem star dimension}, since $\overline{\dim}_B F\cap \ell \leq \dim^* F\cap \ell$ (this inequality is true for any bounded set, see Lemma 2.4.4 in \cite{bishop2013fractal}).

So, let $M$ be a microset of $F\cap \ell$. Then it is not hard to see that, by definition,  there exists a microset of $\ell$ such that $M \subseteq M'$. Similarly, there exists a microset $M''$ of $F$ such that $M\subseteq M''$.  It follows that $M\subseteq M'\cap M''$.

Now, on the one hand, every microset of $\ell$ is contained within a line $\ell'$ that has the same slope as $\ell$ (so it is still a non-principal line). On the other hand,  by Theorem \ref{Theorem - structure of tangent sets general},  $M''$ is contained within a finite union of sets of the form
\begin{equation*}
M = \bigcup_{i=1} ^p  \left( \begin{pmatrix}
a_i & 0 \\
0 & b_i
\end{pmatrix} \cdot \pi_{m}(\tilde{F}_{\omega_i}) \times P_2 (F) + t_i \right),
\end{equation*} 
where  $p<\infty$, $t_i \in \mathbb{R}^2$, $(a_i,b_i) \in \mathbb{R}^2 \setminus \lbrace (0,0) \rbrace$ , $\omega_i \in [n]^\mathbb{N}$. 

Combining these observations, we see that
\begin{equation*}
M \subseteq M' \cap M'' \subseteq \bigcup_{i=1} ^p \left( \left( \begin{pmatrix}
a_i & 0 \\
0 & b_i
\end{pmatrix} \cdot \pi_{m}(\tilde{F}_{\omega_i}) \times P_2 (F) + t_i \right)\bigcap \ell' \right)
\end{equation*}
and therefore
\begin{eqnarray*}
\dim_H M &\leq& \max_{i} \dim_H \left( \begin{pmatrix}
a_i & 0 \\
0 & b_i
\end{pmatrix} \cdot \pi_{m}(\tilde{F}_{\omega_i}) \times P_2 (F) + t_i \right)\bigcap \ell' \\
& = & \max_i \dim_H \left( \pi_{m}(\tilde{F}_{\omega_i}) \times P_2 (F)  \right)\bigcap \ell''
\end{eqnarray*}
where $\ell''$ is the corresponding non-principal affine line. An application of Theorem \ref{Theorem main Theorem} shows that
\begin{equation*}
\dim_H M \leq \max_{i\in [n]} \lbrace \frac{\log |\Gamma_i|}{\log m_1}+ \dim P_2 (F) -1, 0 \rbrace.
\end{equation*}
As required.

\subsection{Proof of Theorem \ref{Theorem intersections of carpets}}
Let $F$ and $E$ be two incommensurable Bedford-McMullen carpets, with exponents $(m_1,n_1),(m_2,n_2)$ respectively. Recall that we denote by $\Gamma \subset [m_1]\times [n_1]$ and  $\Lambda \subset [m_2]\times [n_2]$ the allowed digits sets that define $F$ and $E$, respectively. Let $g:\mathbb{R}^2 \rightarrow \mathbb{R}^2$ be an invertible affine map, such that its linear part is a diagonal matrix. We prove that
\begin{equation*}
\dim^* g(F)\cap E \leq  \max_{(i,j)\in [n_1]\times [n_2]} \lbrace  \frac{\log |\Gamma_i|}{\log m_1} + \frac{\log |\Lambda_j|}{\log m_2} -1 ,0 \rbrace +\max \lbrace \dim P_2 (F) + \dim P_2 (E) -1,0 \rbrace.
\end{equation*} 
To this end, let $M$ be a microset of $g(F) \cap E$. Then, on the one hand, $M$ is contained within a microset of $g(F)$. Since $F$ and $g(F)$ are affine images of each other, by an analogue of Proposition \ref{Proposition Covariance}, every micorset of $g(F)$ is contained within an image of a microset of $F$ under an affine map with the same linear part as $g^{-1}$. Since the linear part of $g^{-1}$ is diagonal, and by Theorem \ref{Theorem - structure of tangent sets general}, we know that 
\begin{equation*}
M \subseteq \bigcup_{i=1} ^p \left( \begin{pmatrix}
a_i & 0 \\
0 & b_i
\end{pmatrix} \cdot \pi_{m_1}(\tilde{F}_{\omega_i}) \times P_2 (F) + t_i \right), 
\end{equation*}
where  $p<\infty$, $t_i \in \mathbb{R}^2$, $(a_i,b_i) \in \mathbb{R}^2 \setminus \lbrace (0,0) \rbrace$ , $\omega_i \in [n_1]^\mathbb{N}$. On the other hand, $M$ is also contained within a Microset of $E$, so
\begin{equation*}
M \subseteq \bigcup_{j=1} ^q \left( \begin{pmatrix}
c_j & 0 \\
0 & d_j
\end{pmatrix} \cdot \pi_{m_2}(\tilde{E}_{\eta_j}) \times P_2 (E) + t_j ' \right), 
\end{equation*}
where  $q<\infty$, $t_j ' \in \mathbb{R}^2$, $(c_j,d_j) \in \mathbb{R}^2 \setminus\lbrace (0,0) \rbrace$ , $\eta_j \in [n_2]^\mathbb{N}$.

It follows that $M$ is contained within a finite union of sets of the form
\begin{equation*}
M_{i,j} = \left( \begin{pmatrix}
a_i & 0 \\
0 & b_i
\end{pmatrix} \cdot \pi_{m_1}(\tilde{F}_{\omega_i}) \times P_2 (F) + t_i \right) \bigcap \left( \begin{pmatrix}
c_j & 0 \\
0 & d_j
\end{pmatrix} \cdot \pi_{m_2}(\tilde{E}_{\eta_j}) \times P_2 (E) + t_j ' \right)
\end{equation*}
for some $1 \leq i\leq p,1\leq j \leq q$. Rewriting the equation above, we have $M_{i,j} = P_1 (M_{i,j})\times P_2 (M_{i,j})$, where
\begin{equation*}
P_1 (M_{i,j}) =  ( a_i \cdot \pi_{m_1}(\tilde{F}_{\omega_i})  + P_1(t_i))\bigcap(c_j  \cdot \pi_{m_2}(\tilde{E}_{\eta_j}) +P_1(t_j ') )
\end{equation*}
and
\begin{equation*}
P_2 (M_{i,j}) = ( b_i \cdot P_2 (F) +P_2 (t_i))\bigcap (d_j \cdot P_2 (E) + P_2 (t_j '))).
\end{equation*}

Finally, $P_1(M_{i,j})$ corresponds to a non principal slice in the product set $\pi_{m_1}(\tilde{F}_{\omega_i}) \times \pi_{m_2}(\tilde{E}_{\eta_j})$. By Theorem \ref{Theorem main Theorem}, see that 
\begin{equation*}
\overline{\dim_B } \left( P_1 (M_{i,j}) \right) \leq   \max_{(i,j)\in [n_1]\times [n_2]} \lbrace \frac{\log |\Gamma_i|}{\log m_1} + \frac{\log |\Lambda_j|}{\log m_2} -1 ,0 \rbrace 
\end{equation*}
In addition, $P_2(M_{i,j})$ corresponds to a non principal slice in the product set $P_2 (F) \times P_2 (E)$, so by Theorem \ref{Theorem main Theorem} (or by the main results of \cite{wu2016proof} and \cite{shmerkin2016furstenberg})
\begin{equation*}
\overline{\dim_B} \left( P_2 (M_{i,j}) \right) \leq \max \lbrace \dim P_2 (F) + \dim P_2 (E) -1,0 \rbrace
\end{equation*}
Since $M \subseteq \bigcup M_{i,j}$ (a finite union), and $M_{i,j} = P_1 (M_{i,j})\times P_2 (M_{i,j})$  Combining the last two displayed equations completes the proof by well known properties of $\overline{\dim}_B$.

The second part of Theorem \ref{Theorem intersections of carpets}, where the linear part of $g$ is an anti-diagonal matrix, follows by a similar argument. 

\subsection{Proof of Theorem \ref{Theorem mutual singularity}}
Let $\mu \in P(F)$ and $\nu \in P(E)$ be self affine measures that satisfy the conditions of Theorem \ref{Theorem mutual singularity} part (1). Let $g:\mathbb{R}^2 \rightarrow \mathbb{R}^2$ be an affine map such that its linear part is given by a diagonal matrix. Suppose towards a contradiction that the conclusion of  Theorem \ref{Theorem mutual singularity} part (1) is false. Then there is a mutually non-null set $A$  such that $(g\mu)|_A \sim \nu|_A$.  

Now, by Theorem \ref{Theorem Fraser Ferguson}, $\mu$ generates an ergodic CP distribution $Q_1$ in the sense of Definition \ref{Definition CP gen}. Therefore, by Theorem \ref{Theorem Gavish}, $\mu$ generates a distribution $W_1\in P(P([0,1]^2))$ at $\mu$ almost every point $x$, in the sense of Definition \ref{Generating FD}. By Lemma \ref{Lemma generating w.r.t. image and abs. conti.} part (2), $g\mu$ generates the push-forward of this distribution $D(g)(x) W_1$, at $g\mu$ almost every point. By Lemma \ref{Lemma generating w.r.t. image and abs. conti.} part (1), $(g\mu)|_A \ll g\mu$, so $(g\mu)|_A$ generates the same distribution at almost every point in $A$.

By a completely analogues argument, $\nu$ generates an ergodic CP distribution $Q_2$. Therefore,  $\nu$ generates a distribution $W_2 \in P(P([0,1]^2))$ at $\nu$ almost every point. Since $\nu|_A \ll \nu$, by Lemma \ref{Lemma generating w.r.t. image and abs. conti.}, $\nu|_A$ generates the same distribution.

Thus, the assumption that $g\mu|_A \sim \nu|_A$, implies, via Lemma \ref{Lemma generating w.r.t. image and abs. conti.} part (2), that there exists some $x\in A$ such that $L W_1 = W_2$, where $L:=D(g)(x)\in GL(\mathbb{R}^2)$ is a diagonal matrix by assumption. Let us denote this common distribution by $P$. 

Therefore, By Theorem \ref{Theorem Gavish}, for $P$ almost every $\rho$ there is a $r_1>0$  such that $\rho|_{B(0,r_1)} \ll S_t \circ T_x (L\alpha)$, where $\alpha$ is a $Q_1$ typical measure, $t>0$ and $x\in \supp(\alpha)$. Similarly, for $P$ almost every $\rho$ there is a $r_2>0$  such that $\rho|_{B(0,r_2)} \ll S_u \circ T_y (\beta)$, where $\beta$ is a $Q_2$ typical measure, $u>0$  and $y\in \supp(\beta)$. Thus, for $P$ almost every $\rho$ there is a small ball such that $\rho|_{B(0,r)}$ is absolutely continuous with respect to both $S_t \circ T_x (L\alpha)$ and $S_u \circ T_y (\beta)$. Let us select such a measures $\rho$ and corresponding measures $\alpha$ and $\beta$.

Moreover, we may assume that we chose $\alpha$ and $\beta$ such that $\dim \alpha = \dim \mu$ and $\dim \beta = \dim \nu$. This is because  by Theorem \ref{Theorem Fraser Ferguson}   $\dim \alpha = \dim \mu$ for $Q_1$ almost every $\alpha$, and $\dim \beta = \dim \nu$ for $Q_2$ almost every $\beta$. Combining this with Theorem \ref{Theorem Gavish}, shows that we can work with measures satisfying this property in the previous paragraph.

Let $B$ denote the support of $\rho|_{B(0,r)}$. Then both $S_t \circ T_x (L\alpha) (B) >0$ and $S_u \circ T_y (\beta)(B)>0$. It follow that
\begin{equation} \label{Eq kappa}
\dim_H B\geq \max \lbrace \dim S_t \circ T_x (L\alpha), S_u \circ T_y (\beta) \rbrace \geq \max \lbrace \dim \alpha, \dim \beta \rbrace = \max \lbrace \dim \mu, \dim \nu \rbrace.
\end{equation}

On the other hand, $$B\subseteq e^{-t} \cdot  \left( L(\supp (\alpha))-x \right) , \quad \text{and }  B\subseteq e^{-u}(\supp (\beta)-y). $$ 
Therefore,
\begin{eqnarray*}
\dim_H B & \leq & \dim_H \left( e^{-t} \cdot  \left( L(\supp \alpha)-x \right)  \right) \bigcap \left( e^{-u}(\supp (\beta)-y)   \right) \\
& =& \dim_H \left( e^{-t+u} \cdot  \left( L(\supp \alpha)-x \right)  \right) \bigcap \left(\supp (\beta)-y  \right) \\ 
&=&  \dim_H \left( e^{-t+u} \cdot  \left( L(\supp \alpha)-t \right)  \right) \bigcap \left(\supp (\beta) \right)
\end{eqnarray*}
where $t=x+e^{-u}y$. Recalling Theorem \ref{Theorem Fraser Ferguson}, we can deduce that $\dim_H B$ is bounded above by
\begin{equation*}
\overline{\dim}_B \left( e^{-t+u} \begin{pmatrix}
 L_1 & 0  \\
 0 & L_2 
 \end{pmatrix}  \begin{pmatrix}
 m_1 ^{-s/2} & 0  \\
 0 & m_1 ^{s/2}
 \end{pmatrix} \cdot F_y \times P_2 (F) +t' \right) \cap \left(  \begin{pmatrix}
 m_2 ^{-r/2} & 0  \\
 0 & m_2 ^{r/2}
 \end{pmatrix} \cdot E_z \times P_2 (E) \right)
\end{equation*}
\begin{equation*}
= \overline{\dim}_B \left( g_1 (F_y) \times g_2 (P_2 (F)) \right) \cap \left( E_z \times P_2 (E) \right) = \overline{\dim}_B \left( g_1 (F_y)\cap E_z \right) \times \left( g_2 \circ P_2 (F) \cap  P_2 (E) \right)
\end{equation*}
for suitable non-degenerate affine maps $g_1,g_2:\mathbb{R}\rightarrow \mathbb{R}$, where $y\in P_2 (F)$ and $z\in P_2 (E)$. By well known properties of the upper box dimension, we find that
\begin{equation} \label{Eq final}
\dim_H B\leq  \overline{\dim}_B \left( g_1 (F_y) \cap E_z \right)  + \overline{\dim}_B \left( (g_2\circ P_2 (F)) \cap P_2 (E) \right) .
\end{equation}
We obtain our desired contradiction by applying Theorem \ref{Theorem main Theorem} to bound the RHS of  equation \eqref{Eq final} from above, and using \eqref{Eq kappa} to bound the LHS of \eqref{Eq final} from below.

The proof of Theorem \ref{Theorem mutual singularity} part (2) is analogues.

\subsection{Proof of Theorem \ref{Theorem embeddings}}
Let $F$ and $E$ be two incommensurable Bedford-McMullen carpets. Recall that we are assuming that there exists some $0\leq i \leq n_1-1$ such that $|\Gamma_i|\geq 2$ and that $\dim^* E <2$.  Suppose, towards a contradiction, that there exists an affine map $g: \mathbb{R}^2 \rightarrow \mathbb{R}^2$ such that $g(F) \subseteq E$. We denote by $A \in GL(\mathbb{R}^2)$ the linear part of $g$. 

Let $\omega = (i,i,...)\in [n_1]^\mathbb{N}$. Then $\Fomega$ is equal to a self similar set $K\subseteq [0,1]$, where $K$ is generated by the self similar IFS $\lbrace f_j \rbrace_{j\in \Gamma_i}$  defined by
\begin{equation*}
\forall j\in \Gamma_i, \forall x\in \mathbb{R}, \quad  f_j (x) = \frac{x+j}{m_1}.
\end{equation*}
Thus,  $K$ is a deleted digit set, so we have
\begin{equation*}
\dim K = \frac{\log |\Gamma_i|}{\log m_1}>0.
\end{equation*}

Now, let $y = \pi_{n_1} (\omega)$, fix $(x,y)\in (\Fomega,y)\subset F$ and let $g(x,y)=(w,z)\in E$.  Consider the following two sequences of $m_1$-adic minisets of $E$ and $F$ respectively,
\begin{equation*}
M_k = \left( m_1 ^k ( E - (w,z)) \right) \cap [-1,1]^2 , \quad R_k = \left( m_1 ^k (F-(x,y)) \right) \cap [-1,1]^2. 
\end{equation*}
Find a subsequence such that both $M_{n_k}$ and $R_{n_k}$ converges. By applying Proposition \ref{Proposition Covariance} and Theorem \ref{Theorem - structure of tangnet sets} along this subsequence, we find that, since $\omega$ is a fixed points for the shift on $[n_1]^\mathbb{N}$
\begin{equation} \label{Eq cov of micro}
 A \left( \left( K \times P_2 (F)  -t \right)\bigcap (-1,1)^2 \right) \subseteq  \bigcup_{i=1} ^p \left( \begin{pmatrix}
a_i & 0 \\
0 & b_i
\end{pmatrix} \cdot \pi_{m_2}(\tilde{E}_{\eta_i}) \times P_2 (E) + t_i \right), 
\end{equation}
where  $p<\infty$, $\eta_i \in [n_2]^\mathbb{N}$, $t_i \in \mathbb{R}^2$, $(a_i,b_i) \in \mathbb{R}^2 \setminus \lbrace (0,0) \rbrace$ and $t\in \pi_{m_1} (\tilde{F}_\omega) \times P_2 (F)$ (we absorb the $c$ from Theorem \ref{Theorem - structure of tangnet sets} that should appear on the LHS into the matrices on the RHS).

By \eqref{Eq star dim of carpet}, the assumption $\dim^* E<2$ implies that either $\dim P_2 (E)<1$ or $\max_{j\in [n_2]} |\Lambda_j| < m_2$. If $\max_{j\in [n_2]} |\Lambda_j| < m_2$ then  by projecting \eqref{Eq cov of micro} by $P_1$ and using the fact that $A$ is invertible, there exist $a_1,a_2 \in \mathbb{R}$ not both zero such that
\begin{equation*}
\emptyset \neq  a_1\cdot (K-t_1)\bigcap (-1,1)+a_2\cdot (P_2(F)-t_2)\bigcap (-1,1) \subseteq \bigcup_{i=1} ^p \left( a_i \cdot \pi_{m_2} (\tilde{E}_{\eta_i}) + t_i \right).
\end{equation*}
Since both $K$ and $P_2 (F)$ are self similar sets, we see that either
\begin{equation*}
\phi( K ) \subseteq \bigcup_{i=1} ^p \left( a_i \cdot \pi_{m_2} (\tilde{E}_{\eta_i}) + t_i \right), \quad \text{ or } \psi( P_2 (F) ) \subseteq \bigcup_{i=1} ^p \left( a_i \cdot \pi_{m_2} (\tilde{E}_{\eta_i}) + t_i \right),
\end{equation*}
where either $\phi$ or $\psi$ are invertible similarity maps $\mathbb{R}\rightarrow \mathbb{R}$. 
However, since  $\frac{\log m_1}{\log m_2}, \frac{\log n_1}{\log m_2} \notin \mathbb{Q}$, both options lead to a contradiction, since e.g. if the first option holds then by Theorem \ref{Theorem main Theorem} part (1), 
\begin{eqnarray*}
0<\overline{\dim}_B K &=& \overline{\dim}_B \left( \phi(K) \right) \\
&=& \overline{\dim}_B \left( \phi(K) \bigcap \left( \bigcup_{i=1} ^p \left( a_i \cdot \pi_{m_2} (\tilde{E}_{\eta_i}) + t_i \right) \right) \right)  \\
& \leq & \dim_H K+ \max_{j\in [n_2]} \frac{\log |\Lambda_j|}{\log m_2}-1 \\
& \lneq & \dim_H K \\
\end{eqnarray*}
since $\max_{j\in [n_2]} \frac{\log |\Lambda_j|}{\log m_2} <1$ for all $j\in [n_2]$.

If $\dim P_2 (E)<1$ then we follow a similar argument, projecting \eqref{Eq cov of micro} by $P_2$ this time, and using the fact that both $\frac{\log m_1}{\log n_2}, \frac{\log n_1}{\log n_2} \notin \mathbb{Q}$ and that $A$ is invertible.

\section{A CP chain on the space of slices of a family of product sets} 
\subsection{Some notations and preliminaries}
We now begin the proof of Theorem \ref{Theorem main Theorem}. Recall the notation introduced before Theorem \ref{Theorem main Theorem}. In particular, we always assume $m_1>m_2$ and $\frac{\log m_2}{\log m_1} = \theta \notin \mathbb{Q}$. Let $R_\theta : \mathbb{T} \rightarrow \mathbb{T}$ denote the irrational rotation $R_\theta (t) = t+\theta \mod 1$.  Let $X = ([m_1] \times [m_2])^{\mathbb{N}_0}$, where $\mathbb{N}_0 = \lbrace 0 \rbrace \cup \mathbb{N}$. 

For $t\in \mathbb{T}$, define a map $\sigma_t : [n_1]^{\mathbb{N}} \rightarrow [n_1]^{\mathbb{N}}$ by 
$\sigma_t (\omega)$ = 
     $\begin{cases}
      \sigma (\omega)  &\quad\text{if } t \in [1- \theta,1) \\
       \omega &\quad\text{if } t\in [0, 1-\theta) \\
     
     \end{cases}$
     
Next, let $t\in \mathbb{T}$. We define a map $\mathbb{T}\rightarrow [2]^{\mathbb{N}_0}$ by  $t\mapsto v_t$, where 
\begin{equation*}
 v_t (n) = 1 \text{ if and only if } \Rtheta^n (t) \in [1-\theta,1).
\end{equation*}
Notice that this is an injection.  On the space $[2]^{\mathbb{N}_0}$ we use the metric $d(x,y)=m_2 ^{-\min \lbrace k : x_k \neq y_k \rbrace}$. Thus, this identification induces a  metric $d_\theta$ on the image of $\mathbb{T}$ in $[2]^{\mathbb{N}_0}$ by taking
\begin{equation*}
d_\theta( v_s,v_t)= m_2 ^{-\min \lbrace k\geq 0: \quad v_t (k) \neq v_s (k)\rbrace}.
\end{equation*}
\begin{Notation}
We denote by $S$ the closure with respect to the metric $d_\theta$ on $[2]^{\mathbb{N}_0}$, of the image of $\mathbb{T}$ under the map $t\mapsto v_t$.
\end{Notation}

 Notice that not every $\tau \in S$ has some $t\in \mathbb{T}$ such that $\tau = v_{t}$. Indeed, this follows by noting that, for a sequence $\lbrace t_k \rbrace\subset \mathbb{T}$, if $v_{t_k}$ converges to $v_t$ in $d_\theta$ for some $t\in \mathbb{T}$, then $t_k$ converges to $t$ in the usual metric on $\mathbb{T}$. Thus, for the sequence $t_k = 1-\theta - \frac{1}{k}$, $v_{t_k}$ has no $d_\theta$ limits coming from elements of $\mathbb{T}$. For if it had one then it would have to be $v_{1-\theta}$. But $v_{1-\theta - \frac{1}{k}} (1) =1 \neq 0=  v_{1-\theta} (1)$ for all $k$ large enough, a contradiction.  

We  define a partition $\mathcal{C}$ of $\mathbb{T}$ in the following manner: $\mathcal{C}= \lbrace [1-\theta,1), [0,1-\theta) \rbrace$. We also denote, for every $k\in \mathbb{N}$, the partition $\mathcal{C}_k = \bigvee_{i=0} ^{k-1} \Rtheta^{-i} \mathcal{C}$. Notice that the elements of $\mathcal{C}_k$ are half closed half open intervals.

Next, for $\tau \in S$ we define 
\begin{equation} \label{Def rkt}
r_k (\tau) = | 0\leq i \leq k-1: \tau(i) =1|,
\end{equation}
and for $t\in \mathbb{T}$, we abuse notation and write $r_k (t):=r_k(v_t)$.

\begin{Claim} \label{Claim tau t}
\begin{enumerate}
\item There exists some integer $C>1$ such that for every $\tau \in S$ and every $k\in \mathbb{N}$,
\begin{equation*}
|r_k (\tau) - k\cdot \theta| \leq C
\end{equation*}

\item Let $t\in \mathbb{T}$. Assume that for every $k$, $t$ is not an endpoint of an interval in $\mathcal{C}_k$. Suppose that a sequence $t_k$ converges to $t$ in the usual metric on $\mathbb{T}$. Then $v_{t_k}$ converges to $v_t$ in $S$.
\end{enumerate}
\end{Claim}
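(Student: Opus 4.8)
The plan is to relate $r_k(\tau)$ for an arbitrary $\tau \in S$ to the Birkhoff sums of the indicator $\mathbf{1}_{[1-\theta,1)}$ under the rotation $R_\theta$, and then invoke the classical three-distance / bounded-discrepancy bound for the rotation by $\theta$ applied to the interval $[1-\theta,1)$. Concretely, for $t \in \mathbb{T}$ we have by definition $r_k(t) = \sum_{i=0}^{k-1} \mathbf{1}_{[1-\theta,1)}(R_\theta^i t)$, so
$$ r_k(t) - k\theta = \sum_{i=0}^{k-1}\bigl(\mathbf{1}_{[1-\theta,1)}(R_\theta^i t) - \theta\bigr). $$
Since $[1-\theta,1)$ is an interval whose length $\theta$ is exactly the rotation number, the ergodic sums of its indicator have discrepancy bounded by an absolute constant (this is a standard fact; it follows e.g. from the three-gap theorem, or from the continued-fraction expansion of $\theta$ together with the observation that partial sums over denominators $q_j$ of $\theta$ are bounded), so there is $C_0$ with $|r_k(t) - k\theta| \le C_0$ for all $t \in \mathbb{T}$ and all $k$. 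First I would prove this bound for $t \in \mathbb{T}$, i.e. for $\tau = v_t$ in the dense subset of $S$.

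To pass from the dense subset to all of $S$: the function $\tau \mapsto r_k(\tau)$ depends only on the coordinates $\tau(0),\dots,\tau(k-1)$, hence it is locally constant, and in particular continuous, on $S$ with respect to $d_\theta$ (two points of $S$ that are $d_\theta$-close agree on an initial block of coordinates). Since $\{v_t : t \in \mathbb{T}\}$ is dense in $S$ and the inequality $|r_k(\tau) - k\theta| \le C_0$ holds on this dense set for each fixed $k$, continuity in $\tau$ forces it to hold on all of $S$. Taking $C = C_0 + 1$ (an integer, as requested in the statement) finishes part (1). I do not expect any real obstacle here beyond quoting the bounded-discrepancy fact correctly.

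**Claim \ref{Claim tau t}, part (2).** Here I want to show $v_{t_k} \to v_t$ in $S$, i.e. $d_\theta(v_{t_k}, v_t) \to 0$, which by definition of the metric means: for every fixed $N$, eventually $v_{t_k}$ and $v_t$ agree on coordinates $0,\dots,N$. Coordinate $j$ of $v_s$ is $1$ iff $R_\theta^j s \in [1-\theta,1)$, i.e. iff $s$ lies in a certain half-open subinterval of $\mathbb{T}$ (the preimage $R_\theta^{-j}[1-\theta,1)$), and these are precisely, up to intersection, the intervals cutting out $\mathcal{C}_{N+1}$. The hypothesis is exactly that $t$ is not an endpoint of any interval of $\mathcal{C}_k$ for any $k$; in particular $t$ is in the interior of the relevant interval for each $j \le N$, i.e. $t$ has positive distance to each of the finitely many endpoints involved in determining coordinates $0,\dots,N$. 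Hence for $t_k$ close enough to $t$, $t_k$ lies in the same interval of the partition $\mathcal{C}_{N+1}$ as $t$, so $v_{t_k}$ and $v_t$ agree on the first $N+1$ coordinates. Since $N$ was arbitrary, $d_\theta(v_{t_k}, v_t) \to 0$.

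The only mild subtlety — and the point I would be most careful about — is the half-open nature of the intervals: membership in $[1-\theta,1)$ is not an open condition, so continuity of $s \mapsto v_s(j)$ genuinely fails at the endpoints, which is why the non-endpoint hypothesis in (2) is indispensable (and is exactly what is violated by the sequence $t_k = 1-\theta - 1/k$ discussed just before the claim). Away from endpoints the indicator of each half-open interval is locally constant, so the argument goes through cleanly. I would also remark that the finitely-many endpoints relevant to coordinates $\le N$ are $R_\theta^{-j}(1-\theta)$ and $R_\theta^{-j}(0)$ for $0 \le j \le N$, and $t$ avoids all of them by hypothesis, so $\delta_N := \min_{j \le N} \operatorname{dist}(t, \{R_\theta^{-j}(1-\theta), R_\theta^{-j}(0)\}) > 0$ and any $t_k$ within $\delta_N$ of $t$ works.
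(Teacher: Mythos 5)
Your proposal is correct and takes essentially the same approach as the paper for both parts: for (1), you invoke the classical bounded-discrepancy fact for the bounded-remainder interval $[1-\theta,1)$ on the dense subset $\{v_t : t\in\mathbb{T}\}$ and extend to all of $S$ via the local constancy of $\tau\mapsto r_k(\tau)$ in the first $k$ coordinates, exactly as the paper does (via a triangle inequality with a nearby $v_{t_p}$); for (2), you use that $t$ lies in the interior of the relevant cell of $\mathcal{C}_{N+1}$, so $t_k$ eventually shares the same cell and hence the same initial digits, matching the paper's argument.
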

For a proof, see Section \ref{Section Remaining proofs}.
\subsection{Symbolic setting}

Let $\tau \in S$, and set $Z(\tau) = \lbrace n\geq 0 : \tau_n =1 \rbrace$. Write the elements of $Z(\tau)$ in increasing order $x_1 (\tau) < x_2 (\tau)<...$.  

\begin{Definition}
For every $\tau \in S$ and $(\omega,\eta) \in  [n_1]^{\mathbb{N}} \times [n_2]^{\mathbb{N}}$ we define the set $X_{\tau,\omega,\eta} \subseteq X$ as

\begin{equation*}
\left( \prod_{i=0} ^{x_1 (\tau)-1} [\lbrace 1 \rbrace \times \Lambda_{\eta_{i+1}}] \right) \times [ \Gamma_{\omega_1} \times \Lambda_{\eta_{x_1(\tau)+1}}] \times \left( \prod_{i=x_1(\tau) +1} ^{x_2 (\tau) -1}  [\lbrace 1 \rbrace \times \Lambda_{\eta_{i+1}}] \right) \times [ \Gamma_{\omega_2} \times \Lambda_{\eta_{x_2(\tau)+1}}] 
\end{equation*}
\begin{equation*}
\times \left( \prod_{i=x_2(\tau) +1} ^{x_3 (\tau) -1}  [\lbrace 1 \rbrace \times \Lambda_{\eta_{i+1}}] \right) ...
\end{equation*}
if $x_1 (\tau)\neq0$. Otherwise, the zero coordinate of $X_{\tau,\omega,\eta}$ is $ \Gamma_{\omega_1} \times \Lambda_{\eta_{1}}$, and the rest of the coordinates are defined as above. We also define
\begin{itemize}
\item A metric on $X_{\tau,\omega,\eta}$ by taking $d(x,y) = m_2 ^{-\min{ \lbrace k\geq 0: \quad  x_k \neq y_k \rbrace}}$. 

\item A map $\pi_{\tau,\omega,\eta} : X_{\tau,\omega,\eta} \rightarrow \Fomega\times \Eeta$ by taking 
\begin{equation*}
\pi_{\tau,\omega,\eta} ( (a_n, b_n) ) = ( \sum_{k=1} ^\infty \frac{a_{x_k(\tau)}}{m_1 ^k}, \sum_{k=1} ^\infty \frac{b_{k-1}}{m_2 ^k} ). 
\end{equation*}
Note that this is a surjective map.
\end{itemize}
\end{Definition}

\begin{Lemma} \label{Lemma 4.1}
\begin{enumerate}
\item Suppose $\tau_k, \tau\in S$ and $\omega_k, \omega \in [n_1] ^{\mathbb{N}}$ and $\eta_k,\eta \in [n_2]^{\mathbb{N}}$ are such that $\tau_k \rightarrow \tau$ in $d_\theta$, and $(\omega_k,\eta_k) \rightarrow (\omega,\eta)$ in $[n_1]^{\mathbb{N}}\times [n_2]^{\mathbb{N}}$. Then $X_{\tau_k, \omega_k, \eta_k} \rightarrow X_{\tau,\omega,\eta}$ in the Hausdorff metric, and $\pi_{\tau_k,\omega_k,\eta_k} \rightarrow \pi_{\tau,\omega,\eta}$ uniformly.

\item $\exists C_1 >0$ such that the maps $\pi_{\tau,\omega,\eta}$ are uniformly $C_1$-Lipschitz.

\item $\exists C_2>0$ such that $\forall \tau \in S$ and $(\omega,\eta) \in [n_1]^{\mathbb{N}}\times [n_2]^{\mathbb{N}}$ and all $A\subseteq X_{\tau,\omega,\eta}$, we have 
\begin{equation*}
N(A, m_2 ^{-k}) \leq C_2 \cdot N ( \pi_{\tau,\omega,\eta} (A), m_2 ^{-k})
\end{equation*}
 for all $k\in \mathbb{N}$, where  $N(\cdot ,m_2 ^{-k})$ on the left hand side denotes the number of $k$-level cylinders $A$ intersects, and on the left hand side the number of  $m_2 ^{-k}$-adic squares needed to cover a set in $[0,1]^2$.

\item For all $\tau \in S$ and $(\omega,\eta) \in [n_1]^{\mathbb{N}}\times [n_2]^{\mathbb{N}}$, and all $A\subseteq X_{\tau,\omega,\eta}$, $\dim_H (A) = \dim_H (\pi_{\tau,\omega,\eta} (A))$.
\end{enumerate}
\end{Lemma}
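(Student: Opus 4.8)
The plan is to read off all four claims from the explicit product structure of $X_{\tau,\omega,\eta}$, the single arithmetic identity $m_1^{\theta}=m_2$ (immediate from $\theta=\log m_2/\log m_1$), and the two-sided estimate $|r_k(\tau)-k\theta|\le C$ furnished by Claim~\ref{Claim tau t}(1); here $r_k(\tau)$ counts the active coordinates (those $i<k$ with $\tau_i=1$), and we abbreviate $r=r_k(\tau)$. Throughout I regard $\pi_{\tau,\omega,\eta}$ as defined by the displayed formula on all of $X$ — it in fact depends only on $\tau$, through the positions $x_1(\tau)<x_2(\tau)<\cdots$. The basic combinatorial fact I use repeatedly is that a length-$k$ cylinder of $X_{\tau,\omega,\eta}$ is freely parametrised by two blocks of digits: the ``$b$-block'' $(b_0,\dots,b_{k-1})$, with $b_i$ ranging over $\Lambda_{\eta_{i+1}}\subseteq[m_2]$, and the ``active $a$-block'' $(a_{x_1(\tau)},\dots,a_{x_r(\tau)})$, with $a_{x_j(\tau)}$ ranging over $\Gamma_{\omega_j}\subseteq[m_1]$, while all remaining $a$-digits in positions $<k$ are frozen to the single value $1$.

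I would first dispatch (2) and (1). For (2): if $x,y\in X_{\tau,\omega,\eta}$ first disagree at coordinate $\ell$, so $d(x,y)=m_2^{-\ell}$, then their $b$-digits agree in positions $0,\dots,\ell-1$, so the second coordinates of their images differ by at most $\sum_{j>\ell}(m_2-1)m_2^{-j}=m_2^{-\ell}$, and their active $a$-digits agree up to index $r_\ell(\tau)$, so the first coordinates differ by at most $m_1^{-r_\ell(\tau)}\le m_1^{C}m_1^{-\ell\theta}=m_1^{C}m_2^{-\ell}$; thus $C_1:=2m_1^{C}$ works. For (1): given $N$, the hypotheses $\tau_k\to\tau$ in $d_\theta$ and $(\omega_k,\eta_k)\to(\omega,\eta)$ imply that for all large $k$ the triples $(\tau_k,\omega_k,\eta_k)$ and $(\tau,\omega,\eta)$ agree on a long enough initial block of coordinates, which is possible because the active density is positive (so only $O(N)$ coordinates of $\omega$ and $\eta$ are relevant). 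Such agreement makes the truncations to coordinates $0,\dots,N-1$ of $X_{\tau_k,\omega_k,\eta_k}$ and $X_{\tau,\omega,\eta}$ identical, hence $d_H(X_{\tau_k,\omega_k,\eta_k},X_{\tau,\omega,\eta})\le m_2^{-N}$, and makes the first $N$ terms of the two defining sums of $\pi_{\tau_k,\omega_k,\eta_k}$ and $\pi_{\tau,\omega,\eta}$ coincide, leaving a tail of size $O(m_1^{-N})$; both convergences follow.

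The heart of the matter is (3). A length-$k$ cylinder $[u]\subseteq X_{\tau,\omega,\eta}$ satisfies
\[
\pi_{\tau,\omega,\eta}([u])\ \subseteq\ [X_1,\,X_1+m_1^{-r}]\times[Y_1,\,Y_1+m_2^{-k}],
\]
where $X_1=\sum_{j=1}^{r}a_{x_j(\tau)}m_1^{-j}$ is an integer multiple of $m_1^{-r}$ depending injectively on the active $a$-block, and $Y_1=\sum_{j=1}^{k}b_{j-1}m_2^{-j}$ is an integer multiple of $m_2^{-k}$ depending injectively on the $b$-block. Since $|r-k\theta|\le C$ and $m_1^{\theta}=m_2$, we have $m_2^{-k}\le m_1^{C}m_1^{-r}$; hence a fixed $m_2^{-k}$-adic square (each of whose sides is an $m_2$-adic interval of length $m_2^{-k}$) meets the vertical strip $[Y_1,Y_1+m_2^{-k}]$ for at most $3$ values of $Y_1$ and the horizontal strip $[X_1,X_1+m_1^{-r}]$ for at most $\lceil m_1^{C}\rceil+1$ values of $X_1$, so it meets the image rectangle of at most $C_2:=3(\lceil m_1^{C}\rceil+1)$ distinct length-$k$ cylinders. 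Assigning to each length-$k$ cylinder that meets $A$ some square of an optimal $m_2^{-k}$-adic cover of $\pi_{\tau,\omega,\eta}(A)$ whose intersection with that cylinder's image rectangle is non-empty gives a map that is at most $C_2$-to-one, which is the asserted inequality. I expect this bounded-multiplicity bookkeeping — in particular checking that the $m$-adic boundary overlaps cost only a fixed constant — to be the only genuinely technical step.

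Finally (4). The inequality $\dim_H\pi_{\tau,\omega,\eta}(A)\le\dim_H A$ is immediate from the Lipschitz bound in (2). For the reverse, fix $t>\dim_H\pi_{\tau,\omega,\eta}(A)$ and $\varepsilon>0$, and pick a cover of $\pi_{\tau,\omega,\eta}(A)$ by $m_2^{-k_i}$-adic squares $U_i$ of small mesh with $\sum_i(\diam(U_i))^t<\varepsilon$. By the multiplicity bound established in (3), applied at each scale $k_i$, the set $A\cap\pi_{\tau,\omega,\eta}^{-1}(U_i)$ is contained in the union of at most $C_2$ length-$k_i$ cylinders, each of diameter at most $\diam(U_i)$; these cover $A$, so $\mathcal H^{t}(A)\le C_2\sum_i(\diam(U_i))^t<C_2\varepsilon$. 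Letting $\varepsilon\to0$ yields $\mathcal H^{t}(A)=0$ for every such $t$, i.e. $\dim_H A\le\dim_H\pi_{\tau,\omega,\eta}(A)$, which would complete the argument.
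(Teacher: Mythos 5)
Your proof is correct and takes essentially the same approach as the paper, which only cites Lemma~4.1 of Wu \cite{wu2016proof} and records the key observation that a length-$k$ cylinder maps under $\pi_{\tau,\omega,\eta}$ into a rectangle of side lengths $m_1^{-r_k(\tau)}$ and $m_2^{-k}$, both comparable to $m_2^{-k}$ by Claim~\ref{Claim tau t}. Your proposal simply fills in the Lipschitz estimate, the bounded-multiplicity bookkeeping, and the two-sided dimension comparison that the paper leaves implicit.
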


\begin{proof}
Follows along the same lines of Lemma 4.1 in \cite{wu2016proof}. The key idea here is that for any cylinder $[u]$ of length $k$ in $X_{t,\omega,\eta}$, the set $\pi_{\tau,\omega,\eta} ([u])$ is contained in at most $4$ box's of side length about  $m_2 ^{-k}\times m_2 ^{-k}$ , since by Claim \ref{Claim tau t} the length on the $x$-axis is $m_1 ^{-k\cdot \theta - C} \leq m_1 ^{-r_k (\tau)} \leq m_1 ^{-k\cdot \theta +C}$.  
\end{proof}

Next, define for every $t\in \mathbb{T}$, a map $\Phi_t : [0,1]^2 \rightarrow [0,1]^2$, by

$$\Phi_t (z) = 
     \begin{cases}
      (T_{m_1} (z_1), T_{m_2} ( z_2))  &\quad\text{if } t\in [1 - \theta,1) \\
       ( z_1, T_{m_2} (z_2))  &\quad\text{if } t\in [0, 1-\theta) \\
     
     \end{cases}$$

\begin{Lemma}
If $\ell$ is a line with slope $m_1 ^t, t\in \mathbb{T}$, through $[0,1]^2$, then $\Phi_t (\ell)$ consists of a finite number of lines of slope $m_1 ^{R_\theta (t)}$ (here we think of $\mathbb{T}$ as $[0,1]$ with addition $\pmod 1$).
\end{Lemma}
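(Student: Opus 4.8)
The plan is to trace carefully how the map $\Phi_t$ acts on a line of slope $m_1^t$ through the unit square, treating the two cases in the definition of $\Phi_t$ separately. First I would write the line $\ell$ as $\ell = \{(x_0 + s, y_0 + m_1^t s) : s \in \mathbb{R}\} \cap [0,1]^2$, so that a point $(x,y)\in \ell$ satisfies $y - y_0 = m_1^t (x - x_0)$.

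\textbf{Case $t \in [1-\theta,1)$.} Here $\Phi_t(z) = (T_{m_1}(z_1), T_{m_2}(z_2))$. The map $T_{m_1}$ decomposes $[0,1]$ into the $m_1$ intervals $I_a = [a/m_1, (a+1)/m_1)$, $a \in [m_1]$, and $T_{m_2}$ decomposes $[0,1]$ into the $m_2$ intervals $J_b = [b/m_2, (b+1)/m_2)$, $b \in [m_2]$. On each product cell $I_a \times J_b$, the map $\Phi_t$ restricts to the affine map $(x,y) \mapsto (m_1 x - a,\, m_2 y - b)$. The image of the portion of $\ell$ inside $I_a \times J_b$ is therefore a line segment; its slope is $\frac{m_2}{m_1}\cdot m_1^t = m_1^{t - 1 + \theta} = m_1^{R_\theta(t)}$, using $\theta = \log m_2 / \log m_1$ and the fact that $t \in [1-\theta,1)$ means $R_\theta(t) = t + \theta \pmod 1 = t - 1 + \theta$. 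Since there are at most $m_1 m_2$ such cells, $\Phi_t(\ell)$ is contained in a union of at most $m_1 m_2$ line segments, all of slope $m_1^{R_\theta(t)}$, hence finitely many lines of that slope.

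\textbf{Case $t \in [0,1-\theta)$.} Here $\Phi_t(z) = (z_1, T_{m_2}(z_2))$, which on each horizontal strip $[0,1] \times J_b$ is the affine map $(x,y)\mapsto (x,\, m_2 y - b)$. The image of $\ell \cap ([0,1]\times J_b)$ is a segment of slope $m_2 \cdot m_1^t = m_1^{t + \theta} = m_1^{R_\theta(t)}$, since $t \in [0,1-\theta)$ gives $R_\theta(t) = t+\theta$ without wraparound. There are $m_2$ strips, so again $\Phi_t(\ell)$ lies in a union of finitely many lines of slope $m_1^{R_\theta(t)}$.

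Combining the two cases gives the statement. The argument is essentially bookkeeping; the only mild subtlety worth spelling out is the slope computation, i.e. that $m_2 \cdot m_1^t = m_1^{t+\theta}$ and that the choice of branch in $\Phi_t$ (whether or not $T_{m_1}$ is applied) exactly matches whether $R_\theta(t) = t+\theta$ or $t+\theta - 1$ on $\mathbb{T}$, so in both regimes the resulting slope is $m_1^{R_\theta(t)}$ with the rotation understood $\pmod 1$. One should also note that a line segment in $[0,1]^2$ has at most $m_1 m_2$ (resp. $m_2$) non-degenerate pieces under the relevant subdivision, which bounds the number of image lines; degenerate intersections (a single point) contribute nothing. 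I do not expect any genuine obstacle here — the claim is a direct unwinding of the definitions and is used merely to feed the dynamics $(\omega, t) \mapsto (\sigma_t\omega, R_\theta t)$ into the CP-chain construction.
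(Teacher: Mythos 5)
Your proof is correct. The paper states this lemma without proof (treating it as an immediate unwinding of the definition of $\Phi_t$), and your argument supplies exactly that routine verification: decompose $[0,1]^2$ into the cells on which $\Phi_t$ is affine, compute the slope factor ($m_2/m_1 = m_1^{\theta-1}$ when $T_{m_1}$ is applied, $m_2 = m_1^\theta$ when it is not), and check that the choice of branch in the definition of $\Phi_t$ matches whether $R_\theta(t) = t+\theta$ wraps around $\mathbb{T}$ or not, so the slope is $m_1^{R_\theta(t)}$ in both cases.
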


Next, let $\ell_{u,z}$ denote the line through $z$ with slope $u$. Define $$\mathcal{F} \subseteq \cmpct (X) \times X \times \mathbb{T} \times S \times [n_1]^\mathbb{N} \times [n_2]^\mathbb{N}$$ by
\begin{equation} \label{Eq for F}
 \mathcal{F} = \lbrace (A,x,t,\tau,\omega,\eta): x\in A \subseteq X_{\tau,\omega,\eta}, \quad   \pi_{\tau,\omega,\eta} (A) \subseteq [ \Fomega \times \Eeta]\cap \ell_{m_1 ^t,\pi_{\tau,\omega,\eta} (x)} \rbrace.
\end{equation}

Note that for every line $\ell_{m_1 ^t,z}$ for some $t\in \mathbb{T}$ and $z\in [\Fomega \times \Eeta]\cap \ell_{m_1 ^t,z}$, for all $x\in \pi_{v_t,\omega,\eta} ^{-1} (z)$ we have
\begin{equation*}
( \pi_{v_t,\omega,\eta} ^{-1} ([ \Fomega \times \Eeta]\cap \ell_{m_1^t,\pi_{v_t,\omega,\eta} (x)}), x, t, v_t, \omega,\eta) \in \mathcal{F}.
\end{equation*}

\begin{Lemma} \label{Lemma 4.3}
\begin{enumerate}
\item If $(A,x,t, v_t, \omega,\eta) \in F$ then
\begin{equation*}
( \sigma(A\cap [x_1]), \sigma (x), \Rtheta (t), \sigma (v_t), \sigma_t (\omega),\sigma(\eta)) \in \mathcal{F}.
\end{equation*} 

\item If $(A_k,x_k,t_k, \tau_k, \omega_k,\eta_k) \rightarrow (A,x,t, \tau, \omega,\eta)$ and $(A_k,x_k,t_k, \tau_k, \omega_k,\eta_k)  \in F$ for all $k$, then $(A,x,t, \tau, \omega,\eta)\in F$. 
\end{enumerate}
\end{Lemma}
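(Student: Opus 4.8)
The plan is to treat the two parts separately: part~(1) is an equivariance statement that I would reduce to two bookkeeping identities for the sets $X_{\tau,\omega,\eta}$ and the maps $\pi_{\tau,\omega,\eta}$, together with the observation that restricting to a first-coordinate cylinder turns $\Phi_t$ into an honest affine map; part~(2) is a routine limiting argument. For part~(1), write $\tau=v_t$; note first that $\sigma(v_t)=v_{\Rtheta(t)}$ straight from the definition of $v_{(\cdot)}$, so the fourth coordinate of the claimed tuple lies in $S$. Next I would prove the structural identity
\[
\sigma\big(X_{\tau,\omega,\eta}\cap[x_1]\big) = X_{\sigma\tau,\,\sigma_t(\omega),\,\sigma(\eta)},
\]
treating $\tau_0=1$ (i.e.\ $t\in[1-\theta,1)$, where the zeroth coordinate of $X_{\tau,\omega,\eta}$ is $\Gamma_{\omega_1}\times\Lambda_{\eta_1}$ and $\sigma_t=\sigma$) and $\tau_0=0$ (i.e.\ $t\in[0,1-\theta)$, where it is $\{1\}\times\Lambda_{\eta_1}$ and $\sigma_t=\mathrm{Id}$) separately: in both cases deleting the zeroth coordinate of the defining product and reindexing yields precisely the product defining $X_{\sigma\tau,\sigma_t(\omega),\sigma(\eta)}$, the point being that the $k$-th occurrence of the symbol $1$ in $\sigma\tau$ carries the digit block $\Gamma_{(\sigma_t\omega)_k}$. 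Alongside this I would establish the intertwining relation
\[
\pi_{\sigma\tau,\,\sigma_t(\omega),\,\sigma(\eta)}\circ\sigma = \Phi_t\circ\pi_{\tau,\omega,\eta}\qquad\text{on }[x_1],
\]
which comes out of substituting the series formula for $\pi_{\tau,\omega,\eta}$ into the definition of $\Phi_t$ and matching terms after the index shift produced by $\sigma$; the endpoint non-injectivity of $T_{m_1},T_{m_2}$ is negligible, exactly as in \cite{wu2016proof}.

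Granting these identities, part~(1) is a verification of the conditions defining $\mathcal{F}$ for the tuple $(\sigma(A\cap[x_1]),\sigma(x),\Rtheta(t),\sigma(v_t),\sigma_t(\omega),\sigma(\eta))$. We have $\sigma(x)\in\sigma(A\cap[x_1])$ since $x\in A$ lies in its own cylinder; the inclusion $\sigma(A\cap[x_1])\subseteq X_{\sigma\tau,\sigma_t(\omega),\sigma(\eta)}$ follows from $A\cap[x_1]\subseteq X_{\tau,\omega,\eta}\cap[x_1]$ together with the structural identity; and the image of $\pi_{\sigma\tau,\sigma_t(\omega),\sigma(\eta)}$ lies in the corresponding product set by definition. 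For the line condition, $\pi_{\tau,\omega,\eta}([x_1])$ is contained in a box $B$ of $x$-width $1/m_1$ when $\tau_0=1$ and of full $x$-width when $\tau_0=0$, and of $y$-height $1/m_2$ in both cases, on which $\Phi_t$ is the restriction of a single affine bijection onto $[0,1]^2$ with diagonal linear part, sending the slope $m_1^t$ to the slope $m_1^{\Rtheta(t)}$. Since $\pi_{\tau,\omega,\eta}(A\cap[x_1])\subseteq\ell_{m_1^t,\pi_{\tau,\omega,\eta}(x)}\cap B$ and $\pi_{\tau,\omega,\eta}(x)\in B$, that affine map carries the segment into the line of slope $m_1^{\Rtheta(t)}$ through $\Phi_t(\pi_{\tau,\omega,\eta}(x))$, and applying the intertwining relation turns this into $\pi_{\sigma\tau,\sigma_t(\omega),\sigma(\eta)}\big(\sigma(A\cap[x_1])\big)\subseteq\ell_{m_1^{\Rtheta(t)},\,\pi_{\sigma\tau,\sigma_t(\omega),\sigma(\eta)}(\sigma x)}$, which is the remaining defining condition.

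For part~(2), I would pass each condition defining $\mathcal{F}$ to the limit. The inclusion $x\in A$ holds because $x_k\in A_k$, $x_k\to x$ and $A_k\to A$ in the Hausdorff metric; $A\subseteq X_{\tau,\omega,\eta}$ holds because $A_k\subseteq X_{\tau_k,\omega_k,\eta_k}$, $X_{\tau_k,\omega_k,\eta_k}\to X_{\tau,\omega,\eta}$ by Lemma~\ref{Lemma 4.1}(1), and an inclusion between two Hausdorff-convergent sequences of compacta passes to the limit; and $\tau\in S$ since $S$ is closed. For the line condition, Lemma~\ref{Lemma 4.1}(1) also gives $\pi_{\tau_k,\omega_k,\eta_k}\to\pi_{\tau,\omega,\eta}$ uniformly, so given $a\in A$ one picks $a_k\in A_k$ with $a_k\to a$ and obtains $\pi_{\tau_k,\omega_k,\eta_k}(a_k)\to\pi_{\tau,\omega,\eta}(a)$ and $\pi_{\tau_k,\omega_k,\eta_k}(x_k)\to\pi_{\tau,\omega,\eta}(x)$; writing the incidence of $\pi_{\tau_k,\omega_k,\eta_k}(a_k)$ with the line through $\pi_{\tau_k,\omega_k,\eta_k}(x_k)$ of slope $m_1^{t_k}$ as the linear relation $p_2-z_2=m_1^{t_k}(p_1-z_1)$ and letting $k\to\infty$ (using $m_1^{t_k}\to m_1^t$) gives $\pi_{\tau,\omega,\eta}(a)\in\ell_{m_1^t,\pi_{\tau,\omega,\eta}(x)}$, as needed.

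The main obstacle is entirely in part~(1): getting the index bookkeeping of the two identities exactly right --- in particular that $\sigma_t$, which shifts $\omega$ precisely when $t\in[1-\theta,1)$, is exactly the reindexing of the $\Gamma$-blocks of $X_{\tau,\omega,\eta}$ forced by deleting its zeroth coordinate, and that passing to the cylinder $[x_1]$ is exactly what collapses the piecewise-defined $\Phi_t$ to a single affine map on the box $B$, so that a line segment is sent to a line segment of the rotated slope $m_1^{\Rtheta(t)}$. Part~(2) is routine once Lemma~\ref{Lemma 4.1}(1) is in hand.
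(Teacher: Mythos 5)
Your proposal is correct and follows essentially the same route as the paper: part (1) is proved via the pointwise intertwining identity $\Phi_t(\pi_{v_t,\omega,\eta}(x)) = \pi_{\sigma(v_t),\sigma_t(\omega),\sigma(\eta)}(\sigma(x))$ on $X_{v_t,\omega,\eta}$ together with the structural identity for the symbolic sets, and part (2) is a direct limiting argument using Lemma~\ref{Lemma 4.1}(1). The only difference is cosmetic: you spell out the line-incidence step by exhibiting $\Phi_t$ as a single affine bijection on the image of the cylinder, whereas the paper invokes the preceding unnumbered observation that $\Phi_t$ sends lines of slope $m_1^t$ to finitely many lines of slope $m_1^{\Rtheta(t)}$.
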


\begin{proof}
Let $x \in X_{v_t,\omega,\eta}$. Then $\sigma(x)\in  X_{v_{\Rtheta (t)},\sigma_t (\omega), \sigma(\eta)}=X_{\sigma(v_t),\sigma_t(\omega),\sigma(\eta)}$. In particular,
\begin{equation*}
 \Phi_t ( \pi_{ v_t,\omega,\eta} (x)) = \pi_{\sigma (v_t), \sigma_t (\omega), \sigma(\eta)} (\sigma (x)).
\end{equation*}
It follows that 
\begin{equation*}
\Phi_t ( \pi_{v_t,\omega,\eta} (A\cap [x_1]))=\pi_{\sigma (v_t), \sigma_t (\omega),\sigma(\eta)} (\sigma(A\cap [x_1])).
\end{equation*}
Also, since $\tilde{F}_\omega$ and $\tilde{E}_\eta$ are product sets,
\begin{equation*}
\Phi_t ( \Fomega \times \Eeta) = \pi_{m_1} (\tilde{F}_{\sigma_t (\omega)})\times \pi_{m_2} (\tilde{E}_{\sigma (\eta)}).
\end{equation*}
This implies part (1). Part (2) follows from part (1) of Lemma \ref{Lemma 4.1}.
\end{proof}

\subsection{Construction of a CP-distribution} \label{Section 4.2}
Consider the space $Y = P(X) \times X \times \mathbb{T} \times S \times [n_1]^{\mathbb{N}} \times [n_2]^ {\mathbb{N}}$. Define $\hat{M}:Y \rightarrow Y$ by 
\begin{equation*}
\hat{M} (\mu,x,t, \tau, \omega,\eta) = (\mu^{[x_0]},\sigma (x), \Rtheta (t), \sigma(\tau), \sigma_t (\omega),\sigma(\eta)),
\end{equation*}
(recall the definition of $\mu^{[x_0]}$ from Section \ref{Section CP symb}).  This rather cumbersome space comes from mostly natural geometric considerations: the first two coordinates of $Y$ are the usual (symbolic) setting for a CP distribution, as in Definition \ref{Def CP dist}. These will describe measures on slices of $\Fomega\times \Eeta$. The following two coordinates, $\mathbb{T} \times S$, capture the slope of the slice, where the $S$ coordinate (the only "unnatural" coordinate) is needed for compactness reasons. The final two coordinates capture the $(\omega,\eta)$ that corresponds to the set $\Fomega\times \Eeta$ in the family that is being sliced.

Note that $\hat{M}$ is not continuous; the set of its discontinuity points is contained in $P(X)\times X \times \lbrace 0, 1-\theta \rbrace \times S \times [n_1]^\mathbb{N} \times [n_2] ^\mathbb{N}$. This is because the skew-product $\sigma : \mathbb{T} \times [n_1] ^\mathbb{N} \rightarrow [n_1] ^\mathbb{N}$ is continuous at all points except at $\lbrace 0 , 1-\theta\rbrace \times [n_1] ^\mathbb{N}$.

We shall say that $P\in P(Y)$ is globally adapted if for every $f\in C(Y)$,
\begin{equation} \label{Glob. ada.}
\int f(\mu,x,t, \tau, \omega,\eta) dP(\mu,x,t, \tau, \omega,\eta) = \int \left( \int f(\mu,x,t, \tau, \omega,\eta)d \mu (x)\right) dP_{1,3,4,5,6} (\mu,t, \tau, \omega,\eta)
\end{equation}
where $P_{1,3,4,5,6}$ denote the corresponding marginal of $P$ on the coordinates $1,3,4,5,6$.   This means that if a property holds $P$ a.e. then it holds for $P_{1,3,4,5,6}$ a.e. $(\mu,t,\tau,\omega,\eta)$ and for $\mu$ a.e. $x$.

For $P\in P(Y)$ define $H(P) = \int \frac{1}{\log m_2} \log \mu ([x_1]) dP_{1,2} (\mu,x)$.
\begin{Proposition} \label{Lemma CP chain}
Suppose that $\exists (t_0, \omega_0,\eta_0)\in \mathbb{T} \times [n_1]^\mathbb{N} \times [n_2] ^\mathbb{N}$ such that there exists a line $\ell$ of slope $m_1 ^{t_0}$ that satisfies 
\begin{equation*}
\overline{\dim}_B \left( \pi_{m_1} (\tilde{F}_{\omega_0})\times \pi_{m_2} (\tilde{E}_{\eta_0}) \right) \bigcap \ell = \gamma >0.
\end{equation*}

Then there exists  $Q\in P(Y)$ that is $\hat{M}$ invariant, $H(Q)=\gamma$, and $Q$ satisfies \eqref{Glob. ada.} for all $f\in C(Y)$. In particular, $Q_{1,2}$ is a CP-distribution. Moreover:
\begin{enumerate}
\item Recall the definition of $\mathcal{F}$ from \eqref{Eq for F}, and let 
\begin{equation*}
D_\mathcal{F} = \bigcup_{(A,x,t, \tau, \omega,\eta) \in \mathcal{F}} P(A)\times \lbrace x \rbrace \times \lbrace t \rbrace \times \lbrace \tau \rbrace \times \lbrace \omega \rbrace \times \lbrace \eta \rbrace.
\end{equation*}
Then $Q$ is supported on $D_\mathcal{F}$. Thus, $Q$ a.e. ergodic component is supported there. Moreover,
\begin{equation} \label{Eq correct coding}
Q ( \lbrace (\mu,x,t,\tau, \omega,\eta) \in D_\mathcal{F}  : \tau = v_t \rbrace)=1.
\end{equation}

\item There a measurable set $E_\gamma \subset Y$ such that $Q(E_\gamma )>0$,  and for $Q$ a.e. $(\mu,x,t, \tau, \omega,\eta)\in E_\gamma$, $Q_{1,2} ^{(\mu,x,t, \tau, \omega,\eta)}$ (the marginal of the corresponding ergodic component of $Q$ on the first two coordinates) is an ergodic CP chain of dimension $\geq \gamma$. 

\end{enumerate}
\end{Proposition}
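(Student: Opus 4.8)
The plan is to imitate Wu's construction of a CP‑distribution from \cite[\S 5]{wu2016proof}, carrying the three extra coordinates $\mathbb{T}\times S\times[n_1]^{\mathbb{N}}\times[n_2]^{\mathbb{N}}$ along passively. Write $\tau_0=v_{t_0}$ and $B=\bigl(\pi_{m_1}(\tilde F_{\omega_0})\times\pi_{m_2}(\tilde E_{\eta_0})\bigr)\cap\ell$, so $\overline{\dim}_B B=\gamma$. Fix a closed $A_0\subseteq X_{\tau_0,\omega_0,\eta_0}$ with $\pi_{\tau_0,\omega_0,\eta_0}(A_0)=B$; by the uniform Lipschitz bound and the covering estimate of Lemma \ref{Lemma 4.1}(2)--(3) the number $N_k$ of length‑$k$ cylinders of $X$ meeting $A_0$ satisfies $\limsup_k\frac{\log N_k}{k\log m_2}=\gamma$, and I fix a subsequence $k_j\to\infty$ realising this $\limsup$. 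Following \cite{wu2016proof}, by passing to a suitable (essentially regular) sub‑tree of $A_0$ one produces measures $\lambda_j\in P(X_{\tau_0,\omega_0,\eta_0})$ supported on $A_0$ with $\frac{1}{k_j\log m_2}H(\lambda_j,\mathcal{P}_{k_j})\to\gamma$, where $\mathcal{P}_{k}$ is the partition of $X$ into length‑$k$ cylinders, and such that every one‑step conditional probability of $\lambda_j$ along a cylinder is bounded below by a constant $c>0$ independent of $j$ --- this last point being the mechanism that prevents entropy from escaping to infinity in the weak‑$*$ limit below. I then set
\begin{equation*}
Q_j=\frac{1}{k_j}\sum_{i=0}^{k_j-1}\hat M^i_{*}\!\left(\int_X\delta_{(\lambda_j,\,x,\,t_0,\,\tau_0,\,\omega_0,\,\eta_0)}\,d\lambda_j(x)\right)\in P(Y),
\end{equation*}
and take $Q$ to be a weak‑$*$ subsequential limit of $\{Q_j\}$.

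Next I would verify the structural properties. \emph{Invariance and adaptedness.} The $\mathbb{T}$‑marginal of $Q_j$ is $\frac1{k_j}\sum_{i<k_j}\delta_{R_\theta^i t_0}$, which equidistributes to Lebesgue by Weyl; hence the $\mathbb{T}$‑marginal of $Q$ is Lebesgue, so $Q$ gives zero mass to $\{t\in\{0,1-\theta\}\}$, the set off which $\hat M$ is continuous. Combined with the telescoping bound $\|\hat M_*Q_j-Q_j\|=O(1/k_j)$ and the fact that $\hat M$ is $Q$‑a.e.\ continuous, the usual argument gives $\hat M_*Q=Q$. The innermost measure in $Q_j$ is globally adapted by construction, $\hat M$ preserves global adaptedness (as in standard CP theory, since $\sigma(x)\mid[x_0]$ is distributed according to $\mu^{[x_0]}$ when $x\sim\mu$), and \eqref{Glob. ada.} passes to weak‑$*$ limits; thus $Q$ is globally adapted and in particular $Q_{1,2}$ is a CP‑distribution. \emph{The value of $H$.} By the chain rule, telescoping along the orbit, $H(Q_j)=\frac{1}{k_j\log m_2}H(\lambda_j,\mathcal{P}_{k_j})\to\gamma$; and since our lower bound on the conditional probabilities of $\lambda_j$ confines $\supp(Q)$ to a region where the integrand defining $H$ is bounded and continuous, $H(Q)=\lim_j H(Q_j)=\gamma$.

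For part (1): since $(A_0,x,t_0,\tau_0,\omega_0,\eta_0)\in\mathcal{F}$ for every $x\in A_0$, Lemma \ref{Lemma 4.3}(1) shows every $\hat M$‑iterate of $(\lambda_j,x,t_0,\tau_0,\omega_0,\eta_0)$ lies in $D_\mathcal{F}$, so $Q_j(D_\mathcal{F})=1$; and $D_\mathcal{F}$ is closed (Lemma \ref{Lemma 4.3}(2), compactness of $\cmpct(X)$, and the fact that $\mu_k\to\mu$, $A_k\to A'$ in the Hausdorff metric forces $\supp\mu\subseteq A'$), so $Q(D_\mathcal{F})=1$ and a.e.\ ergodic component of $Q$ is supported on $D_\mathcal{F}$. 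Finally $\hat M$ maps $\{\tau=v_t\}$ into itself because $\sigma(v_t)=v_{R_\theta t}$, hence $Q_j(\{\tau=v_t\})=1$; since $t\mapsto v_t$ is continuous at every $t$ outside the countable set of endpoints of the partitions $\mathcal{C}_k$ (Claim \ref{Claim tau t}(2)), which is Lebesgue‑null and hence $Q$‑null in the $\mathbb{T}$‑coordinate, the relation $\tau=v_t$ is preserved in the limit, giving \eqref{Eq correct coding}.

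For part (2) I would argue as follows: the projection onto the first two coordinates is a factor map $(Y,\hat M,Q)\to(\Omega,M,Q_{1,2})$, so for $Q$‑a.e.\ $y$ the marginal $Q_{1,2}^{(y)}$ of the ergodic component $Q^{(y)}$ is $M$‑ergodic, hence is an ergodic component of the CP‑distribution $Q_{1,2}$, hence is itself an ergodic CP‑distribution by Proposition \ref{Proposition components of CP}, and it is supported on $D_\mathcal{F}$. By the dimension formula for ergodic CP‑distributions and the ergodic decomposition of the functional $H$ one has $\int \dim(Q_{1,2}^{(y)})\,dQ(y)=H(Q)=\gamma$, so $E_\gamma:=\{\,y\in D_\mathcal{F}:\dim(Q_{1,2}^{(y)})\ge\gamma,\ \tau=v_t\,\}$ has $Q(E_\gamma)>0$, which is (2). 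The main obstacle is the first paragraph: constructing the $\lambda_j$ so that $H(\lambda_j,\mathcal{P}_{k_j})$ is asymptotically as large as the box‑counting of $B$ permits while keeping all one‑step conditional probabilities bounded below --- i.e.\ extracting an essentially regular sub‑tree --- so that no entropy is lost in the weak‑$*$ limit. The remaining care goes into the compactification $S$ and Claim \ref{Claim tau t}, which exist precisely to make the limiting distribution land inside $\mathcal{F}$ and satisfy $\tau=v_t$ despite the discontinuity of $\hat M$ and of $t\mapsto v_t$.
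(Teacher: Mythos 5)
Your proof follows the paper's construction essentially step for step: start from the symbolic preimage of the slice, build measures concentrated on it, push forward by Cesàro averages of $\hat M$, pass to a weak-$*$ subsequential limit, and then use the Weyl equidistribution of the $\mathbb{T}$-marginal both to establish $\hat M$-invariance (since the discontinuity set of $\hat M$ is $Q$-null) and to prove $\tau=v_t$ $Q$-a.e.\ via Claim \ref{Claim tau t}(2) and the countability of the endpoint set; the closedness of $D_\mathcal{F}$ (Lemma \ref{Lemma 4.3}(2)) gives part (1), and the ergodic decomposition of the linear functional $H$ together with Proposition \ref{Proposition components of CP} gives part (2), exactly as in the paper. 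The one point where you diverge is the construction of the initial measures: the paper uses the counting measure $\mu_k$ on one representative per $n_k$-cylinder meeting $E$ and cites Wu's \S 4.2 both for $H(Q_k)\to\gamma$ and for the passage to $H(Q)=\gamma$, whereas you propose pruning to an (essentially regular) sub-tree so that one-step conditionals of $\lambda_j$ are uniformly bounded below, which makes the entropy integrand $(\mu,x)\mapsto -\tfrac{1}{\log m_2}\log\mu([x_1])$ bounded and continuous on $\supp Q$ and so renders the convergence $H(Q_j)\to H(Q)$ immediate. Your modification is not cosmetic: the integrand is only lower semi-continuous and unbounded above, so weak-$*$ convergence alone gives $H(Q)\le\liminf H(Q_k)=\gamma$ but not the reverse inequality, and it is precisely $H(Q)\ge\gamma$ that the construction of $E_\gamma$ in part (2) relies on; you have correctly identified that this is the step that needs care (and is delegated to the cited reference in the paper), and the bounded-conditionals device is a sound way to close it, provided the sub-tree extraction is carried out in detail.
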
 

\begin{proof}
Let $E = \pi_{v_{t_0},\omega_0,\eta_0} ^{-1} \left( \left( \pi_{m_1} (\tilde{F}_{\omega_0})\times \pi_{m_2} (\tilde{E}_{\eta_0}) \right) \bigcap \ell \right)$. By Lemma \ref{Lemma 4.1} parts (2) and (3), we have $\overline{\dim}_B (E)=\gamma$ in the space $X_{v_{t_0},\omega_0,\eta_0}$. Thus, there exists $n_k \nearrow \infty$ such that 
\begin{equation}
\lim_k \frac{ \log N(E, m_2 ^{-n_k})}{-n_k \log m_2} = \gamma.
\end{equation}

Define a sequence of measures $\lbrace \mu_k \rbrace_k$ on $E$ by setting 
\begin{equation*}
\mu_k = \frac{1}{N(E,m_2^{-n_k})} \sum_{u: |u|=n_k, [u]\cap E \neq \emptyset} \delta_{x_u}
\end{equation*}
for some $x_u \in [u]\cap E$. We also define
\begin{equation*}
P_k = \frac{1}{N(E,m_2 ^{-n_k})} \sum_{u: |u|=n_k, [u]\cap E \neq \emptyset} \delta_{(\mu_k, x_u, t_0, v_{t_0}, \omega_0,\eta_0)}
\end{equation*}
\begin{equation*}
Q_k = \frac{1}{n_k} \sum_{i=0} ^{n_k-1} \hat{M}^i P_k.
\end{equation*}
Note that by the construction of $Q_k$, for all $f\in C(Y)$, \eqref{Glob. ada.} holds.

Note that $H(Q_k) \rightarrow \gamma$ as $k$ grows to $\infty$. The proof of this fact can be found in section 4.2 of \cite{wu2016proof}. Now, by the compactness of $P(Y)$, we may find a sub sequence such that $Q_k \rightarrow Q \in P(Y)$. It follows that $H(Q)=\gamma$.

We claim that $Q$ is $\hat{M}$ invariant. Indeed, we note that $Q_3$ is a measure that is invariant under the irrational rotation $\Rtheta$. Therefore, this must be the Lebesgue measure on $\mathbb{T}$. Thus,
\begin{equation*}
Q ( \text{ discontinuouties of } \hat{M} ) \subseteq Q (P(X)\times X \times \lbrace 0, 1-\theta \rbrace \times S \times [n_1]^\mathbb{N} \times [n_2]^\mathbb{N}) = Q_3 ( \lbrace 0, 1-\theta \rbrace) = 0.
\end{equation*}
It follows that $Q$ is a measure such that an orbit of $\hat{M}$ equidistributes for, and by the above calculation $\hat{M}$ is continuous up to a $Q$ null set. Therefore, $Q$ is $\hat{M}$ invariant. Finally, $Q$ satisfies \eqref{Glob. ada.} since each $Q_k$ does.

Let $Q = \int Q^{(\mu,x,t, \tau, \omega,\eta)} dQ(\mu,x,t, \tau, \omega,\eta)$ denote the ergodic decomposition of $Q$. Define
\begin{equation*}
E_\gamma = \lbrace (\mu,x,t, \tau, \omega,\eta)\in Y: \quad  H( Q^{(\mu,x,t, \tau, \omega,\eta)} \geq \gamma \rbrace.
\end{equation*}
Since $H(Q)=\gamma$ we have $Q( E_\gamma )>0$, and for $Q$ a.e. $(\mu,x,t, \tau, \omega,\eta)\in E_\gamma$, $Q_{1,2} ^{(\mu,x,t, \tau, \omega,\eta)}$ is an ergodic CP distribution of dimension $\geq \gamma$ (by Proposition \ref{Proposition components of CP}).  

Next,  by Lemma\footnote{In fact, this requires some work. Specifically, use the fact that if $\mu_k \rightarrow \mu$ in a compact metric space $X$, then $\supp(\mu) \subseteq \lbrace x: \limsup d_X(z, \supp(\mu_k))=0\rbrace$, and Lemma \ref{Lemma 4.1} part (1). } \ref{Lemma 4.3}, $D_\mathcal{F}$ is closed and $Q_k$ is supported on $D_\mathcal{F}$ for all $k$. It follows that $Q$ is supported on $D_\mathcal{F}$. Thus, $Q$ a.e. ergodic component is supported there.

Finally, we prove equation \eqref{Eq correct coding}. Let $A$ denote the countable set of all endpoints of the intervals in the partitions $\mathcal{C}_k$ of $\mathbb{T}$ for all $k\geq 0$, defined before Claim \ref{Claim tau t}. Note that this a countable set. Now, consider the set
\begin{equation*}
B = \lbrace (\mu,x,t,\tau, \omega,\eta) \in D_\mathcal{F} \cap \supp (Q):\quad  t\notin A \rbrace.
\end{equation*} 
Since the projection from the space $Y$ to its third coordinate is continuous, and since $A\subset \mathbb{T}$ is countable (and hence measurable), it follows that $B$ is a measurable set. We now prove that if $(\mu,x,t,\tau,\omega,\eta)\in B$ then $\tau = v_t$. 

Fix $(\mu,x,t,\tau,\omega,\eta)\in B$.  It is well known that since $Q$ is the weak limit of the distributions $Q_k$ (defined earlier), then
\begin{equation*}
\supp (Q) \subseteq \lbrace z\in Y: \quad \limsup_k d_Y (z, \supp (Q_k))=0 \rbrace.
\end{equation*}
Thus, there exists a sequence $n_k$ and elements $(\mu_{n_k} ,x_{n_k},t_{n_k},\tau_{n_k}, \omega_{n_k},\eta_{n_k}) \in \supp (Q_{n_k})$ that converge to $(\mu,x,t,\tau,\omega,\eta)$. In particular, $t_{n_k}$ converges to $t$ in $\mathbb{T}$ and $\tau_{n_k}$ converges to $\tau$ in $S$. Since these are elements in the support of $Q_k$, it follows that
\begin{equation*}
t_{n_k} = \Rtheta^{n_k} (t_0) =  n_k \cdot \theta + t_0 \mod 1
\end{equation*}  
\begin{equation*}
\tau_{n_k} = \sigma^{n_k} (v_{t_0}) = v_{n_k\cdot \theta +t_0 \mod 1} = v_{t_{n_k}}.
\end{equation*}
Now, since $t_{n_k}$ converges to $t$ in $\mathbb{T}$, and since $t \notin A$, we may apply part (2) of Claim \ref{Claim tau t} and see that $v_{t_{n_k}} = \tau_{n_k}$ converges to $v_t$. Therefore, $\tau = v_t$. 

Finally,
\begin{equation*}
Q(B^C)=Q( \lbrace (\mu,x,t,\tau, \omega,\eta) \in D_\mathcal{F}:\quad  t\in A \rbrace) = Q_3 (A) = 0,
\end{equation*}
since the marginal of $Q$ on the third coordinate is the Lebesgue measure and $A$ is countable. Also, notice that
\begin{equation*}
\lbrace (\mu,x,t,\tau, \omega,\eta) \in D_\mathcal{F} :\quad  \tau = v_t \rbrace = B \cup \lbrace (\mu,x,t,\tau, \omega,\eta) \in D_\mathcal{F} :\quad  \tau = v_t, t\in A \rbrace,
\end{equation*}
and since both sets on the right hand side are measurable, so it the set on the left hand side.
\end{proof}

Notice that Proposition \ref{Lemma CP chain} assumes nothing about $(\omega_0,\eta_0)$, and thus forms the first step towards the proof of Theorem \ref{Theorem main Theorem} part (1). We next discuss some improvements of the above Proposition when we can control $\omega_0$ and $\eta_0$ in the statement.  Consider the compact metric space $\mathbb{T} \times [n_1]^{\mathbb{N}} \times [n_2]^\mathbb{N}$. Define a map $$Z:\mathbb{T} \times [n_1]^\mathbb{N}\times [n_2]^\mathbb{N} \rightarrow \mathbb{T} \times [n_1]^\mathbb{N}\times [n_2]^\mathbb{N}$$ by taking 
\begin{equation*}
Z(t,\omega,\eta)=(\Rtheta (t),  \sigma_t (\omega),\sigma(\eta)).
\end{equation*}
The following Proposition was  proved  during  the proof of Proposition 4.3 in \cite{Fraser2015Ferguson}. 
\begin{theorem} \label{Theorem X ergodic MPS} \cite{Fraser2015Ferguson}
Let $\alpha_1, \alpha_2$ be Bernoulli measures on $[n_1]^\mathbb{N}$, $[n_2]^\mathbb{N}$, respectively.   Then for every $t\in \mathbb{T}$ there is a set of full $\alpha_1\times \alpha_2$ measure $A$, such that for all $(\omega,\eta)\in A$, we have
\begin{equation*}
\frac{1}{N} \sum_{i=0} ^{N-1} \delta_{Z^i (t,\omega,\eta)} \rightarrow \lambda\times \rho \times \alpha.
\end{equation*}
In particular, the measure preserving system $(\mathbb{T} \times [n_1]^\mathbb{N}\times [n_2]^\mathbb{N},Z,\lambda\times \alpha_1\times \alpha_2)$ is ergodic, where $\lambda$ is the Lebesgue measure on $\mathbb{T}$.
\end{theorem}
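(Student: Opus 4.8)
The plan is to establish the equidistribution statement directly; ergodicity of $(\mathbb{T}\times[n_1]^\mathbb{N}\times[n_2]^\mathbb{N},Z,\lambda\times\alpha_1\times\alpha_2)$ will follow formally. First I would record that $Z$ preserves $\lambda\times\alpha_1\times\alpha_2$: on each atom of the partition $\{[0,1-\theta),[1-\theta,1)\}$ of $\mathbb{T}$ the map $\sigma_t$ is $\mathrm{id}$ or $\sigma$, both preserving the Bernoulli measure $\alpha_1$, while $\sigma$ preserves $\alpha_2$ and $R_\theta$ preserves $\lambda$; since these act coordinatewise given the atom, a direct computation on product sets gives invariance. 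By Stone--Weierstrass it suffices, for a fixed $t\in\mathbb{T}$, to show that for $\alpha_1\times\alpha_2$-a.e.\ $(\omega,\eta)$ and every test function of the form $\phi(t',\omega',\eta')=f(t')g(\omega')h(\eta')$, with $f\in C(\mathbb{T})$ and $g,h$ cylinder functions, the Birkhoff averages $\frac1N\sum_{i=0}^{N-1}\phi(Z^i(t,\omega,\eta))$ converge to $\int\phi\,d(\lambda\times\alpha_1\times\alpha_2)$; intersecting the resulting null sets over a countable dense family of such $\phi$ yields the full-measure set $A=A(t)$. Once this is known, Fubini shows that the set of $(t,\omega,\eta)$ that equidistribute for $\lambda\times\alpha_1\times\alpha_2$ has full measure, and since the limit is the constant $\int\phi$, the conditional expectation $\mathbb{E}[\phi\mid\mathcal I]$ is a.s.\ constant for every continuous $\phi$, forcing the invariant $\sigma$-algebra $\mathcal I$ to be trivial, i.e.\ the system is ergodic.

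The key identity is $Z^i(t,\omega,\eta)=\bigl(R_\theta^i t,\ \sigma^{r_i(t)}\omega,\ \sigma^i\eta\bigr)$, where $r_i(t)$ is as in \eqref{Def rkt}. By Claim \ref{Claim tau t}(1), $|r_i(t)-i\theta|\le C$ with $C$ independent of $i,t$; hence $i\mapsto r_i(t)$ is nondecreasing, tends to $\infty$, satisfies $r_i(t)/i\to\theta$, and every value is attained for at most $C'=C'(\theta)$ indices $i$. Writing $g=\bar g+g_0$ and $h=\bar h+h_0$ with $\bar g=\int g\,d\alpha_1$, $\bar h=\int h\,d\alpha_2$ and $\int g_0\,d\alpha_1=\int h_0\,d\alpha_2=0$, one expands $\frac1N\sum_{i<N}f(R_\theta^i t)\,g(\sigma^{r_i(t)}\omega)\,h(\sigma^i\eta)$ into four pieces. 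The piece $\bar g\bar h\cdot\frac1N\sum_{i<N}f(R_\theta^i t)$ converges to $\bar g\bar h\int f\,d\lambda$ for \emph{every} $t$, by unique ergodicity of the irrational rotation $R_\theta$.

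The three remaining pieces each carry a mean-zero factor, and I would control them by a second-moment estimate. The sequences $\bigl(g_0(\sigma^m\omega)\bigr)_m$ and $\bigl(h_0(\sigma^m\eta)\bigr)_m$ are bounded, stationary, mean-zero, and $d$-dependent for a suitable $d$ (the number of coordinates on which $g_0,h_0$ depend), because $\alpha_1,\alpha_2$ are product measures. The crucial point is that, by the bounded plateaus of $r_i(t)$, the reindexed sequence $\bigl(g_0(\sigma^{r_i(t)}\omega)\bigr)_i$ satisfies $\mathbb{E}\bigl[g_0(\sigma^{r_i}\omega)g_0(\sigma^{r_j}\omega)\bigr]=0$ whenever $|i-j|>C'd$; consequently, for each of the three pieces the $\alpha_1\times\alpha_2$-expectation of the squared average is $O(1/N)$, the bounded weights $f(R_\theta^i t)$ costing only a constant factor (for the mixed piece $g_0(\sigma^{r_i}\omega)h_0(\sigma^i\eta)$ one freezes $\omega$, applies the estimate in $\eta$, and integrates via Fubini). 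Borel--Cantelli along $N=k^2$ gives a.e.\ convergence to $0$ along that subsequence, and uniform boundedness of the summands interpolates to all $N$. This completes the plan.

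The only step demanding genuine care is the correlation bound for the reindexed sequence $\bigl(g_0(\sigma^{r_i(t)}\omega)\bigr)_i$, where one must use Claim \ref{Claim tau t}(1) to ensure that finite-range dependence survives reindexing with only a bounded multiplicative loss. The rest---unique ergodicity of $R_\theta$, the $L^2$--Borel--Cantelli argument, Fubini, and the Stone--Weierstrass reduction---is routine. This is essentially the argument carried out by Fraser, Ferguson and Sahlsten in \cite{Fraser2015Ferguson}.
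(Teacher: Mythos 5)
Your proof sketch is mathematically sound, and it is worth noting that the paper itself offers no proof of this theorem: it is cited verbatim from \cite{Fraser2015Ferguson}, with the remark that the statement appears ``during the proof of Proposition 4.3'' there. So there is no in-paper argument to compare against; what you have produced is a self-contained proof of the cited fact.

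Checking your steps: the invariance of $\lambda\times\alpha_1\times\alpha_2$ under $Z$ is correct, since $\alpha_1$ is shift-invariant (so $\sigma_t$ preserves it for either atom of $\mathcal{C}$), $\alpha_2$ is shift-invariant, $R_\theta$ preserves $\lambda$, and the computation on rectangles factorizes exactly as you indicate. The identity $Z^i(t,\omega,\eta)=(R_\theta^i t,\sigma^{r_i(t)}\omega,\sigma^i\eta)$ is precisely the observation \eqref{Eq Utk} in Section 5.1 (minus the $[0,1]^2$ coordinate). Claim \ref{Claim tau t}(1) indeed gives $|r_i(t)-i\theta|\le C$, from which monotonicity, the plateau length bound, and the key consequence $r_j-r_i>d$ whenever $j-i$ exceeds a fixed threshold all follow; hence $\mathbb{E}[g_0(\sigma^{r_i}\omega)g_0(\sigma^{r_j}\omega)]=0$ for $|i-j|$ large by the independence structure of the Bernoulli measure, since the two cylinder functions then depend on disjoint coordinate blocks. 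The second-moment estimate $O(1/N)$, Chebyshev plus Borel--Cantelli along $N=k^2$, and the interpolation to all $N$ using the uniform bound on the summands, are all standard and correct. Unique ergodicity of $R_\theta$ handles the $\bar g\bar h$ piece for \emph{every} $t$, which is exactly why the conclusion can be stated for a fixed $t$ with the null set only in $(\omega,\eta)$. The passage from a.e.\ constancy of Birkhoff averages for a dense family of continuous test functions to triviality of the invariant $\sigma$-algebra is the usual argument. In short: your approach is the natural and (as you say) essentially the Fraser--Ferguson--Sahlsten one; the only nonroutine ingredient, correctly identified, is the survival of finite-range independence after the reindexing $i\mapsto r_i(t)$, and Claim \ref{Claim tau t}(1) is exactly what makes that work.
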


The following Corollary is thus a result of the previous Corollary, and the construction carried out in Proposition \ref{Lemma CP chain}:

\begin{Corollary} \label{Cor joint dist}
Let $Q$ be the CP-distribution built in Proposition \ref{Lemma CP chain}. Suppose that the $(\omega_0,\eta_0) \in [n_1]^\mathbb{N} \times [n_2]^\mathbb{N}$  appearing in the statement of Lemma \ref{Lemma CP chain} are typical with respect to $t_0$ and some Bernoulli measures $\alpha_1 \in P( [n_1] ^\mathbb{N})$ and $\alpha_2 \in P([n_2]^\mathbb{N})$, in the sense of Theorem \ref{Theorem X ergodic MPS}.  Let $Q_{3,5,6}$ denote the joint distribution of $Q$ on the third coordinate, the fifth coordinate and the sixth coordinate. Then 
\begin{equation*}
Q_{3,5,6} = \lambda \times \alpha_1 \times \alpha_2 \in P( \mathbb{T} \times [n_1]^\mathbb{N} \times [n_2]^\mathbb{N})
\end{equation*}
\end{Corollary}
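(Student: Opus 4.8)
The plan is to compute the $(3,5,6)$-marginal of each approximating distribution $Q_k$ appearing in the proof of Proposition \ref{Lemma CP chain} directly, recognize it as an ergodic average for the map $Z$ of Theorem \ref{Theorem X ergodic MPS}, and then pass to the weak-$*$ limit.

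First I would recall that, by construction, $Q$ is a weak-$*$ limit along a subsequence of
\begin{equation*}
Q_k = \frac{1}{n_k}\sum_{i=0}^{n_k-1}\hat{M}^i P_k,
\end{equation*}
where $P_k$ is supported on tuples of the form $(\mu_k, x_u, t_0, v_{t_0}, \omega_0, \eta_0)$; in particular the marginal of $P_k$ on the coordinates $(3,5,6)=(t,\omega,\eta)$ is the single point mass $\delta_{(t_0,\omega_0,\eta_0)}$. The key observation is that $\hat{M}$ acts on the coordinates $(t,\tau,\omega,\eta)$ by $(t,\tau,\omega,\eta)\mapsto(\Rtheta t,\sigma\tau,\sigma_t\omega,\sigma\eta)$, independently of the measure coordinate, so that its action on the coordinates $(t,\omega,\eta)$ is exactly the map $Z(t,\omega,\eta)=(\Rtheta t,\sigma_t\omega,\sigma\eta)$ from Theorem \ref{Theorem X ergodic MPS}. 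Consequently the $(3,5,6)$-marginal of $\hat{M}^i P_k$ is $\delta_{Z^i(t_0,\omega_0,\eta_0)}$, and therefore
\begin{equation*}
(Q_k)_{3,5,6} = \frac{1}{n_k}\sum_{i=0}^{n_k-1}\delta_{Z^i(t_0,\omega_0,\eta_0)}.
\end{equation*}

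Next I would invoke the hypothesis that $(\omega_0,\eta_0)$ is typical with respect to $t_0$ and the Bernoulli measures $\alpha_1,\alpha_2$: Theorem \ref{Theorem X ergodic MPS} then gives
\begin{equation*}
\frac{1}{N}\sum_{i=0}^{N-1}\delta_{Z^i(t_0,\omega_0,\eta_0)}\longrightarrow \lambda\times\alpha_1\times\alpha_2
\end{equation*}
weakly as $N\to\infty$, hence the same holds along the subsequence $n_k$. Since passing to a marginal is weak-$*$ continuous (it is the pushforward under a continuous coordinate projection) and $Q_k\to Q$, we obtain $(Q_k)_{3,5,6}\to Q_{3,5,6}$, and uniqueness of weak-$*$ limits forces $Q_{3,5,6}=\lambda\times\alpha_1\times\alpha_2$.

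This argument is essentially bookkeeping, and I do not expect a serious obstacle. The one point that has to be checked carefully is that the dynamics $\hat{M}$ restricted to the parameter coordinates $(t,\omega,\eta)$ coincides with the map $Z$ and does not see the measure component, so that the $(3,5,6)$-marginal of the $\hat{M}$-orbit of $P_k$ is a genuine $Z$-orbit average; once this is in place the rest is immediate. Note that the discontinuity of $\hat{M}$ is irrelevant here, since it is used only to compute the marginals of the explicitly given $Q_k$, and the limiting step uses only weak-$*$ continuity of the marginal operation.
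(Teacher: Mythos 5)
Your proof is correct and follows exactly the route the paper intends: the paper omits the argument, remarking only that the corollary follows from Theorem \ref{Theorem X ergodic MPS} and the construction of $Q$ in Proposition \ref{Lemma CP chain}, and what you have written supplies precisely that missing bookkeeping, namely that $\hat{M}$ restricted to the $(t,\omega,\eta)$-coordinates is the map $Z$, so that the $(3,5,6)$-marginal of $Q_k$ is the $Z$-ergodic average $\frac{1}{n_k}\sum_{i=0}^{n_k-1}\delta_{Z^i(t_0,\omega_0,\eta_0)}$, which converges to $\lambda\times\alpha_1\times\alpha_2$ by the typicality assumption, and then the conclusion follows from weak-$*$ continuity of the coordinate projection.
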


\section{A skew product dynamical system}
\subsection{The transformation U} \label{Section 5.1}
Define $U: [0,1]^2 \times \mathbb{T} \times [n_1]^\mathbb{N} \times [n_2] ^\mathbb{N} \rightarrow [0,1]^2 \times \mathbb{T} \times [n_1]^\mathbb{N}  \times [n_2] ^\mathbb{N}$ by setting 
\begin{equation*}
U(z,t,\omega,\eta) = ( \Phi_t (z), \Rtheta (t), \sigma_t (\omega),\sigma(\eta)).
\end{equation*}
We denote by $U_t ^k (z)$ the first coordinate of the map $U^k (z,t,\omega,\eta)$. Recall that
\begin{equation*}
r_k (t) = | \lbrace 0 \leq i \leq k-1 : R_\theta ^i (t) \in [1-\theta,1) \rbrace|.
\end{equation*} 
Note that for $k\geq 1$, $$\sigma_t ^k (\omega):=\sigma_{\Rtheta^{k-1} (t)} \circ ... \circ\sigma_{\Rtheta (t)} \circ \sigma_{t}(\omega)  = \sigma^{r_k (t)} (\omega).$$ Thus, recalling the definition of the maps $T_{m_i}$ from \eqref{Eq T_m}
\begin{equation} \label{Eq Utk}
U^k _t (z) = \Phi_{\Rtheta^{k-1} (t)} \circ ... \circ\Phi_{\Rtheta (t)} \circ \Phi_{t}(z) =(T_{m_1} ^{r_k (t)}  (z_1) , T_{m_2} ^k  (z_2)), \text{ for } z=(z_1,z_2).
\end{equation}

We now define a sequence of partitions of $[0,1]^2 \times \mathbb{T} \times [n_1]^\mathbb{N} \times [n_2] ^\mathbb{N}$ as follows: Let $\mathcal{D}_{m_1}$ and $\mathcal{D}_{m_2}$ be the $m_1$-adic and $m_2$-adic partitions, respectively, of $[0,1]$. Recall that $\mathcal{C} = \lbrace [0,1-\theta), [1-\theta,1)\rbrace$ is the partition of $\mathbb{T}$ we previously defined. Let $\mathcal{I}_1 ^1$ be the first generation cylinder partition of $[n_1] ^\mathbb{N}$, and let $\mathcal{I}_2 ^1$ denote the first generation cylinder partition of $[n_2]^\mathbb{N}$. Similarly, Let $\mathcal{I}_1 ^k, \mathcal{I}_2 ^k$ be the first $k$ coordinate cylinder partitions of $[n_1]^\mathbb{N}, [n_2]^\mathbb{N}$, respectively.   Let 
\begin{equation} \label{Equation 5.1}
\mathcal{B}_1 = [ \mathcal{D}_m \times \mathcal{D}_n] \times \mathcal{C} \times \mathcal{I}_1 ^1 \times \mathcal{I}_2 ^1 .
\end{equation}
and for $k\geq 2$ let 
\begin{equation*}
\mathcal{B}_k = \bigvee_{m=0} ^{k-1} U^{-m} \mathcal{B}_1 .
\end{equation*}

Let us make the following observations. For $k\geq 2$, let $\mathcal{C}_k = \bigvee_{m=0} ^{k-1} \Rtheta^{-m} \mathcal{C} $, and notice that $\bigvee_{m=0} ^{k-1} \sigma^{-m} \mathcal{I}_2 ^1= \mathcal{I}_2 ^k$. 
\begin{itemize}
\item For $k\geq 1$ and $t\in \mathbb{T}$ define $\mathcal{I}_k ^t : = \bigvee_{m=0} ^{k-1} \sigma_t ^{-m} \mathcal{I}_1 ^1$. Then $\mathcal{I}_k ^t = \mathcal{I}_1 ^{r_k (t)}$.

\item For $k\geq 1$ and $t\in \mathbb{T}$ define $\mathcal{A}_k ^t = \bigvee_{m=0} ^{k-1} \Phi_t ^{-m} (\mathcal{D}_{m_1} \times \mathcal{D}_{m_2})$. By equation \eqref{Eq Utk} we have 
\begin{equation} \label{equation for A_kt}
\mathcal{A}_k ^t =   \mathcal{D}_{m_1 ^{r_k (t)}} \times \mathcal{D}_{m_2 ^k}.
\end{equation}

\item Note that if $t,t'\in \mathbb{T}$ belong to the same atom of $\mathcal{C}_k$ then $\mathcal{A}_k ^t = \mathcal{A}_k ^{t'}$ and $\mathcal{I}_k ^t = \mathcal{I}_k ^{t'}$, since this means that $r_i (t) = r_i (t')$ for all $0 \leq i \leq k-1$. 

\item Every atom of $\mathcal{B}_k$ has the form $A\times C \times I\times J$ for $J\in \mathcal{I}_2 ^k$, $C\in \mathcal{C}_k$ and $A\in \mathcal{A}_k ^t$ and $I\in \mathcal{I}_k ^t$, for some $t\in C$.
\end{itemize}
The following Lemma is modelled after Lemma 5.1 \cite{wu2016proof}. We defer its proof to section \ref{Section Remaining proofs}.

\begin{Lemma} \label{Lemma 5.1}
\begin{enumerate}
\item Let $(t,v_t,\omega,\eta)\in \mathbb{T} \times S \times [n_1]^\mathbb{N} \times [n_2] ^\mathbb{N}$ and $x\in X_{v_t,\omega,\eta}$. If $\pi_{v_t,\omega,\eta} (x)$ is in the interior  $\mathcal{A}_k ^t (\pi_{v_t,\omega,\eta} (x))$  then the set  $\pi_{v_t,\omega,\eta} ([x_0 ^{k-1}])$ is contained within $\mathcal{A}_k ^t (\pi_{v_t,\omega,\eta} (x))$ except possibly at the boundary points of  $\mathcal{A}_k ^t (\pi_{v_t,\omega,\eta} (x))$

\item Let $(\mu,x,t, v_t, \omega,\eta) \in D_\mathcal{F}$. If $\mu$ is not atomic then for $\mu$ a.e. $x$ and all $k\geq 1$,
\begin{equation} \label{Equation 5.3}
\pi_{\sigma^k (v_t), \sigma_t ^k (\omega),\sigma^k (\eta)} \left( \frac{\sigma^k( \mu|_{[x_0 ^{k-1}]})}{\mu ([x_0 ^{k-1}])} \right) = U_t ^k \left( \frac{\pi_{v_t,\omega,\eta} \mu |_{\mathcal{A}_k ^t (\pi_{v_t,\omega,\eta} (x))} }{\pi_{v_t,\omega,\eta} \mu  (\mathcal{A}_k ^t (\pi_{v_t,\omega,\eta} (x)))} \right)
\end{equation}
\end{enumerate}
\end{Lemma}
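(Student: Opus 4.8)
The plan is to establish part (1) by directly unwinding the coding maps, and then to deduce part (2) from the dynamical conjugacy of Lemma \ref{Lemma 4.3} together with a measure-theoretic identification resting on part (1). For part (1), fix $x\in X_{v_t,\omega,\eta}$ and $k\geq 1$, and write $\tau=v_t$. Reading off the definitions of $X_{\tau,\omega,\eta}$ and of $\pi_{\tau,\omega,\eta}$, prescribing the first $k$ coordinates $(x_0,\dots,x_{k-1})$ of a point of $X_{\tau,\omega,\eta}$ pins down the first $k$ base-$m_2$ digits in the second coordinate of its $\pi_{\tau,\omega,\eta}$-image, and the first $r_k(\tau)$ base-$m_1$ digits in the first coordinate, where $r_k(\tau)$ is as in \eqref{Def rkt} and $r_k(\tau)=r_k(t)$ by definition. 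Since $\pi_m$ carries a length-$\ell$ cylinder onto a \emph{closed} $m^{-\ell}$-adic interval, $\pi_{\tau,\omega,\eta}([x_0^{k-1}])$ is contained in the closed box $B$ equal to the product of the closed $m_1^{-r_k(t)}$-adic interval and the closed $m_2^{-k}$-adic interval containing $\pi_{\tau,\omega,\eta}(x)$. By \eqref{equation for A_kt}, $\mathcal{A}_k^t=\mathcal{D}_{m_1^{r_k(t)}}\times\mathcal{D}_{m_2^k}$, so $\mathcal{A}_k^t(\pi_{\tau,\omega,\eta}(x))$ is the half-open box of the same side lengths containing $\pi_{\tau,\omega,\eta}(x)$; if this point lies in the interior of $\mathcal{A}_k^t(\pi_{\tau,\omega,\eta}(x))$, then that half-open box and $B$ share an interior point and have equal side lengths, whence $B$ is precisely the closure of $\mathcal{A}_k^t(\pi_{\tau,\omega,\eta}(x))$. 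This is the claim, the exceptional ``boundary points'' being the top and right faces of $B$.

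For part (2), first iterate the identity $\Phi_t(\pi_{v_t,\omega,\eta}(x))=\pi_{\sigma(v_t),\sigma_t(\omega),\sigma(\eta)}(\sigma x)$ from the proof of Lemma \ref{Lemma 4.3} (using $\sigma v_t=v_{R_\theta t}$, the fact recorded there that $\sigma x\in X_{\sigma(v_t),\sigma_t(\omega),\sigma(\eta)}$, and $\sigma_t^k(\omega)=\sigma^{r_k(t)}(\omega)$) to obtain, for every $x\in X_{v_t,\omega,\eta}$ and every $k\geq1$,
\[
U_t^k\big(\pi_{v_t,\omega,\eta}(x)\big)=\pi_{\sigma^k(v_t),\sigma_t^k(\omega),\sigma^k(\eta)}\big(\sigma^k x\big),
\]
the left side being read via \eqref{Eq Utk}. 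Pushing the measure $\mu|_{[x_0^{k-1}]}/\mu([x_0^{k-1}])$ through this equality of maps gives
\[
\pi_{\sigma^k(v_t),\sigma_t^k(\omega),\sigma^k(\eta)}\!\left(\frac{\sigma^k(\mu|_{[x_0^{k-1}]})}{\mu([x_0^{k-1}])}\right)=U_t^k\!\left(\frac{\pi_{v_t,\omega,\eta}(\mu|_{[x_0^{k-1}]})}{\mu([x_0^{k-1}])}\right),
\]
so it remains to replace $\pi_{v_t,\omega,\eta}(\mu|_{[x_0^{k-1}]})$ and $\mu([x_0^{k-1}])$ by $(\pi_{v_t,\omega,\eta}\mu)|_{\mathcal{A}_k^t(\pi_{v_t,\omega,\eta}(x))}$ and $\pi_{v_t,\omega,\eta}\mu(\mathcal{A}_k^t(\pi_{v_t,\omega,\eta}(x)))$. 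This amounts to showing that $[x_0^{k-1}]\cap X_{v_t,\omega,\eta}$ and $\pi_{v_t,\omega,\eta}^{-1}(\mathcal{A}_k^t(\pi_{v_t,\omega,\eta}(x)))\cap X_{v_t,\omega,\eta}$ agree up to a $\mu$-null set for $\mu$-a.e. $x$; by part (1), their symmetric difference is contained in the set of points whose $\pi_{v_t,\omega,\eta}$-image has first coordinate of the form $j/m_1^n$ or second coordinate of the form $j/m_2^n$ (off this set the relevant base expansions are unique, so distinct length-$k$ cylinders of $X_{v_t,\omega,\eta}$ have disjoint images and each cylinder maps into exactly its adic box). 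Since $(\mu,x,t,v_t,\omega,\eta)\in D_\mathcal{F}$ forces $\pi_{v_t,\omega,\eta}(\supp(\mu))$ into a single non-principal line $\ell$, and $\ell$ meets the countably many lines $\{z_1=j/m_1^n\}$ and $\{z_2=j/m_2^n\}$ in a countable set, each point of which has at most four $\pi_{v_t,\omega,\eta}$-preimages in $X_{v_t,\omega,\eta}$, this exceptional set is countable, hence $\mu$-null as $\mu$ is non-atomic. Combining the identifications yields \eqref{Equation 5.3}.

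I expect the main obstacle to be exactly this last measure-theoretic identification: that restricting $\mu$ to a symbolic cylinder coincides, modulo $\mu$-null sets, with restricting $\pi_{v_t,\omega,\eta}\mu$ to the corresponding adic box. It is here, and essentially only here, that non-atomicity of $\mu$ and the non-principality of the line enter, to discard the countably many ``doubly coded'' points lying on adic grid lines; the remainder of the argument is the formal conjugacy supplied by Lemma \ref{Lemma 4.3} and the digit bookkeeping of part (1), together with the elementary topological observation in part (1) that a closed adic box sharing an interior point with a half-open adic box of the same size is its closure.
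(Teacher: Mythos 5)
Your proof is correct and follows essentially the same route as the paper's: part (1) by unwinding the coding and equation \eqref{equation for A_kt}, and part (2) by iterating the conjugacy identity $\Phi_t\circ\pi_{v_t,\omega,\eta}=\pi_{\sigma(v_t),\sigma_t(\omega),\sigma(\eta)}\circ\sigma$ from Lemma \ref{Lemma 4.3} and then using non-atomicity of $\mu$ to discard the boundary/doubly-coded points (the paper phrases this as $|\supp(\pi_{v_t,\omega,\eta}\mu)\cap\partial A|\leq 2$ for each rectangle $A\in\mathcal{A}_k^t$, whereas you collect the exceptional points over all $k$ into a single countable set, but the mechanism is identical).
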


Let $\nu \in P([0,1]^2)$ and $z\in \supp (\nu)$. Denote
\begin{equation} \label{Equation 5.4}
\nu^{\mathcal{A}_k ^t (z)} = U_t ^k \left( \frac{\nu|_{\mathcal{A}_k ^t (z)}}{\nu (\mathcal{A}_k ^t (z))} \right).
\end{equation}
Note that if $\nu \in P( \ell \cap [\Fomega\times \Eeta])$ with $\ell$ being a line with slope $m_1 ^t$ then 
\begin{equation*}
\nu^{\mathcal{A}_k ^t (z)} \in P(\ell ' \cap [\pi_{m_1} ( \tilde{F}_{\sigma^k _t (\omega)})\times \tilde{E}_{\sigma^k (\eta)} ]),
\end{equation*}
where $\ell'$ has slope $m_1 ^{\Rtheta ^k (t)}$.

\subsection{Construction of U invariant measures}
In this section we construct a family of $U$ invariant measures on $[0,1]^2 \times \mathbb{T} \times [n_1]^\mathbb{N} \times [n_2] ^\mathbb{N}$ by transferring information from an ergodic component of the CP-distribution $Q$ constructed in Proposition \ref{Lemma CP chain}, in a similar spirit to (\cite{wu2016proof}, Proposition 5.3). Unlike the proof in \cite{wu2016proof}, we do this by considering the intensity measure of some of the ergodic components of the CP-chain $Q$:
\begin{theorem}
For $Q_{1,3,4,5,6}$ a.e. $(\mu,t,\tau,\omega,\eta)$ and $\mu$ a.e. $x$ s.t. $(\mu,x,t,\tau, \omega,\eta) \in E_\gamma$, let
\begin{equation} \label{Equation 5.8}
\nu^{(\mu,x,t,\tau, \omega,\eta)} :=\int \left( \pi_{\tau',\xi ,\zeta} (\nu)\times \delta_s  \times \delta_\xi \times \delta_\zeta \right) dQ^{(\mu,x,t,\tau, \omega,\eta)} _{1,3,4,5,6} (\nu,s, \tau', \xi,\zeta)ץ
\end{equation}
Then this is measure on $[0,1]^2 \times \mathbb{T} \times [n_1]^\mathbb{N} \times [n_2]^\mathbb{N}$ that is $U$ invariant.
\end{theorem}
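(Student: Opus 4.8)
The plan is to write $\nu^{(\mu,x,t,\tau,\omega,\eta)}$ as the intensity measure of a push-forward of the ergodic component $R:=Q^{(\mu,x,t,\tau,\omega,\eta)}$ and then to transfer the $\hat{M}$-invariance of $R$ to $U$-invariance. Introduce the map
\begin{equation*}
\Lambda:Y\to P\bigl([0,1]^2\times\mathbb{T}\times[n_1]^\mathbb{N}\times[n_2]^\mathbb{N}\bigr),\qquad \Lambda(\nu,x',s,\tau',\xi,\zeta)=\pi_{\tau',\xi,\zeta}(\nu)\times\delta_s\times\delta_\xi\times\delta_\zeta .
\end{equation*}
Since $\Lambda$ ignores the $X$-coordinate, the definition \eqref{Equation 5.8} reads $\nu^{(\mu,x,t,\tau,\omega,\eta)}=\int_Y\Lambda\,dR$, so it suffices to prove $\int_Y U_*\Lambda\,dR=\int_Y\Lambda\,dR$. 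I first collect what I need about $R$, all inherited from Proposition~\ref{Lemma CP chain} for $Q$-a.e.\ index: $R$ is $\hat{M}$-invariant (it is an ergodic component of the $\hat{M}$-invariant measure $Q$); it is globally adapted in the sense of \eqref{Glob. ada.} (global adaptedness descends to $R$-a.e.\ ergodic component by the same disintegration argument by which ergodic components of CP-distributions are CP-distributions, cf.\ Proposition~\ref{Proposition components of CP}); it is carried by $D_\mathcal{F}\cap\{\tau=v_t\}$; and, since on $E_\gamma$ its dimension is $\geq\gamma>0$, its measure-marginal is carried by non-atomic measures.

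The key point is the identity: for $(\nu,\cdot,t,v_t,\omega,\eta)\in D_\mathcal{F}$ with $\nu$ non-atomic,
\begin{equation*}
\Phi_t\bigl(\pi_{v_t,\omega,\eta}(\nu)\bigr)=\int_X\pi_{\sigma(v_t),\,\sigma_t(\omega),\,\sigma(\eta)}\bigl(\nu^{[x_0]}\bigr)\,d\nu(x).
\end{equation*}
To prove it I would apply Lemma~\ref{Lemma 5.1}(2) with $k=1$: there $U^1_t=\Phi_t$ and, by \eqref{equation for A_kt}, $\mathcal{A}^t_1$ is the half-open $m_1^{r_1(t)}$-adic $\times$ $m_2$-adic partition of $[0,1]^2$, on each cell of which $\Phi_t$ is an affine bijection onto $[0,1]^2$; the lemma says that $\pi_{\sigma(v_t),\sigma_t(\omega),\sigma(\eta)}(\nu^{[x_0]})$ equals $\Phi_t$ of the normalized restriction of $\lambda:=\pi_{v_t,\omega,\eta}(\nu)$ to the atom $\mathcal{A}^t_1(\pi_{v_t,\omega,\eta}(x))$. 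Since that atom depends on $x$ only through $\pi_{v_t,\omega,\eta}(x)$, integrating $d\nu(x)$ and pushing down to $[0,1]^2$ turns the right-hand side into $\sum_{B\in\mathcal{A}^t_1}\lambda(B)\,\Phi_t\bigl(\lambda|_B/\lambda(B)\bigr)=\sum_B\Phi_t(\lambda|_B)=\Phi_t(\lambda)$, by linearity of push-forward and because the half-open cells partition $[0,1]^2$ exactly. Using $\sigma(v_t)=v_{\Rtheta(t)}$ (the identity underlying Lemma~\ref{Lemma 4.3}), this rewrites as $U_*\Lambda(\nu,x,t,v_t,\omega,\eta)=\int_X\Lambda\bigl(\hat{M}(\nu,x',t,v_t,\omega,\eta)\bigr)\,d\nu(x')$, an expression that no longer depends on the $X$-coordinate of the argument of $\Lambda$.

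It remains to integrate against $R$. Fix $f\in C([0,1]^2\times\mathbb{T}\times[n_1]^\mathbb{N}\times[n_2]^\mathbb{N})$ and put $g(w)=\int f\,d\Lambda(w)$, which is continuous by Lemma~\ref{Lemma 4.1}(1). Since $U_*\Lambda$ depends on its argument only through the coordinates $1,3,4,5,6$, and there $\tau=v_t$, the previous display gives $\int f\,d(U_*\nu^{(\mu,x,t,\tau,\omega,\eta)})=\int\bigl(\int_X g(\hat{M}(\nu,x',s,\tau',\xi,\zeta))\,d\nu(x')\bigr)\,dR_{1,3,4,5,6}$; by global adaptedness of $R$ (extended from $C(Y)$ to bounded Borel functions in the usual way) this equals $\int_Y g(\hat{M}(w))\,dR(w)$, which by $\hat{M}$-invariance of $R$ equals $\int_Y g(w)\,dR(w)=\int f\,d\nu^{(\mu,x,t,\tau,\omega,\eta)}$. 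As $f$ was arbitrary, $U_*\nu^{(\mu,x,t,\tau,\omega,\eta)}=\nu^{(\mu,x,t,\tau,\omega,\eta)}$; measurability of the assignment of $\nu^{(\mu,x,t,\tau,\omega,\eta)}$ to the index follows from that of the ergodic decomposition.

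The step I expect to require the most care is the displayed identity in the second paragraph: one must keep track of the non-atomicity hypothesis of Lemma~\ref{Lemma 5.1}(2) (this is precisely why the statement is restricted to $E_\gamma$, where the dimension is $\geq\gamma>0$), of the fact that $\mathcal{A}^t_1(\cdot)$ factors through $\pi_{v_t,\omega,\eta}$ so that the change of variables to $[0,1]^2$ is legitimate, and of the half-open convention on the $m_1$- and $m_2$-adic partitions that makes $\sum_B\lambda|_B=\lambda$ hold on the nose. A secondary point is verifying that global adaptedness passes to the ergodic components of $Q$; this is routine but should be stated, being the exact analogue for $Y$ of Proposition~\ref{Proposition components of CP}.
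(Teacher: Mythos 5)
Your proof is correct and takes essentially the same approach as the paper: both rest on $\hat{M}$-invariance and adaptedness of the ergodic component, the identity $\tau = v_t$, the identity $\sigma(v_s)=v_{R_\theta(s)}$, and Lemma~\ref{Lemma 5.1}(2) with $k=1$ (with non-atomicity supplied by $\dim \geq \gamma>0$ on $E_\gamma$). The only difference is organizational — you first establish the pointwise push-forward identity and then integrate, whereas the paper applies $\hat{M}$-invariance first inside one running integral computation.
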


\begin{proof}
First, notice that by Proposition \ref{Lemma CP chain} part (3) we have that for $Q$ a.e. $(\mu,x,t,\tau, \omega,\eta)$,
\begin{equation*}
\nu^{(\mu,x,t,\tau, \omega,\eta)} =  \int \left( \pi_{v_s,\xi,\zeta} (\nu)\times \delta_s  \times \delta_\xi \times \delta_\zeta \right) dQ^{(\mu,x,t,\tau, \omega,\eta)} _{1,3,4,5,6} (\nu,s, v_s, \xi,\zeta)  
\end{equation*}
(that is, $\tau'=v_s$ in \eqref{Equation 5.8}) since the set where $\tau' \neq v_s$ has $Q$ measure $0$. So, fix such an element in $E_\gamma$. Now, let $f\in C([0,1]^2 \times \mathbb{T} \times [n_1]^\mathbb{N} \times [n_2] ^\mathbb{N})$. Then, since the function we are integrating against does not depend on the second coordinate $y$ (in the space $Y$),  and by $\hat{M}$ invariance,

\begin{equation*} 
\begin{split}
\int f d\nu^{(\mu,x,t,\tau, \omega,\eta)} & = \int \left( \int f(z,s,\xi,\zeta) d\pi_{v_s,\xi,\zeta} (\nu)(z) \right) dQ^{(\mu,x,t,\tau, \omega,\eta)} _{1,3,4,5,6}  (\nu,s, v_s, \xi,\zeta) \\
& = \int \left( \int f(z,s,\xi,\zeta) d\pi_{v_s,\xi,\zeta} (\nu)(z) \right) dQ^{(\mu,x,t,\tau, \omega,\eta)} (\nu,y,s, v_s, \xi,\zeta) \\
 & = \int \left( \int f(z,s,\xi,\zeta) d\pi_{v_s,\xi,\zeta} (\nu)(z) \right) d\hat{M}Q^{(\mu,x,t,\tau, \omega,\eta)}   (\nu,y,s, v_s, \xi,\zeta) \\
\end{split}
\end{equation*}
\begin{equation*}
\begin{split}
& = \int \left( \int f(z,\Rtheta(s),\sigma_s(\xi),\sigma(\zeta)) d\pi_{\sigma(v_s),\sigma_s(\xi),\sigma(\zeta)} (\nu^{[y_0]})(z) \right) dQ^{(\mu,x,t,\tau, \omega,\eta)}  (\nu,y,s, v_s, \xi,\zeta)
\end{split}
\end{equation*}
as $\sigma(v_s)=v_{\Rtheta(s)}$, using the adaptedness\footnote{Notice that the function $(\nu,s, v_s, \xi,\zeta) \mapsto \pi_{v_s,\xi,\zeta} \times \delta_{s} \times \delta_\xi \times \delta_\zeta$ is $Q$ a.e. continuous.} of $Q$, and noting that there are finitely many options ($m_1\cdot m_2$ to be precise) for $y_0$ (so that the integrand above a simple function with respect to $y$), we have

\begin{equation*}
 = \int \left( \int \left( \int f(z,\Rtheta(s),\sigma_s(\xi),\sigma(\zeta)) d\pi_{v_{\Rtheta(s)},\sigma_s(\xi),\sigma(\zeta)} (\nu^{[y_0]})(z) \right) d\nu(y) \right) dQ^{(\mu,x,t,\tau, \omega,\eta)} _{1,3,4,5,6} (\nu,s, v_s, \xi,\zeta)=
\end{equation*}
\begin{equation*}
\int  \left( \sum_{i,j} \int  f(z,\Rtheta(s),\sigma_s(\xi),\sigma(\zeta)) d\pi_{v_{\Rtheta(s)},\sigma_s(\xi),\sigma(\zeta)} (\nu^{[(i,j)]})(z)  \nu([(i,j)]) \right)  dQ^{(\mu,x,t,\tau, \omega,\eta)} _{1,3,4,5,6}  (\nu,s, v_s, \xi,\zeta)
\end{equation*}
\begin{equation*}
\begin{split}
   & = \int   \left(   \sum_{i,j} \int f(z,\Rtheta(s),\sigma_s(\xi),\sigma(\zeta)) d\pi_{v_{\Rtheta(s)},\sigma_s(\xi),\sigma(\zeta)}  ( \frac{\sigma \nu|_{[(i,j)]}}{\nu([i,j]})(z) \right) dQ^{(\mu,x,t,\tau, \omega,\eta)} _{1,3,4,5,6}  (\nu,s, v_s, \xi,\zeta).
\end{split}
\end{equation*}
As generically we have $\dim \nu \geq \gamma >0$ in the above integral (since we are working with an ergodic component with positive entropy), we have by Lemma 5.1 (for $k=1$, so $U_s=\Phi_s$),
\begin{equation*}
\begin{split}
   & = \int \left( \sum_{i,j} \int f(z,\Rtheta(s),\sigma_t(\xi),\sigma(\zeta)) d \Phi_s \frac{ (\pi_{v_s,\xi,\zeta} \nu)|_{\mathcal{A}_t ^1 (\pi_{v_s,\xi,\zeta} ([(i,j)]))}}{\pi_{v_s,\xi,\zeta} \nu(\mathcal{A}_t ^1 (\pi_{v_s,\xi,\zeta} ([(i,j)]))} (z) \right) dQ^{(\mu,x,t,\tau, \omega,\eta)} _{1,3,4,5,6} (\nu,s, v_s, \xi,\zeta)
   \end{split}
\end{equation*} 
where by the notation $\mathcal{A}_t ^1 (\pi_{v_s,\xi,\zeta} ([(i,j)]))$ we mean the unique partition element of $\mathcal{D}_{m_1 ^{r_1(t)}} \times \mathcal{D}_{m_2}$ that contains all elements $\pi_{v_s,\xi,\zeta} (y)$ for all $y\in [(i,j)]$ (except for maybe on the measure zero boundary of the cell).  Changing variables,
\begin{equation*}
\begin{split}
 & = \int \left( \sum_{i,j}  \int f(\Phi_s (z),\Rtheta(s),\sigma_t(\xi),\sigma(\zeta)) d\frac{ (\pi_{v_s,\xi,\zeta} \nu)|_{\mathcal{A}_t ^1 (\pi_{v_s,\xi,\zeta} [(i,j)])}}{\pi_{v_s,\xi,\zeta} \nu(\mathcal{A}_t ^1 (\pi_{v_s,\xi,\zeta} [(i,j)])} (z) \right) dQ^{(\mu,x,t,\tau, \omega,\eta)} _{1,3,4,5,6} (\nu,s, v_s, \xi,\zeta)
\end{split}
\end{equation*}
\begin{equation*}
\begin{split}
 & = \int \left(  \sum_{i,j} \int f\circ U(z,s,\xi,\zeta) d\frac{ (\pi_{v_s,\xi,\zeta} \nu)|_{\mathcal{A}_t ^1 (\pi_{v_s,\xi,\zeta} [(i,j)])}}{\pi_{v_s,\xi,\zeta} \nu(\mathcal{A}_t ^1 (\pi_{v_s,\xi,\zeta} [(i,j)])} (z) \right) dQ^{(\mu,x,t,\tau, \omega,\eta)} _{1,3,4,5,6} (\nu,s, v_s, \xi,\zeta).
\end{split}
\end{equation*}
Finally, by retracing our steps, and using \eqref{Equation 5.3}, we see that
\begin{equation*}
\begin{split}
 & = \int f\circ U d\nu^{(\mu,x,t,\tau, \omega,\eta)}
\end{split}
\end{equation*}
\end{proof}

\subsection{Some properties of our U invariant measures}
Fix an element $(\mu_0,x_0,t_0,\tau_0, \omega_0,\eta_0)\in E_\gamma$ s.t. $Q^{(\mu_0,x_0,t_0,\tau_0, \omega_0,\eta_0)} _{1,2}$ is an ergodic CP distribution of dimension $\geq \gamma$, and 
\begin{equation*}
\nu_\infty = \nu^{(\mu_0,x_0,t_0, \tau_0, \omega_0,\eta_0)} = \int \left( \pi_{v_t,\omega,\eta} \mu \times \delta_t \times \delta_\omega \times \delta_\eta \right) dQ ^{(\mu_0,x_0,t_0,\tau_0, \omega_0,\eta_0)} _{1,3,4,5,6} (\mu,t,v_t, \omega,\eta).
\end{equation*}
is $U$ invariant.

We record some other useful properties of the measure $\nu_\infty$ and the partitions $\mathcal{B}_k$, defined in \eqref{Equation 5.1}. We defer the proof to section \ref{Section Remaining proofs}.
\begin{Proposition} \label{Proposition properties of nu}
\begin{enumerate}
\item The partitions $\mathcal{B}_k$ generate the Borel sigma algebra of $[0,1]^2 \times \mathbb{T} \times [n_1]^\mathbb{N} \times [n_2] ^\mathbb{N}$.

\item For every $k\in \mathbb{N}$ and every element $B\in \mathcal{B}_k$ we have $\nu_\infty (\partial B)=0$.

\item Suppose that the $(\omega_0, \eta_0)$ in the assumption of Proposition \ref{Lemma CP chain} are $\alpha_1\times \alpha_2$ typical for $t_0$ in the sense of Theorem \ref{Theorem X ergodic MPS}, for some Bernoulli measures $\alpha_1 \in P([n_1]^\mathbb{N})$ and $\alpha_2\in P([n_2]^\mathbb{N})$. Then we may assume the marginal of $\nu_\infty$ on the third component and fourth component gives full measure to the set of $\alpha_1\times \alpha_2$ generic points, with respect to the product system $([n_1]^\mathbb{N}\times [n_2]^\mathbb{N}, \sigma\times \sigma,\alpha_1\times \alpha_2)$.

\item Entropy-wise, we have $0<h(\nu_\infty, U) \leq \log | \mathcal{B}_1|$.
\end{enumerate}
\end{Proposition}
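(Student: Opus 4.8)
The plan is to dispatch the four parts essentially in order, leaving the entropy lower bound in part (4) for last since it carries the real content. For part (1) I would show that $\bigvee_k \mathcal{B}_k$ separates points of $[0,1]^2\times\mathbb{T}\times[n_1]^\mathbb{N}\times[n_2]^\mathbb{N}$; as this $\sigma$-algebra is countably generated on a compact metric space, it must then be the Borel $\sigma$-algebra. Using the description of the atoms of $\mathcal{B}_k$ recorded before Lemma~\ref{Lemma 5.1} — an atom has the form $A\times C\times I\times J$ with $C\in\mathcal{C}_k$, $J\in\mathcal{I}_2^k$ and $A\in\mathcal{A}_k^t$, $I\in\mathcal{I}_k^t$ for $t\in C$ — every coordinate gets resolved as $k\to\infty$: the $\mathbb{T}$-coordinate because the endpoints of the intervals of $\mathcal{C}_k=\bigvee_{m=0}^{k-1}R_\theta^{-m}\mathcal{C}$ run through the $R_\theta$-orbit of $1-\theta$, which is dense since $\theta\notin\mathbb{Q}$; the $[n_2]^\mathbb{N}$- and $[0,1]^2$-vertical coordinates because $\mathcal{I}_2^k$ and $\mathcal{D}_{m_2^k}$ shrink to points; and the $[n_1]^\mathbb{N}$- and $[0,1]^2$-horizontal coordinates because $\mathcal{I}_k^t=\mathcal{I}_1^{r_k(t)}$ and $\mathcal{A}_k^t$ involve $\mathcal{D}_{m_1^{r_k(t)}}$ with $r_k(t)\to\infty$ (Claim~\ref{Claim tau t}(1) gives $r_k(t)=k\theta+O(1)$). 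For part (2), first note that the marginal of $\nu_\infty$ on the $\mathbb{T}$-coordinate is $R_\theta$-invariant, hence equals Lebesgue measure by unique ergodicity of the irrational rotation, so it gives no mass to the finite boundary of any $\mathcal{C}_k$-interval; and the cylinders of $\mathcal{I}_k^t,\mathcal{I}_2^k$ are clopen. Thus for $B=A\times C\times I\times J\in\mathcal{B}_k$ one has $\partial B\subseteq(\partial A\times\mathbb{T}\times[n_1]^\mathbb{N}\times[n_2]^\mathbb{N})\cup([0,1]^2\times\partial C\times[n_1]^\mathbb{N}\times[n_2]^\mathbb{N})$, the second piece being $\nu_\infty$-null. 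Since $\nu_\infty=\int(\pi_{v_t,\omega,\eta}\mu\times\delta_t\times\delta_\omega\times\delta_\eta)\,dQ^{(\mu_0,\dots)}_{1,3,4,5,6}$ and, by Proposition~\ref{Lemma CP chain} and the support of $\mathcal{F}$, each $\pi_{v_t,\omega,\eta}\mu$ is supported on a line of slope $m_1^t\in(0,\infty)$ and is non-atomic (by Lemma~\ref{Lemma 4.1}(4) point preimages have Hausdorff dimension $0$, while $\mu$, being exact dimensional of dimension $\geq\gamma>0$, charges no set of dimension $0$), each $\pi_{v_t,\omega,\eta}\mu$ assigns zero mass to the finite union of axis-parallel segments forming $\partial A$; integrating gives $\nu_\infty(\partial B)=0$.

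For part (3), under the stated hypothesis Corollary~\ref{Cor joint dist} gives $Q_{3,5,6}=\lambda\times\alpha_1\times\alpha_2$, which by Theorem~\ref{Theorem X ergodic MPS} is ergodic for the factor map $(t,\omega,\eta)\mapsto(R_\theta(t),\sigma_t(\omega),\sigma(\eta))$; hence $Q$-almost every ergodic component has $(3,5,6)$-marginal equal to $\lambda\times\alpha_1\times\alpha_2$, in particular $(5,6)$-marginal $\alpha_1\times\alpha_2$. Intersecting this full-measure set with $E_\gamma$ (which has $Q(E_\gamma)>0$ by Proposition~\ref{Lemma CP chain}), I would choose the base point $(\mu_0,x_0,t_0,\tau_0,\omega_0,\eta_0)$ so that additionally $Q^{(\mu_0,\dots)}_{5,6}=\alpha_1\times\alpha_2$; then the marginal of $\nu_\infty$ on the third and fourth coordinates is $\alpha_1\times\alpha_2$. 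As $\alpha_1\times\alpha_2$ is Bernoulli, hence ergodic, for $\sigma\times\sigma$, Birkhoff's theorem shows $\alpha_1\times\alpha_2$-a.e. point is generic, which is the assertion.

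For part (4), the upper bound is immediate: by part (1) the finite partition $\mathcal{B}_1$ is a generator, so Kolmogorov--Sinai gives $h(\nu_\infty,U)=\lim_k\frac1kH(\nu_\infty,\mathcal{B}_k)\leq\lim_k\frac1k\log|\mathcal{B}_k|\leq\log|\mathcal{B}_1|$ using $|\mathcal{B}_k|\leq|\mathcal{B}_1|^k$. For the lower bound I would bound $H(\nu_\infty,\mathcal{B}_k)$ from below by conditioning on the coarser partition $\mathcal{G}_k$ that records only the $\mathcal{C}_k$-cell of $t$, the $\mathcal{I}_1^{r_k(t)}$-cell of $\omega$ and the $\mathcal{I}_2^k$-cell of $\eta$; within a $\mathcal{G}_k$-atom $r_i(t)$ is constant for $i<k$, so the only remaining freedom in $\mathcal{B}_k$ is the partition $\mathcal{A}_k^t=\mathcal{D}_{m_1^{r_k(t)}}\times\mathcal{D}_{m_2^k}$ of $[0,1]^2$, whence $H(\nu_\infty,\mathcal{B}_k)\geq\int H\!\left((\nu_\infty)_{\mathrm{atom}}\big|_{[0,1]^2},\,\mathcal{A}_k^t\right)d\nu_\infty$. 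The conditional measure on $[0,1]^2$ is a probability mixture of measures of the form $\pi_{v_t,\omega,\eta}\mu$, each of which has Hausdorff dimension $\geq\gamma$ by Lemma~\ref{Lemma 4.1}(4) and the dimension $\geq\gamma$ of $\mu$ (a mixture inherits this lower bound, since any set it charges is charged by some $\pi_{v_t,\omega,\eta}\mu$); since $|r_k(t)-k\theta|\leq C$, the partitions $\mathcal{A}_k^t$ and the two-dimensional $m_2^{-k}$-adic grid are equivalent up to a $k$-independent bounded refinement, so $H(\cdot,\mathcal{A}_k^t)\geq H(\cdot,\mathcal{D}_{m_2^k})-O(1)$, and the inequality $\dim(\cdot)\leq\underline{\dim}_e(\cdot)$ from \eqref{Eq entr. dim vs Hausd. dim} gives $\liminf_k\frac1kH(\cdot,\mathcal{A}_k^t)\geq\gamma\log m_2$. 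Averaging and applying Fatou's lemma yields $h(\nu_\infty,U)=\lim_k\frac1kH(\nu_\infty,\mathcal{B}_k)\geq\gamma\log m_2>0$.

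I expect the last step to be the main obstacle: translating ``the fibre measures $\pi_{v_t,\omega,\eta}\mu$ have positive dimension'' into ``the skew product $(U,\nu_\infty)$ has positive entropy''. One must arrange the conditioning on the factor coordinates so that the residual partition is exactly $\mathcal{A}_k^t$, verify that restricting and mixing the fibre measures preserves the dimension lower bound $\gamma$, and control the discrepancy between the scales $m_1^{-r_k(t)}$ and $m_2^{-k}$ uniformly in $k$ via Claim~\ref{Claim tau t}. This parallels the corresponding argument in \cite{wu2016proof}, but is complicated by the extra symbolic coordinates $\omega,\eta$ and by the fact that $\nu_\infty$ is built from the intensity measure of an ergodic component of the CP-chain rather than from a single CP-typical measure.
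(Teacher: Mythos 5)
Your treatment of parts (1), (2) and (3) matches the paper's proof in all essentials: shrinking diameters (equivalently, point separation) for (1); splitting $\partial(A\times C\times I\times J)$, clopen cylinders, Lebesgue marginal on $\mathbb{T}$, and non-atomicity of the line-supported measures $\pi_{v_t,\omega,\eta}\mu$ (via exactness and Lemma~\ref{Lemma 4.1}(4)) for (2); Corollary~\ref{Cor joint dist} plus choosing the base point inside the full-measure generic set intersected with $E_\gamma$ for (3).

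For part (4) the two arguments diverge in execution though not in substance. The paper drops the symbolic coordinates entirely: it works with the sub-partition $\mathcal{P}_1=[\mathcal{D}_{m_1}\times\mathcal{D}_{m_2}]\times\mathcal{C}\times[n_1]^\mathbb{N}\times[n_2]^\mathbb{N}$, observes $H(\nu_\infty,\mathcal{P}_k)=H(\Pi_{1,2}\nu_\infty,\mathcal{W}_k)$ for the factor system $(U_{1,2},\Pi_{1,2}\nu_\infty)$ on $[0,1]^2\times\mathbb{T}$, and invokes Wu's Proposition~5.8. You instead run the entropy estimate directly on the skew product by conditioning on the factor coordinates, which is a legitimate re-derivation of the same lower bound. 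Two small points worth tightening in your version: (i) to make Fatou applicable you should condition on the limiting $\sigma$-algebra $\sigma(\Pi_{2,3,4})$ rather than on the $k$-dependent partitions $\mathcal{G}_k$ — since $\sigma(\Pi_{2,3,4})$ refines every $\mathcal{G}_k$, one has $H(\nu_\infty,\mathcal{B}_k)\geq H(\nu_\infty,\mathcal{B}_k\mid\sigma(\Pi_{2,3,4}))=\int H(\rho_{t,\omega,\eta},\mathcal{A}_k^t)\,dQ_{3,5,6}$ with a $k$-independent integrand, avoiding the moving-target issue you flag; (ii) the conditional measures $\rho_{t,\omega,\eta}$ are mixtures $\int\pi_{v_t,\omega,\eta}\mu\,dQ_{1\mid(t,\omega,\eta)}(\mu)$ rather than single fibre measures, but the lower Hausdorff dimension bound $\geq\gamma$ does pass to mixtures exactly as you say (any positively-charged set is positively charged by some component), and then $\underline{\dim}_e\geq\dim$ together with Claim~\ref{Claim tau t}(1) gives the $O(1)$-comparison between $\mathcal{A}_k^t$ and $\mathcal{D}_{m_2^k}\times\mathcal{D}_{m_2^k}$. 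With these adjustments your route is sound and makes explicit what the paper delegates to the citation of Wu.
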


We next outline another important property of measure $\nu_\infty$. By applying Proposition \ref{Proposition 3.7} to the ergodic CP distribution $Q_{1,2} ^{(\mu_0,x_0,t_0, \tau_0, \omega_0,\eta_0)}$ we see that: for any $\epsilon>0$ there exits $k_0 (\epsilon)$ s.t.  for $Q^{(\mu_0,x_0,t_0, \tau_0, \omega_0,\eta_0)}$ a.e. $\mu$ and $\mu$ a.e.  $x$ we have
\begin{equation} \label{Equation 5.14}
\begin{aligned}
{} & \liminf_{N} \frac{1}{N} | \lbrace 1\leq k\leq N : \max_{ |u|=k_0 (\epsilon)} \mu^{[x_0 ^{k-1}]} ([u]) \le \epsilon \text { and } \\
& H( \mu^{[x_0 ^{k-1}]} , \mathcal{J}_1 ^p \times \mathcal{J}_2 ^p) \geq p \cdot (\gamma \log m_2 - \epsilon) \rbrace | > 1- 2\epsilon, \text{ for any } p\geq k_0 (\epsilon) \\
\end{aligned}
\end{equation}
By applying part (3) of Lemma 4.1, we see that for any $\epsilon>0$ there is some $\delta(\epsilon)>0$ and some $k_1 (\epsilon) \in \mathbb{N}$ s.t. for $Q_1 ^{(\mu_0,x_0,t_0, \tau_0, \omega_0,\eta_0)}$ a.e. $\mu$ and $\pi_{v_t,\omega,\eta} \mu$ a.e. $z$ we have 
\begin{equation} \label{Equation 5.15}
\begin{aligned}
{} & \liminf_{N} \frac{1}{N} | \lbrace 1\leq k\leq N : \sup_{y\in [0,1]^2} (\pi_{v_t,\omega,\eta} \mu)^{\mathcal{A}_k ^t (z)} (B(y,\delta)) \leq \epsilon \text { and } \\
& H( (\pi_{v_t,\omega,\eta} \mu)^{\mathcal{A}_k ^t (z)} , \mathcal{D}_{2^p}\times \mathcal{D}_{2^p}) \geq p \cdot (\gamma \log 2 - 2\epsilon) \rbrace | > 1- 2\epsilon, \text{ for any } p\geq k_1 (\epsilon) \\
\end{aligned}
\end{equation}

In particular, the above is true for $Q_{1,3,4,5,6} ^{(\mu_0,x_0,t_0, \tau_0, \omega_0,\eta_0)}$ a.e. $(\mu,t, v_t,\omega,\eta)$ and $\pi_{v_t,\omega,\eta}$ a.e. $z$. On the other hand, by the definition of the measure $\nu_\infty$, selecting $(z,t,\omega,\eta)$ according to $\nu_\infty$ can be done by first drawing $(\mu,t,v_t, \omega,\eta)$ according to $Q_{1,3,4,5,6} ^{(\mu_0,x_0,t_0, \tau_0, \omega_0,\eta_0)}$ and then selecting $z$ according to $\pi_{v_t,\omega,\eta} \mu$.  Thus, we have the following Proposition:

\begin{Proposition} \label{Proposition 5.7}
The measure $\nu_\infty$ satisfies the following property: for every $\epsilon>0$ there exists $\delta(\epsilon)>0$ and $k_1 (\epsilon)$ s.t. for $\nu_\infty$ a.e. $(z,t,\omega,\eta)$  there exists a measure $\mu \in P(X)$ s.t. 
\begin{enumerate}
\item $\pi_{v_t,\omega,\eta} \mu \in P(\ell\cap [ \Fomega\times \Eeta ])$ for some line $\ell$ with slope $m_1 ^t$.

\item  \eqref{Equation 5.15} holds for $\pi_{v_t,\omega,\eta}$ and for $z$. 
\end{enumerate}
\end{Proposition}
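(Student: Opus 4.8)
The plan is to assemble the two displayed statements \eqref{Equation 5.14} and \eqref{Equation 5.15} preceding the Proposition together with the structural description \eqref{Equation 5.8} of $\nu_\infty$, and to push everything through the two-stage sampling that defines $\nu_\infty$. First I would record the sampling picture: by \eqref{Equation 5.8}, together with \eqref{Eq correct coding} (which forces $\tau'=v_s$ off a $Q^{(\mu_0,x_0,t_0,\tau_0,\omega_0,\eta_0)}$-null set), a point $(z,t,\omega,\eta)$ distributed according to $\nu_\infty$ can be produced by first drawing $(\mu,t,v_t,\omega,\eta)$ according to $Q^{(\mu_0,x_0,t_0,\tau_0,\omega_0,\eta_0)}_{1,3,4,5,6}$ and then drawing $z$ according to $\pi_{v_t,\omega,\eta}\mu$. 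Property (1) then comes directly from Proposition \ref{Lemma CP chain}: the ergodic component $Q^{(\mu_0,x_0,t_0,\tau_0,\omega_0,\eta_0)}$ is supported on $D_\mathcal{F}$ and gives full mass to $\{\tau=v_t\}$, so for $Q^{(\mu_0,x_0,t_0,\tau_0,\omega_0,\eta_0)}_{1,3,4,5,6}$-a.e. $(\mu,t,v_t,\omega,\eta)$ there is a set $A\subseteq X_{v_t,\omega,\eta}$ with $\mu\in P(A)$ and $\pi_{v_t,\omega,\eta}(A)\subseteq [\Fomega\times\Eeta]\cap\ell$ for a line $\ell$ of slope $m_1^t$; pushing $\mu$ forward, $\pi_{v_t,\omega,\eta}\mu$ is supported on this non-principal slice, which is exactly (1).

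For property (2) I would start from Proposition \ref{Proposition 3.7} applied to the ergodic CP distribution $Q^{(\mu_0,x_0,t_0,\tau_0,\omega_0,\eta_0)}_{1,2}$, which has dimension $\geq\gamma>0$; its uniform constant $k_0(\epsilon)$ yields \eqref{Equation 5.14} for $Q^{(\mu_0,\ldots)}$-a.e. $\mu$ and $\mu$-a.e. $x$. Translating the symbolic density and entropy bounds into Euclidean ones via Lemma \ref{Lemma 4.1} part (3) — whose covering constant $C_2$ is uniform in $\tau,\omega,\eta$ — produces \eqref{Equation 5.15} with a $\delta(\epsilon)$ and $k_1(\epsilon)$ depending only on $\epsilon$, valid for $Q^{(\mu_0,\ldots)}_{1,3,4,5,6}$-a.e. $(\mu,t,v_t,\omega,\eta)$ and $\pi_{v_t,\omega,\eta}\mu$-a.e. $z$. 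Combining this with the sampling picture from the first paragraph, and noting that the set of tuples $(z,t,\omega,\eta,\mu)$ for which both (1) and \eqref{Equation 5.15} hold is jointly measurable, a Fubini argument gives that for $\nu_\infty$-a.e. $(z,t,\omega,\eta)$ the disintegration measure $\mu$ is a witness for both (1) and (2) at once. Taking this $\mu$ finishes the proof.

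The genuinely routine parts here are the bookkeeping of the uniform constants $k_0(\epsilon),\delta(\epsilon),k_1(\epsilon)$ and the joint measurability needed for the Fubini step. The only mildly delicate point is ensuring that a single $\mu$ simultaneously witnesses (1) and (2) for the same $(z,t,\omega,\eta)$; this is not an obstacle because both statements are formulated for the very same disintegration component $(\mu,t,v_t,\omega,\eta)$ along the fixed ergodic CP distribution $Q^{(\mu_0,x_0,t_0,\tau_0,\omega_0,\eta_0)}$, so they hold jointly off a single null set. In short, there is no serious obstacle: the Proposition is essentially a repackaging of \eqref{Equation 5.14}--\eqref{Equation 5.15} and Proposition \ref{Lemma CP chain} along the mixture defining $\nu_\infty$.
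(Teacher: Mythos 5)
Your proposal is correct and follows essentially the same route as the paper: the paper's ``proof'' of Proposition~\ref{Proposition 5.7} is precisely the discussion that precedes it---applying Proposition~\ref{Proposition 3.7} to the ergodic CP component to get \eqref{Equation 5.14}, transferring to the Euclidean setting via Lemma~\ref{Lemma 4.1}(3) to get \eqref{Equation 5.15}, and then invoking the two-stage sampling interpretation of $\nu_\infty$ (draw $(\mu,t,v_t,\omega,\eta)$ from $Q^{(\mu_0,\ldots)}_{1,3,4,5,6}$, then $z$ from $\pi_{v_t,\omega,\eta}\mu$). Your additional remarks about joint measurability, the single null set, and the Fubini step simply make explicit what the paper leaves implicit; the support-on-$D_\mathcal{F}$ argument for item (1) is also the same.
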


Let us sum up the results of this section:

\begin{theorem} \label{Theorem 5.10}
There exists an ergodic $U$-invariant measure $\nu_{\infty}$ s.t. $h(\nu_{\infty},U)>0$, and $\nu_{\infty}$ satisfies the properties stated in Propositions \ref{Proposition 5.7} and \ref{Proposition properties of nu}.
\end{theorem}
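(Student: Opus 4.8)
The plan is to assemble the pieces already in hand. By Proposition~\ref{Lemma CP chain}, the hypothesis of Theorem~\ref{Theorem main Theorem} part (1) — that there is a non-principal slice $\ell$ of some $\pi_{m_1}(\tilde F_{\omega_0})\times\pi_{m_2}(\tilde E_{\eta_0})$ of positive upper box dimension $\gamma$ — produces an $\hat M$-invariant distribution $Q$ on $Y$ with $H(Q)=\gamma$, supported on $D_{\mathcal F}$, with $Q(\tau=v_t)=1$, and a set $E_\gamma$ of positive $Q$-measure whose ergodic components $Q^{(\mu_0,x_0,t_0,\tau_0,\omega_0,\eta_0)}_{1,2}$ are ergodic CP distributions of dimension $\geq\gamma$. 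Fix such a component, and let $\nu_\infty=\nu^{(\mu_0,x_0,t_0,\tau_0,\omega_0,\eta_0)}$ be the measure constructed in Section~5.2, which was shown there to be $U$-invariant. First I would record that $\nu_\infty$ satisfies the conclusions of Proposition~\ref{Proposition properties of nu} (generating partitions, zero boundary measure, correct third/fourth marginals under the typicality assumption, and $0<h(\nu_\infty,U)\leq\log|\mathcal B_1|$) and of Proposition~\ref{Proposition 5.7} (the quantitative entropy/non-atomicity statement along the $U$-orbit), since these are exactly the properties demanded in the statement.

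The one genuinely new point is \emph{ergodicity}: the measure $\nu_\infty$ produced in Section~5.2 is $U$-invariant but need not be ergodic, so one must pass to an ergodic component. The key observation is that every property we need is inherited by $\nu_\infty$-almost every ergodic component. Concretely, write the ergodic decomposition $\nu_\infty=\int \nu_\infty^w\, d\nu_\infty(w)$ over $w\in[0,1]^2\times\mathbb T\times[n_1]^\mathbb N\times[n_2]^\mathbb N$. Since $h(\nu_\infty,U)=\int h(\nu_\infty^w,U)\,d\nu_\infty(w)>0$, a positive-measure set of components has positive entropy; fix the component-selection so that $h(\nu_\infty^w,U)>0$. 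The properties of Proposition~\ref{Proposition 5.7} are stated as "for $\nu_\infty$ a.e. $(z,t,\omega,\eta)$ there exists $\mu$ with \dots", i.e. they hold on a Borel set of full $\nu_\infty$-measure; hence this set has full measure for almost every ergodic component, so we may choose $w$ with $\nu_\infty^w$ in $E_\gamma$-good position and with the Proposition~\ref{Proposition 5.7} property holding $\nu_\infty^w$-a.e. Likewise the zero-boundary-measure property $\nu_\infty(\partial B)=0$ for each $B\in\mathcal B_k$ (Proposition~\ref{Proposition properties of nu}(2)) is a countable family of full-measure statements, hence survives passage to a.e. component, and the $\mathcal B_k$ still generate the Borel $\sigma$-algebra since that is a property of the space and partitions, not the measure. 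Finally the marginal-on-third/fourth-coordinate statement (Proposition~\ref{Proposition properties of nu}(3)) asserts full measure on a fixed Borel set of generic points, again inherited by a.e. component. Replacing $\nu_\infty$ by this chosen ergodic component $\nu_\infty^w$ — and renaming it $\nu_\infty$ — yields the theorem.

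I expect the main obstacle to be a bookkeeping subtlety rather than a deep one: one must check that the entropy lower bound $h(\nu_\infty^w,U)>0$ can be arranged \emph{simultaneously} with the full-measure properties, which is immediate because "positive entropy of the component" is itself a positive-$\nu_\infty$-measure event (by the decomposition of entropy) and intersecting finitely or countably many full/positive-measure events over $w$ leaves a positive-measure set of admissible components. One should also note that the upper bound $h(\nu_\infty^w,U)\le\log|\mathcal B_1|$ is automatic from $h(\nu_\infty^w,U)\le h(\nu_\infty^w,\mathcal B_1)\le\log|\mathcal B_1|$ since $\mathcal B_1$ is a finite generator (Proposition~\ref{Proposition properties of nu}(1)). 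With these remarks in place, the statement of Theorem~\ref{Theorem 5.10} is simply the conjunction of the already-established facts applied to the chosen component, and the proof is short.
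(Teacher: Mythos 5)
Your proposal is correct and takes essentially the same approach as the paper: the paper's own proof consists of the two remarks at the end of Section 5.3, namely that one passes to an ergodic component and that positive entropy for some component follows from the fact that entropy averages over the ergodic decomposition. Your version simply spells out the standard inheritance arguments (full-measure sets have full measure for a.e. component, the countably many boundary-null statements persist, the generating property is measure-independent, and the positive-entropy event has positive $\nu_\infty$-measure so it can be intersected with the other full-measure conditions) that the paper leaves implicit.
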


Notice that if $\nu_\infty$ is not ergodic, we may move to an ergodic component of $\nu_\infty$. To get positive entropy, we use the well known fact that the entropy of $\nu_\infty$ is the average over the entropies of its ergodic components.

\section{Proof of Theorem \ref{Theorem main Theorem}}
\subsection{An application of the Sinai factor Theorem}
Recall that a sequence $\lbrace x_k \rbrace_{k\in \mathbb{N}} \subset \mathbb{T}$ is uniformly distributed (UD) if for every sub-interval $J\subseteq \mathbb{T}$ we have
\begin{equation*}
\frac{1}{N} | \lbrace 0\ \leq k \leq N-1: x_k \in J \rbrace| \rightarrow \lambda (J)
\end{equation*}

In \cite{wu2016proof}, Wu was able to prove the following Theorem by using the Sinai factor Theorem.
\begin{theorem} \label{Theorem 6.1} (\cite{wu2016proof}, Theorem 6.1)
Let $(X,T,\mu)$ be an ergodic measure preserving system with $h(\mu,T)>0$. Let $\mathcal{A}$ be a generator with finite cardinality, and let $\lbrace \mathcal{A}_k \rbrace_k$ denote the filtration generated by $\mathcal{A}$ and $T$. Suppose that $\mu (\partial A)=0$ for every $k\in \mathbb{N}$ and every $A\in \mathcal{A}_k$. Let $\beta \notin \mathbb{Q}$.

Then for any $\epsilon>0$ there exists $k_2 = k_2 (\epsilon)>0$ s.t. for all $k\geq k_2$ there exists a disjoint family of measurable sets $\lbrace C_i \rbrace_{i=1} ^{N(k,\epsilon)}, C_i \subset X,$ such that:
\begin{enumerate}
\item $\mu( \bigcup C_i) > 1-\epsilon.$

\item For every $1 \leq i \leq N(k,\epsilon)$, $| \lbrace A\in \mathcal{A}_k: C_i\cap A \rbrace| \leq e^{k\cdot \epsilon}$.

\item There exists another  disjoint family of measurable sets $\lbrace \tilde{C}_i \rbrace_{i=1} ^{N(k,\epsilon)}, \tilde{C}_i \subset X$, s.t. for every $1\leq i \leq N(k,\epsilon)$ we have:
\begin{itemize}
\item $C_i \subseteq \tilde{C}_i$

\item $\mu (C_i) \geq (1-\epsilon) \mu (\tilde{C}_i)$.

\item for $\mu$ a.e. $x$ we have that the sequence 
\begin{equation*}
\lbrace R_\beta ^k (0) \in \mathbb{T} : k\in \mathbb{N} \text{ and }T^k (x)\in \tilde{C}_i \rbrace
\end{equation*}
is UD.
\end{itemize}
\end{enumerate}
\end{theorem}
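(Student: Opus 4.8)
The plan is to reduce the statement to a genericity property of a Bernoulli process via Sinai's factor theorem, and then extract the uniform distribution of return times from Weyl's criterion together with the independence built into the Bernoulli factor. Throughout we may assume $\epsilon$ is small, in particular $\epsilon<h:=h(\mu,T)$, since the conclusion for a given $\epsilon$ is monotone and becomes weaker as $\epsilon$ grows.

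\textbf{Setup via Sinai's factor theorem.} Since $(X,T,\mu)$ is ergodic with $h>0$, Sinai's factor theorem gives a factor map $\phi:(X,T,\mu)\to(\Sigma,S,\nu)$ onto a Bernoulli shift $\Sigma=\{0,\dots,r-1\}^{\mathbb{N}}$ with $\nu$ a product measure of entropy $h':=h-\epsilon/3\in(0,h)$. Let $\mathcal{P}$ be the time-zero partition of $\Sigma$, and set $\mathcal{Q}_k:=\phi^{-1}\!\big(\bigvee_{i=0}^{k-1}S^{-i}\mathcal{P}\big)$ (pullback of the depth-$k$ cylinder partition) and $\mathcal{R}_k:=\mathcal{A}_k\vee\mathcal{Q}_k$. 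Because $\mathcal{A}$ is a generator, Kolmogorov--Sinai gives $h(\mu,T,\mathcal{A}\vee\phi^{-1}\mathcal{P})=h$, while $h(\mu,T,\phi^{-1}\mathcal{P})=h(\nu,S)=h'$. Applying the Shannon--McMillan--Breiman theorem to the three filtrations $\mathcal{A}_k,\mathcal{Q}_k,\mathcal{R}_k$ together with Egorov's theorem, I obtain $k_2(\epsilon)$ so that for every $k\ge k_2$ there is a set $G_k$ with $\mu(G_k)>1-\epsilon^2$ on which $-\tfrac1k\log\mu(\mathcal{Q}_k(x))$ and $-\tfrac1k\log\mu(\mathcal{R}_k(x))$ are within $\epsilon/10$ of $h'$ and $h$ respectively.

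\textbf{Combinatorial bookkeeping and the sets $C_i,\tilde C_i$.} Fix $k\ge k_2$. If $G_k\cap\phi^{-1}[w]\neq\emptyset$ then $\mu(\phi^{-1}[w])\le e^{-k(h'-\epsilon/10)}$, while every $\mathcal{R}_k$-subcell of $\phi^{-1}[w]$ meeting $G_k$ has measure $\ge e^{-k(h+\epsilon/10)}$; since these subcells are disjoint and each corresponds to exactly one atom of $\mathcal{A}_k$, the cell $\phi^{-1}[w]$ meets at most $e^{k(h-h'+\epsilon/5)}\le e^{k\epsilon}$ atoms of $\mathcal{A}_k$ among those intersecting $G_k$. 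Let $[w_1],\dots,[w_{N(k,\epsilon)}]$ enumerate the depth-$k$ cylinders of $\Sigma$ with $\mu\!\big(G_k\mid\phi^{-1}[w_i]\big)\ge 1-\epsilon$; a Markov-inequality argument (using $\mu(G_k)>1-\epsilon^2$) shows $\mu\!\big(\bigcup_i\phi^{-1}[w_i]\big)>1-\epsilon$. Put $\tilde C_i:=\phi^{-1}[w_i]$ and $C_i:=\phi^{-1}[w_i]\cap\bigcup\{A\in\mathcal{A}_k: A\cap G_k\cap\phi^{-1}[w_i]\neq\emptyset\}$. Then the $C_i$ (and the $\tilde C_i$) are pairwise disjoint; $C_i\subseteq\tilde C_i$ and $C_i\supseteq G_k\cap\phi^{-1}[w_i]$, so $\mu(C_i)\ge(1-\epsilon)\mu(\tilde C_i)$ and $\mu(\bigcup_i C_i)\ge\mu(G_k)-\mu\!\big(X\setminus\bigcup_i\phi^{-1}[w_i]\big)>1-2\epsilon$; and by construction $C_i$ meets at most $e^{k\epsilon}$ atoms of $\mathcal{A}_k$. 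Adjusting $\epsilon$ by a constant factor at the outset recovers the stated bounds, giving parts (1), (2) and the first two bullets of (3).

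\textbf{Uniform distribution of return times.} Fix $i$ and set $p_i:=\nu[w_i]>0$. For $x$ with $\omega:=\phi(x)$ one has $T^\ell x\in\tilde C_i$ iff $S^\ell\omega\in[w_i]$, i.e.\ iff $a_\ell(\omega):=\mathbf{1}[\,\omega_\ell\cdots\omega_{\ell+k-1}=w_i\,]=1$; under $\nu$ the process $(a_\ell)_{\ell\ge0}$ is stationary, $\{0,1\}$-valued with mean $p_i$, and $k$-dependent (independent whenever the index gap is $\ge k$, since $\nu$ is a product measure). By Weyl's criterion, UD of $\{\,\ell\beta\bmod1:\ a_\ell(\omega)=1\,\}$ follows once $\tfrac1N\sum_{\ell<N}a_\ell(\omega)e^{2\pi i h\ell\beta}\to0$ for every nonzero $h\in\mathbb{Z}$ (dividing by $\tfrac1N\sum_{\ell<N}a_\ell\to p_i$, valid by the ergodic theorem). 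Writing $a_\ell=p_i+(a_\ell-p_i)$, the term $p_i\sum_{\ell<N}e^{2\pi i h\ell\beta}$ is $O(1)$ because $h\beta\notin\mathbb{Z}$; and $k$-dependence gives $\mathbb{E}\big|\sum_{\ell<N}(a_\ell-p_i)e^{2\pi i h\ell\beta}\big|^2=O(kN)$, so Chebyshev along $N_j=j^2$, Borel--Cantelli, and interpolation between consecutive $N_j$ yield $\tfrac1N\sum_{\ell<N}(a_\ell-p_i)e^{2\pi i h\ell\beta}\to0$ almost surely. Taking a union over the finitely many $i$ and the countably many $h$, for $\mu$-a.e.\ $x$ the sequence $\{R_\beta^\ell(0):\ \ell\in\mathbb{N},\ T^\ell x\in\tilde C_i\}$ is UD for every $i$. (Alternatively, this last step is immediate from the Wiener--Wintner theorem applied to the weakly mixing system $(\Sigma,S,\nu)$ and $f=\mathbf{1}_{[w_i]}-p_i$, which has no eigenvalue $e^{2\pi i h\beta}$.)

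\textbf{Main obstacle.} I expect the delicate part to be the bookkeeping in the middle step: threading the three Shannon--McMillan--Breiman estimates with the Egorov and Markov steps so that simultaneously the discarded mass is $<\epsilon$, each retained $\mathcal{Q}_k$-cell meets $\le e^{k\epsilon}$ atoms of $\mathcal{A}_k$, and the enlargement $\tilde C_i$ loses only an $\epsilon$-fraction of its mass, all while keeping every estimate uniform in $k$ so that a single $k_2(\epsilon)$ works for all $k\ge k_2$. The Weyl-criterion step is technically routine once the Bernoulli factor supplies independence, and the hypothesis $\mu(\partial A)=0$ is not used here---it is carried along for the applications of this lemma later in the paper.
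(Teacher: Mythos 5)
The paper states this result as a black-box citation of \cite{wu2016proof}, Theorem~6.1, and gives no proof of its own, so there is no in-paper argument to compare against; your argument is correct and follows the strategy the paper's introduction attributes to Wu (Sinai's factor theorem to a Bernoulli factor of slightly smaller entropy, Shannon--McMillan--Breiman plus Egorov and a Markov/conditional-entropy count to bound the number of $\mathcal{A}_k$-atoms meeting each factor cylinder, and uniform distribution of return times from independence in the Bernoulli factor via your variance plus Borel--Cantelli estimate, or equivalently Wiener--Wintner for weakly mixing systems). Your remark that the hypothesis $\mu(\partial A)=0$ is not actually used in constructing $C_i,\tilde{C}_i$ is consistent with how the paper treats it: that condition is verified separately (Proposition~\ref{Proposition properties of nu}) precisely so that the cited theorem can be invoked for $\nu_\infty$, not because it enters the proof of the theorem itself.
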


\subsection{Extracting geometric information from Theorem \ref{Theorem 6.1}}
The following Proposition is modeled after Proposition 7.1 in \cite{wu2016proof}. As in \cite{wu2016proof}, we denote the coordinate projections\footnote{For example $\Pi_1 (z,t,\omega,\eta)=z$.} of $[0,1]^2 \times \mathbb{T} \times [n_1]^\mathbb{N} \times[n_2] ^\mathbb{N}$ by $\Pi_i$, for $i=1,2,3,4$, and similarly $\Pi_{1,2}$ and $\Pi_{3,4}$.


\begin{Proposition} \label{Proposition 7.1}
There exists a constant $C>0$ s.t. for every $\epsilon>0$ there is some $r_0 = r_0 (\epsilon)$ and $k_5 = k_5 (\epsilon)\in \mathbb{N}$ s.t. for every $k\geq k_5 (\epsilon)$ the following is true:

For $\nu_\infty$ a.e. $(z,t,\omega,\eta)$ we can find a measure $\nu \in P([0,1]^2)$, a measurable set $D \subset [0,1]^2 \times \mathbb{T} \times [n_1]^\mathbb{N} \times[n_2] ^\mathbb{N}$, and a set $\mathcal{N} \subset \mathbb{N}$ such that:
\begin{enumerate}
\item $\nu \in P(\ell \cap [ \Fomega\times \Eeta ])$ for some line $\ell$ with slope $m_1 ^t$.

\item We have
\begin{equation*}
 \frac{1}{k} \log N_{2^{-k}} (D_1) \leq C\cdot (\epsilon+\frac{1}{k}), 
\end{equation*}
where $\Pi_1 (D)= D_1$, and $N_{2^{-k}} (A)$ is the number of $k$-level dyadic boxes $A$ intersects. In addition, 
\begin{equation*}
\frac{1}{k \log 2} \log N_{2^{-k}} (\pi_{m_1} \times \pi_{m_2} (\tilde{D}_{3,4}))  \leq C\cdot (\epsilon+\frac{1}{k}) + \max_{(i,j) \in [n_1] \times [n_2]} \frac{\log |\Gamma_i|}{\log m_1} + \frac{\log |\Lambda_j|}{\log m_2}.
\end{equation*}
where $\Pi_{3,4} (D)= D_{3,4}$ and $\tilde{D}_{3,4} = \bigcup_{(\omega,\eta)\in D_{3,4}} \tilde{F}_\omega \times \tilde{E}_\eta \subset [m_1]^\mathbb{N} \times [m_2]^\mathbb{N}$. 

\item For every $p\in \mathcal{N}$ we have $U^p (z,t,\omega,\eta)\in D$.

\item $\lambda( \overline{\lbrace \Rtheta^p (t):p\in \mathcal{N} \rbrace} ) \geq 1 - C \cdot \epsilon$.

\item For every $p\in \mathcal{N}$,
\begin{equation*}
\inf_{y\in \mathbb{R}^2} \frac{1}{k\log 2}  H( \nu^{\mathcal{A}_p ^t (z)} |_{B(y,r_0)^C} , \mathcal{D}_{2^k}) \geq \gamma - C\cdot \sqrt{\epsilon}.
\end{equation*}

\item Suppose $(\omega_0,\eta_0)$ from the condition in the statement of Proposition \ref{Lemma CP chain} are typical with respect to $t_0$ and a product of Bernoulli measures $\alpha_1\times \alpha_2 \in P([n_1]^\mathbb{N} \times[n_2] ^\mathbb{N})$, in the sense of Theorem \ref{Theorem X ergodic MPS}. Then for $\nu_\infty$ a.e. $(z,t,\omega,\eta)$ we can construct sets and measures with all the above properties, with the additional property that
\begin{equation*}
\frac{1}{k \log 2} \log N_{2^{-k}} (\pi_{m_1} \times \pi_{m_2} (\tilde{D'}_{3,4} ))  \leq C\cdot (\epsilon +\frac{1}{k}) + \sum_{i=0} ^{n_1-1}  \alpha_1([i])\cdot \frac{\log |\Gamma_{i}|}{\log m_1} + \sum_{j=0} ^{n_2-1}  \alpha_2([i])\cdot \frac{\log |\Lambda_{j}|}{\log m_2} ,
\end{equation*}
where $D_{3,4} ' = \bigcup_{k\in \mathcal{N}} \Pi_{3,4} U^k (z,t,\omega,\eta)$ and $\tilde{D'}_{3,4} = \bigcup_{(\omega,\eta)\in D_{3,4} '} \tilde{F}_\omega \times \tilde{E}_\eta.$
\end{enumerate}
\end{Proposition}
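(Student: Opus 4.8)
The plan is to run the Sinai-factor machinery of Theorem \ref{Theorem 6.1} on the ergodic system $([0,1]^2 \times \mathbb{T} \times [n_1]^\mathbb{N} \times [n_2]^\mathbb{N}, U, \nu_\infty)$ produced in Theorem \ref{Theorem 5.10}, with $\beta = \theta$ (irrational by assumption) and with the generator $\mathcal{A} = \mathcal{B}_1$ from \eqref{Equation 5.1}; Proposition \ref{Proposition properties of nu} guarantees precisely the hypotheses needed (generation, positive entropy, $\nu_\infty(\partial B)=0$ for every atom of every $\mathcal{B}_k$). Fix $\epsilon>0$, feed a small multiple of $\epsilon$ into Theorem \ref{Theorem 6.1}, and obtain for all large $k$ the families $\{C_i\}$, $\{\tilde C_i\}$. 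For $\nu_\infty$-a.e. point $(z,t,\omega,\eta)$ the orbit under $U$ equidistributes (ergodic theorem), so it spends at least $(1-\epsilon)$-density of its time in $\bigcup C_i$; picking the index $i$ whose $\tilde C_i$ the orbit visits with positive density, set $D := \tilde C_i$ and $\mathcal{N} := \{p : U^p(z,t,\omega,\eta)\in \tilde C_i\}$. This immediately gives (3), and property (iii) of Theorem \ref{Theorem 6.1} gives that $\{R_\theta^p(0): p\in \mathcal{N}\}$ is UD; composing with the a.e.-continuity of $t\mapsto$ orbit (and the fact that $\Rtheta^p(t)$ and $\Rtheta^p(0)$ differ by the fixed translation $t$, which preserves UD and hence density-one closure) yields (4). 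The measure $\nu$ is the one attached to $(z,t,\omega,\eta)$ by Proposition \ref{Proposition 5.7}, which is exactly property (1), and property (2) of Proposition \ref{Proposition 5.7} feeds the entropy lower bound on $\nu^{\mathcal{A}_p^t(z)}$ along $p\in \mathcal{N}$; removing a ball $B(y,r_0)$ costs at most a controlled amount of entropy by the first clause of \eqref{Equation 5.15}, and a standard concavity/covering estimate converts the conclusion into the uniform-over-$y$ bound (5) (this is where the $\sqrt{\epsilon}$ loss appears, exactly as in Wu's Proposition 7.1).

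The heart of the matter — and the genuinely new part compared to \cite{wu2016proof} — is (2), the two covering estimates. For $D_1 = \Pi_1(\tilde C_i)$: each atom $A\in \mathcal{B}_k$ projects under $\Pi_1$ into an $\mathcal{A}_k^t$-cell, i.e.\ a rectangle of dimensions roughly $m_1^{-r_k(t)}\times m_2^{-k}$, which by Claim \ref{Claim tau t}(1) is comparable to $2^{-k}\times 2^{-k}$ up to bounded factors; since $\tilde C_i$ meets at most $e^{k\epsilon}$ atoms of $\mathcal{B}_k$ (property (2) of Theorem \ref{Theorem 6.1}), $D_1$ is covered by $O(e^{k\epsilon})$ dyadic boxes of side $2^{-k}$, giving $\frac1k\log N_{2^{-k}}(D_1)\le C(\epsilon + \frac1k)$. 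For $\tilde D_{3,4}$: an atom of $\mathcal{B}_k$ fixes the first $k$ coordinates in $[n_2]^\mathbb{N}$ and the first $r_k(t)\approx \theta k$ coordinates in $[n_1]^\mathbb{N}$ (via the skew partition $\mathcal{I}_k^t$); hence the union of $\tilde F_\omega\times \tilde E_\eta$ over $(\omega,\eta)\in D_{3,4}$ is, at scale $2^{-k}$, covered by at most (number of atoms met)$\times$(number of ways to fill the free digit slots), and the free slots contribute at most $\prod_{i}|\Gamma_{\omega_i}|\prod_j|\Lambda_{\eta_j}| \le (\max_i|\Gamma_i|)^{r_k(t)}(\max_j|\Lambda_j|)^{k}$; taking logs, dividing by $k\log 2$, and using $r_k(t) = \theta k + O(1) = k\log m_2/\log m_1 + O(1)$ converts $(\max_i|\Gamma_i|)^{r_k(t)/k}$ into $m_2^{(\log\max|\Gamma_i|/\log m_1)}$ — wait, more carefully one tracks scales in base $m_1$ on the first axis and base $m_2$ on the second, exactly as in Lemma \ref{Lemma 4.1}(3), so that the exponents come out as $\frac{\log|\Gamma_i|}{\log m_1}+\frac{\log|\Lambda_j|}{\log m_2}$, and passing to the max over $(i,j)\in[n_1]\times[n_2]$ gives the stated bound, with the $e^{k\epsilon}$ atom count absorbed into $C(\epsilon+\frac1k)$.

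For part (6), replace the crude per-coordinate bound $\max_i|\Gamma_i|$, $\max_j|\Lambda_j|$ by the actual digit-statistics along the orbit. Under the hypothesis that $(\omega_0,\eta_0)$ is $\alpha_1\times\alpha_2$-typical for $t_0$ in the sense of Theorem \ref{Theorem X ergodic MPS}, Corollary \ref{Cor joint dist} and Proposition \ref{Proposition properties of nu}(3) tell us that the $(\omega,\eta)$-marginal of $\nu_\infty$ is supported on $\alpha_1\times\alpha_2$-generic points, so along the orbit $U^p(z,t,\omega,\eta)$ the empirical frequency of seeing a digit slot with symbol $i$ in the first coordinate is $\alpha_1([i])$ and similarly $\alpha_2([j])$ for the second — more precisely one uses that $\sigma_t^k(\omega) = \sigma^{r_k(t)}(\omega)$ so the shifts $\sigma^{r_p(t)}(\omega)$ run over a density-one subsequence whose digit statistics are still governed by $\alpha_1$ by a Birkhoff argument along $r_p(t) = \theta p + O(1)$. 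Hence $\sum_{i=1}^{?}$ of the contributions is, up to $o(k)$, $k(\theta\sum_i\alpha_1([i])\log|\Gamma_i| + \sum_j\alpha_2([j])\log|\Lambda_j|)$ in the relevant logarithmic scale; dividing by $k\log 2$ and rewriting $\theta = \log m_2/\log m_1$ absorbed into the base-change gives $\sum_i\alpha_1([i])\frac{\log|\Gamma_i|}{\log m_1} + \sum_j\alpha_2([j])\frac{\log|\Lambda_j|}{\log m_2}$, which is the asserted bound for $\tilde D'_{3,4}$. The main obstacle, as flagged, is making the book-keeping in (2) and (6) fully rigorous: one must carefully match the $2^{-k}$-dyadic covering scale against the genuinely anisotropic $m_1^{-r_k(t)}\times m_2^{-k}$ geometry of the atoms (this is exactly the role of Claim \ref{Claim tau t}(1) and of Lemma \ref{Lemma 4.1}(3)), and one must verify that the "number of atoms met" bound $e^{k\epsilon}$ from the Sinai factor theorem genuinely dominates, rather than merely adds to, the combinatorial digit-counting — i.e.\ that the entropy gain hidden in $\tilde C_i$ is an entropy of the $\Pi_1$-fibre and not double-counted against the $\Pi_{3,4}$-combinatorics. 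The rest is a faithful, if notationally heavier, adaptation of \cite[Proposition 7.1]{wu2016proof}.
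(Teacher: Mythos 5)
Your overall architecture is right, and you have correctly identified the three ingredients that must be glued: Theorem \ref{Theorem 6.1} applied to $(\cdot, U, \nu_\infty)$ with generator $\mathcal{B}_1$, Proposition \ref{Proposition 5.7} for the entropy estimates, and (for part (6)) the genericity of $(\omega,\eta)$ under $\alpha_1\times\alpha_2$ coming from Corollary \ref{Cor joint dist} and Proposition \ref{Proposition properties of nu}(3). You also correctly flag the scale-matching issue: the paper actually applies Theorem \ref{Theorem 6.1} at level $\tilde k = [k\log 2/\log m_2]+1$ rather than $k$, precisely so that $\mathcal{B}_{\tilde k}$-atoms have side length comparable to $2^{-k}$ in both coordinates. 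However, there are two concrete errors in the way you build $D$ and $\mathcal{N}$.

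First, you set $D := \tilde C_i$, but the paper sets $D := C_{i_0}$. This distinction matters: property (2) of Theorem \ref{Theorem 6.1} — the bound $|\{A\in\mathcal{A}_k : C_i\cap A\neq\emptyset\}| \le e^{k\epsilon}$ — applies only to the sets $C_i$, not to the larger $\tilde C_i$. With your choice, the covering estimate in part (2) of the Proposition has no source: you would need $\tilde C_i$ to meet few $\mathcal{B}_{\tilde k}$-atoms, and Theorem \ref{Theorem 6.1} makes no such promise. The right move is to take $D = C_{i_0}$ (controlled complexity), take $\mathcal{N}$ inside the visiting times to $C_{i_0}$, and then use $C_{i_0}\subseteq\tilde C_{i_0}$ together with $\mu(C_{i_0})\ge(1-\epsilon)\mu(\tilde C_{i_0})$ (Theorem \ref{Theorem 6.1}(3)) and Lemma \ref{Lemma 7.2} to transfer the UD statement for $\tilde C_{i_0}$-visits to the density statement in part (4).

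Second, your $\mathcal{N} = \{p : U^p(z,t,\omega,\eta)\in \tilde C_i\}$ is too large for parts (5) and (6). Inequality \eqref{Equation 5.15} guarantees the entropy/anti-concentration estimate only for a set of times of lower density $\ge 1-2\epsilon$, not for all times, and the digit-frequency control needed in (6) holds only on a set $V$ of $\nu_\infty$-measure $\ge 1-\epsilon$ produced via an Egorov argument which your proposal never invokes. In the paper, $\mathcal{N}$ is the triple intersection $A(\nu,z,t,\omega,\eta) \cap B(V,z,t,\omega,\eta) \cap B(C_{i_0},z,t,\omega,\eta)$, and one keeps track that each intersection costs only an $O(\epsilon)$ loss of relative density. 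Your Birkhoff heuristic for the digit statistics (``the shifts $\sigma^{r_p(t)}(\omega)$ run over a density-one subsequence governed by $\alpha_1$'') is morally correct, but it is not what allows you to bound the count of distinct $(\omega,\eta)$-cylinders that $\mathcal{N}$ visits; the Egorov set $V$ (and its visiting times) is what caps the frequency deviation \emph{uniformly} in $p\in\mathcal{N}$ and $k\ge k_5$, which is what the covering computation in Claim \ref{Claim projections} actually consumes. Without restricting $\mathcal{N}$ to $B(V,\cdot)$, the estimate in part (6) does not follow. These two fixes — $D=C_{i_0}$ and the triple intersection for $\mathcal{N}$ — are exactly what the paper's proof adds beyond the outline you gave.
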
 

We shall require two Lemmas for the proof. Both can be found in \cite{wu2016proof}. For a set $O \subset \mathbb{N}$ we denote the density of $O$ in $\mathbb{N}$ by
\begin{equation} \label{Eq density}
d(O,\mathbb{N}): = \lim_N \frac{1}{N} |O\cap [0,N-1]|
\end{equation}
If the limit does not exists we call the $\limsup$ the upper density of $O$ in $\mathbb{N}$ which we denote by $\overline{d}(O,\mathbb{N})$, and the $\liminf$ the lower density of $O$ in $\mathbb{N}$, denoted by $\underline{d}(O,\mathbb{N})$.
\begin{Lemma} (\cite{wu2016proof}, Lemma 7.2) \label{Lemma 7.2}
Let $\lbrace x_k \rbrace \subset \mathbb{T}$ be UD. Let $O\subseteq \mathbb{N}$. Then
\begin{equation*}
\lambda( \overline{\lbrace x_k : x_k \in O\rbrace } ) \geq \overline{d} (O,\mathbb{N}).
\end{equation*}
\end{Lemma}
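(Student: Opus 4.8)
The plan is to recall that this Lemma is attributed to Wu, so in the present paper one only needs to record its short proof. Fix a uniformly distributed sequence $\{x_k\}\subset\mathbb{T}$ and a set $O\subseteq\mathbb{N}$. Write $\overline{d}(O,\mathbb{N})=c$; we must show the closure $K:=\overline{\{x_k:k\in O\}}$ has $\lambda(K)\geq c$. The key observation is that for a closed set $K\subseteq\mathbb{T}$ and a UD sequence, the {\em asymptotic frequency of visits} of $\{x_k\}_{k\in\mathbb{N}}$ to any $\varepsilon$-neighborhood $K_\varepsilon$ of $K$ is at least $\lambda(K_\varepsilon)\geq\lambda(K)$: indeed one can cover $\mathbb{T}\setminus K_\varepsilon$ from inside by finitely many intervals whose total length is at least $1-\lambda(K_\varepsilon)$, and uniform distribution controls the frequency of visits to a finite union of intervals. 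Hence
\[
\limsup_{N\to\infty}\frac{1}{N}\big|\{0\leq k\leq N-1:\ x_k\in K_\varepsilon\}\big|\;\leq\;\lambda(K_\varepsilon).
\]
(Here one actually has a genuine limit for intervals with endpoints of measure zero, and an inequality in general, which is all that is needed.)

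First I would note that by definition of $K$, every index $k\in O$ has $x_k\in K$ (after passing to the closure), so in particular $\{k:\ x_k\in K_\varepsilon\}\supseteq O$ for every $\varepsilon>0$. Therefore
\[
\limsup_{N\to\infty}\frac{1}{N}\big|\{0\leq k\leq N-1:\ x_k\in K_\varepsilon\}\big|
\;\geq\;\limsup_{N\to\infty}\frac{1}{N}\big|O\cap[0,N-1]\big|\;=\;\overline{d}(O,\mathbb{N})\;=\;c.
\]
Combining the two displayed inequalities gives $\lambda(K_\varepsilon)\geq c$ for every $\varepsilon>0$. Since $K$ is closed, $K=\bigcap_{\varepsilon>0}K_\varepsilon$ and $K_\varepsilon$ decreases to $K$ as $\varepsilon\to 0$; by continuity of the (finite) measure $\lambda$ from above, $\lambda(K)=\lim_{\varepsilon\to 0}\lambda(K_\varepsilon)\geq c$, which is exactly the asserted bound $\lambda(\overline{\{x_k:x_k\in O\}})\geq\overline{d}(O,\mathbb{N})$.

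The only mildly delicate point — and the one I would spell out carefully — is the first inequality, i.e. that a UD sequence visits the open neighborhood $K_\varepsilon$ with asymptotic frequency at most $\lambda(K_\varepsilon)$; this needs the standard fact that for UD sequences the limiting visit-frequency to an open set is bounded above by its Lebesgue measure only after a small inner-approximation argument (open sets in $\mathbb{T}$ are countable disjoint unions of open intervals, so one truncates to finitely many of them and uses that uniform distribution gives the exact frequency for each interval up to the endpoints, which form a null set). Everything else is just monotone/continuity bookkeeping for $\lambda$, so no real obstacle remains. Since this is verbatim Lemma 7.2 of \cite{wu2016proof}, it is legitimate to cite it rather than reproduce the argument, but the sketch above shows it is elementary.
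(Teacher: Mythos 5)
Your proof is correct, and there is nothing in this paper to compare it against: the author explicitly cites \cite{wu2016proof}, Lemma 7.2, and does not reproduce the argument. (Note also that the statement as printed has a typo --- it should read $\{x_k : k\in O\}$ rather than $\{x_k : x_k\in O\}$; you read it the intended way, and that reading is confirmed by how the lemma is invoked in the proof of Proposition~\ref{Proposition 7.1}.)

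Two small remarks on the write-up itself. First, your prose says the asymptotic visit frequency to $K_\varepsilon$ is ``at least'' $\lambda(K_\varepsilon)$, but the display that follows (correctly) asserts $\limsup_N \frac{1}{N}|\{k<N : x_k\in K_\varepsilon\}|\leq \lambda(K_\varepsilon)$; the prose should say ``at most'' (or, better, ``exactly,'' see next point). Second, the ``mildly delicate point'' you flag --- inner-approximating $\mathbb{T}\setminus K_\varepsilon$ by intervals --- is in fact automatic and requires no approximation argument at all: since $K$ is closed, $\mathbb{T}\setminus K$ is a countable disjoint union of open arcs $(a_i,b_i)$ with $a_i,b_i\in K$, hence $\mathbb{T}\setminus K_\varepsilon = \bigcup_{i:\, b_i-a_i>2\varepsilon}[a_i+\varepsilon,\, b_i-\varepsilon]$ is a \emph{finite} disjoint union of closed arcs (finite because $\sum_i(b_i-a_i)\leq 1$). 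Uniform distribution then gives a genuine limit $\frac{1}{N}|\{k<N: x_k\in K_\varepsilon\}|\to\lambda(K_\varepsilon)$, and the rest of your argument (the inclusion $O\subseteq\{k:x_k\in K_\varepsilon\}$ giving $\lambda(K_\varepsilon)\geq\overline{d}(O,\mathbb{N})$, then $K=\bigcap_\varepsilon K_\varepsilon$ and continuity of $\lambda$ from above) goes through exactly as you wrote.
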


\begin{Lemma} (\cite{wu2016proof}, Lemma 7.3) \label{Lemma 7.3}
Let $\mu \in P(\mathbb{R}^d)$ and fix $0<\delta<1$. If $\sup_{y\in \mathbb{R}^d} \mu (B(y,\delta)) \leq \epsilon$ then for all $k\in \mathbb{N}$ such that $2^{-k} \leq \delta$ we have
\begin{equation*}
\inf_{y\in \mathbb{R}^2}   H( \mu |_{B(y,r_0)^C} , \mathcal{D}_{2^k}) \geq  H(\mu,\mathcal{D}_{2^k}) - C_1 \cdot k \sqrt{\epsilon}.
\end{equation*}
For some constant $C_1$ that depends only on $d$.
\end{Lemma}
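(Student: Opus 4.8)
The plan is to prove the stated inequality for each fixed centre $y$ and then take the infimum over $y\in\mathbb{R}^{d}$. So fix $y$ and put $s:=\mu(B(y,\delta))\le\epsilon$. We may assume $0<s<1$: if $s=0$ then $\mu|_{B(y,\delta)^{C}}=\mu$ and there is nothing to prove, while if $s=1$ then $\mu$ is carried by a ball of radius $\delta<1$, so $H(\mu|_{B(y,\delta)^{C}},\mathcal{D}_{2^{k}})=0$ and $H(\mu,\mathcal{D}_{2^{k}})\le 3dk\log 2$, which is $\le C_{1}k\sqrt\epsilon$ once $C_{1}\ge 3d\log 2$; we may also assume $\epsilon\le 1$, since $\sup_{y}\mu(B(y,\delta))\le 1$ always, so any larger $\epsilon$ reduces to the case $\epsilon=1$. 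Now write $\mu=(1-s)\mu_{1}+s\mu_{2}$ with $\mu_{1}=(1-s)^{-1}\mu|_{B(y,\delta)^{C}}$ and $\mu_{2}=s^{-1}\mu|_{B(y,\delta)}$, both probability measures. Viewing the index $i\in\{1,2\}$ (chosen with probabilities $1-s,s$) together with the $\mathcal{D}_{2^{k}}$-cell as a joint random variable and using that a marginal entropy does not exceed the joint entropy, one gets the standard mixture bound
\[
H(\mu,\mathcal{D}_{2^{k}})\;\le\;H(s)+(1-s)\,H(\mu_{1},\mathcal{D}_{2^{k}})+s\,H(\mu_{2},\mathcal{D}_{2^{k}}),
\]
where $H(s)=-s\log s-(1-s)\log(1-s)$ is the binary entropy; equivalently $(1-s)H(\mu_{1},\mathcal{D}_{2^{k}})\ge H(\mu,\mathcal{D}_{2^{k}})-H(s)-sH(\mu_{2},\mathcal{D}_{2^{k}})$.

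Next I would bound the two error terms. Since $2^{-k}\le\delta<1$ (in particular $k\ge 1$), the ball $B(y,\delta)$ is contained in a cube of side $2\delta$, hence meets at most $(2\delta 2^{k}+2)^{d}\le 2^{(k+2)d}$ atoms of $\mathcal{D}_{2^{k}}$; therefore $H(\mu_{2},\mathcal{D}_{2^{k}})\le (k+2)d\log 2\le 3dk\log 2$, and so $sH(\mu_{2},\mathcal{D}_{2^{k}})\le 3dk\log 2\cdot\epsilon\le 3dk\log 2\cdot\sqrt\epsilon$. For the binary-entropy term, monotonicity of $H$ on $[0,\tfrac12]$ gives $H(s)\le H(\epsilon)$ when $\epsilon\le\tfrac12$, and the elementary estimate $H(\epsilon)\le 3\sqrt\epsilon$ on $(0,\tfrac12]$ (coming from $\sup_{0<u\le 1}\sqrt u\log(1/u)=2/e$ for the $\epsilon\log(1/\epsilon)$ part and $\log\tfrac1{1-\epsilon}\le 2\epsilon$ for the other part); for $\tfrac12<\epsilon\le 1$ one simply uses $H(s)\le\log 2\le 3\sqrt\epsilon$. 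So $H(s)\le 3\sqrt\epsilon$ throughout.

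Finally I would pass from the normalised $\mu_{1}$ back to the sub-probability measure $\mu|_{B(y,\delta)^{C}}$ that appears in the statement: for any probability measure $\rho$ and $c\in(0,1]$ one has $H(c\rho,\mathcal{A})=-c\log c+cH(\rho,\mathcal{A})\ge cH(\rho,\mathcal{A})$, and applying this with $c=1-s$, $\rho=\mu_{1}$ yields $H(\mu|_{B(y,\delta)^{C}},\mathcal{D}_{2^{k}})\ge(1-s)H(\mu_{1},\mathcal{D}_{2^{k}})$. Combining the three bounds,
\[
H(\mu|_{B(y,\delta)^{C}},\mathcal{D}_{2^{k}})\;\ge\;H(\mu,\mathcal{D}_{2^{k}})-3\sqrt\epsilon-3dk\log 2\cdot\sqrt\epsilon\;\ge\;H(\mu,\mathcal{D}_{2^{k}})-C_{1}k\sqrt\epsilon
\]
with $C_{1}:=3+3d\log 2$ (using $k\ge 1$); taking the infimum over $y\in\mathbb{R}^{d}$ finishes the proof. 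I do not expect a genuine obstacle here — this is a routine entropy estimate — the only points needing a little care are the bookkeeping in normalising the restricted sub-probability measure $\mu|_{B(y,\delta)^{C}}$, the replacement of the sharp binary-entropy bound $H(\epsilon)\asymp\epsilon\log(1/\epsilon)$ by the slightly weaker $\sqrt\epsilon$ quoted in the statement, and the covering count for $B(y,\delta)$, which is precisely where the hypothesis $2^{-k}\le\delta$ is used.
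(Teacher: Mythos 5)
Your proof is correct: the paper does not prove this lemma at all (it is quoted from Wu's paper, Lemma 7.3), and your argument — splitting $\mu=(1-s)\mu_1+s\mu_2$ at the ball, applying the standard mixture bound $H(\mu,\mathcal{D}_{2^k})\le H(s)+(1-s)H(\mu_1,\mathcal{D}_{2^k})+sH(\mu_2,\mathcal{D}_{2^k})$, bounding $H(\mu_2,\mathcal{D}_{2^k})$ by the number of dyadic cells meeting $B(y,\delta)$, and estimating $H(s)\le 3\sqrt{\epsilon}$ — is precisely the standard argument behind Wu's statement, with the right reading of the statement's typos ($B(y,r_0)$ meaning $B(y,\delta)$ and $\mathbb{R}^2$ meaning $\mathbb{R}^d$). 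One cosmetic remark: your covering count only uses $\delta<1$, so the hypothesis $2^{-k}\le\delta$ is not actually where your argument needs it (it is harmlessly unused), but this does not affect correctness.
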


We now prove Proposition 7.1, under the additional assumption that $(\omega_0,\eta_0)$ from Lemma \ref{Lemma CP chain} are typical with respect to a product of Bernoulli measures measures $\alpha_1\times \alpha_2$ and $t_0 \in \mathbb{T}$, in the sense of part (6). If this is not the case then proof follows along the same lines, and  is actually easier.

Let $\epsilon>0$.

\textbf{Choice of the integer $k_5$ and $r_0$} By Theorem \ref{Theorem 5.10}, $\nu_\infty$ is ergodic, has positive entropy and satisfies  Proposition \ref{Proposition 5.7}. Put $r_0 (\epsilon) := \delta (\epsilon)$, where $\delta (\epsilon)$ is the number from  Proposition \ref{Proposition 5.7}. Recall the partition $\mathcal{B}_1$ of $[0,1]^2 \times \mathbb{T} \times [n_1]^\mathbb{N} \times [n_2] ^\mathbb{N}$, defined in \eqref{Equation 5.1}. Recall that $\mathcal{B}_1$ is a partition of finite cardinality, and that by Proposition \ref{Proposition properties of nu}, $\nu_{\infty} (\partial B)=0$, for all $B\in \mathcal{B}_k$ and all $k\geq 1$. We may thus apply Theorem \ref{Theorem 6.1} to the  dynamical system $([0,1]^2 \times \mathbb{T} \times [n_1] ^\mathbb{N} \times [n_2] ^\mathbb{N}, U, \nu_\infty)$.

In addition, for every $i\in [n_1]$ define continuous functions $f_i:[0,1]^2 \times \mathbb{T} \times [n_1]^\mathbb{N} \times [n_2] ^\mathbb{N}\rightarrow \mathbb{R}$ by $f_i (z,t,\omega,\eta) = 1_{\lbrace \omega_1 = i \rbrace} (\omega)$. Let $f_i ^k$ be the ergodic average (with respect to $([n_1]^\mathbb{N}, \sigma)$), 
\begin{equation*}
f_i ^k (z,t,\omega,\eta)= \frac{1}{[\theta\cdot  k]} \sum_{p=0} ^{[\theta\cdot k]-1} f_i (\sigma^p (\omega)).
\end{equation*} 
Similarly, for every $j\in [n_2]$ define continuous functions $g_j:[0,1]^2 \times \mathbb{T} \times [n_1]^\mathbb{N}\times [n_2] ^\mathbb{N}\rightarrow \mathbb{R}$ by $g_j (z,t,\omega,\eta) = 1_{\lbrace \eta_1 = j \rbrace} (\omega)$. Let $g_j ^k$ be the ergodic average (with respect to $([n_2]^\mathbb{N}, \sigma)$),
\begin{equation*}
g_j ^k (z,t,\omega,\eta)= \frac{1}{k} \sum_{p=0} ^{k-1} g_j (\sigma^p (\eta)).
\end{equation*}

By Proposition \ref{Proposition properties of nu}, $\Pi_{3,4} \nu_\infty$ almost every $(\omega,\eta)$ is generic with respect to the  product system $([n_1]^\mathbb{N} \times [n_2]^\mathbb{N}, \sigma\times \sigma, \alpha_1\times \alpha_2)$. So for $\nu_\infty$ a.e. $(z,t,\omega,\eta)$, $\omega$ is generic for $([n_2]^\mathbb{N}, \sigma,\alpha_1)$ and $\eta$ is generic for $([n_2]^\mathbb{N}, \sigma,\alpha_2)$, and therefore for every $(i,j)\in [n_1]\times [n_2]$,
\begin{equation*}
\lim_k f_i ^k (z,t,\omega,\eta) = \int 1_{\lbrace  \omega_1 = i \rbrace} (\omega) d \alpha_1 (\omega) = \alpha_1([i]), \quad \text{and similarly } \lim_k g_j ^k (z,t,\omega,\eta) = \alpha_2 ([j]).
\end{equation*}
Thus, by $n_1 \cdot n_2$ applications of Egorov's Theorem, we may find an integer $k_3 (\epsilon)$ such that 
\begin{equation} \label{Eq freq}
\begin{aligned}
{} & V = \lbrace (z,t,\omega,\eta): \forall k>k_3 (\epsilon), \quad \forall (i,j)\in [n_1]\times [n_2],  \\
& |f_i ^k (z,t,\omega,\eta) - \alpha_1([i])| < \frac{\epsilon}{n_1 \cdot n_2}, \quad \text{ and } |g_j ^k (z,t,\omega,\eta) - \alpha_2([j])| <\frac{\epsilon}{n_1\cdot  n_2} \rbrace \\
\end{aligned}
\end{equation}
has measure $\nu_\infty (V) \geq 1-\epsilon$.

Let $k_2 (\epsilon)$ be the integer provided by Theorem \ref{Theorem 6.1}. Let $k_1 (\epsilon)$ be the integer from Proposition \ref{Proposition 5.7}. Let $k_4 (\epsilon)$ be such that $2^{-k} \leq \delta$ for all $k\geq k_4$. Let 
\begin{equation*}
k_5 = \max \lbrace k_1 (\epsilon), k_2 (\epsilon)\cdot \frac{\log m_2}{\log 2} , k_3 (\epsilon)\cdot \frac{\log m_2}{\log 2}, k_4 (\epsilon) \rbrace.
\end{equation*}
We will show that $k_5$ can be taken to be the integer promised in the statement of Proposition \ref{Proposition 7.1}.

\textbf{Construction of the sets $\mathcal{N}$ and $D$} Let $k\geq k_5$. Define $\tilde{k} = [ k \cdot \frac{\log 2}{\log m_2}]+1$. Then $\tilde{k} \geq k_2 (\epsilon), k_3 (\epsilon)$.  By Theorem \ref{Theorem 6.1} we can find  disjoint families of measurable sets $$\lbrace C_i \rbrace_{i=1} ^{N(\tilde{k},\epsilon)}, \quad \lbrace \tilde{C}_i \rbrace_{i=1} ^{N(\tilde{k},\epsilon)}, \text{ such that all sets are subsets of }  [0,1]^2 \times \mathbb{T} \times [n_1]^\mathbb{N} \times [n_2] ^\mathbb{N}$$  satisfying the conditions of Theorem \ref{Theorem 6.1} with respect to the partition $\mathcal{B}_{\tilde{k}}$.

 Let $A' \subset [0,1]^2 \times \mathbb{T} \times [n_1]^\mathbb{N} \times [n_2] ^\mathbb{N}$ denote the set of $(z,t,\omega,\eta)$ such that:
\begin{itemize}
\item  The sequence
\begin{equation} 
\lbrace R_\theta ^k (t) \in \mathbb{T} : k\in \mathbb{N} \text{ and }U^k (z,t,\omega,\eta)\in \tilde{C}_i \rbrace
\end{equation}
is UD for every $1\leq i \leq N(\tilde{k},\epsilon)$.

\item There exists a measure $\mu =\mu_{z,t,\omega,\eta}$ such that $\pi_{v_t,\omega,\eta} \mu \in P(\ell \cap [ \Fomega \times \Eeta])$ for some line $\ell$ with slope $m_1 ^t$, and \eqref{Equation 5.15} holds for $\pi_{v_t,\omega.\eta} \mu$ and $z$.
\end{itemize}
By Theorem \ref{Theorem 6.1} part (3), and by Proposition \ref{Proposition 5.7}, since $k\geq k_1(\epsilon)$ and by the choice of $r_0(\epsilon)$, $\nu_\infty (A')=1$.

Next, for $(z,t,\omega,\eta)$ and $1\leq i \leq N(\tilde{k},\epsilon)$ define the sequences of visiting times
\begin{eqnarray*}
B(C_i, z, t, \omega,\eta) &=& \lbrace k\in \mathbb{N}: \quad U^k (z,t,\omega,\eta) \in C_i \rbrace\\
B(\tilde{C}_i, z, t, \omega,\eta)&=& \lbrace k\in \mathbb{N}:\quad U^k (z,t,\omega,\eta) \in \tilde{C}_i \rbrace\\
B(V,z,t,\omega,\eta)& =& \lbrace k\in \mathbb{N}:\quad  U^k (z,t,\omega,\eta) \in V \rbrace.
\end{eqnarray*}
Recall the definition of the density of a set of integers from \eqref{Eq density}. Let $A''$ be the set of all $(z,t,\omega,\eta)$ such that for all $i$
\begin{equation*}
d(B(C_i, z, t, \omega,\eta), \mathbb{N}) = \nu_\infty (C_i), \quad d(B(\tilde{C}_i, z, t, \omega,\eta), \mathbb{N}) = \nu_\infty (\tilde{C}_i),
\end{equation*}
and
\begin{equation*}
d(B(V, z, t, \omega,\eta), \mathbb{N}) = \nu_\infty (V) \geq 1- \epsilon.
\end{equation*}
Then the ergodicity of $\nu_\infty$ implies that $\nu_\infty (A'')=1$. Let $A= A' \cap A''$, then $\nu_\infty (A)=1$.

Let $(z,t,\omega,\eta)\in A$. Then $(z,t,\omega,\eta)\in A'$, so there exists $\mu=\mu_{z,t,\omega,\eta}$ such that $$\pi_{v_t,\omega,\eta} \mu \in P(\ell \cap [ \Fomega \times \Eeta]) \text{ for some line } \ell \text { with slope } m_1 ^t ,$$and \eqref{Equation 5.15} holds for $\pi_{v_t,\omega,\eta} \mu$ and $z$. Denote $\nu = \pi_{v_t,\omega,\eta} \mu$. By the choice of $r_0 (\epsilon) = \delta(\epsilon)$ , as $k\geq k_1(\epsilon)$, and by \eqref{Equation 5.15}, the set
\begin{equation*}
A(\nu,z,t,\omega,\eta):= \lbrace  p\in \mathbb{N} : \sup_{y\in [0,1]^2} \nu^{\mathcal{A}_p ^t (z)} (B(y,r_0(\epsilon))) \leq \epsilon \text { and }  H( \nu^{\mathcal{A}_p ^t (z)} , \mathcal{D}_{2^k}) \geq k \cdot (\gamma \log m_2 - 2\epsilon) \rbrace  
\end{equation*}
has lower density $\geq 1-2\epsilon$ in $\mathbb{N}$.

 Since $d(B(V, z, t, \omega,\eta), \mathbb{N}) \geq 1- \epsilon$, it follows, by the inclusion-exclusion principle, that the set
\begin{equation*}
B(V, z, t, \omega,\eta) \cap A(\nu,z,t,\omega,\eta)
\end{equation*}
has lower density at least $1-3\cdot \epsilon$ in $\mathbb{N}$.

On the other hand, by Theorem \ref{Theorem 6.1} part (1), the density of $\bigcup_{i=1} ^{N(\tilde{k},\epsilon)} B(C_i,z,t,\omega,\eta)$ in $\mathbb{N}$ is at least $1-\epsilon$. Notice that the sets $B(C_i,z,t,\omega,\eta)$ are disjoint. It follows that there exists at least one $1\leq i_0 \leq N(\tilde{k},\epsilon)$ such that\footnote{If $S_1,S_2 \subseteq \mathbb{N}$, we define $\underline{d}(S_1,S_2)= \liminf_N \frac{S_1\cap (S_2\cap[N])}{S_2\cap [N]}$.}
\begin{equation*}
\underline{d} (  \left(  B(V, z, t, \omega,\eta) \cap A(\nu,z,t,\omega,\eta) \right) \cap B(C_{i_0},z,t,\omega,\eta), \quad B(C_{i_0},z,t.\omega,\eta)) \geq 1- 4 \epsilon.
\end{equation*}
We thus set $D = C_{i_0}$ and $\mathcal{N} = A(\nu,z,t,\omega,\eta)\cap B(V, z, t, \omega,\eta) \cap B(C_{i_0},z,t,\omega,\eta)$.

\textbf{Proof of the Proposition \ref{Proposition 7.1} part (4)}
\begin{Lemma}
$\lambda ( \overline{\lbrace \Rtheta^k (t):k\in \mathcal{N} \rbrace} ) > 1-5\epsilon.$
\end{Lemma}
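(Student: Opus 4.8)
The plan is to deduce this from Lemma \ref{Lemma 7.2}, feeding into it the uniform distribution statement of Theorem \ref{Theorem 6.1} part (3) together with the density identities that hold at the fixed point $(z,t,\omega,\eta)\in A$. Recall that by construction $\mathcal{N}\subseteq B(C_{i_0},z,t,\omega,\eta)\subseteq B(\tilde C_{i_0},z,t,\omega,\eta)$, and that, since $(z,t,\omega,\eta)\in A\subseteq A'$, the sequence $\{R_\theta^k(t):k\in\mathbb{N},\ U^k(z,t,\omega,\eta)\in\tilde C_{i_0}\}$ is UD. First I would enumerate the (necessarily infinite) set $B(\tilde C_{i_0},z,t,\omega,\eta)$ in increasing order as $n_1<n_2<\cdots$ and put $x_j:=R_\theta^{n_j}(t)$, so that $(x_j)_{j\in\mathbb{N}}$ is UD; writing $O:=\{j\in\mathbb{N}:n_j\in\mathcal{N}\}$, one has $\{x_j:j\in O\}=\{R_\theta^k(t):k\in\mathcal{N}\}$ and, by an elementary counting argument, $\overline d(O,\mathbb{N})\geq\underline d\big(\mathcal{N},B(\tilde C_{i_0},z,t,\omega,\eta)\big)$.

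The main step is then to show $\underline d\big(\mathcal{N},B(\tilde C_{i_0},z,t,\omega,\eta)\big)>1-5\epsilon$. I would split the relevant counting ratio through $B(C_{i_0},z,t,\omega,\eta)$: the first factor has $\liminf$ at least $1-4\epsilon$ by the choice of $i_0$ (which guarantees $\underline d(\mathcal{N},B(C_{i_0},z,t,\omega,\eta))\geq 1-4\epsilon$, since $\mathcal{N}=A(\nu,z,t,\omega,\eta)\cap B(V,z,t,\omega,\eta)\cap B(C_{i_0},z,t,\omega,\eta)$), while the second factor $|B(C_{i_0})\cap[N]|/|B(\tilde C_{i_0})\cap[N]|$ converges — since $(z,t,\omega,\eta)\in A''$ — to $\nu_\infty(C_{i_0})/\nu_\infty(\tilde C_{i_0})$, which is $\geq 1-\epsilon$ by Theorem \ref{Theorem 6.1} part (3). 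Combining these via the inequality $\liminf(a_Nb_N)\geq(\liminf a_N)(\lim b_N)$, valid for $a_N\geq 0$ and $b_N\to b\geq 0$, gives $\underline d\big(\mathcal{N},B(\tilde C_{i_0},z,t,\omega,\eta)\big)\geq(1-4\epsilon)(1-\epsilon)=1-5\epsilon+4\epsilon^2>1-5\epsilon$, hence $\overline d(O,\mathbb{N})>1-5\epsilon$.

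Finally I would apply Lemma \ref{Lemma 7.2} to the UD sequence $(x_j)$ and the index set $O$ to conclude $\lambda\big(\overline{\{R_\theta^k(t):k\in\mathcal{N}\}}\big)=\lambda\big(\overline{\{x_j:j\in O\}}\big)\geq\overline d(O,\mathbb{N})>1-5\epsilon$. I expect the only genuinely delicate point to be the bookkeeping among the three notions of density involved — absolute density in $\mathbb{N}$, relative density inside $B(C_{i_0},z,t,\omega,\eta)$, and relative density inside $B(\tilde C_{i_0},z,t,\omega,\eta)$ — and the clean passage between them; everything else is immediate from constructions already in place. One should also record along the way that $\nu_\infty(\tilde C_{i_0})\geq\nu_\infty(C_{i_0})>0$, which is automatic because $i_0$ was chosen so that $B(C_{i_0},z,t,\omega,\eta)$ has positive density, so that the enumeration of $B(\tilde C_{i_0},z,t,\omega,\eta)$ and the divisions above are legitimate.
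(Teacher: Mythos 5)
Your proof is correct and follows essentially the same route as the paper: split $\underline{d}\big(\mathcal{N}, B(\tilde C_{i_0},z,t,\omega,\eta)\big)$ through $B(C_{i_0},z,t,\omega,\eta)$ to get $(1-4\epsilon)(1-\epsilon)$, then feed the UD statement from Theorem~\ref{Theorem 6.1} part (3) into Lemma~\ref{Lemma 7.2}. You spell out more carefully the re-indexing/bookkeeping that the paper leaves implicit (the reduction of $\overline d(O,\mathbb{N})$ to the relative density, and the positivity of $\nu_\infty(\tilde C_{i_0})$), but there is no genuine difference in approach.
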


\begin{proof} Since $C_{i_0} \subset \tilde{C_{i_0}}$, by the choice of $A''$ and Theorem \ref{Theorem 6.1} part (3) we have
\begin{equation*}
d( B(C_{i_0},z,t,\omega,\eta) , B( \tilde{C}_{i_0},z,t,\omega,\eta)) = d( B(C_{i_0},z,t,\omega,\eta) , \mathbb{N}) \cdot d(B( \tilde{C}_{i_0},z,t,\omega,\eta), \mathbb{N})^{-1} \geq 1-\epsilon.
\end{equation*}
Therefore, 
\begin{equation*}
\underline{d} ( \mathcal{N}, B(\tilde{C}_{i_0},z,t,\omega,\eta)) \geq \underline{d} ( \mathcal{N}, B(C_{i_0},z,t,\omega,\eta)) \cdot \underline{d} ( B(C_{i_0},z,t,\omega,\eta), B(\tilde{C}_{i_0},z,t,\omega,\eta)   
\end{equation*}
\begin{equation*}
\begin{split}
& \geq (1- 4 \epsilon)\cdot (1-\epsilon).
\end{split}
\end{equation*}
Since $(z,t,\omega,\eta)\in A'$ then $\lbrace R_\theta ^k (t) \in \mathbb{T} : k\in B(\tilde{C}_{i_0},z,t,\eta) \rbrace$ is UD. It follows from Lemma \ref{Lemma 7.2} that
\begin{equation*}
\lambda( \overline{\lbrace \Rtheta^k (t):k\in \mathcal{N} \rbrace} ) \geq (1- 4 \epsilon)(1-\epsilon) > 1-5\epsilon.
\end{equation*}
\end{proof}

\textbf{Proof of Proposition \ref{Proposition 7.1} parts (2) and (6)}
\begin{Claim} \label{Claim projections}
 For $i=3,4$ let $\Pi_i (D) = D_i$ , and let $D_i ' = \bigcup_{k\in \mathcal{N}} \Pi_i (U^k (z,t,\omega,\eta))  \subset D_i$. Define
\begin{equation*}
\pi_{m_1} (\tilde{F}_{D_3'}): = \bigcup_{\xi\in D_3'} \pi_{m_1} (\tilde{F}_\xi), \quad \text{ and  } \pi_{m_2} (\tilde{E}_{D_4'}) := \bigcup_{\zeta\in D_4'} \pi_{m_2} (\tilde{E}_\zeta). 
\end{equation*}
Then, for some constant $C_2$ that does not depend on $k$ or $\epsilon$, 
\begin{equation} \label{Eq F}
\frac{\log N_{2^{-k}} (\pi_{m_1} (\tilde{F}_{D_3 '}))}{k \log 2} \leq \sum_{i=0} ^{n_1-1} \alpha_1([i]) \cdot \frac{\log |\Gamma_{i}|}{\log m_1}  +C_2 \cdot (\epsilon+\frac{1}{k})
\end{equation}
and
\begin{equation} \label{Eq E}
\frac{\log N_{2^{-k}} (\pi_{m_2} (\tilde{E}_{D_4 '}))}{k \log 2} \leq \sum_{j=0} ^{n_2-1} \alpha_2([j]) \cdot \frac{\log |\Lambda_{j}|}{\log m_2}  +C_2 \cdot (\epsilon+\frac{1}{k})
\end{equation}
\end{Claim}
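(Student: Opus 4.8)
The plan is to estimate the covering number of $\pi_{m_1}(\tilde F_{D_3'})$ by counting, for each point in $D_3'$, how many $k$-level $m_1$-adic cylinders its associated product set $\pi_{m_1}(\tilde F_\xi)$ occupies, and then to control the \emph{number} of distinct such $\xi$ coming from the orbit. First I would recall that $D_3' = \bigcup_{p\in\mathcal N}\Pi_3(U^p(z,t,\omega,\eta)) = \bigcup_{p\in\mathcal N}\{\sigma^{r_p(t)}(\omega)\}$, so every element of $D_3'$ is a shift $\sigma^{r_p(t)}(\omega)$ of the single sequence $\omega$. Consequently the prefix of length $q$ of any such shifted sequence is determined by the window $(\omega_{r_p(t)+1},\dots,\omega_{r_p(t)+q})$. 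For a fixed $q$ there are at most $N_{q\text{-cyl}}$ distinct length-$q$ words appearing among these windows, and crucially, since $\tilde F_\xi = \prod_i \Gamma_{\xi_i}$ is a product set, $\pi_{m_1}(\tilde F_\xi)$ is covered by $\prod_{i=1}^{q}|\Gamma_{\xi_i}|$ cylinders of level $q$, up to a bounded multiplicative constant coming from the two possible $m_1$-adic expansions of endpoints. So $N_{m_1^{-q}}(\pi_{m_1}(\tilde F_{D_3'})) \le C \cdot \sum_{w}\prod_{i=1}^q |\Gamma_{w_i}|$ where the sum is over the distinct length-$q$ words $w$ realized.

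The key point is then to bound this sum using the frequency control built into the set $V$ and the fact that $\mathcal N\subseteq B(V,z,t,\omega,\eta)$. For $p\in\mathcal N$, since $U^p(z,t,\omega,\eta)\in V$ and $p\ge$ (the relevant threshold), the ergodic averages $f_i^{q'}$ evaluated at $\sigma^{r_p(t)}(\omega)$ are within $\epsilon/(n_1 n_2)$ of $\alpha_1([i])$ for the appropriate length $q' \approx q$; this says the word $w = (\omega_{r_p(t)+1},\dots,\omega_{r_p(t)+q'})$ has empirical symbol-frequencies close to those of $\alpha_1$. Hence $\prod_{i=1}^{q'}|\Gamma_{w_i}| = \exp\big(\sum_{i} \log|\Gamma_{w_i}|\big) \le \exp\big(q'(\sum_{a\in[n_1]}\alpha_1([a])\log|\Gamma_a| + C'\epsilon)\big)$, using that $\log|\Gamma_a|$ is bounded. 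The number of distinct such frequency-constrained words is at most $\exp(q'\cdot o(1))$ by a standard type-counting (Stirling) estimate, but in fact we don't even need a sharp count: absorbing it into the $\frac1k$ and $\epsilon$ error terms suffices once we note there are at most polynomially-in-$q$ many ``types'' and each type class has size $\le \exp(q' H(\text{type}))$ with $H(\text{type})$ close to $H(\alpha_1)$, which is $\le C''\epsilon$ away from... — actually cleanly, I would just bound the total number of distinct windows by $n_1^{q'}$ and instead bound $\sum_w \prod |\Gamma_{w_i}|$ directly: each term is $\le \exp(q'(\sum_a \alpha_1([a])\log|\Gamma_a| + C'\epsilon))$ and the number of terms that matter is at most the number of length-$q'$ subwords of $\omega$, which is $\le q'+1 \le $ something absorbed into $e^{Ck}$-type slack — wait, that's not right either since there can be exponentially many subwords.

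The honest route, and the one I would carry out: let $T(\epsilon,q')$ be the set of length-$q'$ words whose symbol frequencies lie within $\epsilon/(n_1 n_2)$ of $\alpha_1$. Then $|T(\epsilon,q')| \le \exp(q'(H(\alpha_1) + \delta(\epsilon)))$ where $\delta(\epsilon)\to 0$, by the type-counting bound, and by concavity/continuity of entropy $H(\alpha_1)$ here is being compared against... no — I should be more careful: I will bound $\sum_{w\in T(\epsilon,q')}\prod_{i}|\Gamma_{w_i}| \le |T(\epsilon,q')|\cdot \max_{w\in T}\prod_i|\Gamma_{w_i}| \le \exp\big(q'(H(\alpha_1)+\delta(\epsilon))\big)\cdot \exp\big(q'(\textstyle\sum_a\alpha_1([a])\log|\Gamma_a|+C'\epsilon)\big)$. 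This produces an extra $H(\alpha_1)$ term which is \emph{not} in the claimed bound, so this crude splitting is too lossy. The resolution — and the step I expect to be the main obstacle — is to not separate the count from the weight: instead bound $\sum_{w\in T(\epsilon,q')}\prod_i |\Gamma_{w_i}|$ by the full sum $\sum_{\text{all }w\in[n_1]^{q'}}\prod_i|\Gamma_{w_i}| = \big(\sum_{a\in[n_1]}|\Gamma_a|\big)^{q'}$ only when $\alpha_1$ is uniform, and in general to observe that we must exploit that the words come from a \emph{single} sequence $\omega$ which is generic for $\alpha_1$; the number of distinct length-$q'$ subwords of such an $\omega$ is subexponential in $q'$ only if $\omega$ has zero entropy, which we cannot assume. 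Therefore the correct argument weights each \emph{occurrence}: $N_{m_1^{-q}}(\pi_{m_1}(\tilde F_{D_3'}))\le C\sum_{p\in\mathcal N, p\le ?}\prod_{i=1}^{q}|\Gamma_{(\sigma^{r_p(t)}\omega)_i}|$, and since each such product is $\le\exp(q(\sum_a\alpha_1([a])\log|\Gamma_a|+C'\epsilon))$ while $|\mathcal N\cap[0,N]|\le N$, taking $q=\tilde k\log m_1/\log 2 \approx k$ and $N$ bounded in terms of the cardinality of $\mathcal B_{\tilde k}$-atoms $C_{i_0}$ meets... hence the clean statement: choose the covering scale so that only the distinct words matter, dominate by $N(\tilde k,\epsilon)\cdot e^{\tilde k\epsilon}$ many cylinders from Theorem \ref{Theorem 6.1} part (2) times the per-word weight $\exp(k(\sum_a\alpha_1([a])\log|\Gamma_a|/\log m_1 + C'\epsilon))$, and absorb $\frac1k\log N(\tilde k,\epsilon)$ and $\epsilon$ into $C_2(\epsilon+\frac1k)$ — noting $N(\tilde k,\epsilon)$ depends on $\epsilon$ but not $k$, so $\frac1k\log N(\tilde k,\epsilon)\to 0$. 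The estimate \eqref{Eq E} for $\pi_{m_2}(\tilde E_{D_4'})$ is entirely parallel, using $D_4' = \bigcup_{p\in\mathcal N}\{\sigma^p(\eta)\}$ (no $r_p$ twist here since $\sigma$ acts on $\eta$ at every step) and the frequency control $|g_j^k - \alpha_2([j])|<\epsilon/(n_1n_2)$ from $V$, giving $\prod_{i=1}^{k}|\Lambda_{(\sigma^p\eta)_i}|\le\exp(k(\sum_b\alpha_2([b])\log|\Lambda_b|+C'\epsilon))$ and hence the base-$m_2$ bound after rescaling the covering number to scale $m_2^{-k}$ versus $2^{-k}$, which contributes only a bounded constant. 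I would finish by combining \eqref{Eq F} and \eqref{Eq E} with $N_{2^{-k}}(\pi_{m_1}\times\pi_{m_2}(\tilde D'_{3,4}))\le N_{2^{-k}}(\pi_{m_1}(\tilde F_{D_3'}))\cdot N_{2^{-k}}(\pi_{m_2}(\tilde E_{D_4'}))$ for part (6), and for part (2) replacing the $\alpha$-frequency bounds by the universal worst-case bound $\max_{(i,j)}(\log|\Gamma_i|/\log m_1 + \log|\Lambda_j|/\log m_2)$, which requires no genericity and follows from $\prod_{i=1}^q|\Gamma_{w_i}|\le\max_i|\Gamma_i|^q$ directly. The $D_1$ covering bound in part (2) is separate and comes from Theorem \ref{Theorem 6.1} part (2) together with $\nu^{\mathcal A_p^t(z)}$ being supported on $\Pi_1 U^p(\cdot)$-type cells: $D_1\subseteq\bigcup_{A\in\mathcal B_{\tilde k}, A\cap C_{i_0}\ne\emptyset}\Pi_1(A)$, a union of $\le e^{\tilde k\epsilon}$ cells each of diameter $\le m_1^{-r_{\tilde k}(t)}\times m_2^{-\tilde k}\lesssim 2^{-k}$, so $N_{2^{-k}}(D_1)\le C e^{\tilde k \epsilon}\le e^{Ck(\epsilon+1/k)}$, giving the first displayed inequality of part (2).
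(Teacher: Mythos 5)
Your final argument, once you discard the two false starts you explicitly abandon, is exactly the paper's proof: you bound the number of symbolic cylinders meeting $D_3$ by $e^{\epsilon\tilde{k}}$ using Theorem~\ref{Theorem 6.1} part~(2), and you bound the $2^{-k}$-covering number of each $\pi_{m_1}(\tilde F_\xi)$ for $\xi\in D_3'$ by $\exp\bigl(k(\sum_a\alpha_1([a])\log|\Gamma_a|/\log m_1+C\epsilon)\bigr)$ using the frequency control from the set $V$; multiplying these two estimates and converting between $m_1$-adic and dyadic scales (via $r_{\tilde k}(t)\approx\tilde k\theta\approx k\log2/\log m_1$, up to the additive constant from Claim~\ref{Claim tau t}) gives \eqref{Eq F}, and \eqref{Eq E} is the parallel computation for $D_4'$. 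One slip worth flagging: in your final display you insert a factor $N(\tilde k,\epsilon)$ and assert it ``depends on $\epsilon$ but not $k$,'' which is false --- $\tilde k$ is a function of $k$, and Theorem~\ref{Theorem 6.1} gives no growth bound on $N(\cdot,\epsilon)$, so this absorption would genuinely fail if the factor were there. It isn't: $D=C_{i_0}$ is a \emph{single} element of the disjoint family, so part~(2) of Theorem~\ref{Theorem 6.1} directly bounds $\bigl|\{B\in\mathcal B_{\tilde k}:D\cap B\neq\emptyset\}\bigr|\leq e^{\epsilon\tilde k}$ with no $N(\tilde k,\epsilon)$ factor. Dropping that extraneous term, your argument coincides with the paper's.
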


\begin{proof}
We first study $D_3$. By Theorem \ref{Theorem 6.1} part (2) and the choice of $D$,
\begin{equation*}
| \lbrace B \in \mathcal{B}_{\tilde{k}}: D \cap B \neq \emptyset \rbrace | \leq e^{\epsilon\cdot \tilde{k}}. 
\end{equation*}
Let $t_0 \in \mathbb{T}$. Then, by the last displayed equation, the definition of the partitions $\mathcal{B}_k$, and recalling that $\mathcal{I}_1 ^p$ is the $p$-level cylinder partition of $[n_1]^\mathbb{N}$,
\begin{equation*}
|\lbrace I \in \mathcal{I}_1 ^{r_{\tilde{k}}(t_0)}: D_3 \cap I \neq \emptyset \rbrace| \leq |\bigcup_{t\in \mathcal{C}_{\tilde{k}}} \lbrace I \in \mathcal{I}_1 ^{r_{\tilde{k}}(t)}: D_3 \cap I \neq \emptyset \rbrace | \leq e^{\epsilon\cdot \tilde{k}}.
\end{equation*} 
By Claim \ref{Claim tau t}, we know that there exists some constant $C\in \mathbb{N}$ such that for every $t$
\begin{equation*}
r_{\tilde{k}} (t) \geq \tilde{k} \cdot  \theta -C \geq [\tilde{k} \cdot  \theta] -C. 
\end{equation*}
It follows that
\begin{equation*}
N(D_3, \mathcal{I}_1 ^{[\tilde{k} \cdot  \theta]-C}): = | \lbrace I \in  \mathcal{I}_1 ^{[\tilde{k} \cdot  \theta] -C}: D_3 \cap I \neq \emptyset \rbrace | \leq |\lbrace I \in \mathcal{I}_1 ^{r_{\tilde{k}}(t_0)}: D_3 \cap I \neq \emptyset \rbrace| \leq e^{\epsilon\cdot \tilde{k}}.
\end{equation*}
Since $\tilde{k} = [k \frac{\log 2}{\log m_2}]+1$ we see that, since $\theta = \frac{\log m_2}{\log m_1}$,
\begin{equation*}
[\tilde{k} \cdot  \theta] \geq k\frac{\log 2}{\log m_1} - 1 \geq k\frac{\log 2}{\log m_1} -C \geq [k\frac{\log 2}{\log m_1}]-C,
\end{equation*}
since $C>1$. 

Now, recall that $D_3 ' = \bigcup_{k\in \mathcal{N}} \Pi_3 (U^k (z,t,\omega,\eta))  \subset D_3$. Then, as $\tilde{k}\geq k_3 (\epsilon)$, by the definition of $B(V,z,t,\omega,\eta)$ and of the set $V$ (recall \eqref{Eq freq}), we have, for every $\xi \in D_3 '$ and every $0\leq i \leq n-1$,
\begin{equation*}
|\lbrace |1\leq j \leq [k\frac{\log 2}{\log m_1}] - 2C: \xi_j=i \rbrace| \leq |\lbrace |1\leq j \leq [\tilde{k}\cdot \theta]: \xi_j=i \rbrace| \leq [\theta \cdot \tilde{k}]\cdot \alpha_1([i])+[\theta \cdot \tilde{k}]\cdot \epsilon.
\end{equation*}

We can now calculate. Define for every $\xi \in D_3 '$ 
\begin{equation*}
\widehat{\pi_{m_1} (\tilde{F}_\xi)} := \lbrace \sum_{i=1} ^{[k \frac{\log 2}{\log m_1}]-2C} \frac{x_i}{m_1 ^i}: x_i \in \Gamma_{\xi_i} \rbrace, \quad \text{ and } \widehat{ \pi_m(\tilde{F}_{D_3 '}) } : = \bigcup_{\xi \in D_3 '} \widehat{\pi_{m_1} (\tilde{F}_\xi)} .
\end{equation*}
Notice that $\widehat{ \pi_m(\tilde{F}_{D_3 '}) }$ is actually a finite union of sets of the form $\widehat{\pi_{m_1} (\tilde{F}_\xi)}$, and that, by considering $m_1$-adic rationals,
\begin{equation*}
N( \pi_{m_1} (\tilde{F}_{D_3 '}), m_1 ^{[k \frac{\log 2}{\log m_1}]-2C}) \leq 3\cdot N( \widehat{\pi_{m_1} (\tilde{F}_{D_3 '}}), m_1 ^{[k \frac{\log 2}{\log m_1}]-2C}).
\end{equation*}
Recall that for a set  $A\subseteq \mathbb{R}$, $N(A,p)$ denotes the number of $p$-adic intervals $A$ intersects, and that $N_{2^{-k}} (A):=N(A,2^k)$. Let $C_1 \in \mathbb{N}$ be such that $2^{C_1} >m_1$. Then,
\begin{eqnarray*}
N_{2^{-k}} (\pi_{m_1} (\tilde{F}_{D_3 '})) & \leq & 2^{C_1 +1}\cdot N( (\pi_{m_1} (\tilde{F}_{D_3 '}), m_1 ^{[k \frac{\log 2}{\log m_1}]}) \\
& \leq & 2^{C_1 +1}\cdot N( (\pi_{m_1} (\tilde{F}_{D_3 '}), m_1 ^{[k \frac{\log 2}{\log m_1}]-2C})\cdot m_1 ^{2C} \\
&\leq& 2^{C_1 +1}\cdot  3\cdot N( \widehat{\pi_{m_1} (\tilde{F}_{D_3 '})}, m_1 ^{[k \frac{\log 2}{\log m_1}]-2C}) \cdot m_1^{2C}\\
&\leq& 2^{C_1 +1}\cdot 3 \cdot   N(D_3, \mathcal{I}_1 ^{[k \frac{\log 2}{\log m_1}]-2C}) \cdot  \max_{\xi \in D_3 '} N(\widehat{\pi_{m_1} (\tilde{F}_\xi)} ,m_1 ^{[k \frac{\log 2}{\log m_1}]-2C}]) \cdot m_1^{2C} \\
 &\leq & 2^{C_1 +1}\cdot  3\cdot N(D_3, \mathcal{I}_1 ^{[\tilde{k} \cdot  \theta]-C}) \cdot \max_{\xi \in D_3} \prod_{i=0} ^{n_1-1} |\Gamma_{i}|^{|1\leq j \leq [k\frac{\log 2}{\log m_1}] - 2C: \xi_j=i \rbrace|} \cdot m_1^{2C}\\
 &\leq& 2^{C_1 +1}\cdot  3\cdot  e^{\epsilon\cdot \tilde{k}} \cdot \prod_{i=0} ^{n_1-1} |\Gamma_{i}|^{[\theta \cdot \tilde{k}]\cdot \alpha_1([i])+[\theta \cdot \tilde{k}]\cdot \epsilon} \cdot m_1^{2C}
\end{eqnarray*}
Taking $\log$ and dividing by $k\log 2$, recalling that $\theta = \frac{\log m_2}{\log m_1}$, yields \eqref{Eq F}.

For $D_4$, we follow a similar argument.  The main difference is that for $D_4$, by the definition of $\mathcal{B}_{\tilde{k}}$,
\begin{equation*}
| \lbrace J \in \mathcal{I}_2 ^{\tilde{k}}: D_4 \cap J \neq \emptyset \rbrace | \leq e^{\epsilon\cdot \tilde{k}}.
\end{equation*} 
since $\Pi_4 \mathcal{B}_{\tilde{k}} = \mathcal{I}_2 ^{\tilde{k}}$ - the cylinder partition of generation $\tilde{k}$. In addition, as $\tilde{k}\geq k_3 (\epsilon)$, by the definition of $B(V,z,t,\omega,\eta)$ and of the set $V$ (recall \eqref{Eq freq}), we have, for every $\zeta \in D_4 '$ and every $0\leq i \leq n_2-1$,
\begin{equation*}
|\lbrace |1\leq j \leq \tilde{k}: \zeta_j=i \rbrace| \leq \tilde{k} \cdot \alpha_2([i])+ \tilde{k}\cdot \epsilon.
\end{equation*}
Thus, by a similar argument to the one proving \eqref{Eq F}, we see that
\begin{equation*}
N_{2^{-k}} (\pi_{m_2} (\tilde{F}_{D_4 '}))  \leq 2^{C_1 +1}\cdot  3\cdot e^{\tilde{k}\cdot \epsilon}\cdot  \prod_{i=0} ^{n_2-1} |\Lambda_{i}|^{ \tilde{k}\cdot \alpha([i])+ \tilde{k}\cdot \epsilon} \cdot m_2
\end{equation*}
Taking $\log$ and dividing by $k\log 2$, this yields \eqref{Eq E}.
\end{proof}

Recall that we want to bound
\begin{equation*}
\frac{1}{k \log 2} \log N_{2^{-k}} (\pi_{m_1} \times \pi_{m_2} (\tilde{D'}_{3,4} ))
\end{equation*}
where $D_{3,4} ' = \bigcup_{k\in \mathcal{N}} \Pi_{3,4} U^k (z,t,\omega,\eta)$ and $\tilde{D'}_{3,4} = \bigcup_{(\omega,\eta)\in D_{3,4} '} \tilde{F}_\omega \times \tilde{E}_\eta$
and
\begin{equation*}
\pi_{m_1} \times \pi_{m_2} (\tilde{D'}_{3,4}) = \bigcup_{(\omega,\eta)\in D_{3,4}'} \pi_{m_1} (\tilde{F}_\omega) \times \pi_{m_2}(\tilde{E}_\eta).
\end{equation*} 
It follows by definition that $\pi_{m_1} \times \pi_{m_2} (\tilde{D'}_{3,4}) \subseteq \pi_{m_1} (\tilde{F}_{D_3 '}) \times \pi_{m_2} (\tilde{E}_{D_4 '})$. Thus,
\begin{equation*}
\frac{1}{k \log 2} \log N_{2^{-k}} (\pi_{m_1} \times \pi_{m_2} (\tilde{D'}_{3,4} )) \leq  \frac{1}{k \log 2} \log N_{2^{-k}} (\pi_{m_1} (\tilde{F}_{D_3 '})) +\frac{1}{k \log 2} \log N_{2^{-k}} (\pi_{m_2} (\tilde{F}_{D_4 '})), 
\end{equation*}
and the result follows by Claim \ref{Claim projections}.

\textbf{Remaining proofs} The rest of the proofs are similar to those appearing in (\cite{wu2016proof}, Proposition 7.1). In particular, Lemma \ref{Lemma 7.2} is needed to prove part (5), and the remaining case of part (2) follows by an argument similar to Claim \ref{Claim projections}. In each case we get a constant $C$ multiplying $\epsilon$ and $\frac{1}{k}$ that does not depend on $k$ or $\epsilon$. Taking the maximal such constant, we obtain Proposition \ref{Proposition 7.1}. We omit the rest of the details.

\subsection{Proof of Theorem \ref{Theorem main Theorem}}
We begin by  relating our assumptions from Theorem \ref{Theorem main Theorem} to those of Proposition \ref{Lemma CP chain}, and hence to the subsequent results.
\begin{Lemma} \label{Lemma assumptions}
Let $\ell \subset \mathbb{R}^2$ be a non-principal line of positive slope. Suppose that for some $(\omega,\eta)\in [n_1]^\mathbb{N} \times [n_2]^\mathbb{N}$ we have
\begin{equation*}
\gamma := \overline{\dim}_B \left (\Fomega \times \Eeta\right) \bigcap \ell   >0.
\end{equation*}
Then $\exists (t_0, \omega_0,\eta_0)\in \mathbb{T} \times [n_1]^\mathbb{N} \times [n_2] ^\mathbb{N}$ and a line $\ell'$ of slope $m_1 ^{t_0}$ such that
\begin{equation} \label{Eq assumption}
\overline{\dim}_B \left( \pi_{m_1} ( \tilde{F}_{\omega_0}) \times \pi_{m_2} ( \tilde{E}_{\eta_0}) \right) \cap \ell' \geq \gamma >0.
\end{equation}
Moreover, if $\alpha_1 \times \alpha_2 \in P( [n_1]^\mathbb{N}\times [n_2]^\mathbb{N})$ is a product of Bernoulli measures, then there is a set $A$ of full $\alpha_1  \times \alpha_2$  measure such that:  If $(\omega,\eta)\in A$ then we may take $(t_0,\omega_0,\eta_0)$ to be generic with respect to the measure preserving system $(\mathbb{T} \times [n_1]^\mathbb{N}\times [n_2]^\mathbb{N},Z, \lambda\times \alpha_1 \times \alpha_2)$, discussed in Theorem \ref{Theorem X ergodic MPS}.
\end{Lemma}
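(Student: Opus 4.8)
The plan is to pass from the non-principal line $\ell$ through the Euclidean product set to a line of slope $m_1^{t_0}$ for an appropriate $t_0 \in \mathbb{T}$, and then to use the equidistribution machinery of Theorem \ref{Theorem X ergodic MPS} to upgrade $(\omega,\eta)$ to a generic point. First I would reduce to the case where $\ell$ has slope exactly of the form $m_1^{t_0}$: given an arbitrary non-principal line $\ell$ of positive slope $u>0$, write $u = m_1^{t_0}$ with $t_0 \in \mathbb{R}$, and set $t_0' = t_0 \bmod 1 \in \mathbb{T}$. Since the product sets $\Fomega \times \Eeta$ are contained in $[0,1]^2$ and the box dimension of a bounded set intersected with a line only depends on the line, and since $\ell$ itself already has a slope of this form, we may simply take $\ell' = \ell$, $t_0$ to be the fractional part, $\omega_0 = \omega$, $\eta_0 = \eta$; this trivially gives \eqref{Eq assumption} with equality. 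So the first (non-moreover) part of the Lemma is essentially a matter of unwinding definitions: the point is just that "non-principal line of positive slope $u$" is the same as "line of slope $m_1^{t}$ for some $t \in \mathbb{R}$", and reducing $t$ modulo $1$ does not change the line.

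The substantive content is the "moreover" clause. Here I would invoke Theorem \ref{Theorem X ergodic MPS}: for the fixed $t_0 \in \mathbb{T}$ coming from the slope of $\ell$, there is a set $A = A(t_0) \subseteq [n_1]^\mathbb{N} \times [n_2]^\mathbb{N}$ of full $\alpha_1 \times \alpha_2$ measure such that for every $(\omega,\eta) \in A$ the orbit averages $\frac{1}{N}\sum_{i=0}^{N-1} \delta_{Z^i(t_0,\omega,\eta)}$ converge to $\lambda \times \alpha_1 \times \alpha_2$, i.e. $(t_0,\omega,\eta)$ is generic for the system $(\mathbb{T}\times[n_1]^\mathbb{N}\times[n_2]^\mathbb{N}, Z, \lambda\times\alpha_1\times\alpha_2)$. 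The remaining issue is that genericity of $(t_0,\omega,\eta)$ for $Z$ does not by itself say anything about the slice dimension $\gamma$; the slice dimension is a property of the original pair $(\omega,\eta)$. So I cannot simply replace $(\omega,\eta)$ by a generic pair — I must keep $(\omega,\eta)$ itself and argue that, when $(\omega,\eta) \in A$, the pair $(t_0,\omega,\eta)$ is already generic, so one can take $(t_0,\omega_0,\eta_0) = (t_0,\omega,\eta)$. For this one needs $A$ to be chosen inside the full-measure set produced by Theorem \ref{Theorem X ergodic MPS}, which is legitimate since the latter has full $\alpha_1\times\alpha_2$ measure. Thus for $(\omega,\eta)\in A$ both conclusions hold simultaneously: \eqref{Eq assumption} holds with $\ell'=\ell$, $t_0$ the fractional part of $\log_{m_1}(\text{slope}(\ell))$, $\omega_0=\omega$, $\eta_0=\eta$, and this triple is $Z$-generic.

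The main obstacle I anticipate is a subtle one about endpoints: Theorem \ref{Theorem X ergodic MPS} gives a full-measure set $A$ for each \emph{fixed} $t$, but the skew-product map $Z$ (and $\sigma_t$) is discontinuous at $t \in \{0, 1-\theta\}$ and, more seriously, the orbit of $t_0$ under $R_\theta$ could in principle hit the partition endpoints accumulated in the countable set $A$ of Claim \ref{Claim tau t} part (2), which is exactly the bad set for the coding $\tau = v_t$ to be stable under limits. If the slope of the given line $\ell$ corresponds to such a $t_0$, the later constructions (Proposition \ref{Lemma CP chain}, equation \eqref{Eq correct coding}) need care. I would handle this by noting that $\{0,1-\theta\}$ and all $R_\theta$-preimages of partition endpoints form a countable $R_\theta$-invariant set of Lebesgue measure zero, and since $t_0$ is determined only up to the requirement that $m_1^{t_0}$ be the prescribed slope — which is a single real number, hence a single $t_0 \in \mathbb{T}$ — if that one $t_0$ is bad we cannot avoid it. However, inspecting how $t_0$ is used downstream (it only feeds into Proposition \ref{Lemma CP chain}, whose conclusion \eqref{Eq correct coding} is proved via an argument that discards a $Q_3$-null, hence Lebesgue-null, set of third coordinates), one sees the single bad value of $t_0$ does no harm: the relevant genericity statements are used along orbits and averaged, and the $R_\theta$-orbit of a fixed $t_0$ equidistributes regardless. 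So ultimately the endpoint issue is cosmetic here and the proof reduces to the two ingredients above: definitional unwinding of "non-principal positive slope $=$ slope $m_1^t$", plus a direct application of Theorem \ref{Theorem X ergodic MPS} with $A$ taken inside its full-measure set.
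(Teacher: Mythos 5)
There is a genuine gap in the first, non-``moreover'' step, which you dismiss as ``a matter of unwinding definitions.'' You propose to take $\ell'=\ell$, $\omega_0=\omega$, $\eta_0=\eta$, and $t_0$ equal to the fractional part of $\log_{m_1}u$, where $u>0$ is the slope of $\ell$. But $t_0\in\mathbb{T}$ is necessarily represented in $[0,1)$ in all the downstream constructions (the dynamics on $\mathbb{T}\times[n_1]^\mathbb{N}\times[n_2]^\mathbb{N}$, the coding $t\mapsto v_t$, the maps $\Phi_t$ in Proposition~\ref{Lemma CP chain}), so a line of slope $m_1^{t_0}$ has slope in $[1,m_1)$. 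If $u\notin[1,m_1)$ then $\ell$ is \emph{not} a line of slope $m_1^{t_0}$ for that $t_0$; for example if $u=1/2$ and $m_1=3$, then $t_0\approx 0.369$ and $m_1^{t_0}\approx 1.5\neq 1/2$. So the slope must actually be moved into the fundamental domain, and this cannot be done while keeping the line fixed.

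The paper's argument accomplishes this by applying the maps $\phi_1=T_{m_1}\times T_{m_2}$ and $\phi_2=\mathrm{id}\times T_{m_2}$, which multiply the slope by $m_2/m_1$ and $m_2$ respectively, and transform a slice of $\pi_{m_1}(\tilde F_\omega)\times\pi_{m_2}(\tilde E_\eta)$ into a finite family of slices of $\pi_{m_1}(\tilde F_{\sigma(\omega)})\times\pi_{m_2}(\tilde E_{\sigma(\eta)})$ (respectively $\pi_{m_1}(\tilde F_\omega)\times\pi_{m_2}(\tilde E_{\sigma(\eta)})$), at least one of which retains upper box dimension $\geq\gamma$. Iterating $n$ times with $\phi_1$ and $k-n$ times with $\phi_2$ gives slope $u\,m_2^k/m_1^n$ through $\pi_{m_1}(\tilde F_{\sigma^n\omega})\times\pi_{m_2}(\tilde E_{\sigma^k\eta})$. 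The irrationality $\theta\notin\mathbb{Q}$ is used precisely here: $\{u\,m_2^k/m_1^n:k\geq n\}$ is dense in $(0,\infty)$, so one can hit $[1,m_1)$. Thus $\omega_0=\sigma^n\omega$ and $\eta_0=\sigma^k\eta$ with $(n,k)$ depending on $u$ --- they are \emph{not} $\omega$ and $\eta$. Consequently the ``moreover'' set $A$ is not simply the full-measure set $A'$ from Theorem~\ref{Theorem X ergodic MPS}, but its preimage $A=(\sigma^{-n}\times\sigma^{-k})(A')$, which has full $\alpha_1\times\alpha_2$-measure because Bernoulli measures are shift-invariant; your proposal never produces this pull-back because it never shifts $(\omega,\eta)$. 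Finally, the ``endpoint'' concerns you devote a paragraph to are not in play in this lemma at all --- they are dealt with inside Proposition~\ref{Lemma CP chain} and Claim~\ref{Claim tau t}, and no part of Lemma~\ref{Lemma assumptions} depends on choosing $t_0$ off a bad countable set.
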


The case of a  negative slope can be treated in a completely analogues way.  We defer the proof to section \ref{Section Remaining proofs}.

Now, we want to show that if \eqref{Eq assumption} holds then $\gamma+1 \leq \max_{(i,j)\in [n_1]\times [n_2]} \frac{\log |\Gamma_i|}{\log m_1} + \frac{\log |\Lambda_j|}{\log m_2}$. We also want to show that under the additional assumption that $(\omega_0,\eta_0)$ are typical with respect to $t_0$ and a product of Bernoulli measures $\alpha_1\times \alpha_2 \in P( [n_1]^\mathbb{N}\times [n_2]^\mathbb{N})$ (in the sense of Theorem \ref{Theorem X ergodic MPS}), then
\begin{equation*}
1+\gamma \leq \sum_{i=0} ^{n_1-1} \frac{\log |\Gamma_i|}{\log m_1}\cdot \alpha_1 ([i])+\sum_{i=0} ^{n_2-1} \frac{\log |\Lambda_i|}{\log m_2}\cdot \alpha_2 ([i]).
\end{equation*}
We shall prove the latter assertion. The other assertion follows from a similar argument. 

For this end, let $\epsilon>0$, and let $r_0 = r_0 (\epsilon)$, $k_5 = k_5 (\epsilon)$ be as in Proposition \ref{Proposition 7.1}. Let $k\geq k_5$. Choose a point $(z,t,\omega,\eta) \in [0,1]^2 \times \mathbb{T} \times [n_1]^\mathbb{N} \times [n_2] ^\mathbb{N}$, a measure $\nu \in P([0,1]^2)$,  a set $D \subset [0,1]^2 \times \mathbb{T} \times [n_1]^\mathbb{N} \times [n_2] ^\mathbb{N}$ and $\mathcal{N} \subset \mathbb{N}$ with the properties stated in Proposition 7.1.

\begin{Lemma} \label{Equation 7.5}
For all $p\in \mathcal{N}$,
\begin{equation} 
\inf_{y\in \mathbb{R}^2} \frac{1}{k\log 2} \log N_{2^{-k}} ( \supp (\nu^{\mathcal{A}_p ^t (z)}) \setminus B(y,r_0)) \geq \gamma -o(1), \quad \text{ as }  \epsilon\rightarrow 0 \text{ and } k \rightarrow \infty.
\end{equation}
\end{Lemma}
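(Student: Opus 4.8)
The plan is to extract a quantitative lower bound on the covering number of the support of $\nu^{\mathcal{A}_p^t(z)}$ (with a ball removed) from the entropy lower bound that is already available for this measure. Recall that by the construction of $\mathcal{N}$ (it is contained in $A(\nu,z,t,\omega,\eta)$), for every $p\in\mathcal{N}$ we have both
\begin{equation*}
\sup_{y\in[0,1]^2}\nu^{\mathcal{A}_p^t(z)}(B(y,r_0))\le\epsilon,\qquad H\bigl(\nu^{\mathcal{A}_p^t(z)},\mathcal{D}_{2^k}\bigr)\ge k(\gamma\log m_2-2\epsilon).
\end{equation*}
Actually what Proposition \ref{Proposition 7.1} part (5) gives directly is the sharper statement $\inf_{y}\frac{1}{k\log 2}H(\nu^{\mathcal{A}_p^t(z)}|_{B(y,r_0)^C},\mathcal{D}_{2^k})\ge\gamma-C\sqrt\epsilon$; this is the entropy with a ball deleted, which is the version I want to use.

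First I would recall the elementary comparison between entropy and covering numbers: for any probability measure $\rho$ on $[0,1]^2$ and any $k$, $H(\rho,\mathcal{D}_{2^k})\le\log|\{D\in\mathcal{D}_{2^k}:\rho(D)>0\}|\le\log N_{2^{-k}}(\supp\rho)+O(1)$, where the $O(1)$ absorbs the bounded-overlap constant relating dyadic cells to balls. Applying this with $\rho=\nu^{\mathcal{A}_p^t(z)}|_{B(y,r_0)^C}$ (suitably normalized), whose support is contained in $\supp(\nu^{\mathcal{A}_p^t(z)})\setminus B(y,r_0)$, gives
\begin{equation*}
\frac{1}{k\log 2}\log N_{2^{-k}}\bigl(\supp(\nu^{\mathcal{A}_p^t(z)})\setminus B(y,r_0)\bigr)\ge\frac{1}{k\log 2}H\bigl(\nu^{\mathcal{A}_p^t(z)}|_{B(y,r_0)^C},\mathcal{D}_{2^k}\bigr)-\frac{O(1)}{k}.
\end{equation*}
Taking the infimum over $y\in\mathbb{R}^2$ and invoking Proposition \ref{Proposition 7.1} part (5) yields the bound $\gamma-C\sqrt\epsilon-O(1)/k$, which is $\gamma-o(1)$ as $\epsilon\to0$ and $k\to\infty$, as required. (One small point to check: the normalization of $\nu^{\mathcal{A}_p^t(z)}|_{B(y,r_0)^C}$ changes its total mass by a factor in $[1-\epsilon,1]$, which only perturbs the entropy by $O(1)$, harmless after dividing by $k$.)

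The only genuine subtlety — and the step I'd flag as the main obstacle — is bookkeeping the constants so that the error really is uniform in $p\in\mathcal{N}$ and depends on $\epsilon,k$ only through a term going to $0$; but this is exactly the content already packaged into Proposition \ref{Proposition 7.1} part (5), whose constant $C$ does not depend on $p$, $k$ or $\epsilon$. So the proof is essentially: invoke part (5), then pass from entropy to covering number via the trivial inequality $H(\rho,\mathcal{D}_{2^k})\le\log N_{2^{-k}}(\supp\rho)+O(1)$, and collect errors. I would write it out in three or four lines.

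\begin{proof}
Fix $p\in\mathcal{N}$ and $y\in\mathbb{R}^2$. Write $\rho=\nu^{\mathcal{A}_p^t(z)}$ and let $\rho_y$ denote the normalized restriction of $\rho$ to $B(y,r_0)^C$ (this is well-defined since $\rho(B(y,r_0))\le\epsilon<1$). Then $\supp(\rho_y)\subseteq\supp(\rho)\setminus B(y,r_0)$, so every dyadic cell $D\in\mathcal{D}_{2^k}$ with $\rho_y(D)>0$ meets $\supp(\rho)\setminus B(y,r_0)$. Since a set of diameter $2^{-k}$ meets at most a bounded number $c_0$ (depending only on the dimension $2$) of cells in $\mathcal{D}_{2^k}$, and conversely, we get
\begin{equation*}
H(\rho_y,\mathcal{D}_{2^k})\le\log\bigl|\{D\in\mathcal{D}_{2^k}:\rho_y(D)>0\}\bigr|\le\log\bigl(c_0\cdot N_{2^{-k}}(\supp(\rho)\setminus B(y,r_0))\bigr).
\end{equation*}
Moreover $|H(\rho_y,\mathcal{D}_{2^k})-H(\rho|_{B(y,r_0)^C},\mathcal{D}_{2^k})|\le C_3$ for an absolute constant $C_3$, since $\rho(B(y,r_0)^C)\in[1-\epsilon,1]$. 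Combining these,
\begin{equation*}
\frac{1}{k\log 2}\log N_{2^{-k}}\bigl(\supp(\rho)\setminus B(y,r_0)\bigr)\ge\frac{1}{k\log 2}H\bigl(\rho|_{B(y,r_0)^C},\mathcal{D}_{2^k}\bigr)-\frac{\log c_0+C_3}{k\log 2}.
\end{equation*}
Taking the infimum over $y$ and applying Proposition \ref{Proposition 7.1} part (5) gives
\begin{equation*}
\inf_{y\in\mathbb{R}^2}\frac{1}{k\log 2}\log N_{2^{-k}}\bigl(\supp(\nu^{\mathcal{A}_p^t(z)})\setminus B(y,r_0)\bigr)\ge\gamma-C\sqrt\epsilon-\frac{\log c_0+C_3}{k\log 2},
\end{equation*}
and the right-hand side tends to $\gamma$ as $\epsilon\to0$ and $k\to\infty$, uniformly in $p\in\mathcal{N}$.
\end{proof}
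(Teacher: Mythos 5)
Your proof is correct and takes essentially the same route as the paper, which simply delegates to Wu's paper (equation (7.3) in \cite{wu2016proof}): invoke Proposition \ref{Proposition 7.1} part (5) and convert the entropy lower bound into a covering-number lower bound via $H(\rho,\mathcal{D}_{2^k})\le\log N_{2^{-k}}(\supp\rho)$. One small inaccuracy worth noting: the two-sided claim $|H(\rho_y,\mathcal{D}_{2^k})-H(\rho|_{B(y,r_0)^C},\mathcal{D}_{2^k})|\le C_3$ with an absolute constant is false, since with $c=\rho(B(y,r_0)^C)$ one has $H(\rho_y,\mathcal{D}_{2^k})=\frac{1}{c}H(\rho|_{B(y,r_0)^C},\mathcal{D}_{2^k})+\log c$, and the term $(\frac{1}{c}-1)H(\rho|_{B(y,r_0)^C},\mathcal{D}_{2^k})$ can grow linearly in $k$; however, you only use the one-sided inequality $H(\rho_y,\mathcal{D}_{2^k})\ge H(\rho|_{B(y,r_0)^C},\mathcal{D}_{2^k})+\log c \ge H(\rho|_{B(y,r_0)^C},\mathcal{D}_{2^k})-\log 2$ (valid for $\epsilon<1/2$), which does hold and suffices for the conclusion.
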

\begin{proof}
This is a consquence of property (5) of Proposition \ref{Proposition 7.1} as proven in \cite{wu2016proof}, equation (7.3).
\end{proof}

Let $K := \pi_{m_1} \times \pi_{m_2} (\tilde{D'}_{3,4})$ be a union of product sets, where $D_{3,4} ' = \bigcup_{p\in \mathcal{N}} \Pi_{3,4} U^p (z,t,\omega,\eta) $ and $\tilde{D'}_{3,4}\subseteq [m_1]^\mathbb{N} \times [m_2]^\mathbb{N}$  is as in Proposition \ref{Proposition 7.1} part (6) (and part (2)).

\begin{Lemma} \label{D and D}
For all $p\in \mathcal{N}$, $\nu^ {A_p ^t (z)}$ is supported on a slice $\ell' \cap K$ of $K$ of slope $m^{\Rtheta ^p (t)}$, and $U_t ^p (z) \in \supp ( \nu^ {A_p ^t (z)}) \cap D_1$ is a point on this line. In particular,
\begin{equation} \label{7.5}
\inf_{y\in \mathbb{R}^2} \frac{1}{k\log 2} \log N_{2^{-k}} (\ell' \cap K  \setminus B(y,r_0)) \geq \gamma-o(1), \quad \text{ as }  \epsilon\rightarrow 0 \text{ and } k \rightarrow \infty.
\end{equation}
\end{Lemma}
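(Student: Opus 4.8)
The plan is to prove Lemma \ref{D and D} by tracking how the magnification operator $\nu \mapsto \nu^{\mathcal{A}_p^t(z)}$ interacts with the slicing structure, and then transferring the covering-number lower bound of Lemma \ref{Equation 7.5} from the support of $\nu^{\mathcal{A}_p^t(z)}$ to the slice $\ell' \cap K$.

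First I would recall from the end of Section \ref{Section 5.1} (the paragraph following \eqref{Equation 5.4}) that if $\nu \in P(\ell \cap [\Fomega \times \Eeta])$ with $\ell$ of slope $m_1^t$, then $\nu^{\mathcal{A}_p^t(z)} \in P(\ell' \cap [\pi_{m_1}(\tilde{F}_{\sigma_t^p(\omega)}) \times \pi_{m_2}(\tilde{E}_{\sigma^p(\eta)})])$ where $\ell'$ has slope $m_1^{\Rtheta^p(t)}$. By Proposition \ref{Proposition 7.1} part (1), the measure $\nu$ supplied to us indeed satisfies $\nu \in P(\ell \cap [\Fomega \times \Eeta])$; and by the definition of $\mathcal{N} \subseteq A(\nu,z,t,\omega,\eta) \cap B(V,\dots) \cap B(C_{i_0},\dots)$, for each $p \in \mathcal{N}$ we have $U^p(z,t,\omega,\eta) \in C_{i_0} = D$, so in particular $\Pi_1 U^p(z,t,\omega,\eta) = U_t^p(z) \in D_1$. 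Since $U^p$ acts on the last two coordinates by $(\omega,\eta) \mapsto (\sigma_t^p(\omega), \sigma^p(\eta))$, the pair $(\sigma_t^p(\omega), \sigma^p(\eta))$ lies in $D_{3,4}' = \bigcup_{p \in \mathcal{N}} \Pi_{3,4} U^p(z,t,\omega,\eta)$, hence $\pi_{m_1}(\tilde{F}_{\sigma_t^p(\omega)}) \times \pi_{m_2}(\tilde{E}_{\sigma^p(\eta)}) \subseteq \pi_{m_1} \times \pi_{m_2}(\tilde{D'}_{3,4}) = K$. Combining these facts: $\nu^{\mathcal{A}_p^t(z)}$ is a probability measure on $\ell' \cap K$, and $U_t^p(z)$ — which equals $\Pi_1 U^p(z,t,\omega,\eta)$ and lies in the support of $\nu^{\mathcal{A}_p^t(z)}$ by construction of the magnification (the base point is kept in the support) — is a point of $\ell' \cap K \cap D_1$ on this line. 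This establishes the first assertion.

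For \eqref{7.5}: since $\supp(\nu^{\mathcal{A}_p^t(z)}) \subseteq \ell' \cap K$, every set $\supp(\nu^{\mathcal{A}_p^t(z)}) \setminus B(y,r_0)$ is contained in $(\ell' \cap K) \setminus B(y,r_0)$ — we use here $\overline{B(y,r_0)} \subseteq B(y,r_0')$ type inclusions only up to harmless constants, or more simply that removing a ball from a subset removes at least as much as from the superset, so $N_{2^{-k}}(\supp(\nu^{\mathcal{A}_p^t(z)}) \setminus B(y,r_0)) \leq N_{2^{-k}}((\ell' \cap K) \setminus B(y,r_0))$ for every $y$. Taking infimum over $y$ on both sides and invoking Lemma \ref{Equation 7.5} gives
\begin{equation*}
\inf_{y \in \mathbb{R}^2} \frac{1}{k\log 2} \log N_{2^{-k}}((\ell' \cap K) \setminus B(y,r_0)) \geq \inf_{y \in \mathbb{R}^2} \frac{1}{k\log 2} \log N_{2^{-k}}(\supp(\nu^{\mathcal{A}_p^t(z)}) \setminus B(y,r_0)) \geq \gamma - o(1),
\end{equation*}
as $\epsilon \to 0$ and $k \to \infty$, which is exactly \eqref{7.5}.

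The main obstacle I anticipate is purely bookkeeping: one must be careful that the base point $U_t^p(z)$ genuinely lies in $\supp(\nu^{\mathcal{A}_p^t(z)})$ and not merely in its closure-up-to-boundary, and that the coordinate identity $\Pi_1 \circ U^p = U_t^p \circ \Pi_1$ (which holds by the very definition of $U$ in Section \ref{Section 5.1}) is applied consistently. There is also a minor subtlety that the normalization in $\nu^{\mathcal{A}_p^t(z)}$ involves restriction to an $\mathcal{A}_p^t$-cell and rescaling by $U_t^p$; one should check that $z$ being a point of $\supp(\nu)$ (which it is, since $\nu = \pi_{v_t,\omega,\eta}\mu$ and $z$ is drawn according to this measure in the construction leading to Proposition \ref{Proposition 7.1}) guarantees $\nu(\mathcal{A}_p^t(z)) > 0$, so the magnified measure is well-defined for every $p$. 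None of these present a genuine difficulty; the conceptual content is entirely contained in the slice-covariance statement already recorded after \eqref{Equation 5.4} and in Lemma \ref{Equation 7.5}.
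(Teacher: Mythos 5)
Your proposal is correct and follows essentially the same route as the paper: invoke the slice-covariance observation recorded after \eqref{Equation 5.4} to identify $\nu^{\mathcal{A}_p^t(z)}$ as a measure on a slice of slope $m_1^{\Rtheta^p(t)}$, use $U^p(z,t,\omega,\eta)\in D=C_{i_0}$ to place $U_t^p(z)\in D_1$ and $(\sigma_t^p(\omega),\sigma^p(\eta))\in D_{3,4}'$ so that the support lands inside $\ell'\cap K$, and then transfer the covering-number lower bound from $\supp(\nu^{\mathcal{A}_p^t(z)})$ to the superset $\ell'\cap K$ via Lemma \ref{Equation 7.5}. The only difference is that you spell out a couple of bookkeeping points (that $z\in\supp(\nu)$ makes $\nu(\mathcal{A}_p^t(z))>0$, and the monotonicity of $N_{2^{-k}}$ under set inclusion) that the paper leaves implicit.
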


\begin{proof}
Since $\nu \in P( \left( \Fomega\times \Eeta \right) \bigcap \ell)$ for a line $\ell$ of slope $m_1 ^t$, the measure $\nu^ {A_p ^t (z)}$ is supported on a slice $ \left( \pi_{m_1}(\tilde{F}_{\sigma_t ^p (\omega)}) \times \pi_{m_2}(\tilde{E}_{\sigma ^p (\eta)}) \right)\bigcap \ell'$, where $\ell'$ has slope $m^{\Rtheta^p (t)}$.  In addition, for every $p\in \mathcal{N}$, $\Pi_1(U^p (z,t,\omega,\eta)) \in \Pi_1 (D)$ and $\Pi_{3,4} (U^p (z,t,\omega,\eta))=(\sigma_t ^p (\omega), \sigma ^p (\eta)) \in D_{3,4} '$. So,
\begin{equation*}
\supp (\nu^{\mathcal{A}_p ^t (z)}) \subseteq \ell' \cap \left( \pi_{m_1}(\tilde{F}_{\sigma_t ^p (\omega)}) \times \pi_{m_2}(\tilde{E}_{\sigma ^p (\eta)}) \right) \subseteq \ell'\cap K,
\end{equation*}
and $U_t ^p (z) \in \supp (\nu^{A_p ^t (z)}) \cap \Pi_1 (D) \neq \emptyset$, since $z \in \supp (\nu)$. The last assertion is thus a consequence Lemma \ref{Equation 7.5}.
\end{proof}

To sum up, for every $\epsilon>0$ and large enough $k$, we have produced sets $K \subset [0,1]^2$, $D_1 \subset [0,1]^2$ and $V = \lbrace \Rtheta ^p (t) : p\in \mathcal{N} \rbrace$ such that:
\begin{enumerate}
\item $\lambda(\overline{V}) \geq 1- o(1), \quad \text{ as }  \epsilon\rightarrow 0 \text{ and } k \rightarrow \infty.$

\item We have
\begin{equation*}
 \frac{1}{k} \log N_{2^{-k}} (D_1) = o(1), \quad \text{ as }  \epsilon\rightarrow 0 \text{ and } k \rightarrow \infty.
\end{equation*}

and,
\begin{equation*}
\frac{1}{k \log 2} \log N_{2^{-k}} (K)  \leq  \sum_{i=0} ^{n_1-1} \frac{\log |\Gamma_i|}{\log m_1}\cdot \rho ([i])+\sum_{i=0} ^{n_2-1} \frac{\log |\Lambda_i|}{\log m_2}\cdot \alpha ([i])+o(1), \quad \text{ as }  \epsilon\rightarrow 0 \text{ and } k \rightarrow \infty.
\end{equation*}

\item By Lemma \ref{D and D}, for every $v\in V$ there is a line $\ell'$ of slope $m^v$ that intersects $D_1 \cap  K$, such that \eqref{7.5} holds. 
\end{enumerate}

 Let $K' = K - D_1 = \lbrace k-d:k\in K, d\in D_1 \rbrace$. By items (1) and (3) above, for all $v\in V$ there is a line $\ell$ of slope $m^v$ satisfying \eqref{7.5}, passing through sufficiently many  $k$-level dyadic cubes containing the origin. Thus,
\begin{equation*}
\frac{\log N_{2^{-k}} ( K')}{ k\log 2} \geq 1+\gamma-o(1), \quad \text{ as }  \epsilon\rightarrow 0 \text{ and } k \rightarrow \infty.
\end{equation*}
Since for any two sets $A,B \subset \mathbb{R}^2$ there is a constant $C(2)$ such that \begin{equation*}
N_{2^{-k}}(A+B)\leq C(2)N_{2^{-k}} (A)\cdot N_{2^{-k}} (B).
\end{equation*}
We deduce from item (2) above that
\begin{equation*}
\sum_{i=0} ^{n_1-1} \frac{\log |\Gamma_i|}{\log m_1}\cdot \alpha_1 ([i])+\sum_{i=0} ^{n_2-1} \frac{\log |\Lambda_i|}{\log m_2}\cdot \alpha_2 ([i]) \geq \frac{\log N_{2^{-k}} ( K')}{ k\log 2} \geq 1+\gamma-o(1), \quad \text{ as }  \epsilon\rightarrow 0 \text{ and } k \rightarrow \infty.
\end{equation*}
Taking $\epsilon \rightarrow 0$ and $k\rightarrow \infty$, yields the Theorem.

\section{Remaining proofs} \label{Section Remaining proofs}

\textbf{Proof of Claim \ref{Claim tau t}}
\begin{proof}
For the first item, if $\tau=v_t$ for some $t\in \mathbb{T}$ then the existence of such a constant $C$ is well known. Otherwise,  $\tau = \lim_p v_{t_p}$ for some $t_p\in \mathbb{T}$, and let $k\in \mathbb{N}$. Find $p_0$ such that for all $p>p_0$, $d_\theta (t_p,\tau)<m_2 ^{-k}$. This means that the first $k$ digits of $\tau$ agree with the first $k$ digits of $v_{t_p}$ for all $p>p_0$. For any such $p$
\begin{equation*}
|r_k (\tau) - k \cdot \theta| \leq |r_k (\tau) - r_k (t_p)| + |r_k (t_p) - k\cdot \theta| \leq 0 +C
\end{equation*}
As required.

For the second item, let $p\in \mathbb{N}$ and let $\mathcal{C}_p (t)$ be the unique element of the partition $\mathcal{C}_p$ that contains $t$. Since $t$ is not an endpoint of $\mathcal{C}_p (t)$, $t$ belongs to the interior of that interval. Since $t_k$ converges to $t$, there is some $k_0$ such that for all $k>k_0$, $t_k$ also belongs to the interior of $\mathcal{C}_p (t)$. By noting that this means that $v_t$ and $v_{t_k}$ share the same first $p$ digits, we see that $d_\theta (v_{t_k},v_t)\leq \frac{1}{n^p}$, which is sufficient for the claim.
\end{proof}

\textbf{Proof of Lemma \ref{Lemma 5.1}}

\begin{proof}
Part (1) is an immediate corollary of \eqref{equation for A_kt}. For part (2), notice that $\pi_{v_t,\omega,\eta} \mu$ is a measure supported on some slice of $\Fomega\times \Eeta \subseteq [0,1]^2$, of the form $\ell_{m_1 ^t,z}$ for $z\in \Fomega\times \Eeta$. Notice that for every $k\geq 1$ for every atom $A\in \mathcal{A}_k ^t$, $A$ being a rectangle and $\supp(\pi_{v_t,\omega,\eta} \mu)$ being contained on a line, we have $| \supp(\pi_{v_t,\omega,\eta} \mu) \cap  \partial A| \leq 2$. As $\mu$ is not atomic, and the map $\pi_{v_t,\omega,\eta}$ is finite to one, $\pi_{v_t,\omega,\eta} \mu (\partial A) =0$. It follows that for $\mu$ a.e. $x$ and all $k\geq 1$ we have 
\begin{equation} \label{Eq we see}
\pi_{v_t,\omega,\eta} (\mu |_{[x_0 ^{k-1}]}) = \pi_{v_t,\omega,\eta} \mu |_{\mathcal{A}_k ^t (\pi_{v_t,\omega,\eta} (x))}.
\end{equation}
Finally, for all $t\in \mathbb{T}, \tau\in S, (\omega,\eta) \in ([n_1]\times [n_2]) ^\mathbb{N}$ and $x\in X_{t,\omega}$ we have (by the proof of Lemma \ref{Lemma 4.3})
\begin{eqnarray*}
U^k ( \pi_{\tau,\omega,\eta} (x), t, \omega,\eta)& =& ( U^k _t (\pi_{\tau,\omega,\eta} (x)), \Rtheta^k (t), \sigma_t ^k (\omega),\sigma^k (\eta) )\\
&=& (\pi_{\sigma^k (\tau), \sigma_t ^k (\omega), \sigma^k (\eta)} (\sigma^k (x)), \Rtheta^k (t), \sigma_t ^k (\omega),\sigma^k (\eta) )
\end{eqnarray*}
combining the last two calculation yields part (2) of the Lemma.
\end{proof}

\textbf{Proof of Proposition \ref{Proposition properties of nu}}

\begin{proof}
Part (1) is an easy consequence of the fact that, as $k$ grows to infinity, the maximal diameter of an element in the partition $\mathcal{B}_k$ converges to $0$. For the second part, let $k\in \mathbb{N}$ and fix and element  $A\times C \times I\times J \in \mathcal{B}_k$  where $J\in \mathcal{I}_2 ^k$,  $C\in \mathcal{C}_k$, $A\in \mathcal{A}_k ^t$ and $I\in \mathcal{I}_k ^t$, for some $t\in C$. Since $\partial (I) = \partial(J) =\emptyset$, we have $\partial (I\times J) = \emptyset$, since
\begin{equation*}
\partial(I\times J)= (\partial(I)\times J)\bigcup(I\times \partial(J)) = \emptyset.
\end{equation*}
Therefore, by two application of  the "product rule" for boundary of product sets
\begin{equation*}
\partial (A\times C \times I\times J ) \subseteq \partial (A\times C) \times [n_1]^\mathbb{N} \times [n_2]^\mathbb{N} = (\partial A \times C \times [n_1]^\mathbb{N} \times [n_2]^\mathbb{N}) \bigcup (A \times \partial C \times [n_1]^\mathbb{N} \times [n_2]^\mathbb{N}).
\end{equation*} 
Thus, by Boole's inequality,
\begin{equation*}
\nu_\infty (\partial (A\times C \times I\times J ))\leq \nu_\infty (\partial A\times C \times [n_1]^\mathbb{N} \times [n_2]^\mathbb{N}) + \nu_\infty (A \times \partial C  \times [n_1]^\mathbb{N} \times [n_2]^\mathbb{N}).
\end{equation*}
Now, the first summoned on the right hand side above is $0$. This is because $Q^{(\mu_0,x_0,t_0,\tau_0, \omega_0,\eta_0)}_1$ a.e. $\mu$ has positive and exact dimension. By Lemma \ref{Lemma 4.1} it follows that $\pi_{\tau,\omega,\eta} \mu$ is also exact dimensional with positive dimension, and is therefore not atomic. It is also supported on a line, and $\partial A$ is a union of four lines. Thus, $\pi_{\tau,\omega,\eta} \mu (\partial A)=0$ almost surely (this is not too different from the proof of Lemma \ref{Lemma 5.1}). The second summoned is trivially $0$ since the marginal on the second coordinate of $\nu_\infty$ is $\lambda$, and $\partial C$ consists of two points.

For part (3), we make use of Corollary \ref{Cor joint dist}. By this Corollary, and our assumptions, we have that the joint distribution of $Q$ on $\mathbb{T} \times [n_1]^\mathbb{N} \times [n_2] ^\mathbb{N}$ is $\lambda\times \alpha_1 \times \alpha_2$. So,  we may assume we chose $Q^{(\mu_0,x_0,t_0,\tau_0 , \omega_0,\eta_0)}$ so that it gives full mass to the set
\begin{equation*}
\lbrace (\mu,x,t,\tau, \omega,\eta): \tau=v_t, \quad (\omega,\eta) \text{ are } \alpha_1\times \alpha_2 \text{ generic } \rbrace. 
\end{equation*}
This can be done since $Q$ gives this set full measure.

For the last part, by the definition of Kolmogorov-Sinai entropy, it suffices to show that there exists some partition $\mathcal{P}_1$ of $[0,1]^2 \times \mathbb{T} \times [n_1]^\mathbb{N} \times [n_2] ^\mathbb{N}$ s.t. $\lim_k \frac{1}{k} H(\nu_\infty, \bigvee_{i=0} ^{k-1} U^{-i} \mathcal{P}_1) >0$. Consider the partition $P_1 = [\mathcal{D}_{m_1} \times \mathcal{D}_{m_2}] \times \mathcal{C} \times [n_1]^\mathbb{N} \times [n_2] ^\mathbb{N}$ (where the elements in this partition were defined in subsection \ref{Section 5.1}).  Denote, for every $k$, $\mathcal{P}_k = \bigvee_{i=0} ^{n-1} U^{-i} \mathcal{P}_1$.

Denoting by $\Pi_{1,2} :[0,1]^2 \times \mathbb{T} \times [n_1]^\mathbb{N} \times [n_2] ^\mathbb{N} \rightarrow [0,1]^2 \times \mathbb{T} $ the coordinate projection, and $\Pi_1$ similarly, we have
\begin{eqnarray*}
H( \nu_\infty, \mathcal{P}_k) &=& \sum_{B \in \Pi_{1,2} (\mathcal{B}_k)}  \nu_\infty (B\times [n_1]^\mathbb{N} \times [n_2] ^\mathbb{N}) \cdot \log (\nu_\infty (B\times [n_1]^\mathbb{N}\times [n_2] ^\mathbb{N})^{-1}\\
&=& \sum_{B \in \Pi_{1,2} (\mathcal{B}_k)} \Pi_{1,2} \nu_\infty (B) \cdot \log (\Pi_{1,2} \nu_\infty (B))^{-1} \\
\end{eqnarray*}
Denote the measure $\Pi_{1,2} \nu_\infty$ by $\nu\in P([0,1]^2 \times \mathbb{T})$.  Define  partitions of $[0,1]^2 \times \mathbb{T}$ by $\mathcal{W}_k  = \bigvee_{i=0} ^{n-1} {U}^{-i} _{1,2} ([\mathcal{D}_m \times \mathcal{D}_n] \times \mathcal{C})$, where ${U}_{1,2}$ is the restriction of $U$ to $[0,1]^2 \times \mathbb{T}$.  Thus,
\begin{equation*}
H( \nu_\infty, \mathcal{P}_k) = H(\nu , \mathcal{W}_k).
\end{equation*}
The proof that $\lim_k \frac{1}{k} H( \nu, \mathcal{W}_k) >0$, which proves our claim, is now quite similar to the proof of Proposition 5.8 in \cite{wu2016proof}. We  omit the details. 
\end{proof}

\textbf{Proof of Lemma \ref{Lemma assumptions}}
\begin{proof}
Define functions $\phi_1,\phi_2 :[0,1]^2 \rightarrow [0,1]^2$ by
\begin{equation*}
\phi_1 (x,y) = (T_{m_1} (x), T_{m_2}(y)),\quad \phi_2 (x,y)= (x,T_{m_2} (y)).
\end{equation*}
Let $u>0$ denote the slope of $\ell$. Then $\phi_1 ( \ell )$ is a finite family of lines through $[0,1]^2$, all with slope $u\cdot \frac{m_2}{m_1}$, and at least one of these lines  intersects $\pi_{m_1} ( \tilde{F}_{\sigma(\omega)}) \times \pi_{m_2} (\tilde{E}_{\sigma(\eta)})$ in a set of upper box dimension $\geq \gamma$. Similarly, $\phi_2 (\ell)$ is a finite family of lines through $[0,1]^2$, all with slope $u\cdot m_2$,  and at least one of these lines  intersects $\pi_{m_1} (\tilde{F}_{\omega}) \times \pi_{m_2} (\tilde{E}_{\sigma(\eta)})$ in a set of upper box dimension $\geq \gamma$.

Since $\frac{\log m_2}{\log m_1} \notin \mathbb{Q}$ the set $\lbrace u\cdot \frac{m_2 ^k}{m_1 ^n}: k\geq n \rbrace$ is dense in $(0,\infty)$. Therefore, there exists $k\geq n$ such that $u\cdot \frac{m_2 ^k}{m_1 ^n} = m_1 ^{t_0} \in (1,m_1)$. By $n$ applications of $\phi_1$ to $\ell$ followed by $k-n$ applications $\phi_2$ to the resulting line, we see that there exists a line $\ell'$ of slope $m_1 ^{t_0}$ that intersects $\pi_{m_1} ( \tilde{F}_{\sigma^n (\omega)}) \times \pi_{m_2} (\tilde{E}_{\sigma^k (\eta)})$ in a set of dimension $\geq \gamma$. Denote $\omega_0 = \sigma^n (\omega), \eta_0 = \sigma^k (\eta)$. 

Finally, by Theorem \ref{Theorem X ergodic MPS}, there is a set $A' \subseteq [n_1]^\mathbb{N} \times [n_2]^\mathbb{N}$ satisfying $\alpha_1 \times \alpha_2 (A')=1$ such that for every $(\xi,\zeta)\in A'$, $(t_0,\xi,\zeta)$ is generic with respect to the system $(\mathbb{T} \times [n_1]^\mathbb{N}\times [n_2]^\mathbb{N},Z, \lambda\times \alpha_1 \times \alpha_2)$. Define $A = \sigma^{-n} \times \sigma^{-k} (A')$. Then since $\sigma^n \alpha_1=\alpha_1$ and $\sigma^k \alpha_2=\alpha_2$, the product $\sigma^k \times \sigma^n$ preserves the measure $\alpha_1 \times \alpha_2$. Therefore, $\alpha_1 \times \alpha_2 (\sigma^{-n} \times \sigma^{-k} (A'))=1$.   Finally, if $(\omega,\eta)\in A$ then $(\omega_0,\eta_0) = (\sigma^n (\omega),\sigma^k (\eta))\in A'$, so $(\omega_0,\eta_0)$ satisfies that $(t_0,\omega_0,\eta_0)$ is generic. 
\end{proof}

\bibliography{bib}{}

\begin{thebibliography}{10}

\bibitem{algom2017affine}
Amir Algom.
\newblock Affine embeddings of {C}antor sets in the plane.
\newblock {\em To appear in Journal d'Analyse Mathématique}, 2017.

\bibitem{algom2016self}
Amir Algom and Michael Hochman.
\newblock Self embeddings of {B}edford-{M}cmullen carpets.
\newblock {\em To appear in Ergodic Theory and Dynamical Systems}, 2017.

\bibitem{bandt2013local}
Christoph Bandt and Antti K{\"a}enm{\"a}ki.
\newblock Local structure of self-affine sets.
\newblock {\em Ergodic Theory Dynam. Systems}, 33(5):1326--1337, 2013.

\bibitem{bedford1984crinkly}
Timothy Bedford.
\newblock {\em Crinkly curves, Markov partitions and dimension}.
\newblock PhD thesis, University of Warwick, 1984.

\bibitem{bishop2013fractal}
Christopher~J. Bishop and Yuval Peres.
\newblock {\em Fractals in probability and analysis}, volume 162.
\newblock Cambridge University Press, 2016.

\bibitem{elekes2010self}
M{\'a}rton Elekes, Tam{\'a}s Keleti, and Andr{\'a}s M{\'a}th{\'e}.
\newblock Self-similar and self-affine sets: measure of the intersection of two
  copies.
\newblock {\em Ergodic Theory Dynam. Systems}, 30(2):399--440, 2010.

\bibitem{falconer1997techniques}
Kenneth Falconer.
\newblock {\em Techniques in fractal geometry}.
\newblock John Wiley \& Sons, Ltd., Chichester, 1997.

\bibitem{falconer1986geometry}
Kenneth~J Falconer.
\newblock {\em The geometry of fractal sets}, volume~85.
\newblock Cambridge university press, 1986.

\bibitem{Fan2002measures}
Ai-Hua Fan, Ka-Sing Lau, and Hui Rao.
\newblock Relationships between different dimensions of a measure.
\newblock {\em Monatsh. Math.}, 135(3):191--201, 2002.

\bibitem{feng2009dimension}
De-Jun Feng and Huyi Hu.
\newblock Dimension theory of iterated function systems.
\newblock {\em Comm. Pure Appl. Math.}, 62(11):1435--1500, 2009.

\bibitem{feng2014affine}
De-Jun Feng, Wen Huang, and Hui Rao.
\newblock Affine embeddings and intersections of {C}antor sets.
\newblock {\em J. Math. Pures Appl. (9)}, 102(6):1062--1079, 2014.

\bibitem{feng2016affine}
De-Jun Feng and Ying Xiong.
\newblock Affine embeddings of {C}antor sets and dimension of $\alpha \beta
  $-sets.
\newblock {\em To appear in Israel J. Math.}, 2017.

\bibitem{Fraser2015Ferguson}
Andrew Ferguson, Jonathan~M. Fraser, and Tuomas Sahlsten.
\newblock Scaling scenery of {$(\times m,\times n)$} invariant measures.
\newblock {\em Adv. Math.}, 268:564--602, 2015.

\bibitem{furstenberg1970intersections}
Harry Furstenberg.
\newblock Intersections of {C}antor sets and transversality of semigroups.
\newblock In {\em Problems in analysis ({S}ympos. {S}alomon {B}ochner,
  {P}rinceton {U}niv., {P}rinceton, {N}.{J}., 1969)}, pages 41--59. Princeton
  Univ. Press, Princeton, N.J., 1970.

\bibitem{furstenberg2008ergodic}
Hillel Furstenberg.
\newblock Ergodic fractal measures and dimension conservation.
\newblock {\em Ergodic Theory and Dynamical Systems}, 28(02):405--422, 2008.

\bibitem{Gavish2008scaling}
Matan Gavish.
\newblock Measures with uniform scaling scenery.
\newblock {\em Ergodic Theory Dynam. Systems}, 31(1):33--48, 2011.

\bibitem{hochman2010dynamics}
Michael Hochman.
\newblock Dynamics on fractals and fractal distributions.
\newblock {\em arXiv preprint arXiv:1008.3731}, 2010.

\bibitem{hochman2010geometric}
Michael Hochman.
\newblock Geometric rigidity of {$\times m$} invariant measures.
\newblock {\em J. Eur. Math. Soc. (JEMS)}, 14(5):1539--1563, 2012.

\bibitem{hochman2009local}
Michael Hochman and Pablo Shmerkin.
\newblock Local entropy averages and projections of fractal measures.
\newblock {\em Ann. of Math. (2)}, 175(3):1001--1059, 2012.

\bibitem{hutchinson1981fractals}
John~E. Hutchinson.
\newblock Fractals and self-similarity.
\newblock {\em Indiana Univ. Math. J.}, 30(5):713--747, 1981.

\bibitem{mackay2011assouad}
John Mackay.
\newblock Assouad dimension of self-affine carpets.
\newblock {\em Conformal Geometry and Dynamics of the American Mathematical
  Society}, 15(12):177--187, 2011.

\bibitem{mattila1999geometry}
Pertti Mattila.
\newblock {\em Geometry of sets and measures in {E}uclidean spaces}, volume~44
  of {\em Cambridge Studies in Advanced Mathematics}.
\newblock Cambridge University Press, Cambridge, 1995.
\newblock Fractals and rectifiability.

\bibitem{mcmullen1984hausdorff}
Curt McMullen et~al.
\newblock The {H}ausdorff dimension of general {S}ierpi{\'n}ski carpets.
\newblock {\em Nagoya Mathematical Journal}, 96:1--9, 1984.

\bibitem{shmerkin2016furstenberg}
Pablo Shmerkin.
\newblock On {F}urstenberg's intersection conjecture, self-similar measures,
  and the $ l^{q} $ norms of convolutions.
\newblock {\em To appear in Annals of Mathematics}, 2018.

\bibitem{wu2016proof}
Meng Wu.
\newblock A proof of {F}urstenberg's conjecture on the intersections of $\times
  p $ and $\times q $-invariant sets.
\newblock {\em arXiv preprint arXiv:1609.08053}, 2016.

\end{thebibliography}
\bibliographystyle{plain}

\end{document}